\titleformat{\section}[block]{\centering\Large\bfseries}{\thesection}{1em}{}
\titleformat{\subsection}[block]{\centering\large\bfseries}{\thesubsection}{1em}{}
\titleformat{\subsubsection}[block]{\centering\normalsize\bfseries}{\thesubsubsection}{1em}{}
\newtheorem{theorem}{\textbf{Theorem}}[section]
\newtheorem{lemma}{\textbf{Lemma}}[section]
\newtheorem{proposition}{\textbf{Proposition}}[section]
\newtheorem{corollary}{\textbf{Corollary}}[section]
\newtheorem{remark}{\textbf{Remark}}[section]
\newtheorem{example}{\textbf{Example}}[section]
\newtheorem{definition}{\textbf{Definition}}[section]
\providecommand{\keywords}[1]{\textbf{\textit{Keywords---}} #1}
\numberwithin{equation}{section}
\def\be{\begin{equation}}
\def\ee{\end{equation}}
\def\bea{\begin{eqnarray}}
\def\eea{\end{eqnarray}}
\def\bt{\begin{theorem}}
\def\et{\end{theorem}}
\def\bl{\begin{lemma}}
\def\el{\end{lemma}}
\def\br{\begin{remark}}
\def\er{\end{remark}}
\def\bp{\begin{proposition}}
\def\ep{\end{proposition}}
\def\bc{\begin{corollary}}
\def\ec{\end{corollary}}
\def\bd{\begin{definition}}
\def\ed{\end{definition}}
\def\Pi{\mathbf{\psi}}
\title{\bf{Stability and bifurcation of 2D viscous primitive
equations with full diffusion}}
\author{
Song Jiang$^{a}$
\thanks{jiang@iapcm.ac.cn}
\quad\quad
Quan Wang$^{b}$
\thanks{Corresponding author:xihujunzi@scu.edu.cn }
\\ \footnotesize $^a$
 LCP, Institute of Applied Physics and Computational Mathematics,
\\\footnotesize Huayuan Road 6,
Beijing,100088, China
  \\ \footnotesize $^{b}$ College of Mathematics, Sichuan University,
  \footnotesize
 Chengdu, Sichuan, 610065,  China
% \thanks{The work was supported by the National Nature Science Foundation of China (11901408) and (11711306).}
}
\begin{document}
%\linenumbers
%%%%%%%%%%%%%%%%%%%%%%%%%%%%%%%%%%%%%%%
\maketitle
\begin{abstract}
This paper investigates the stability and bifurcation of the two-dimensional viscous primitive equations with full diffusion under thermal forcing. The system governs perturbations about a motionless basic state with a linear temperature profile in a periodic channel, where the temperature is fixed at $T_0$ and $T_1$ on the bottom and upper boundaries, respectively. Through a rigorous analysis of three distinct thermal regimes, we identify a critical temperature difference $T_c$ that fundamentally dictates the system's dynamical transitions. Our main contributions are fourfold. Firstly, in the subcritical case $T_0 - T_1 < T_c$, we use energy methods to establish the global nonlinear stability in $H^2$-norm, proving that perturbations decay exponentially. Secondly, precisely at the critical threshold $T_0 - T_1 = T_c$, we prove not only the nonlinear stability in $H^1$-norm but also the asymptotic convergence of all solutions to zero, leveraging spectral and dynamical systems theory. Finally, in the supercritical regime $T_0 - T_1 > T_c$, a bootstrap argument reveals that the basic state is nonlinearly unstable across all $L^p$-norms for $1 \leq p \leq \infty$.
Finally, near the critical point, the dynamics are first reduced to a two-dimensional system on a center manifold. This reduced system then undergoes a supercritical bifurcation, generating a countable family of stable steady states that are organized into a local ring attractor.
This work closes a significant gap in the stability analysis of the thermally driven primitive equations, establishing a rigorous mathematical foundation for understanding the formation of convection cells in large-scale geophysical flows.
\end{abstract}
\keywords{Primitive
equations; Stability; Bifurcation; Ring attractor}

\newpage
\tableofcontents

\newpage
\section{Introduction}
\setcounter{equation}{0}

\subsection{Problem and Literature Review}
The investigation of hydrodynamic stability in thermally driven flows, such as the Rayleigh–Bénard convection \cite{Busse1965,busse1979instabilities,Getling1998}, holds significant relevance in geophysical contexts and finds numerous engineering applications \cite{busse1994convection,Nield2007}. This field examines the transition of a stable system into a turbulent state under the influence of thermal gradients. It serves as a paradigm for exploring nonlinear dynamics, pattern formation, and chaotic behavior, thereby providing insights applicable to a broad range of non-equilibrium systems \cite{Lappa2010}. From a practical standpoint, such studies contribute to the optimization of heat transfer in diverse technologies—ranging from nuclear reactors to climate modeling—where large-scale atmospheric and oceanic circulations are driven by thermal forcing \cite{Mizerski2021,Radko2017}. Ultimately, this field deepens our understanding of universal physical principles and facilitates technological advancement through the prediction and control of thermally induced flows.

In this article, we consider the stability and nonlinear dynamics near the basic state defined by
\begin{align}\label{steady-state-0}
\begin{aligned}
&(v,w)=(v_s,w_x)=(0,0),\quad T=T_s=T_0+\frac{z}{H}(T_1-T_0),\\&p=p_s=
-g\rho_0\int_0^z(1-\beta(T_s(\xi)-T_0))\,d\xi.
\end{aligned}
\end{align}
This motionless basic state, which is characterized by a linear temperature profile and hydrostatic balance, satisfies the two-dimensional viscous primitive equations:
\begin{align}\label{primitive}
\begin{cases}
\partial_tv+v\partial_x v+w\partial_z v=
\mu_x \partial_{xx}v+
\mu_z\partial_{zz}v
-\partial_xp, \\ 
\partial_zp=-g\rho_0(1-\beta(T-T_0)),\\ 
\partial_tT+v\partial_xT+w\partial_z T=
\kappa_x \partial_{xx}T+
\kappa_z\partial_{zz}T,\\
\partial_xv+\partial_z w=0,\\
\rho=\rho_0(1-\beta(T-T_0)),
\end{cases}
\end{align}
where the dependent variables are the velocity field $(v, w)$, temperature $T$, and pressure $p$, with $\rho_0$ being a reference density. The physical parameters include the thermal expansion coefficient $\beta$, gravitational acceleration $g$, and the prescribed boundary temperatures $T_0$ (bottom) and $T_1$ (upper). The turbulent viscosities and diffusion coefficients in the $x$- and $z$-directions are denoted by $\mu_x$, $\mu_z$ and $\kappa_x$, $\kappa_z$, respectively.
The basic state \eqref{steady-state-0} represents a quiescent, linearly stratified configuration that can be realized through heating from below ($T_0 > T_1$) or above ($T_0 < T_1$). 
The dynamics of perturbations about this equilibrium state, examined in both bounded and unbounded domains, constitute a central theme in mathematical convection theory \cite{Getling1998, Lappa2010}. This foundational problem continues to inspire substantial research, bridging rigorous mathematical analysis and applied physics through the elucidation of universal mechanisms governing instability and spontaneous pattern formation in nonequilibrium fluid systems.
The viscous primitive equations \eqref{primitive}, which are derived from the Boussinesq equations via the hydrostatic approximation \cite{Furukawa2020}, establish a standard theoretical framework for investigating geostrophic phenomena in continuously stratified fluids \cite{Pedlosky1987, Lions1992, Lions1992-2}. In this study, the governing system \eqref{primitive} is considered on the domain
\begin{align}
\Omega = \left\{ (x, z) : 0 \le z \le H,\ x \in [0, L] \right\},\quad L\gg H,
\end{align}
subject to the following initial and boundary conditions:
\begin{align}\label{i-b-condition}
\begin{cases}
(v, w, T)_{t=0} = (v^i, w^i, T^i), \\
T|_{z=0} = T_0, \quad T|_{z=H} = T_1, \\
(v_z, w)|_{z=0} = 0, \quad (v_z, w)|_{z=H} = 0, \\
v, w, T \text{ are periodic in $x$ with period $L$}.
\end{cases}
\end{align}

 The medium- and small-scale dynamics that perturb the basic state \eqref{steady-state-0} are primarily governed by the Boussinesq equations \cite{Nield2007, Pedlosky1987, Majda2003}. This model has served as a cornerstone in theoretical studies of convective flows, enabling detailed analysis of linear and nonlinear stability, well-posedness, transition mechanisms, and the nonlinear evolution of perturbations near the basic state. When $T_0 > T_1$, the state \eqref{steady-state-0} corresponds to a fluid layer heated from below, leading to the classical Rayleigh–Bénard convection instability once the temperature difference $T_0 - T_1$ exceeds a critical threshold. In their foundational works, Chandrasekhar \cite{Chandrasekha1961} and Drazin and Reid \cite{Drazin1981} linearized the governing equations about the basic state and introduced normal mode perturbations of the form $u' \propto \exp(\sigma t + i(k_x x + k_y y))$. Through linear stability analysis, they derived an expression for the critical Rayleigh number $Ra_c$—a dimensionless parameter quantifying the balance between buoyant driving and dissipative effects—beyond which the perturbation growth rate $\Re(\sigma)$ becomes positive. Guo and Han \cite{guo2010critical} later established a variational characterization of $Ra_c$ and used it to analyze nonlinear stability in the $H^2$-norm, showing that the basic state is nonlinearly stable in this norm if and only if $Ra < Ra_c$. Nguyen \cite{Nguyen2025} extended these results to account for temperature-dependent viscosity, while Mielke \cite{Mielke1997} provided a rigorous mathematical treatment of sideband instabilities. Substantial attention has also been devoted to understanding bifurcations near the basic state at $Ra = Ra_c$. Ma and Wang \cite{Ma2003, Ma2004} investigated attractor bifurcation phenomena in this regime, the results were later generalized by Han et al. \cite{Han2019} to include the internal heating and variable gravity effects.
Braaksma and Iooss \cite{Braaksma2019} established the existence of bifurcating quasipatterns near the threshold, and Haragus and Iooss \cite{Haragus2021} analyzed the emergence of symmetric domain walls. More recently, Watanabe and Yoshimura \cite{Watanabe2023} examined resonance, symmetry, and periodic orbit bifurcations in the vicinity of the basic state. 
For a broader exposition of bifurcation phenomena in this problem from a dynamical systems perspective, we direct the reader to the recent studies by Welter \cite{Welter2025} and Roberts \cite{Roberts2025} and the pertinent literature reviewed therein.

When $T_0 < T_1$, the basic state \eqref{steady-state-0} represents an equilibrium configuration with hotter fluid overlying colder fluid—a scenario opposite to the classical Rayleigh–Bénard instability. From a physical standpoint, this configuration is expected to be nonlinearly stable. In recent years, the nonlinear stability analysis of this basic state within the Boussinesq framework has attracted considerable attention from applied mathematicians. 
Doering et al.\cite{Doering2018} established stability and long-time behavior of perturbations near the basic state for the 2D Boussinesq equations with only velocity dissipation. Castro et al. \cite{Castro2019} proved the asymptotic stability of the basic state for the 2D Boussinesq system augmented by a velocity damping term. Tao \cite{Tao2020} investigated the stability of perturbations about the basic state for the 2D Boussinesq equations without thermal diffusion in a periodic domain. Further extending these results, Lai et al. \cite{Lai2021} derived optimal decay estimates for perturbations of the basic state in the 2D Boussinesq equations with partial dissipation. Adhikari et al. \cite{Adhikari2022} analyzed the stability of the basic state for the 2D Boussinesq system with horizontal dissipation and vertical thermal diffusion. 
For a more comprehensive exposition of recent developments concerning the nonlinear stability of the basic state, including cases with and without shear flows, we refer the reader to \cite{Masmoudi2022,Jang2023, Zhang2023}, along with the pertinent literature cited therein.

When the horizontal scale of thermally driven fluid flows significantly exceeds the vertical scale, the two-dimensional Boussinesq equations without rotation can be systematically reduced to the primitive equations on a periodic channel \cite{Lions1992, Lions1992-2}. Serving as the fundamental governing equations for large-scale atmospheric and oceanic circulations, the primitive equations provide a principal framework for studying geophysical fluid phenomena \cite{Pedlosky1987}. Owing to their broad physical applicability and intrinsic mathematical interest, the analytical properties of these equations have attracted considerable attention in recent years. A substantial body of research has focused on the well-posedness and long-time dynamics of the primitive equations. In bounded domains, Petcu \cite{Petcu2007} established the existence and uniqueness of $z$-weak solutions for the two-dimensional viscous primitive equations, along with strong solutions in the three-dimensional case. Cao and Titi \cite{Cao2007} proved the global well-posedness of strong solutions to the three-dimensional viscous primitive equations in cylindrical domains. Rousseau et al. \cite{Rousseau2008} investigated the well-posedness of the three-dimensional inviscid primitive equations linearized about a stratified flow. Guo and Huang \cite{Guo2009} demonstrated the existence and uniqueness of global strong solutions to the initial-boundary value problem of stochastic three-dimensional primitive equations, further establishing the existence of random attractors by analyzing the asymptotic behavior of solutions. Temam and Wirosoetisno \cite{Temam2010} examined the stability of the slow manifold in the three-dimensional forced-dissipative rotating primitive equations. Hsia and Shiue \cite{Hsia2013} derived an asymptotic stability criterion for solutions in a three-dimensional finite cylinder with time-dependent forcing. Further developments include the work of Cao et al. \cite{Cao2014}, who established the global well-posedness of strong solutions with vertical eddy diffusivity, building on this foundation, they later proved an analogous result with only horizontal viscosity and diffusion \cite{Cao2016}. For a class of discontinuous initial data, Li and Titi \cite{Li2017} proved the existence and uniqueness of weak solutions to the three-dimensional viscous primitive equations. Regarding non-uniqueness and singularity formation, Chiodaroli and Mich\'{a}lek \cite{Chiodaroli2017} applied the convex integration methods of De Lellis and Székelyhidi \cite{De2009,De2010} to demonstrate the existence of infinitely many global weak solutions to the inviscid primitive and Boussinesq equations. More recently, Collot et al. \cite{Collot2024} provided a comprehensive description of two distinct blow-up mechanisms for the three-dimensional inviscid primitive equations. For further advances in the mathematical theory of primitive equations, we refer readers to \cite{Binz2024, Korn2024, Hu2025, Hieber2025} and references therein.

The primitive equations are formally derived as the small-aspect-ratio limit (i.e., the ratio of depth to horizontal length scale) of the Boussinesq system, which itself models large-scale oceanic and atmospheric dynamics \cite{Pascal2001, Li2019}. Extensive mathematical analysis of the Boussenseq system has yielded profound insights into the local and global dynamics near the basic state \eqref{steady-state-0}. These studies elucidate the onset of the Rayleigh–Bénard convection patterns when the fluid is heated from below and the temperature difference $T_0 - T_1$ surpasses a critical threshold, as well as the global stability of the configuration corresponding to heating from above. For large-scale oceanic and atmospheric phenomena, a central question remains: it is analyzed within the more realistic framework of the primitive equations, does the basic state \eqref{steady-state-0} with hot fluid overlying cold fluid retain its global stability? Conversely, does the reverse configuration with cold fluid over hot fluid give rise to large-scale convective structures—such as Hadley cells, Walker circulation, and thermohaline circulation \cite{Vallis2017}? Addressing these questions using the primitive equations constitutes a fundamental step toward understanding the organization of global climate systems.

Compared to the mathematical analysis via the Boussinesq equations, the study of local and global dynamics near the basic state \eqref{steady-state-0} within the primitive equations framework presents several substantial challenges.
The first major difficulty arises in the energy estimates.
The standard approach of estimating $\nabla v$ or $\nabla T$ as single entities, effective in the Boussinesq setting, fails for the primitive equations. This is due to the strong mutual coupling among the bounds for $\|\partial_{x}v\|_{L^{2}}$, $\|\partial_{z}v\|_{L^{2}}$, $\|\partial_{x}T\|_{L^{2}}$, and $\|\partial_{z}T\|_{L^{2}}$. Consequently, one cannot directly demonstrate that $\|\nabla v\|_{L^{2}}+\|\nabla T\|_{L^{2}}$ satisfies a closed inequality, apply Gronwall's lemma, and thereby deduce either the exponential decay of this quantity when $T_0 \leq T_1$ or its uniform boundedness when $T_0 > T_1$. The second difficulty concerns the handling of the nonlinear terms. Ladyzhenskaya's inequality, a standard tool for estimating terms like $u \cdot \nabla u$ and $u \cdot \nabla T$ in the Boussinesq system, is not directly applicable to the primitive equations. The nonlinear terms in this paper inherently contain non-local nonlinear terms $\left(\int_0^z \partial_x v(t,x,\xi)  d\xi \right) \partial_z v$ and $\left(\int_0^z \partial_x v(t,x,\xi)  d\xi \right) \partial_z T$, which necessitate the development of novel analytical techniques. New methods must be devised to control these terms to establish the desired $H^2$-norm estimates: either exponential decay when $T_0 \leq T_1$ or uniform boundedness of $\|v\|_{H^{2}} + \|\nabla T\|_{H^{2}}$ when $T_0 > T_1$. Furthermore, a third fundamental challenge appears in the critical regime. When $T_0 > T_1$ and the difference $T_0 - T_1$ exceeds a critical threshold $T_c$, the basic state \eqref{steady-state-0} is expected to lose stability precisely at this threshold. In this critical case, nonlinear estimates alone are generally insufficient to establish the global stability of \eqref{steady-state-0}, requiring more refined dynamical systems. An overarching complication arises because the three thermal regimes—$T_1 > T_0$, $T_1 = T_0$, and $0 < T_0 - T_1 < T_c$—induce fundamentally different mathematical structures in its perturbation system \eqref{ondimensional-equations-2}. This structural disparity requires the development of separate and tailored estimation methods to analyze the dynamics in each case.

\subsection{Reformulation and Open Problems}

We begin by reformulating the system \eqref{primitive}–\eqref{i-b-condition}. Using the boundary condition $w(0)=0$ and integrating the equation $\eqref{primitive}_4$ with respect to $z$ yields the vertical velocity component:
\begin{align}\label{w}
w=-\int_0^z\partial_xv(t,x,\xi)\,d\xi.
\end{align}
Making use of the boundary condition $\eqref{i-b-condition}_3$, we obtain:
\begin{align*}
\int_0^H\partial_xv(t,x,z)\,dz=0,
\end{align*}
which implies that the depth-integrated horizontal velocity $\int_0^Hv(t,x,z) , dz$ is independent of $x$. Thus, without loss of generality, we impose:
\[
\int_0^Hv(t,x,\xi)\,d\xi=0,
\]
a condition that is consistent with the initial condition 
\[
\int_0^1 v^i(x,z) , dz = 0.
\]

We now obtain an expression for the pressure by integrating the second equation in \eqref{primitive} vertically. Performing the integration in $z$ from $0$ to $z$ yields:
\begin{align}\label{p}
p(t,x,z)=-g\rho_0\int_0^z(1-\beta(T-T_0))\,d\xi+q(t,x).
\end{align}
where $q(t,x)$ is an integration constant independent of $z$, representing the background pressure distribution in the horizontal direction. This expression decomposes the pressure $p(t,x,z)$ into a buoyancy-related term, and a purely hydrostatic (or horizontally varying) component.

By substituting the integral relations for the vertical velocity \eqref{w} and the pressure \eqref{p} into the governing equations \eqref{primitive} and subsequently applying the boundary conditions \eqref{i-b-condition}, the system can be reduced to a simplified form. This process yields a set of equations that govern the evolution of the horizontal velocity $v$ and the temperature $T$, which are given by
 \begin{align}
\begin{cases}\label{primitive-reformulation}
\begin{aligned}
&\partial_tv+v \partial_x v-\left(\int_0^z\partial_xv(t,x,\xi)\,d\xi \right)\partial_z v=\nu_x\partial_{xx}v+
\nu_z\partial_{zz}v\\&-g\rho_0\beta\partial_x \left(
\int_0^zT(t,x,\xi)\,d\xi\right)-\partial_xq(t,x),
\end{aligned}\\
\begin{aligned}
&\partial_tT+v\partial_x T-\left(\int_0^z\partial_xv(t,x,\xi)\,d\xi \right)\partial_z T= \kappa_{x}\partial_{xx}T+
\kappa_{z}\partial_{zz} T,
\end{aligned}\\
v_z|_{z=0}= v_z|_{z=H}=0, \quad
T|_{z=0}=T_0,\quad T|_{z=H}=T_1,\\
v, T~\text{ are periodic in $x$ with period $L$},\\
\int_0^Hv(t,x,\xi)\,d\xi=0.
\end{cases}
\end{align}

In the framework of \eqref{primitive-reformulation}, the basic state for the steady solution \eqref{steady-state-0} is given by
\begin{align}\label{steady-state}
\begin{aligned}
&v=v_s=0,\quad T=T_s=T_0+\frac{z}{H}(T_1-T_0),\quad q=q_s=0.
\end{aligned}
\end{align}
which constitutes a purely conductive, hydrostatically balanced solution. To analyze its linear and nonlinear stability, we consider its perturbations of the form $(v,T,q)=(v_s, T_s, q_s) + (v', T', q')$. Substituting this ansatz into the governing equations \eqref{primitive-reformulation} yields the nonlinear 
system for the perturbations (where the primes are omitted for clarity):
 \begin{align}
\begin{cases}\label{primitive-reformulation-p}
\begin{aligned}
&\partial_tv+v \partial_x v-\left(\int_0^z\partial_xv(t,x,\xi)\,d\xi \right)\partial_z v=\nu_x\partial_{xx}v+
\nu_z\partial_{zz}v\\&-g\rho_0\beta\partial_x \left(
\int_0^zT(t,x,\xi)\,d\xi\right)-\partial_xq(t,x),
\end{aligned}\\
\begin{aligned}
&\partial_tT+v\partial_x T-\left(\int_0^z\partial_xv(t,x,\xi)\,d\xi \right)\partial_z T= \kappa_{x}\partial_{xx}T+
\kappa_{z}\partial_{zz} T\\
&+\frac{T_1-T_0}{H}\left(\int_0^z\partial_xv(t,x,\xi)\,d\xi \right),
\end{aligned}\\
v_z|_{z=0}= v_z|_{z=H}=0, \quad
T|_{z=0}=0,\quad T|_{z=H}=0,\\
v, T~\text{ are periodic in $x$ with period $L$},\\
\int_0^Hv(t,x,z)\,dz=0,
\end{cases}
\end{align}
The initial conditions supplement the system for perturbations
\begin{align}\label{i-b-condition-2}
(v, T)\big|_{t=0} = (v^i, T^i),\quad
\int_0^Hv^i(x,z)\,dz=0.
\end{align}

A precise characterization of the steady states is indispensable for analyzing the long-time behavior of any fluid-dynamical system.  As the foundational elements of the linear stability theory, these equilibrium configurations represent the possible end-states towards which the system may evolve. Our detailed analysis of the nonlinear system \eqref{primitive-reformulation-p} has led to the identification of the following family of steady states, which we now summarize.
\begin{lemma}\label{steady-one}
Let $(v,\Theta)$ be a smooth steady-state solution of the system \eqref{primitive-reformulation-p},  we have
\begin{itemize}
\item[ \rm{(1)}] If $T_1\geq T_0$,  then $
v=T=0$;
\item[\rm{(2)}] If $T_1<T_0$, we obtain the following identity
\begin{align}\label{steady-231-1}
\begin{aligned}
&g\rho_0\beta H\left(\kappa_{x}
\int_{D}\abs{ \partial_{x}T}^2\,dx\,dz
+\kappa_{z}
\int_{D}\abs{ \partial_{z}T}^2\,dx\,dz\right)\\&=
\left(T_0-T_1\right)
\left(
\nu_x\int_{D}\abs{\partial_xv}^2
\,dx\,dz+\nu_z\int_{D}\abs{\partial_zv}^2
\,dx\,dz
\right).
\end{aligned}
\end{align}
\end{itemize}
\end{lemma}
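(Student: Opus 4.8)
The plan is to obtain \eqref{steady-231-1} as a single combined energy identity for the steady system (set $\partial_t=0$ in \eqref{primitive-reformulation-p}), and then deduce part~(1) by a sign inspection of that identity. Throughout I write $T$ for the temperature perturbation $\Theta$ and set $w:=-\int_0^z\partial_xv(x,\xi)\,d\xi$. The constraint $\int_0^Hv\,dz=0$ and $x$-periodicity give $\partial_xv+\partial_zw=0$ on $D$ together with $w|_{z=0}=w|_{z=H}=0$, so $(v,w)$ is divergence-free with vanishing normal component on the horizontal boundaries. First I would multiply the steady $v$-equation by $v$ and integrate over $D$: the advection $\int_D v\,(v\partial_xv+w\partial_zv)=\tfrac12\int_D(v,w)\cdot\nabla(v^2)$ vanishes after integration by parts (using $\nabla\cdot(v,w)=0$, periodicity, and $w|_{z=0,H}=0$); the viscous terms give $-\nu_x\int_D|\partial_xv|^2-\nu_z\int_D|\partial_zv|^2$ with no boundary contribution since $v_z|_{z=0,H}=0$; and the pressure term $-\int_D v\,\partial_xq$ vanishes because $q=q(x)$ is $z$-independent and $\int_0^Hv\,dz=0$. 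This yields $\nu_x\int_D|\partial_xv|^2+\nu_z\int_D|\partial_zv|^2=-g\rho_0\beta\,A$, where $A:=\int_D v\,\partial_x\!\big(\int_0^zT\,d\xi\big)\,dx\,dz$.

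Next I would repeat the computation for the steady $T$-equation tested against $T$: the advection vanishes (now also using $T|_{z=0,H}=0$), diffusion yields $-\kappa_x\int_D|\partial_xT|^2-\kappa_z\int_D|\partial_zT|^2$, and the stratification forcing yields $\tfrac{T_1-T_0}{H}B$ with $B:=\int_D T\,\big(\int_0^z\partial_xv\,d\xi\big)\,dx\,dz$, so that $\kappa_x\int_D|\partial_xT|^2+\kappa_z\int_D|\partial_zT|^2=\tfrac{T_1-T_0}{H}B$. The crux of the argument is then the identity $A=B$. Writing $G(x,z):=\int_0^zT\,d\xi$ and $V(x,z):=\int_0^z\partial_xv\,d\xi$, so that $\partial_zG=T$, $\partial_zV=\partial_xv$, $G(x,0)=0$, and $V(x,0)=V(x,H)=0$, one integration by parts in $x$ gives $A=-\int_D(\partial_xv)\,G$, while integrating $B=\int_D(\partial_zG)\,V$ by parts in $z$ kills all boundary terms (because $G(x,0)=0$ and $V$ vanishes at $z=0,H$) and gives $B=-\int_D G\,\partial_zV=-\int_D G\,\partial_xv$; hence $A=B$. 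I regard this as the main obstacle and the one genuinely nonroutine point: it says the buoyancy forcing in the momentum equation and the stratification forcing in the temperature equation are mutually adjoint through the nonlocal vertical integrals, which is precisely what makes the two cross-terms coincide.

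Finally I would combine the two balances \emph{without dividing by} $T_0-T_1$: multiplying the momentum balance by $(T_0-T_1)$ and the temperature balance by $H$ and then substituting $A=B$ eliminates the cross-term and produces \eqref{steady-231-1} verbatim, valid for every sign of $T_0-T_1$; this is exactly part~(2). For part~(1), observe that the right-hand side of \eqref{steady-231-1} is nonnegative. If $T_1>T_0$, the left-hand side is $\le 0$, so both sides vanish; this forces $\partial_xT=\partial_zT=0$ and $\partial_xv=\partial_zv=0$, whence $T\equiv0$ by $T|_{z=0,H}=0$ and $v\equiv0$ by $\int_0^Hv\,dz=0$. If $T_1=T_0$, the left-hand side is identically zero, so the right-hand side forces $T\equiv0$ as before; feeding $T\equiv0$ (hence $A=0$) back into the momentum balance then gives $v\equiv0$. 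Keeping the combination multiplicative is the only subtlety here, since it lets the single identity \eqref{steady-231-1} cover the borderline case $T_1=T_0$ as well.
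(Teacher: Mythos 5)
Your proposal is correct and follows essentially the same route as the paper: both proofs rest on the two steady energy identities (momentum tested with $v$, temperature tested with $T$) together with the same integration-by-parts computation showing that the nonlocal buoyancy and stratification cross-terms coincide (your $A=B$), the only cosmetic differences being that you work in dimensional variables and combine the two balances multiplicatively into the single identity \eqref{steady-231-1} valid for every sign of $T_0-T_1$, whereas the paper nondimensionalizes and treats $T_1=T_0$ as a separate case. One harmless slip in your wording of the case $T_1=T_0$: it is the \emph{right}-hand side of \eqref{steady-231-1} that vanishes identically (its coefficient $T_0-T_1$ is zero), which forces the left-hand side to vanish and hence $T\equiv0$; your subsequent deduction of $v\equiv0$ from the momentum balance with $A=0$ then matches the paper exactly.
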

The proof of Lemma~\ref{steady-one} is presented in 
Section~\ref{section2.2}.

Since the system \eqref{primitive-reformulation-p} governs the perturbations about the steady state \eqref{steady-state} of the original system \eqref{primitive-reformulation}, Lemma \ref{steady-one} leads to the following conclusion: if $T_1 \geq T_0$, the only admissible steady state of the system \eqref{primitive-reformulation} is the quiescent solution given by \eqref{steady-state}; whereas if $T_1 < T_0$, the relation \eqref{steady-231-1} indicates the potential existence of additional non-trivial steady states. This dichotomy motivates the investigation of three fundamental open questions for the system \eqref{primitive-reformulation}:
\begin{itemize}
\item \textbf{Global nonlinear stability for $T_1 \geq T_0$}. 
Establish the unconditional global nonlinear stability of the motionless state \eqref{steady-state} in system \eqref{primitive-reformulation}. Specifically, does the basic state attract all solutions emanating from finite-amplitude initial data?
\item \textbf{Bifurcation phenomena for $T_1 < T_0$}. Do non-trivial steady states bifurcate from the basic solution \eqref{steady-state} ? What are their multiplicity and stability? What is the critical temperature difference $T_0 - T_1$ that triggers such a bifurcation?
\item \textbf{Influence of the temperature difference}. How does the temperature difference $T_0-T_1$ dictate the stability landscape and the long-time dynamics?
\end{itemize}
Despite the physical relevance of these questions, a comprehensive survey of the literature reveals a striking theoretical gap: no rigorous mathematical treatment has been established to date.

\subsection{Statement of the Main Theorems}

The precise formulation of our main results involves the critical temperature difference $T_c$, which serves as the linear stability 
threshold for the basic state \eqref{steady-state}. It is given by
\begin{align}\label{difference}
T_c = \frac{\pi^4 \mu_z \kappa_x}{H^3 \rho_0 g \beta} \left(
\frac{\kappa_z \mu_x}{\kappa_x \mu_z} + 1 +
\min_{m \in \mathbb{Z}^+} \left( \frac{4 m^2 \mu_x H^2}{\mu_z L^2} +
\frac{\kappa_z L^2}{4 m^2 \kappa_x H^2} \right) \right).
\end{align}
This expression originates from a standard linear stability analysis \cite{Chandrasekha1961,Drazin1981, Ma2004}: $T_c$ is defined as the critical value where the leading eigenvalue of the linearized operator crosses into the right half-plane, marking the onset of exponential growth and the loss of linear stability.

When $T_0 - T_1 < T_c$, the trivial solution $(v, T) = (0, 0)$ of the system \eqref{primitive-reformulation-p} is linearly stable. Moreover, from the derivation of $T_c$, it follows that any solution $(v, T)$ satisfying \eqref{steady-231-1} must be identically zero. This implies that $(0, 0)$ is the unique steady state of the system \eqref{primitive-reformulation-p} in the subcritical regime $T_0 - T_1 < T_c$.
It is natural to expect that this unique steady state also exhibits nonlinear stability. Our first main result establishes its global nonlinear stability properties.

\begin{theorem}\label{global-stability}
Assume that $T_0 - T_1 < T_c$. Then the steady-state solution \eqref{steady-state} is nonlinearly stable in the $H^2$-norm. Specifically, there exists a constant $\lambda > 0$ such that every global strong solution $
((v, T), q) \in C\big([0, +\infty); H^2(\Omega) \times H^1(\mathbb{T})\big)$
to the perturbation system \eqref{primitive-reformulation-p}, with initial data $(v^i, T^i) \in H^2(\Omega)$ satisfying $\int_0^H v^i(\cdot, x, z) \, dz = 0$,
also satisfies the decay estimate
\begin{align*}
\| v(t) \|_{H^2} + \| T(t) \|_{H^2} + \| \partial_x q(t) \|_{L^2} \leq C e^{-2\lambda t}, \quad \forall t \geq 0,
\end{align*}
for some constant $C > 0$ independent of time.
\end{theorem}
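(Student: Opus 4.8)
The plan is to run a weighted energy method that isolates the precise mechanism by which the threshold $T_c$ enters, and then to bootstrap from $L^2$ to $H^2$ using the finiteness of the dissipation integral. First I would introduce the combined functional
\[
\mathcal{E}_0(t)=\frac{1}{2}\|v\|_{L^2}^2+\frac{\gamma}{2}\|T\|_{L^2}^2
\]
with a weight $\gamma>0$ to be chosen. Testing the $v$-equation against $v$ and the $T$-equation against $\gamma T$, the advection terms $v\partial_x v+w\partial_z v$ and $v\partial_x T+w\partial_z T$ with $w=-\int_0^z\partial_x v\,d\xi$ are genuine divergence-free transport, so they drop out after integration by parts (the boundary terms vanish because $w|_{z=0,H}=0$ and by $x$-periodicity); likewise the pressure term dies because $q=q(t,x)$ is $z$-independent and $\int_0^H v\,dz=0$. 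Hence at the $L^2$ level the estimate is effectively linear, which is what makes the decay unconditional.

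The coupling of the two equations reduces to a single cross term. Using integration by parts in $z$ together with the constraint $\int_0^H\partial_x v\,dz=\partial_x\int_0^H v\,dz=0$ and the vanishing of the primitives at $z=0$, one obtains the key identity
\[
\int_\Omega\left(\int_0^z\partial_x v\,d\xi\right)T\,dx\,dz=-\int_\Omega\left(\int_0^z T\,d\xi\right)\partial_x v\,dx\,dz,
\]
so the two buoyancy/stratification couplings collapse into $C_\gamma\int_\Omega(\int_0^z T\,d\xi)\,\partial_x v$ with $C_\gamma=g\rho_0\beta+\gamma\frac{T_0-T_1}{H}$. When $T_0\le T_1$ I can choose $\gamma=g\rho_0\beta H/(T_1-T_0)$ to annihilate $C_\gamma$ and obtain pure dissipation. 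For $T_0>T_1$ subcritical this is impossible, and here $T_c$ does its work: I would expand $(v,T)$ in Fourier modes $e^{2\pi i m x/L}$ in $x$ and in the Neumann basis $\{\cos(n\pi z/H)\}$ for $v$ (minus the mean) and the Dirichlet basis $\{\sin(n\pi z/H)\}$ for $T$, diagonalizing the quadratic form so that coercivity decouples into a family of $2\times2$ problems. The requirement that every mode yield a strictly negative definite form — equivalently that the leading eigenvalue of each mode stay in the left half-plane — is exactly the condition $T_0-T_1<T_c$, with the $\min_{m}$ in \eqref{difference} coming from optimizing over the horizontal wavenumber. This yields a spectral gap $\lambda>0$ and the coercive bound $\frac{d}{dt}\mathcal{E}_0\le -c(\|\nabla v\|_{L^2}^2+\|\nabla T\|_{L^2}^2)\le -4\lambda\mathcal{E}_0$, hence $L^2$ exponential decay and, by integrating in time, $\int_0^\infty(\|\nabla v\|_{L^2}^2+\|\nabla T\|_{L^2}^2)\,dt\le C\mathcal{E}_0(0)$.

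Next I would climb to $H^1$ and $H^2$ by differentiating the system and testing against the corresponding derivatives, tracking $\partial_x v,\partial_z v,\partial_x T,\partial_z T$ and the second derivatives as separate quantities rather than lumping them into $\|\nabla v\|,\|\nabla T\|$, since the paper's difficulty is precisely their mutual coupling; the right weighted combination must be chosen so the cross terms again align with the coercive structure above. The cubic nonlinearities, in particular the non-local ones $w\partial_z v$ and $w\partial_z T$ with $w=-\int_0^z\partial_x v\,d\xi$, I would control by anisotropic Sobolev inequalities adapted to the integral structure — estimating $\int_0^z\partial_x v\,d\xi$ through $\int_0^H|\partial_x v|\,dz$ and the one-dimensional embedding $H^1_x\hookrightarrow L^\infty_x$ in the periodic horizontal variable, in the spirit of Cao--Titi — and absorbing the top-order factor into the dissipation by Young's inequality. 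Since each nonlinear term is at least quadratic, it carries a factor measurable in a lower, already-controlled norm. Feeding the finite dissipation integral from the first step into the uniform Gronwall lemma then gives time-uniform bounds $\|v(t)\|_{H^2}+\|T(t)\|_{H^2}\le M$; combining these with the $L^2$ decay and the Poincaré coercivity of the full dissipation (so that $\mathcal{D}_k\ge c\,\mathcal{E}_k$ at each order) upgrades the differential inequalities to $\frac{d}{dt}\mathcal{E}_k+4\lambda\mathcal{E}_k\le(\text{faster decaying})$, whence the rate $e^{-2\lambda t}$ propagates to $H^2$. The pressure gradient is then recovered algebraically: integrating the $v$-equation over $z\in[0,H]$ and using $\int_0^H v\,dz=0$ expresses $H\partial_x q$ purely through $v$ and $T$, so $\|\partial_x q\|_{L^2}$ inherits the same decay.

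I expect the main obstacle to be the $H^2$ estimates for the non-local advection terms $w\partial_z v$ and $w\partial_z T$. Two features conspire: $w$ is a vertical primitive of $\partial_x v$, so it carries one $x$-derivative but is non-local in $z$, defeating the direct Ladyzhenskaya estimate; and the four gradient components are coupled, so I cannot close a single inequality and must instead find a weighted functional whose cross terms reproduce the subcritical coercivity at every differentiation order. Getting the anisotropic estimates sharp enough that the top-order contributions are genuinely absorbed by $\nu_x,\nu_z,\kappa_x,\kappa_z$ — with a residual that is either a multiple of a time-integrable coefficient or of an already-decaying lower-order norm — is the delicate point on which the whole bootstrap, and hence the unconditional (no smallness) nature of the theorem, rests.
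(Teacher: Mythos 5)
Your proposal follows essentially the same skeleton as the paper's proof, with a few cosmetic variations. At the $L^2$ level your observations are exactly the paper's: transport and pressure terms vanish, the coupling collapses to the single cross term you identify, and the threshold emerges from mode-by-mode negative definiteness of the resulting quadratic form — your Fourier-diagonalized $2\times 2$ analysis with optimized weight $\gamma$ is computationally identical to the paper's route, which symmetrizes the system by rescaling $\Theta$ (so both cross coefficients become $\mathrm{R}$) and then invokes the variational characterization of the principal eigenvalue of \eqref{perturbation-31} to obtain \eqref{two-proof--1-cases3}. Your higher-order plan — tracking the four first-order derivatives as separate quantities, bounding the nonlocal terms $w\partial_z v$ and $w\partial_z T$ by anisotropic mixed norms of Cao--Titi type, and feeding time-integrable dissipation into a Gronwall-type upgrade — is also the paper's strategy: Lemmas \ref{mixed-norm}--\ref{mixed-norm-1} are precisely the mixed-norm inequalities you describe, and Lemma \ref{general-inequality1} plays the role of your ``faster-decaying right-hand side'' step. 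The only genuinely different choices are minor: you recover $\partial_x q$ by vertically averaging the $v$-equation (using $\int_0^H v\,dz=0$), where the paper tests the equation against $\partial_x q$; and you aim for one unified weighted treatment where the paper splits into three regimes.

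Two points in your outline would fail as stated. First, the borderline $T_1=T_0$ is included in the hypothesis $T_0-T_1<T_c$, but there your weight $\gamma=g\rho_0\beta H/(T_1-T_0)$ is undefined and no finite $\gamma$ annihilates the remaining cross term $g\rho_0\beta\int_\Omega\bigl(\int_0^z T\,d\xi\bigr)\partial_x v\,dx\,dz$. The $L^2$ step is rescuable inside your framework (take $\gamma$ large and absorb the cross term into the $T$-dissipation via Poincar\'e, since the $T$-equation is linearly decoupled), but the bootstrap genuinely changes: with no linear $v$-forcing in the $T$-equation, decay cannot be transferred to $(\partial_x v,\partial_x T)$ jointly through the $\mathrm{R}$-coupling, which is why the paper runs a different iteration order in this regime ($\|\partial_x T\|_{L^2}$ before $\|\partial_x v\|_{L^2}$ at the $H^1$ level, and $\partial_z\nabla T$ before $\partial_z\nabla v$ at the $H^2$ level — its five-step and three-step schemes). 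Second, your claim that a weighted functional ``reproduces the subcritical coercivity at every differentiation order'' is too optimistic for vertical derivatives: $\partial_z T$ does not inherit the Dirichlet condition, so the spectral bound is unavailable for the pair $(\partial_z v,\partial_z T)$. What actually closes this level in the paper are two structural facts your outline must make explicit: the linear cross terms for $(\sigma,\eta)=(\partial_z v,\partial_z T)$ cancel identically after integration by parts (using $T|_{z=0,H}=0$), and the $\sigma$-equation \eqref{three-proof--8} is pure transport-diffusion forced only by the already-decaying $\mathrm{R}\,\partial_x\Theta$, so its nonlinearity vanishes upon testing with $\sigma$ — this is what starts the cascade and supplies the time-integrable coefficients your Gronwall step presupposes. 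With those mechanisms inserted, your outline matches the paper's proof.
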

The proof of Theorem~\ref{global-stability} is presented in Section~\ref{theorem21}.

Having established the nonlinear stability for the subcritical regime $T_0 - T_1 < T_c$, we now examine the critical case where the temperature difference exactly equals the threshold value.
\begin{theorem}\label{critical-stability}
Assume that $T_0 - T_1 = T_c$. Then the steady-state solution \eqref{steady-state} is nonlinearly stable in the $H^1$-norm. Moreover, every global strong solution 
$((v, T), q) \in C\big([0, +\infty); H^2(\Omega) \times H^1(\mathbb{T})\big)$
to the perturbation system \eqref{primitive-reformulation-p}, with initial data $(v^i, T^i) \in H^2(\Omega)$ satisfying 
$\int_0^H v^i(\cdot, x, z) \, dz = 0$,
exhibits asymptotic decay in the $H^1$-norm: 
$\lim_{t \to \infty} \left( \| v(t) \|_{H^1} + \| T(t) \|_{H^1} \right) = 0$.
\end{theorem}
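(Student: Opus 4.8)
The plan is to treat the critical case by combining the weighted energy structure behind $T_c$ with a spectral splitting and a LaSalle-type invariance principle; since the linearization carries a zero eigenvalue at $T_0-T_1=T_c$, exponential decay is unavailable and one can only expect marginal ($H^1$) stability together with nonlinearly driven convergence to the origin. I would first fix the weighted functional $\mathcal{E}_0=\tfrac12\|v\|_{L^2}^2+\tfrac{\gamma}{2}\|T\|_{L^2}^2$ with $\gamma=g\rho_0\beta H/(T_0-T_1)>0$, chosen so that the linear part of \eqref{primitive-reformulation-p} becomes self-adjoint in the associated inner product. Testing the equations against $v$ and $\gamma T$ and using the structural cancellations — the advective nonlinearities $v\partial_x v-(\int_0^z\partial_x v\,d\xi)\partial_z v$ and its temperature analogue are energy-neutral after integrating by parts (boundary terms vanish by $\int_0^H\partial_x v\,dz=0$ and by the homogeneous conditions on $v_z$ and $T$), while the pressure term $\partial_x q$ drops by the constraint — yields $\frac{d}{dt}\mathcal{E}_0=-\mathcal{D}(v,T)$ for a quadratic form $\mathcal{D}$ in the first derivatives. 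Diagonalizing $\mathcal{D}$ over the horizontal modes $e^{ik_m x}$ and the vertical bases $\{\cos(n\pi z/H)\}$ (for $v$) and $\{\sin(n\pi z/H)\}$ (for $T$), self-adjointness makes the spectral definition of $T_c$ in \eqref{difference} coincide with the energy-stability threshold: at $T_0-T_1=T_c$ the form $\mathcal{D}$ is positive semidefinite with a two-dimensional kernel $E_c$ spanned by the marginal mode $(m_c,1)$, every other mode contributing a spectrally gapped strictly positive amount.

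Next I would establish the a priori bounds that make a dynamical-systems argument run. Monotonicity $\frac{d}{dt}\mathcal{E}_0\le0$ gives uniform $L^2$ control and $\int_0^\infty\mathcal{D}(v,T)\,dt<\infty$. The delicate step is upgrading this to uniform-in-time $H^1$ and $H^2$ bounds: because $\mathcal{D}$ is degenerate along $E_c$ it no longer controls the full gradient, so the coercive higher-order estimates used for Theorem \ref{global-stability} break down and must be reorganized. Here I would split $(v,T)=P_c(v,T)+P_s(v,T)$; the finite-dimensional center component is bounded in every norm by $\mathcal{E}_0$, while the stable component obeys a forced parabolic equation whose linear part enjoys the spectral gap, so its higher norms close by bootstrapping against the energy-neutral, quadratically small forcing. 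The resulting uniform $H^2$ bound yields the claimed $H^1$ stability and, via the compact embedding of $H^2(\Omega)$ into $H^1(\Omega)$, renders the trajectory precompact.

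Finally I would invoke LaSalle. Precompactness and the monotone functional $\mathcal{E}_0$ produce a nonempty, compact, invariant $\omega$-limit set on which $\mathcal{E}_0$ is constant, forcing $\mathcal{D}\equiv0$ and hence $\omega\subseteq E_c$. A complete trajectory confined to the linear space $E_c$ satisfies $\partial_t u\in E_c$ and, since $Lu=0$ there, $\partial_t u=N(u)$; projecting onto $E_s$ gives $P_sN(u)=0$, while the quadratic nonlinearity sends the single wavenumber $k_{m_c}$ into wavenumbers $0$ and $2k_{m_c}$, so $P_cN(u)=0$ as well. Thus $\partial_t u=N(u)\in E_c\cap E_s=\{0\}$, so each limit point is a steady state with $N(u)=0$; since the self-advection of a single cosine mode produces a nonzero $2k_{m_c}$ component, $N(u)\ne0$ for every nonzero $u\in E_c$, whence $u\equiv0$ in accordance with the balance \eqref{steady-231-1} of Lemma \ref{steady-one}. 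Hence $\omega=\{0\}$ and $\|v(t)\|_{H^1}+\|T(t)\|_{H^1}\to0$.

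The principal obstacle is the uniform higher-order bound of the second step: at criticality the dissipation degenerates exactly along $E_c$, so one loses the coercivity that closed the subcritical estimates and must instead exploit both the spectral gap on the stable subspace and the precise cancellation of the nonlocal advection terms $(\int_0^z\partial_x v\,d\xi)\partial_z v$ and $(\int_0^z\partial_x v\,d\xi)\partial_z T$. A secondary subtlety is confirming that the only complete orbit trapped in the center subspace $E_c$ is the origin, i.e. ruling out nontrivial steady or purely oscillatory motion on the marginal mode, which here reduces to the observation that the quadratic nonlinearity neither preserves $E_c$ nor vanishes on it.
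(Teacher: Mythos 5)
Your overall architecture---a monotone (weighted) energy whose dissipation form degenerates exactly on a two-dimensional kernel $E_c$ at criticality, uniform higher-order bounds giving precompactness in $H^1$, an invariance-principle reduction of the $\omega$-limit set to $E_c$, and a final rigidity argument on $E_c$---is precisely the skeleton of the paper's proof (Lemmas~\ref{attractor1}--\ref{attractor6}, Lemma~\ref{lemma-grad-1-cases3}, Corollary~\ref{colarry}); your weight $\gamma=g\rho_0\beta H/(T_0-T_1)$ plays the role of the paper's rescaling of $\Theta$ that makes both couplings equal to $\mathrm{R}$. The genuine divergence is the last step. The paper rules out nontrivial complete orbits in $E_c$ by a similarity argument: on $E_c$ the linear part vanishes, the trajectory solves the purely advective system \eqref{perturbation-3-special}, which is invariant under $(v,\Theta)\mapsto(\gamma v(\gamma t,\cdot),\gamma\Theta(\gamma t,\cdot))$, and rescaling a nontrivial orbit on the sphere $S_\delta$ contradicts the data-independent absorbing bounds of Lemma~\ref{attractor4}. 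Your alternative---$P_c\mathcal{N}(u)=0$ by wavenumber bookkeeping, $P_s\mathcal{N}(u)=0$ by invariance of the orbit in $E_c$, and $\mathcal{N}(u)\neq 0$ on $E_c\setminus\{0\}$---is also valid and arguably more direct; the paper's own computations in the proof of Theorem~\ref{reduction} confirm it, since $\mathcal{N}\big(\psi_{m_c,1}^{+,j},\psi_{m_c,1}^{+,j}\big)\propto(0,\sin 2\pi z)^{T}\neq 0$ for $j=1,2$ while the cross terms cancel. However, your stated reason for $\mathcal{N}(u)\neq0$ is wrong in detail: the $2k_{m_c}$ output of the self-advection vanishes identically in this system (the velocity contribution is $z$-independent and annihilated by the projection $\mathcal{P}$, and the temperature contributions at wavenumber $2k_{m_c}$ cancel between $-v\partial_x\Theta$ and the nonlocal term), so the surviving obstruction is the zonal-mean temperature mode $m=0$, $n=2$, i.e.\ $\sin 2\pi z$. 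The conclusion stands, but only after this computation, which your proposal does not actually perform.

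The one genuine gap is your second step. You propose to obtain uniform-in-time $H^2$ bounds by splitting off the center component and bootstrapping the stable component, through the spectral gap, against forcing you call ``quadratically small.'' At that stage the full solution is controlled only in $L^2$ (by monotonicity of $\mathcal{E}_0$) together with time-integrability of the dissipation of the stable part; for initial data that are not small, the quadratic forcing is not small in any norm that closes a parabolic bootstrap, and the gap on the stable subspace gives no handle on the nonlocal terms $\big(\int_0^z\partial_x v\,d\xi\big)\partial_z v$ and $\big(\int_0^z\partial_x v\,d\xi\big)\partial_z\Theta$, whose estimation is exactly the crux of the problem. The paper closes this step by entirely different, non-spectral means: data-independent absorbing-set estimates built on the maximum-principle decomposition $W=\Theta-\mathrm{R}z$ with the parts $W_+$ and $(W+\mathrm{R})_-$ (Lemma~\ref{attractor1}), the anisotropic mixed-norm inequalities of Lemmas~\ref{mixed-norm}--\ref{mixed-norm-1}, and the uniform Gronwall lemma (Lemmas~\ref{attractor2}--\ref{attractor6}); these hold for all $T_0>T_1$, criticality entering only when the attractor is identified with $\{0\}$. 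As written, your bootstrap would not close for large data, so this step must be replaced by estimates of that type.
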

The proof of Theorem~\ref{critical-stability} is provided in Section~\ref{proof-theorem22}.

While the system remains stable at the critical threshold, the situation changes dramatically in the supercritical regime, where nonlinear instability emerges.
\begin{theorem}\label{noninstability}
Assume that $T_1 - T_0 > T_c$. Then, for any $p \in [1, \infty]$, the steady-state solution \eqref{steady-state} is nonlinearly unstable in the $L^p$-norms. More precisely, there exist positive constants $\epsilon$ and $C^*$ and unit-norm functions $(v^i_0, T^i_0) \in H^2(\Omega)$ satisfying $\|(v^i_0, T^i_0)\|_{H^2} = 1$ and 
$\int_0^H v_0^i(\cdot, x, z) \, dz = 0$,
 such that for every $\delta \in (0, \epsilon)$, the unique global strong solution
$((v, T), q) \in C\big([0, +\infty); H^2(\Omega) \times H^1(\mathbb{T})\big)$
to the perturbation system \eqref{primitive-reformulation-p} with initial data $(v^i, T^i) = \delta (v^i_0, T^i_0)$ satisfies $
\|(v(T_\delta), T(T_\delta))\|_{L^p} \geq \epsilon$,
where $T_\delta$ is the escape time defined by
$T_\delta = \left(\beta_{m_c,1}^{+}\right)^{-1} \ln \frac{\epsilon}{\delta}$,
in which $\beta_{m_c,1}^{+}$ given in \eqref{all-eigenvalue} is the principal eigenvalue (the largest eigenvalue) of the problem \eqref{perturbation-31}.
\end{theorem}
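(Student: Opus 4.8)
The plan is to use the classical bootstrap scheme for nonlinear instability (Grenier; Guo--Strauss): isolate the fastest-growing mode of the linearization, take it as the initial perturbation, and show that its exponential linear growth at the rate $\beta_{m_c,1}^{+}>0$ dominates the quadratic nonlinearity until the solution has escaped a fixed neighborhood of the basic state. Throughout I would write the perturbation system \eqref{primitive-reformulation-p} in the abstract form $\partial_t U=\mathcal{L}U+\mathcal{N}(U)$, where $U=(v,T)$, the linear operator $\mathcal{L}$ is the linearization about the zero state (with the pressure eliminated via the constraint $\int_0^H v\,dz=0$), and $\mathcal{N}$ collects the quadratic terms, namely $v\partial_x v-(\int_0^z\partial_x v\,d\xi)\partial_z v$ and the analogous non-local transport term in the temperature equation.

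First I would carry out the Fourier-in-$x$ decomposition $U=\sum_m \widehat{U}_m(z)\,e^{i 2\pi m x/L}$, reducing $\mathcal{L}$ to a family of one-dimensional operators in $z$ with compact resolvent, whose eigenvalues are precisely the quantities $\beta_{m,j}^{\pm}$ of \eqref{all-eigenvalue}. The supercritical hypothesis $T_0-T_1>T_c$, with $T_c$ as in \eqref{difference}, is exactly the condition that the principal eigenvalue $\beta_{m_c,1}^{+}$ of the critical mode $m_c$ is strictly positive; by the principle of exchange of stabilities for this convective problem this leading eigenvalue is real, so its eigenfunction $\Phi_0$ may be taken real and smooth, and it automatically satisfies the boundary conditions and the zero-mean constraint. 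I then set $(v_0^i,T_0^i)=\Phi_0/\|\Phi_0\|_{H^2}$, so that the linear evolution of the seed is exact, $e^{t\mathcal{L}}\Phi_0=e^{\beta_{m_c,1}^{+}t}\Phi_0$. Since $\mathcal{L}$ is a lower-order perturbation of a negative self-adjoint elliptic operator, it is sectorial and generates an analytic semigroup whose growth bound equals its spectral bound, giving $\|e^{t\mathcal{L}}\|_{Y\to H^2}\le C\,t^{-\theta}e^{\beta_{m_c,1}^{+}t}$ with an integrable-in-time smoothing factor ($\theta<1$) for a suitable intermediate space $Y$.

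Next I would record local well-posedness together with the quadratic bound $\|\mathcal{N}(U)\|_{Y}\le C\|U\|_{H^2}^2$, the decisive point being the control of the non-local terms $(\int_0^z\partial_x v\,d\xi)\partial_z v$ and $(\int_0^z\partial_x v\,d\xi)\partial_z T$ via the vertical-integration and anisotropic estimates developed in the earlier sections. Writing Duhamel's formula and splitting $U=U_{\mathrm{lin}}+r$ with $U_{\mathrm{lin}}(t)=\delta e^{\beta_{m_c,1}^{+}t}\Phi_0/\|\Phi_0\|_{H^2}$, the remainder satisfies $r(t)=\int_0^t e^{(t-s)\mathcal{L}}\mathcal{N}(U(s))\,ds$. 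I would then run the bootstrap on the a priori assumption $\|U(t)\|_{H^2}\le 2\delta e^{\beta_{m_c,1}^{+}t}$: the semigroup and nonlinear bounds yield $\|r(t)\|_{H^2}\le C\int_0^t (t-s)^{-\theta}e^{\beta_{m_c,1}^{+}(t-s)}\delta^2 e^{2\beta_{m_c,1}^{+}s}\,ds\le C'\delta^2 e^{2\beta_{m_c,1}^{+}t}$, which is genuinely higher order while $\delta e^{\beta_{m_c,1}^{+}t}\le\epsilon\ll 1$; this closes the bootstrap on $[0,T_\delta]$ for $\epsilon$ small and supplies existence up to $T_\delta=(\beta_{m_c,1}^{+})^{-1}\ln(\epsilon/\delta)$. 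At $t=T_\delta$ one has $\delta e^{\beta_{m_c,1}^{+}T_\delta}=\epsilon$, so for every $p\in[1,\infty]$ the two-dimensional embedding $H^2(\Omega)\hookrightarrow L^p(\Omega)$ gives $\|U(T_\delta)\|_{L^p}\ge \epsilon\,\|\Phi_0\|_{L^p}/\|\Phi_0\|_{H^2}-C_p\,\epsilon^2\ge c_p\,\epsilon$, and since $\Phi_0\not\equiv 0$ forces $\|\Phi_0\|_{L^p}>0$, rescaling $\epsilon$ delivers the claimed uniform lower bound in every $L^p$.

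The principal obstacle is the nonlinear estimate: the quadratic bound for the non-local transport terms must be sharp enough, in a norm compatible with the parabolic smoothing, that the Duhamel remainder is truly of order $\delta^2 e^{2\beta_{m_c,1}^{+}t}$ rather than merely comparable to the linear growth, since otherwise the bootstrap cannot separate the two contributions. A second essential point is matching the semigroup growth bound to the spectral bound $\beta_{m_c,1}^{+}$: because $\mathcal{L}$ is non-self-adjoint (the buoyancy coupling and the non-local term are not symmetric), one must invoke analyticity and sectoriality rather than a spectral theorem to exclude faster transient growth. Finally, uniformity over the whole range $1\le p\le\infty$, in particular the endpoints $p=1$ and $p=\infty$, relies on the finite measure of one period of $\Omega$ together with $H^2\hookrightarrow L^\infty$ in two dimensions, which keeps the remainder negligible in every $L^p$ while the fixed smooth eigenfunction $\Phi_0$ furnishes a uniform lower bound.
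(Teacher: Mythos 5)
Your proposal is correct in outline and reaches the theorem, but it implements the bootstrap through a genuinely different technical route than the paper. You run the Guo--Strauss/Grenier scheme via semigroup theory: sectoriality, the coincidence of growth and spectral bounds, and a Duhamel iteration with fractional smoothing $\|e^{t\mathcal{L}}\|_{Y\to H^2}\le C t^{-\theta}e^{\beta_{m_c,1}^{+}t}$. The paper deliberately avoids this machinery (its introduction explains that Henry-type hypotheses---smoothing local semiflows, equivalence of operator-induced norms---are often violated in concrete hydrodynamic problems) and instead uses a ``boosted energy method'': Lemma \ref{estimes-nonlinear-instability} closes an $H^2$ a priori estimate of the form $\|U(t)\|_{H^2}^2\le C^*\big(\|U_0\|_{H^2}^2+\int_0^t\|U(\tau)\|_{L^2}^2\,d\tau\big)$ by combining energy functionals that include $\partial_t(v,\Theta)$; the proof then introduces two escape times $T_*$ (persistence of $H^2$-smallness) and $T_{**}$ (persistence of the $L^2$ bound $2C_1\delta e^{\beta_1 t}$), shows $T_\delta=\min\{T_\delta,T_*,T_{**}\}$ by contradiction, and controls the deviation $U_e=U-U_{\mathrm{lin}}$ through the $L^2$ differential inequality $\frac{d}{dt}\|U_e\|_{L^2}-\beta_1\|U_e\|_{L^2}\le C\|U\|_{H^2}^2$, whose sharp rate comes from the variational characterization \eqref{ve12-3} of the principal eigenvalue (Remark \ref{remark52}) rather than from any semigroup bound; the final lower bound is taken in $L^1$ and then propagated to every $L^p$ by H\"older on the bounded period cell, whereas you go through $H^2\hookrightarrow L^p$---both are fine and both are uniform in $p\in[1,\infty]$. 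One inaccuracy worth correcting: you justify ``growth bound equals spectral bound'' by sectoriality on the grounds that $\mathcal{L}$ is non-self-adjoint, but after the rescaling of $\Theta$ in Section 2 the linearization is in fact symmetric (Lemma \ref{eigen-all}: $(\mathcal{L}_{\mathrm{R}}\psi,\psi)=(\psi,\mathcal{L}_{\mathrm{R}}\psi)$, with real spectrum and compact resolvent), which is precisely the structure the paper exploits. This makes your semigroup estimates immediate consequences of the spectral theorem, but it also means your route buys no generality here while adding the burden of verifying graph-norm/$H^2$ equivalence for the projected, nonlocal hydrostatic operator; what it would buy is robustness, since the Duhamel framework survives perturbations that destroy the symmetry (rotation, background shear), where the paper's variational Gronwall step would no longer apply.
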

The proof of Theorem~\ref{noninstability} is given in Section~\ref{proof-theorem23}.

\begin{remark}
The global existence of strong solutions to the two-dimensional primitive equations with full dissipation—that is, with complete viscosities and diffusion—was first established by Bresch, Kazhikhov, and Lemoine in \cite{Bresch2004} and independently by Temam and Ziane in \cite{Temam2004}. Although the boundary conditions considered in the system \eqref{primitive-reformulation-p} differ from those in the aforementioned works, a similar approach can be applied to prove the global existence of strong solutions for \eqref{primitive-reformulation-p}.
\end{remark}

The transition from stability to instability near the critical threshold motivates a deeper investigation into the local dynamics. The following reduction theorem reveals that \eqref{primitive-reformulation-p} can be effectively described by a two-dimensional system in the vicinity of the critical threshold $T_c$.
\begin{theorem}\label{reduction}
Assume that $T_0 - T_1$ lies in a neighborhood of $T_c$. Then the dynamics of the global strong solution $
((v, T), q) \in C\big([0, +\infty); H^2(\Omega) \times H^1(\mathbb{T})\big)$
to the perturbation system \eqref{primitive-reformulation-p} near the trivial state $(v, T) = (0, 0)$, with initial data $(v^i_0, T^i_0) \in H^2(\Omega)$ satisfying 
$\int_0^1 v^i_0(\cdot, x, z)  dz = 0$, 
are equivalent to the dynamics of the following two-dimensional system:
\begin{align}\label{req-m1-m}
\begin{cases}
	\frac{dx_1}{dt} = \beta_{m_c,1}^{+} x_1 + l x_1 \|\mathbf{x}\| + o\left(\|\mathbf{x}\|^3\right), \\
	\frac{dx_2}{dt} = \beta_{m_c,1}^{+} x_2 + l x_2 \|\mathbf{x}\| + o\left(\|\mathbf{x}\|^3\right),
\end{cases}
\end{align}
where $\beta_{m_c,1}^{+}$ is the
largest eigenvalue given in \eqref{all-eigenvalue}, and $l$ is the bifurcation coefficient given by
\[
l = \frac{16 H^3 \kappa_x (16 - 3\pi^2) A_{m_c}^2 m_c^2 (1 + A_{m_c}^2)^{-2}}{12 L^3 (4 \kappa_z \pi^2 + 2 \kappa_x \beta_{m_c,1}^{+})}<0.
\]
Here, $A_{m_c}$ is defined as
\begin{align*}
A_{m_c} = \frac{2\pi m_c \text{R}}{\alpha \pi \left( \frac{4\pi^2 m_c^2}{\alpha^2} + \kappa_a \pi^2 + \beta_{m_c,1}^{+} \right)}, \quad \text{with} \quad \text{R} = \frac{H \sqrt{H \rho_0 g \beta (T_1 - T_0)}}{\kappa_x},
\end{align*}
and $m_c$ is the positive integer satisfying
\[
\frac{4m_c^2 \nu_x H^2}{\nu_z L^2} + \frac{\kappa_z L^2}{4 m_c^2 \kappa_x H^2}= \min_{m \in \mathbb{Z}^+} \left( \frac{4m^2 \nu_x H^2}{\nu_z L^2} + \frac{\kappa_z L^2}{4 m^2 \kappa_x H^2} \right).
\]
Moreover, the bifurcation of the system \eqref{req-m1-m} at $\mathrm{R} = \mathrm{R}_c$ is supercritical. In this case, it admits infinitely many stable steady-state solutions $(x_1, x_2) = (s_1, s_2)$ for $T_0-T_1 >T_c$. These solutions form a local ring attractor with $|\mathbf{s}|^2 = s_1^2 + s_2^2 = -\lambda_1 / l$, as illustrated in Fig.~\ref{attractor-ring}.
\end{theorem}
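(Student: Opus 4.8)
The plan is to treat \eqref{primitive-reformulation-p} as an abstract semilinear evolution equation $\dot u = L_R u + N(u)$ on the Hilbert space $X$ of perturbations $u=(v,T)$ carrying the boundary conditions of \eqref{primitive-reformulation-p} together with the zero-vertical-mean constraint $\int_0^H v\,dz=0$, where $L_R$ is the linearization about $(0,0)$ (with $\partial_x q$ eliminated through the constraint exactly as in the derivation of \eqref{w}--\eqref{p}) and $N$ collects the bilinear advection terms, including the nonlocal contributions $\big(\int_0^z\partial_x v\,d\xi\big)\partial_z v$ and $\big(\int_0^z\partial_x v\,d\xi\big)\partial_z T$. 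First I would carry out the spectral analysis of $L_R$ by separating variables: expanding in the horizontal Fourier modes $e^{\mathrm{i} m\alpha x}$ (with $\alpha=2\pi/L$) and in the vertical bases compatible with the boundary data ($\cos$-modes for $v$ since $v_z=0$ at $z=0,H$, and $\sin$-modes for $T$ since $T=0$ there), which diagonalizes $L_R$ into finite-dimensional blocks with eigenvalues $\beta^{\pm}_{m,n}$. At $T_0-T_1=T_c$ the principal eigenvalue $\beta^{+}_{m_c,1}$ reaches $0$ while every other mode remains in the open left half-plane; crucially, the translation and reflection symmetry in $x$ forces this critical eigenvalue to have geometric multiplicity two, with eigenspace $X_c=\mathrm{span}\{\phi_1,\phi_2\}$ spanned by the $\cos(m_c\alpha x)$- and $\sin(m_c\alpha x)$-type eigenfunctions. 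This two-dimensionality is the origin of the planar reduced system \eqref{req-m1-m}.

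Next I would invoke the center manifold theorem. Since $L_R$ has compact resolvent and generates an analytic semigroup with a spectral gap separating $\beta^{+}_{m_c,1}$ from the rest of the spectrum, and since $N$ is smooth and vanishes to second order, there is a smooth local center manifold $W^c=\{u_c+\Phi(u_c):u_c\in X_c\}$ with $\Phi:X_c\to X_s$, $\Phi(0)=0$, $D\Phi(0)=0$, that is locally attracting, so the long-time dynamics near $(0,0)$ is governed by the field obtained by projecting onto $X_c$ along the stable complement $X_s$. Writing $u_c=x_1\phi_1+x_2\phi_2$, the reduced field is $\dot{x}_i=\beta^{+}_{m_c,1}x_i+P_c N(u_c+\Phi(u_c))\cdot\phi_i^{*}$. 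The key structural observation is that the purely quadratic contribution $P_cN(u_c,u_c)$ vanishes: the bilinear self-interaction of modes with horizontal wavenumber $m_c$ produces only the harmonics $0$ and $2m_c$, both orthogonal to $X_c$. Hence the leading nonlinearity in \eqref{req-m1-m} is cubic, arising from the beating of $u_c$ against the quadratic part of the center manifold.

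To extract $l$ I would therefore compute $\Phi$ to second order by solving the homological equation $L_R\,\Phi_2(u_c)=-P_s N(u_c,u_c)$ on $X_s$ (invertible there by the spectral gap), which reduces to the inhomogeneous linear problems for the $0$- and $2m_c$-harmonics forced by the quadratic interaction; the nonlocal operators $\int_0^z\partial_x v\,d\xi$ must be tracked carefully through these solves. Substituting $\Phi_2$ back and projecting $2P_c B(u_c,\Phi_2(u_c))$ onto $X_c$ yields the cubic coefficient, where $B$ is the symmetric bilinear form of $N$. The rotational symmetry inherited from $x$-translation invariance forces this cubic term into the $O(2)$-equivariant normal form $l\,x_i\|\mathbf x\|^2$ with a single coefficient $l$; evaluating the vertical-mode integrals (products of $\cos/\sin$ modes against their nonlocal primitives) produces the explicit constants, in particular the factor $16-3\pi^2<0$, and a sign check against the positive denominator $4\kappa_z\pi^2+2\kappa_x\beta^{+}_{m_c,1}>0$ confirms $l<0$. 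I expect this homological solve, together with the bookkeeping of the nonlocal primitive-equation structure, to be the main obstacle, since it replaces the routine Ladyzhenskaya-type estimates of the Boussinesq setting with a delicate explicit computation of the second-harmonic and mean-flow corrections.

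Finally, with $l<0$ established, I would analyze \eqref{req-m1-m} in polar coordinates $(r,\theta)$, in which the equivariant normal form decouples as $\dot r=\beta^{+}_{m_c,1}r+l r^3+o(r^3)$ with $\theta$ neutral at this order. For $T_0-T_1>T_c$ one has $\beta^{+}_{m_c,1}=\lambda_1>0$, so the radial equation has the nontrivial fixed point $r_*^2=-\beta^{+}_{m_c,1}/l=-\lambda_1/l$, which is stable because $\beta^{+}_{m_c,1}+3l r_*^2=-2\beta^{+}_{m_c,1}<0$; this proves the bifurcation is supercritical. The rotational degeneracy then renders the entire circle $\{|\mathbf s|^2=-\lambda_1/l\}$ a continuum of steady states of the truncated normal form, each neutrally stable along the circle and transversally attracting, so the circle is a normally hyperbolic attracting invariant closed curve; by the persistence theorem for normally hyperbolic manifolds it survives the $o(\|\mathbf x\|^3)$ remainder as a local ring attractor, giving the asserted family of stable bifurcated states.
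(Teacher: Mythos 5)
Your proposal follows essentially the same route as the paper's proof: the abstract formulation $\dot\psi=\mathcal{L}_{\mathrm{R}}\psi+\mathcal{N}(\psi,\psi)$ with the pressure eliminated by projection, the two-dimensional center-manifold reduction onto the span of $\psi_{m_c,1}^{+,1},\psi_{m_c,1}^{+,2}$, the observation that $P_c\mathcal{N}(\psi_c,\psi_c)=0$ because the quadratic self-interaction of the $m_c$-modes generates only the $0$ and $2m_c$ horizontal harmonics, the second-order homological solve as in \eqref{hig-order1}, the cubic coefficient carrying the factor $16-3\pi^2<0$, and the supercritical ring attractor (your $O(2)$-equivariance shortcut and the normal-hyperbolicity persistence remark are mild refinements of the paper's explicit inner-product computations and its direct evaluation of $\tfrac{d}{dt}(x_1^2+x_2^2)$). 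One small correction: since the reduction is performed for $\mathrm{R}$ near, not at, $\mathrm{R}_c$, the homological equation must carry the shift $\mathcal{L}_{\mathrm{R}}\Phi_2-2\beta_{m_c,1}^{+}\Phi_2=-P_s\mathcal{N}(\psi_c,\psi_c)$ arising from differentiating $\Phi_2$ along the linear center flow, which is exactly what produces the denominator $4\kappa_a\pi^2+2\beta_{m_c,1}^{+}$ in $l$; your unshifted version $\mathcal{L}_{\mathrm{R}}\Phi_2=-P_s\mathcal{N}(\psi_c,\psi_c)$ recovers the stated constant only at criticality, where $\beta_{m_c,1}^{+}=0$.
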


The proof of Theorem~\ref{reduction} is presented in Section~\ref{proof-theorem24}.

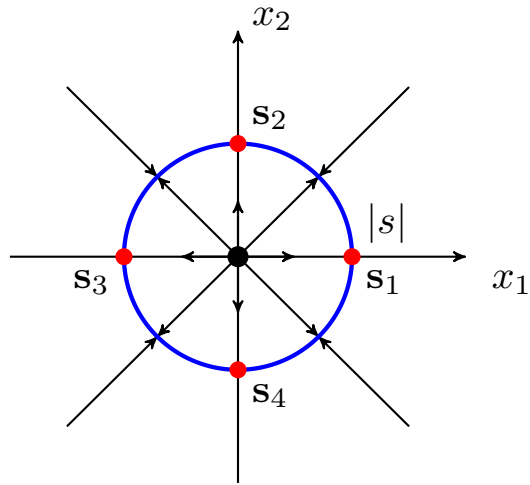
\begin{figure}[tbh]
\centering 
        \begin{tikzpicture}[>=stealth',xscale=1.5,yscale=1.5,every node/.style={scale=1.5}]
\draw [thick,->] (0,-2) -- (0,2) ;
\draw [thick,->] (-2,0) -- (2,0) ;
\draw [thick,->] (0,0) -- (0.707,0.707) ;
\draw [thick,->] (0,0) -- (-0.707,0.707) ;
\draw [thick,->] (0,0) -- (-0.707,-0.707) ;
\draw [thick,->] (0,0) -- (0.707,-0.707) ;
\draw [thick,->] (1.5,1.5) -- (0.707,0.707) ;
\draw [thick,->] (-1.5,1.5) -- (-0.707,0.707) ;
\draw [thick,->] (-1.5,-1.5) -- (-0.707,-0.707) ;
\draw [thick,->] (1.5,-1.5) -- (0.707,-0.707) ;
\draw [thick,->] (0,0) -- (0.5,0) ;
\draw [thick,->] (0,0) -- (-0.5,0) ;
\draw [thick,->] (0,0) -- (0,0.5) ;
\draw [thick,->] (0,0) -- (0,-0.5) ;
\node [below right] at (2.1,0) {$x_1$};
\node [above right] at (1,0) {$\abs{s}$};
\node [below right ] at (1,0) {$\mathbf{s}_1$};
\node [above right ] at (0,1) {$\mathbf{s}_2$};
\node [below left ] at (-1,0) {$\mathbf{s}_3$};
\node [below right] at (0,-1) {$\mathbf{s}_4$};
\node [right] at (0,2.1) {$x_2$};
\draw[domain = -2:360][draw=blue, ultra thick,samples = 200] plot({cos(\x)}, {sin(\x)});
\draw[fill,red] (1,0) circle [radius=2pt];
\draw[fill,red] (0,1) circle [radius=2pt];
\draw[fill,red] (-1,0) circle [radius=2pt];
\draw[fill,red] (0,-1) circle [radius=2pt];
\draw[fill,black] (0,0) circle [radius=2.5pt];
\end{tikzpicture}
\caption{For system \eqref{req-m1-m}, each point $\mathbf{s}$ on the blue circle corresponds to a stable equilibrium when $T_0 - T_1 > T_c$, while the origin $(0,0)$ becomes unstable. Accordingly, each such $\mathbf{s}$ gives rise to a stable equilibrium
$(v^s_b,T_b^s)=s_1(V_1,\Theta_1)^T+s_2(V_2,\Theta_2)^T
+\abs{\mathbf{s}}^2(V_3,\Theta_3)^T
+o(\abs{\mathbf{s}}^2)$
of the full system \eqref{primitive-reformulation-p}. The four labeled points $\mathbf{s}_1$--$\mathbf{s}_4$, defined by
 \\
\centerline{\text{\parbox{0.9\linewidth}{%
\[
\begin{aligned}
&\mathbf{s}_1=-\mathbf{s}_3,\quad \mathbf{s}_3=\left(-\sqrt{-\beta_{m_c,1}^{+}/l},0\right),\\&\mathbf{s}_2=-\mathbf{s}_4,\quad
\mathbf{s}_4=\left(0,-\sqrt{-\beta_{m_c,1}^{+}/l}\right),
\end{aligned}
\]}}}\\
correspond to stable steady states $(v^j_b, T_b^j)$, $j = 1, \dots, 4$, of the system \eqref{primitive-reformulation-p},
\\
\centerline{\text{\parbox{0.9\linewidth}{%
\[
(v^j_b,T_b^j)=s_1^j(V_1,\Theta_1)^T+s_2^j(V_2,\Theta_2)^T
+\abs{\mathbf{s}^j}^2(V_3,\Theta_3)^T
+o(\abs{\mathbf{s}^j}^2),
\]}}}
where the expressions of $(V_j,\Theta_j)^T$ with $j=1,2,3$ are given in Theorem \ref{bifurcation}. }
\label{attractor-ring}
\end{figure}

Building upon the reduction to the two-dimensional system, we can now characterize the precise nature of the bifurcation that occurs as the system \eqref{primitive-reformulation-p} passes through the critical threshold.

\begin{theorem}\label{bifurcation}
Under the hypotheses of Theorem~\ref{reduction}, the dynamic bifurcation of the system \eqref{primitive-reformulation-p} at the critical threshold $T_0 - T_1 = T_c$ is supercritical. Specifically, for every $T_0 - T_1 > T_c$ sufficiently close to $T_c$, the system admits a countably infinite family of stable steady states of the form
\[
(v, T) = (v^s_b, T_b^s),
\]
whose leading-order asymptotic expansions are given by
\begin{align*}
\begin{pmatrix}
v_b^s \\
T_b^s
\end{pmatrix}
= s_1
\begin{pmatrix}
V_1 \\
\Theta_1
\end{pmatrix}
+ s_2
\begin{pmatrix}
V_2 \\
\Theta_2
\end{pmatrix}
+ \abs{\mathbf{s}}^2
\begin{pmatrix}
V_3 \\
\Theta_3
\end{pmatrix}
+ o(\abs{\mathbf{s}}^2),
\end{align*}
where $\abs{\mathbf{s}}^2 = \beta_{m_c,1}^{+} / l$, and the vector profiles $(V_j, \Theta_j)^\top$ for $j = 1, 2, 3$ are given by:
\begin{align*}
\begin{aligned}
\begin{pmatrix}
V_1 \\
\Theta_1
\end{pmatrix} &=
\left( \frac{L}{4H} + \frac{L}{4H} A_{m_c}^2 \right)^{-1}
\begin{pmatrix}
\frac{\kappa_x}{H}\cos \left( \frac{2\pi m_c x}{L} \right) \cos \left( \frac{\pi z}{H} \right) \\
\abs{T}_0A_{m_c} \sin \left( \frac{2\pi m_c x}{L} \right) \sin \left( \frac{\pi z}{H} \right)
\end{pmatrix}, \\
\begin{pmatrix}
V_2 \\
\Theta_2
\end{pmatrix} &=
\left( \frac{L}{4H} + \frac{L}{4H} A_{m_c}^2 \right)^{-1}
\begin{pmatrix}
\frac{\kappa_x}{H}\sin \left( \frac{2\pi m_c x}{L} \right) \cos \left( \frac{\pi z}{H} \right) \\
-\abs{T}_0A_{m_c} \cos \left( \frac{2\pi m_c x}{L} \right) \sin \left( \frac{\pi z}{H} \right)
\end{pmatrix}, \\
\begin{pmatrix}
V_3 \\
\Theta_3
\end{pmatrix} &=
- \frac{HA_{m_c} \kappa_x \pi m_c}{2L\left( 4 \kappa_z \pi^2 + 2 \kappa_x \beta_{m_c,1}^{+} \right)}
\left( \frac{L}{4H} + \frac{L}{4H} A_{m_c}^2 \right)^{-2}
\begin{pmatrix}
0 \\
\abs{T}_0\sin \left( \frac{2\pi z}{H} \right)
\end{pmatrix}.
\end{aligned}
\end{align*}
These steady-state solutions form a local ring attractor.
\end{theorem}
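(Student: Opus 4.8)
The plan is to read Theorem~\ref{bifurcation} as the infinite-dimensional realization of the finite-dimensional bifurcation already encoded in Theorem~\ref{reduction}: the reduced system \eqref{req-m1-m} fixes the bifurcation diagram, and the center-manifold parametrization transports its equilibria back to honest steady states of \eqref{primitive-reformulation-p}. I would therefore split the argument into three stages: (i) determine the equilibria and their stability for the reduced flow; (ii) lift each reduced equilibrium to the full system and identify the profiles $(V_j,\Theta_j)$; and (iii) promote in-manifold stability to (ring) stability of the lifted steady states. Throughout I read the leading nonlinearity of \eqref{req-m1-m} as the cubic $l\,x_i\|\mathbf{x}\|^2$, which is the form consistent with the remainder $o(\|\mathbf{x}\|^3)$ and with the stated amplitude $\abs{\mathbf{s}}^2=-\beta_{m_c,1}^{+}/l$.

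First, I would analyze \eqref{req-m1-m} in polar coordinates $x_1=r\cos\theta$, $x_2=r\sin\theta$. The rotational symmetry inherited from $x$-translation invariance decouples the leading-order dynamics into a radial equation $\dot r=\beta_{m_c,1}^{+}r+l\,r^3+o(r^3)$ and a neutral angular equation $\dot\theta=o(1)$. Since $l<0$ by Theorem~\ref{reduction}, the radial balance $r_\ast^2=-\beta_{m_c,1}^{+}/l$ is solvable precisely when $\beta_{m_c,1}^{+}>0$, i.e.\ for $T_0-T_1>T_c$, and $\left.\partial_r(\beta_{m_c,1}^{+}r+l\,r^3)\right|_{r_\ast}=-2\beta_{m_c,1}^{+}<0$ shows $r_\ast$ is radially attracting. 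This simultaneously yields supercriticality—the nontrivial branch emerges on the side where the origin is unstable and is stable in $r$—and identifies the invariant circle $\{r=r_\ast\}$ as a continuum of equilibria, the local ring. The four marked points $\mathbf{s}_1,\dots,\mathbf{s}_4$ are then particular representatives on this circle.

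Second, I would lift these equilibria through the center manifold. The directions $(V_1,\Theta_1)$ and $(V_2,\Theta_2)$ are the eigenfunctions spanning the two-dimensional critical eigenspace associated with $\beta_{m_c,1}^{+}$; solving the linear eigenvalue problem \eqref{perturbation-31} at horizontal wavenumber $2\pi m_c/L$ produces exactly the stated combinations of $\cos(2\pi m_c x/L)\cos(\pi z/H)$ and $\sin(2\pi m_c x/L)\sin(\pi z/H)$, with the amplitude ratio set by $A_{m_c}$ and normalization $(L/4H)(1+A_{m_c}^2)$. The correction $(V_3,\Theta_3)$ is the quadratic coefficient of the center-manifold graph: substituting $(v,T)=x_1(V_1,\Theta_1)+x_2(V_2,\Theta_2)+\Psi$ into \eqref{primitive-reformulation-p}, collecting the $O(\abs{\mathbf{x}}^2)$ terms, and solving the resulting nonsingular linear problem $L_0\,\Psi_2=-\tfrac12\,\mathcal{N}_2[(V_1,\Theta_1),(V_1,\Theta_1)]$ on the modes generated by the self-interaction of the critical eigenfunctions. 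The bookkeeping to verify here is that the velocity self-interaction feeds only the horizontal second harmonic while the temperature self-interaction feeds the horizontally averaged $\sin(2\pi z/H)$ mode; the rotation-invariant component of $\Psi_2$ is then purely thermal and proportional to $\sin(2\pi z/H)$, whose coefficient carries the denominator $4\kappa_z\pi^2+2\kappa_x\beta_{m_c,1}^{+}$ (the relevant eigenvalue of $L_0$). Evaluating $x_1(V_1,\Theta_1)+x_2(V_2,\Theta_2)+\Psi_2$ at $\mathbf{s}$ with $\abs{\mathbf{s}}^2=-\beta_{m_c,1}^{+}/l$ reproduces the asserted expansion for $(v_b^s,T_b^s)$.

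The step I expect to be the main obstacle is the stability assertion, because the continuous $x$-translation symmetry forces the individual bifurcated states to be only Lyapunov (orbitally) stable and never asymptotically stable: a perturbation tangent to the ring simply slides the solution to a neighboring equilibrium. I would therefore prove that the circle is asymptotically stable \emph{as a set}, i.e.\ a normally hyperbolic attracting invariant manifold, by combining (a) the radial contraction $\left.\partial_r\dot r\right|_{r_\ast}<0$ inside the center manifold with (b) a spectral-gap estimate guaranteeing that all non-critical eigenvalues of the linearization retain strictly negative real part near threshold, so that the center manifold is exponentially attracting in the transverse directions. The delicate point is to phrase ``stable steady state'' in this ring-attractor sense and to confirm that neither the $o(\|\mathbf{x}\|^3)$ reduced remainder nor the $o(\abs{\mathbf{s}}^2)$ lifting remainder, together with the symmetry, destroys the invariant circle or its normal hyperbolicity; for this I would invoke persistence of normally hyperbolic invariant manifolds rather than attempt pointwise asymptotic stability, which genuinely fails here.
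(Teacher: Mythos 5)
Your proposal is correct and, in stages (i) and (ii), follows essentially the same route as the paper: the paper likewise reads supercriticality off the reduced system \eqref{req-m1-m} via the radial identity $\frac{d}{dt}\left(x_1^2+x_2^2\right)=2\left(x_1^2+x_2^2\right)\left(\beta_{m_c,1}^{+}+l\left(x_1^2+x_2^2\right)\right)$ (the unpolarized analogue of your polar-coordinate computation), and it obtains the profiles by evaluating the center-manifold parametrization at the ring, having already solved \eqref{hig-order1} in the proof of Theorem~\ref{reduction} to find $g_{12}=0$ and $g_{11}=g_{22}$ equal to a purely thermal $\sin 2\pi z$ mode carrying the denominator $4\kappa_a\pi^2+2\beta_{m_c,1}^{+}$ — exactly your $(V_3,\Theta_3)$ after undoing the nondimensionalization. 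Two small points, both in your favor: the amplitude $\abs{\mathbf{s}}^2=-\beta_{m_c,1}^{+}/l$ you use is the correct one (the statement's $\abs{\mathbf{s}}^2=\beta_{m_c,1}^{+}/l$ is a sign typo, since $l<0$ and $\beta_{m_c,1}^{+}>0$ in the supercritical regime); and in the paper the velocity--velocity quadratic interaction is annihilated outright by the projection $\mathcal{P}$ because it is $z$-independent, rather than merely feeding a second harmonic, though this does not affect your conclusion that the quadratic manifold coefficient is purely thermal. Where you genuinely diverge is stage (iii): the paper stops at the scalar radial equation and simply declares the circle a local ring attractor, whereas you correctly observe that each individual bifurcated state can only be orbitally stable and propose asymptotic stability of the ring as a set. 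One caution about your chosen tool, however: persistence of normally hyperbolic invariant manifolds alone yields only an attracting invariant circle, which could in principle carry a slow tangential drift, and hence would not by itself deliver a \emph{circle of steady states}. To conclude that the circle consists of genuine equilibria of the full reduced system, remainder included, you should invoke the $O(2)$ equivariance of \eqref{primitive-reformulation-p} — translation invariance in $x$ together with the reflection symmetry $(x,v)\mapsto(-x,-v)$ — which the center-manifold reduction inherits and which forces the tangential component of the reduced vector field to vanish identically on the group orbit; normal hyperbolicity then upgrades this exact circle of equilibria to an asymptotically stable set. With that substitution, your stage (iii) is a rigorous completion of a point the paper leaves implicit.
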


The proof of Theorem~\ref{bifurcation} is presented in Section~\ref{proof-theorem25}.

\subsection{Key Ideas of the Proof}

The three thermal regimes—$T_1 > T_0$, $T_1 = T_0$, and $0 < T_0 - T_1 < T_c$—endow the system \eqref{primitive-reformulation-p} with fundamentally distinct mathematical structures. As a result, Theorem~\ref{global-stability} requires three separate proofs, one for each case. The dynamical behavior and stability characteristics differ significantly across these regimes, necessitating the use of different analytical methods and mathematical tools to rigorously establish global stability under each specific set of conditions.

For $T_1 > T_0$, the exponential decay of $\norm{v}_{L^2} + \norm{T}_{L^2}$ is directly obtained via the standard energy method. In the second case, $T_1 = T_0$, the proof follows a two-step argument: the exponential decay of $\norm{T}_{L^2}$ is proven first, and the decay of $\norm{v}_{L^2}$ then follows as a consequence. For the third regime, $0 < T_0 - T_1 < T_c$, we exploit the variational structure of the principal eigenvalue to demonstrate the exponential decay of the combined norm 
$\norm{v}_{L^2} + \norm{T}_{L^2}$.

The central challenge in the proof is to establish the exponential decay of the $H^1$-norm:
\[
\|\nabla v\|_{L^2} + \|\nabla T\|_{L^2} \leq C e^{-2\lambda t}.
\]
Owing to the strong mutual coupling among the bounds for $\|\partial_x v\|_{L^2}$, $\|\partial_z v\|_{L^2}$, $\|\partial_x T\|_{L^2}$, and $\|\partial_z T\|_{L^2}$, the conventional strategy of estimating $\nabla v$ or $\nabla T$ as single entities—which is effective for the Boussinesq equations—fails in the context of the primitive equations. To overcome this, we employ distinct iterative schemes tailored to the thermal regime. For the regimes $T_1 > T_0$ and $0 < T_0 - T_1 < T_c$, The decay is established through the following chain of implications:
\[
\begin{aligned}
&\|\partial_z v\|_{L^2} \leq C e^{-2\lambda t}
\quad\Rightarrow\quad
\|\partial_x v\|_{L^\infty} \leq C
\quad\Rightarrow\quad\\
&\|\partial_z T\|_{L^2} \leq C e^{-2\lambda t}
\quad\Rightarrow\quad
\|(\partial_x v, \partial_x T)\|_{L^2} \leq C e^{-2\lambda t}.
\end{aligned}
\]
For $T_1 = T_0$: A modified, five-step iterative procedure is required:
\[
\begin{aligned}
&\|\partial_z v\|_{L^2} \leq C e^{-2\lambda t}
\quad\Rightarrow\quad
\|\partial_x v\|_{L^\infty} \leq C
\quad\Rightarrow\quad\\
&\|\partial_z T\|_{L^2} \leq C e^{-2\lambda t}
\quad\Rightarrow\quad
\|\partial_x T\|_{L^2} \leq C e^{-2\lambda t}
\\&\Rightarrow\quad
\|\partial_x v\|_{L^2} \leq C e^{-2\lambda t}.
\end{aligned}
\]

An analogous, yet more intricate, difficulty arises in establishing the decay of the $H^2$-norm:
\[
\|\Delta v\|_{L^2} + \|\Delta T\|_{L^2} \leq C e^{-2\lambda t}.
\]
The strong nonlinear coupling, inherent to the primitive equations, again precludes a direct proof that treats the Laplacians as monolithic entities.
Reflecting the system's anisotropic structure, our proof follows a sequential, component-wise approach, proceeding from vertical to horizontal derivatives. For the regimes $T_1 > T_0$ and $0 < T_0 - T_1 < T_c$, this strategy unfolds in two stages.
We first establish the exponential decay of the second-order vertical gradients:
\[
\|\partial_z \nabla v\|_{L^2} + \|\partial_z \nabla T\|_{L^2} \leq C e^{-2\lambda t}.
\]
This result then enables the subsequent derivation of the bound for the horizontal gradients:
\[
\|\partial_x \nabla v\|_{L^2} + \|\partial_x \nabla T\|_{L^2} \leq C e^{-2\lambda t}.
\]
For the case $T_1 = T_0$, the conventional strategy of collectively estimating the horizontal gradients fails. A more nuanced, three-step iterative procedure is required:
\[
\begin{aligned}
&\|\partial_z \nabla T\|_{L^2} \leq C e^{-2\lambda t} \Rightarrow \|\partial_z \nabla v\|_{L^2} \leq C e^{-2\lambda t} \\
&\Rightarrow \|\partial_x \nabla v\|_{L^2} + \|\partial_x \nabla T\|_{L^2} \leq C e^{-2\lambda t}.
\end{aligned}
\]

The proof of Theorem~\ref{critical-stability} presents a distinct challenge at the critical threshold $T_0 - T_1 = T_c$, where the system \eqref{primitive-reformulation-p} can admit a bounded invariant set in $H^1(\Omega)$ containing infinitely many elements. To show that all such invariant sets collapse to the origin, we proceed through a multi-step argument. We first establish the existence of a global attractor. Consequently, we prove that this attractor is contained within a one-dimensional bounded invariant set in $H^1(\Omega)$. Crucially, we then demonstrate that every solution within this invariant set obeys a similarity invariance. Ultimately, by applying a carefully chosen similarity transformation, we show that the attractor must reduce to the singleton ${0}$, thereby confirming the global stability of the trivial equilibrium.

To establish the desired $H^2$-norm behavior—exponential decay for $T_0 \leq T_1$ or uniform boundedness for $T_0 > T_1$—we encounter a principal obstacle: the presence of nonlocal nonlinear terms in the system \eqref{primitive-reformulation-p} and its differentiated forms, such as
\[
\left(\int_0^z \partial_x v(t,x,\xi)  d\xi \right) \partial_z v \quad \text{and} \quad \left(\int_0^z \partial_x v(t,x,\xi)  d\xi \right) \partial_z T.
\]
The standard approach of applying Ladyzhenskaya's inequality to control the $L^2$-norm of these terms proves ineffective.
To circumvent this, we develop a refined strategy based on estimating their $L^2$-norms via mixed norms of the constituent functions. This yields the key inequalities:
\[
\begin{aligned}
&\left\| \left( \int_0^z \partial_x v(t,x,\xi)  d\xi \right) \partial_z v \right\|_{L^2}^2 \leq \| \partial_x v \|_{L_z^{2} L_x^{\infty}}^2 \| \partial_z v \|^2_{L_z^{\infty} L_x^{2}}, \\
&\left\| \left( \int_0^z \partial_x v(t,x,\xi)  d\xi \right) \partial_z T \right\|_{L^2}^2 \leq C \| \partial_z T \|^2_{L_z^{\infty} L_x^{2}} \| \partial_x v \|_{L_z^{2} L_x^{\infty}}^2.
\end{aligned}
\]
These novel estimates form the analytical foundation that enables us to derive closed differential inequalities for the quantities
\[
\|\nabla v\|_{L^{2}} + \|\nabla T\|_{L^{2}}, \quad 
\|\partial_{xx} v\|_{L^{2}} + \|\partial_{xx} T\|_{L^{2}}\quad
\text{and}\quad \|\partial_{zz} v\|_{L^{2}} + \|\partial_{zz} T\|_{L^{2}}.
\]
From these inequalities, we conclusively establish the exponential decay of these norms when $T_0 \leq T_1$, or their uniform boundedness when $T_0 > T_1$.

Establishing the nonlinear instability of fluid flows based on linear instability analysis remains a fundamental challenge in hydrodynamic stability theory. Seminal works by Sattinger \cite{MR261182}, who employed a Galerkin approach to prove that linear instability implies nonlinear instability for $H^1$ weak solutions on bounded domains, and Yudovich \cite{MR1003607}, who established similar results for the Navier-Stokes equations in $L^p(\Omega)$ ($p \geq n$, with $n$ the spatial dimension), have rigorously connected linear and nonlinear instability concepts. These developments extended earlier foundational work by Henry \cite{MR610244}.
However, such implications are not universal: linear instability does not invariably imply nonlinear instability, nor does linear stability guarantee nonlinear stability. The inherent inequivalence of norms in infinite-dimensional spaces prevents direct deductions between linear and nonlinear stability/instability for specific function space settings. Moreover, the abstract framework typically used to justify linearization in evolution equations, as comprehensively treated by Henry \cite{MR610244}, relies on technical assumptions—such as the existence of local semiflows with suitable smoothing properties or the equivalence of certain operator-induced norms—that are frequently violated in concrete hydrodynamical stability problems, as evidenced by counterexamples in Yudovich \cite{MR1003607} and the intrinsic structure of the Navier-Stokes equations.
To overcome these limitations of conventional linear-to-nonlinear arguments, we develop a boosted energy method. This approach constructs carefully tailored nonlinear energy functionals incorporating specific correction terms, which enables us to establish nonlinear instability (Theorem~\ref{noninstability}) across all $L^p$ spaces ($1 \leq p \leq \infty$) for the thermally unstable regime $T_0 - T_1 > T_c$ (Linear instability).

\section{New Formulation}

\subsection{Nondimensional Equations}
To simplify the analysis and identify key parameters, we non-dimensionalize each equation in the system \eqref{primitive-reformulation-p}. Under the assumption (without loss of generality) that $T_0 \neq 0$, we introduce the following dimensionless quantities:
\[
(x, z) = H(x', z'), \quad 
t = \frac{H^2}{\kappa_x} t', \quad 
v = \frac{\kappa_x}{H} v', \quad 
T = |T_0| \Theta', \quad 
q = \frac{\kappa_x^2}{H^2} q'.
\]

We first introduce these dimensionless quantities and then drop the primes for simplicity. The resulting dimensionless system of the system \eqref{primitive-reformulation-p} is given by:
 \begin{align}
\begin{cases}\label{ondimensional-equations}
\begin{aligned}
&\partial_tv+v \partial_x v-\left(\int_0^z\partial_xv(t,x,\xi)\,d\xi \right)\partial_z v=\text{Pr}_x\partial_{xx}v+
\text{Pr}_z\partial_{zz}v\\&- \text{R}_0^2\partial_x \left(
\int_0^z\Theta(t,x,\xi)\,d\xi\right)-\partial_xq(t,x),
\end{aligned}\\
\begin{aligned}
&\partial_t\Theta+v\partial_x \Theta-\left(\int_0^z\partial_xv(t,x,\xi)\,d\xi \right)\partial_z \Theta= \partial_{xx}\Theta+
\kappa_{a}\partial_{zz} \Theta\\
&+\frac{T_1-T_0}{\abs{T_0}}\left(\int_0^z\partial_xv(t,x,\xi)\,d\xi \right),
\end{aligned}\\
v_z|_{z=0}= v_z|_{z=1}=0, \quad
\Theta|_{z=0}=0,\quad \Theta|_{z=1}=0,\\
v, \Theta|~\text{ are periodic in $x$ with period $\alpha=L/H $},\\
\int_0^1v(t,x,z)\,dz=0,
\end{cases}
\end{align}
The nondimensional parameters appearing in the preceding equations are defined as follows:
\begin{align}\label{parameters}
\text{Pr}_x := \frac{\mu_x}{\kappa_x}, \quad
\text{Pr}_z := \frac{\mu_z}{\kappa_x}, \quad
\kappa_a := \frac{\kappa_z}{\kappa_x}, \quad
\text{R}_0 = \frac{H \sqrt{H |T_0| \rho_0 g \beta}}{\kappa_x}.
\end{align}

To further simplify the nondimensional system, we introduce a rescaling of the variable $\Theta$. The specific scaling depends on the thermal regime:
\[
\Theta(t,x,z) =
\begin{cases}
\text{R}_0^{-2} \Theta'(t,x,z), & T_0 = T_1, \\
\dfrac{\sqrt{|T_0 - T_1|} \Theta'(t,x,z)}{\text{R}_0 \sqrt{|T_0|}}, & T_0 \neq T_1.
\end{cases}
\]
After applying this rescaling and dropping the prime on $\Theta'$ for notational simplicity, the dimensionless system \eqref{ondimensional-equations} reduces to the following form:
\begin{align}
\begin{cases}\label{ondimensional-equations-2}
\begin{aligned}
&\partial_tv+v \partial_x v-\left(\int_0^z\partial_xv(t,x,\xi)\,d\xi \right)\partial_z v=\text{Pr}_x\partial_{xx}v+
\text{Pr}_z\partial_{zz}v\\&- \text{R}^{\abs{\text{Sgn}(T_0-T_1)}}\partial_x \left(
\int_0^z\Theta(t,x,\xi)\,d\xi\right)-\partial_xq(t,x),
\end{aligned}\\
\begin{aligned}
&\partial_t\Theta+v\partial_x \Theta-\left(\int_0^z\partial_xv(t,x,\xi)\,d\xi \right)\partial_z \Theta= \partial_{xx}\Theta+
\kappa_{a}\partial_{zz} \Theta\\
&-\text{Sgn}(T_0-T_1)\text{R}\left(\int_0^z\partial_xv(t,x,\xi)\,d\xi \right),
\end{aligned}\\
v_z|_{z=0}= v_z|_{z=1}=0, \quad
\Theta|_{z=0}=0,\quad \Theta|_{z=1}=0,\\
v, \Theta|~\text{ are periodic in $x$ with period $\alpha=L/H $},\\
\int_0^1v(t,x,z)\,dz=0,\quad
\int_0^{\alpha }\Theta(t,x,z)\,dx=0,
\end{cases}
\end{align}
where $\text{R}$ is the Rayleigh number and $\text{Sgn}(T_0 - T_1)$ is the sign function, defined respectively by
\begin{align}\label{parameters-2}
\begin{aligned}
&\text{R} = \frac{H \sqrt{H \rho_0 g \beta |T_0 - T_1|}}{\kappa_x}, \quad
\text{Sgn}(T_0 - T_1) = 
\begin{cases}
1,  & T_0 > T_1, \\
0,  & T_0 = T_1, \\
-1, & T_0 < T_1.
\end{cases}
\end{aligned}
\end{align}
Note that the system \eqref{ondimensional-equations-2} is defined on the domain $D$ given by
\begin{align}\label{domain}
D = (0,\alpha) \times (0, 1).
\end{align}
 
For a function $g(x, z)$, this article employs the mixed norms 
$\|g\|_{L^{\infty}_{x} L^{2}_{z}}^2$, 
$\|g\|_{L^{2}_{x} L^{\infty}_{z}}^2$,
$\|g\|_{L^{\infty}_{z} L^{2}_{x}}^2$, and
$\|g\|_{L^{2}_{z} L^{\infty}_{x}}^2$
to estimate nonlinear terms in the system \eqref{ondimensional-equations-2}, 
which are defined as follows:
\begin{align}
\begin{aligned}
&\|g\|_{L^{\infty}_{x} L^{2}_{z}}^2 = \left\| \int_0^1 |g(\cdot, z)|^2  dz \right\|_{L^{\infty}_{x}}, \quad
\|g\|_{L^{2}_{x} L^{\infty}_{z}}^2 = \int_0^1 \|g(x, \cdot)\|_{L^{\infty}_{z}}^2  dx, \\
&\|g\|_{L^{\infty}_{z} L^{2}_{x}}^2 = \left\| \int_0^1 |g(x, \cdot)|^2  dx \right\|_{L^{\infty}_{z}}, \quad
\|g\|_{L^{2}_{z} L^{\infty}_{x}}^2 = \int_0^1 \|g(\cdot, z)\|_{L^{\infty}_{x}}^2  dz.
\end{aligned}
\end{align}

\subsection{Proof of Theorem \ref{steady-one}}\label{section2.2}
\begin{proof}
Since the system \eqref{ondimensional-equations-2} is equivalent to the equations \eqref{primitive-reformulation-p}, we will use the former to establish Theorem~\ref{steady-one}. To this end, we consider its steady-state formulation:
\begin{align}
\begin{cases}\label{pes-xz-ns}
\begin{aligned}
&v \partial_x v-\left(\int_0^z\partial_xv(t,x,\xi)\,d\xi \right)\partial_z v=\text{Pr}_x\partial_{xx}v+
\text{Pr}_z\partial_{zz}v\\&- \text{R}^{\abs{\text{Sgn}(T_0-T_1)}}\partial_x \left(
\int_0^z\Theta(t,x,\xi)\,d\xi\right)-\partial_xq(t,x),
\end{aligned}\\
\begin{aligned}
&v\partial_x \Theta-\left(\int_0^z\partial_xv(t,x,\xi)\,d\xi \right)\partial_z \Theta= \partial_{xx}\Theta+
\kappa_{a}\partial_{zz} \Theta\\
&-\text{Sgn}(T_0-T_1)\text{R}\left(\int_0^z\partial_xv(t,x,\xi)\,d\xi \right),
\end{aligned}\\
v_z|_{z=0}= v_z|_{z=1}=0, \quad
\Theta|_{z=0}=0,\quad \Theta|_{z=1}=0,\\
v, T~\text{ are periodic in $x$ with period $\alpha=L/H $},\\
\int_0^1v(t,x,z)\,dz=0,\quad
\int_0^{\alpha }\Theta(t,x,z)\,dx=0.
\end{cases}
\end{align}

From the second equation in \eqref{pes-xz-ns}, together with an integration by parts, we find
\begin{align}\label{steady-1}
\begin{aligned}
0&=\int_{D}\left(v\partial_x \Theta-\left(\int_0^z\partial_xv(t,x,\xi)\,d\xi \right)\partial_z \Theta\right)\Theta\,dx\,dz\\
&\quad-\int_{D}\Theta \partial_{xx}\Theta\,dx\,dz
-\kappa_{a}\int_{D}\Theta \partial_{zz}\Theta\,dx\,dz
\\&\quad+\text{Sgn}(T_0-T_1)\text{R}\int_{D}\Theta
\left(\int_0^z\partial_xv(t,x,\xi)\,d\xi\right)
\,dx\,dz
\\&= \int_{D}\abs{ \partial_{x}\Theta}^2\,dx\,dz
+\kappa_{a}
\int_{D}\abs{ \partial_{z}\Theta}^2\,dx\,dz
\\&\quad+\text{Sgn}(T_0-T_1)\text{R}\int_{D}\Theta
\left(\int_0^z\partial_xv(t,x,\xi)\,d\xi\right).
\end{aligned}
\end{align}

Multiplying the first equation in \eqref{pes-xz-ns} by $v$ in $L^2$ and integrating by parts yields
\begin{align}\label{steady-222}
\begin{aligned}
&\text{R}^{|\text{Sgn}(T_0-T_1)|}
\int_{D} \left( \Theta \int_0^z \partial_x v(t,x,\xi)  d\xi \right)  dx  dz \\
&= \text{Pr}_x \int_{D} |\partial_x v|^2  dx  dz + 
\text{Pr}_z \int_{D} |\partial_z v|^2  dx  dz,
\end{aligned}
\end{align}
where we have used the two identities:
\begin{align*}
\int_{D} \left( v \partial_x v - \int_0^z \partial_x v(t,x,\xi)  d\xi  \partial_z v \right) v  dx  dz = 0,
\end{align*}
and
\begin{align*}
\begin{aligned}
&\int_{D} \left( -\text{R}^{|\text{Sgn}(T_0-T_1)|} \partial_x \left( \int_0^z \Theta(t,x,\xi)  d\xi \right) - \partial_x q(t,x) \right) v  dx  dz \\
&= \int_{D} \left( \text{R}^{|\text{Sgn}(T_0-T_1)|} \int_0^z \Theta(t,x,\xi)  d\xi + q \right) \partial_x v  dx  dz \\
&= \int_{D} \left( \text{R}^{|\text{Sgn}(T_0-T_1)|} \int_0^z \Theta(t,x,\xi)  d\xi + q \right) \partial_z \left( \int_0^z \partial_x v(t,x,\xi)  d\xi \right)  dx  dz \\
&= -\text{R}^{|\text{Sgn}(T_0-T_1)|} \int_{D} \Theta \left( \int_0^z \partial_x v(t,x,\xi)  d\xi \right)  dx  dz.
\end{aligned}
\end{align*}
Consequently, for $T_1 > T_0$, it follows from \eqref{steady-1}--\eqref{steady-222} that
\begin{align*}
\begin{aligned}
&\text{Pr}_x \int_{D} |\partial_x v|^2  dx  dz + \text{Pr}_z \int_{D} |\partial_z v|^2  dx  dz \\
&+ \int_{D} |\partial_x \Theta|^2  dx  dz + \kappa_a \int_{D} |\partial_z \Theta|^2  dx  dz = 0.
\end{aligned}
\end{align*}
Together with the constraints $\int_0^H v(t,x,\xi)  d\xi = 0$ and $\Theta|_{z=0,1} = 0$, this yields
\[
v = \Theta = 0.
\]

For $T_1 < T_0$, combining \eqref{steady-1}--\eqref{steady-222} leads to \eqref{steady-231-1}. When $T_1 = T_0$, equation \eqref{steady-1} directly gives
\[
\int_{D} |\partial_x \Theta|^2  dx  dz + \kappa_a \int_{D} |\partial_z \Theta|^2  dx  dz = 0.
\]
Given the condition $\Theta|_{z=0,1} = 0$, it follows that 
$\Theta = 0$.
Substituting $\Theta = 0$ into \eqref{steady-222} yields
\begin{align*}
\text{Pr}_x \int_{D} |\partial_x v|^2  dx  dz + \text{Pr}_z \int_{D} |\partial_z v|^2  dx  dz = 0.
\end{align*}
Combined with $\int_0^1 v(t,x,\xi)  d\xi = 0$, this implies $v = 0$.
\end{proof}

\section{Proof of Theorem \ref{global-stability}}\label{theorem21}
To prove Theorem~\ref{global-stability}, we first note that systems \eqref{ondimensional-equations-2} and \eqref{primitive-reformulation-p} are mathematically equivalent. This allows us to restrict the stability analysis to the formulation in \eqref{ondimensional-equations-2}.
A central observation is that the three distinct thermal regimes—$T_1 > T_0$, $T_1 = T_0$, and $0 < T_0 - T_1 < T_c$—endow the system \eqref{ondimensional-equations-2} with fundamentally different mathematical structures. Consequently, a unified treatment is not feasible, and the stability analysis must be carried out separately for each case.

\subsection{Case $T_1 > T_0$}
For the case $T_1 > T_1$, Theorem \ref{global-stability} follows from Lemma~\ref{v-l2-l2}--Lemma \ref{lemma-grad-1-8}.

\subsubsection{$L^2$-Estimates and Global Stability}
\begin{lemma}\label{v-l2-l2}
Let $T_1 > T_0$ and consider any smooth solution $(v, \Theta)$ to the system \eqref{ondimensional-equations-2} with initial data $(v_0, \Theta_0) \in L^2(D)$. Then the following uniform-in-time estimates hold:
\begin{align}\label{u21-1-1}
\begin{aligned}
&v \in L^\infty\big( (0,\infty); L^2(D) \big) \cap L^2\big( (0,\infty); H^1(D) \big), \\
&\Theta \in L^\infty\big( (0,\infty); L^2(D) \big) \cap L^2\big( (0,\infty); H^1(D) \big).
\end{aligned}
\end{align}
Moreover, there exists a constant $\lambda > 0$ such that the solution decays exponentially in time:
\begin{align}
\norm{(v(t), \Theta(t))}_{L^2(D)}^2 \leq e^{-2\lambda t} \norm{(v_0, \Theta_0)}_{L^2(D)}^2, \quad \forall t \geq 0.
\end{align}
\end{lemma}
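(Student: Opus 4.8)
The plan is to run the standard energy method directly on the nondimensional system \eqref{ondimensional-equations-2}, testing the momentum equation against $v$ and the temperature equation against $\Theta$ in $L^2(D)$, and then exploiting the favorable sign structure that is available precisely when $T_1 > T_0$. First I would record two structural facts. The effective advecting velocity $\left(v,\,-\int_0^z\partial_x v\,d\xi\right)$ is divergence-free, and the nonlocal field $W := \int_0^z \partial_x v(t,x,\xi)\,d\xi$ satisfies $W|_{z=0}=0$ together with $W|_{z=1} = \partial_x\!\int_0^1 v\,dz = 0$ by the constraint $\int_0^1 v\,dz = 0$. Exactly as in the computation behind \eqref{steady-222}, these facts force the advective contributions $\int_D (v\partial_x v - W\partial_z v)\,v\,dx\,dz$ and $\int_D (v\partial_x \Theta - W\partial_z \Theta)\,\Theta\,dx\,dz$ to vanish, while the pressure term $\int_D \partial_x q\,v\,dx\,dz$ vanishes because $q=q(t,x)$ pairs against $\partial_x\!\int_0^1 v\,dz = 0$. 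Integrating the viscous and diffusive terms by parts then yields the two identities
\begin{align*}
&\tfrac12\tfrac{d}{dt}\norm{v}_{L^2}^2 + \text{Pr}_x\norm{\partial_x v}_{L^2}^2 + \text{Pr}_z\norm{\partial_z v}_{L^2}^2 = -\,\text{R}\int_D \Theta\,W\,dx\,dz,\\
&\tfrac12\tfrac{d}{dt}\norm{\Theta}_{L^2}^2 + \norm{\partial_x \Theta}_{L^2}^2 + \kappa_a\norm{\partial_z \Theta}_{L^2}^2 = -\,\text{Sgn}(T_0-T_1)\,\text{R}\int_D \Theta\,W\,dx\,dz.
\end{align*}

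The decisive point is the sign. For $T_1 > T_0$ we have $\abs{\text{Sgn}(T_0-T_1)} = 1$ and $\text{Sgn}(T_0-T_1) = -1$, so the buoyancy coupling in the $v$-identity and the thermal-forcing coupling in the $\Theta$-identity are exact negatives of one another. Adding the two identities therefore cancels the coupling completely and leaves the closed energy equality
\begin{align*}
\tfrac12\tfrac{d}{dt}\bigl(\norm{v}_{L^2}^2 + \norm{\Theta}_{L^2}^2\bigr) + D(t) = 0, \qquad D(t) := \text{Pr}_x\norm{\partial_x v}_{L^2}^2 + \text{Pr}_z\norm{\partial_z v}_{L^2}^2 + \norm{\partial_x \Theta}_{L^2}^2 + \kappa_a\norm{\partial_z \Theta}_{L^2}^2.
\end{align*}

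To extract exponential decay I would invoke the one-dimensional Poincaré inequality in $z$: since $\Theta$ vanishes at $z=0,1$, and $v$ has zero vertical mean with $v_z|_{z=0,1}=0$, both satisfy $\norm{\partial_z v}_{L^2}^2 \ge \pi^2\norm{v}_{L^2}^2$ and $\norm{\partial_z \Theta}_{L^2}^2 \ge \pi^2\norm{\Theta}_{L^2}^2$. Hence $D(t) \ge \lambda\,E(t)$ with $E := \norm{v}_{L^2}^2 + \norm{\Theta}_{L^2}^2$ and $\lambda := \pi^2\min(\text{Pr}_z, \kappa_a)$, so the closed equality gives $E'(t) \le -2\lambda E(t)$, and Gronwall's inequality yields $E(t) \le E(0)\,e^{-2\lambda t}$, which is the claimed decay. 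Integrating the same equality over $(0,t)$ produces $E(t) + 2\int_0^t D(s)\,ds = E(0)$, from which the uniform bound $v,\Theta \in L^\infty\bigl((0,\infty);L^2(D)\bigr)$ and—since every coefficient in $D$ is positive—the regularity $v,\Theta \in L^2\bigl((0,\infty);H^1(D)\bigr)$ follow immediately.

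Because the coupling cancels identically, no smallness of $\text{R}$ and no spectral or variational argument is required here; this is the benign regime, and there is no genuine analytical obstacle at the $L^2$ level. The only points demanding care are purely structural: confirming $W|_{z=1}=0$, so that the integration-by-parts boundary terms in both the advective and the buoyancy couplings vanish, and checking that the two couplings match with opposite signs. These hinge entirely on the constraint $\int_0^1 v\,dz=0$ and on the sign convention \eqref{parameters-2}. The nonlocal terms, which become the true source of difficulty in the higher-order $H^1$ and $H^2$ estimates, are harmless at this stage precisely because the effective velocity is divergence-free.
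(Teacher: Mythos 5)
Your proposal is correct and takes essentially the same route as the paper's proof of Lemma~\ref{v-l2-l2}: the identical $L^2$ energy identity in which the advective and pressure terms vanish and the buoyancy and thermal-forcing couplings cancel exactly (via $\partial_x v = \partial_z\int_0^z \partial_x v\,d\xi$, the constraint $\int_0^1 v\,dz = 0$, and the sign $\mathrm{Sgn}(T_0-T_1)=-1$), followed by Poincar\'e and Gronwall. Your explicit constant $\lambda = \pi^2\min(\mathrm{Pr}_z,\kappa_a)$ and the check that $W|_{z=1}=0$ are correct refinements of the same argument rather than a different approach.
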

\begin{proof}
For $T_1 > T_0$, multiplying the equation \eqref{ondimensional-equations-2}$_1$ by $v$ and the equation \eqref{ondimensional-equations-2}$_2$ by $\Theta$, then adding the resulting expressions, we obtain
\begin{align}\label{case1-L2-1} 
\begin{aligned}
\frac{d}{dt} \int_{D} \left( v^2 + \Theta^2 \right) dx dz 
= & -2\text{Pr}_x \int_{D} |\partial_x v|^2 dx dz 
   - 2\text{Pr}_z \int_{D} |\partial_z v|^2 dx dz \\
  & - 2 \int_{D} |\partial_x \Theta|^2 dx dz 
   - 2\kappa_a \int_{D} |\partial_z \Theta|^2 dx dz,
\end{aligned}
\end{align}
where we have used the identity
\begin{align*}
\begin{aligned}
&\int_{D} \left( -\partial_x \left( \int_0^z \Theta(t,x,\xi) d\xi \right) - \partial_x q(t,x) \right) v dx dz 
 + \int_{D} \Theta \left( \int_0^z \partial_x v(t,x,\xi) d\xi \right) dx dz \\
&= \int_{D} \left( \int_0^z \Theta(t,x,\xi) d\xi + q \right) \partial_x v dx dz 
 + \int_{D} \Theta \left( \int_0^z \partial_x v(t,x,\xi) d\xi \right) dx dz \\
&= \int_{D} \left( \int_0^z \Theta(t,x,\xi) d\xi + q \right) \partial_z \left( \int_0^z \partial_x v(t,x,\xi) d\xi \right) dx dz \\
&\quad + \int_{D} \Theta \left( \int_0^z \partial_x v(t,x,\xi) d\xi \right) dx dz = 0.
\end{aligned}
\end{align*}

Integrating \eqref{case1-L2-1} over the time interval $[0, t]$ leads to
\begin{align*}
\begin{aligned}
&\norm{v(t)}_{L^2}^2 + \norm{\Theta(t)}_{L^2}^2 + 2\int_0^t \left( \text{Pr}_x \int_{D} |\partial_x v|^2 dx dz + \text{Pr}_z \int_{D} |\partial_z v|^2 dx dz \right) dt' \\
& + 2\int_0^t \left( \int_{D} |\partial_x \Theta|^2 dx dz + \kappa_a \int_{D} |\partial_z \Theta|^2 dx dz \right) dt' 
= \norm{v_0}_{L^2}^2 + \norm{\Theta_0}_{L^2}^2,
\end{aligned}
\end{align*}
which establishes \eqref{u21-1-1}. An application of the Poincaré inequality yields
\begin{align*}
\int_{D} \left( v^2 + \Theta^2 \right) dx dz \leq C \int_{D} \left( |\nabla v|^2 + |\nabla \Theta|^2 \right) dx dz.
\end{align*}
Together with \eqref{case1-L2-1}, this implies the existence of a constant $\lambda > 0$ such that
\begin{align*}
\frac{d}{dt} \int_{D} \left( v^2 + \Theta^2 \right) dx dz \leq -2\lambda \int_{D} \left( v^2 + \Theta^2 \right) dx dz.
\end{align*}
Applying Gronwall's inequality then yields
\[
\norm{v(t)}_{L^2(D)}^2 + \norm{\Theta(t)}_{L^2(D)}^2 \leq e^{-2\lambda t} \left( \norm{v_0}_{L^2(D)}^2 + \norm{\Theta_0}_{L^2(D)}^2 \right).
\]
\end{proof}
\subsubsection{Compatibility Condition}

Setting $\sigma = \partial_z v$, we deduce from the system \eqref{ondimensional-equations-2} that $(\sigma, \Theta)$ satisfies
\begin{align}\label{new-eq-vz-1}
\begin{cases}
\begin{aligned}
\partial_t \sigma = &\ \text{Pr}_x \partial_{xx} \sigma + \text{Pr}_z \partial_{zz} \sigma - \text{R}  \partial_x \Theta \\
& - v \partial_x \sigma + \left( \int_0^z \partial_x v(t,x,\xi)  d\xi \right) \partial_z \sigma,
\end{aligned} \\
\begin{aligned}
\partial_t \Theta = &\ \partial_{xx} \Theta + \kappa_a \partial_{zz} \Theta + \text{R} \left( \int_0^z \partial_x v(t,x,\xi)  d\xi \right) \\
& + v \partial_x \Theta - \left( \int_0^z \partial_x v(t,x,\xi)  d\xi \right) \partial_z \Theta,
\end{aligned} \\
\sigma |_{z=0} = \sigma |_{z=1} = 0, \quad \Theta |_{z=0} = \Theta |_{z=1} = 0, \\
\sigma, \Theta \text{ are periodic in $x$ with period $\alpha$}, \\
\sigma |_{t=0} = \sigma_0.
\end{cases}
\end{align}

Introducing the stream function $\Psi$ via $v = \partial_z \Psi$, where $\Psi$ satisfies
\begin{align}\label{new-eq-111}
\begin{cases}
\partial_{zz} \Psi = \sigma, \\
\Psi |_{z=0,1} = 0,
\end{cases}
\end{align}
we observe that the system \eqref{new-eq-vz-1} is underdetermined. To ensure the existence of a global solution and to identify $\Psi$ as the stream function satisfying
\[
(v, w) = \left( \partial_z \Psi, -\partial_x \Psi \right) = \left( v, -\int_0^z \partial_x v(t,x,\xi)  d\xi \right),
\]
an additional compatibility condition between $\sigma$ and $\Psi$ is required. This condition is given by
\begin{align}\label{compatibility-cond}
\partial_z \Psi |_{z=0} = \int_0^1 \xi \sigma(\cdot, \xi)  d\xi - \int_0^1 \sigma(\cdot, \xi)  d\xi.
\end{align}

Integrating the first equation in \eqref{new-eq-111} with respect to $z$ from $0$ to $z$ yields
\[
\partial_z \Psi = \int_0^z \sigma(\cdot, \xi)  d\xi + C,
\]
where the constant of integration $C$ is determined by the compatibility condition \eqref{compatibility-cond} as
\[
C = \int_0^1 \xi \sigma(\cdot, \xi)  d\xi - \int_0^1 \sigma(\cdot, \xi)  d\xi.
\]
Thus,
\[
\partial_z \Psi = \int_0^z \sigma(\cdot, \xi)  d\xi + \left( \int_0^1 \xi \sigma(\cdot, \xi)  d\xi - \int_0^1 \sigma(\cdot, \xi)  d\xi \right).
\]

Integrating once more with respect to $z$ from $0$ to $z$, we obtain
\begin{align*}
\begin{aligned}
\Psi &= \int_0^z \left( \int_0^y \sigma(\cdot, \xi)  d\xi \right) dy + z \left( \int_0^1 \xi \sigma(\cdot, \xi)  d\xi - \int_0^1 \sigma(\cdot, \xi)  d\xi \right) \\
&= z \int_0^z \sigma(\cdot, \xi)  d\xi - \int_0^z \xi \sigma(\cdot, \xi)  d\xi + z \left( \int_0^1 \xi \sigma(\cdot, \xi)  d\xi - \int_0^1 \sigma(\cdot, \xi)  d\xi \right).
\end{aligned}
\end{align*}
It follows that $\Psi|_{z=1} = 0$, confirming the boundary condition.

Using the relations $v = \partial_z \Psi$ and $\sigma = \partial_z v$, we simplify the expression for $\Psi$ to
\begin{align*}
\Psi = z \int_0^z \sigma(\cdot, \xi)  d\xi - \int_0^z \xi \sigma(\cdot, \xi)  d\xi + z v|_{z=0} = \int_0^z v(\cdot, \xi)  d\xi.
\end{align*}
Consequently,
\[
\partial_x \Psi = \int_0^z \partial_x v(\cdot, \xi)  d\xi = -w,
\]
which verifies that $\Psi$ is indeed the stream function for the velocity field $(v, w)$.

\subsubsection{$H^1$-Estimates and Global Stability}

\begin{lemma}\label{lemma-grad-1-2}
Let $T_1 > T_0$ and consider any smooth solution $(v, \Theta)$ to the system \eqref{ondimensional-equations-2} emanating from initial data $(v_0, \Theta_0) \in L^2(D)$ with $\partial_z v_0 \in L^2(D)$. The following regularity properties hold:
\begin{align}\label{infty-vz-1}
\partial_z v \in L^\infty\big( (0,\infty); L^2(D) \big), \quad
\nabla \partial_z v \in L^2\big( (0,\infty); L^2(D) \big).
\end{align}
Moreover, there exist constants $C > 0$ (independent of the initial data) and $\lambda > 0$ such that the vertical derivative of $v$ decays exponentially in time:
\begin{align}
\norm{\partial_z v(t)}_{L^2(D)}^2 \leq C e^{-2\lambda t} \left( \norm{(v_0, \Theta_0)}_{L^2(D)}^2 + \norm{\partial_z v_0}_{L^2(D)}^2 \right), \quad \forall t \geq 0.
\end{align}
\end{lemma}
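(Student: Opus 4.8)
The plan is to run an $L^2$ energy estimate directly on $\sigma = \partial_z v$ using its evolution equation \eqref{new-eq-vz-1}, and to treat the buoyancy coupling as an exponentially decaying source by feeding in the $L^2$-decay of $\Theta$ already established in Lemma~\ref{v-l2-l2}. Concretely, I would multiply the $\sigma$-equation in \eqref{new-eq-vz-1} by $\sigma$ and integrate over $D$. The two advection contributions $-v\partial_x\sigma + \left(\int_0^z\partial_x v\,d\xi\right)\partial_z\sigma$ combine into $-(u\cdot\nabla)\sigma$ for the divergence-free velocity $u=\left(v,\,-\int_0^z\partial_x v\,d\xi\right)$; since the vertical component vanishes on $z=0,1$ (using $\int_0^1 v\,dz=0$) and all fields are periodic in $x$, integration by parts gives $\int_D (u\cdot\nabla)\sigma\,\sigma\,dx\,dz=0$. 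The diffusion terms produce the full dissipation $-\text{Pr}_x\|\partial_x\sigma\|_{L^2}^2-\text{Pr}_z\|\partial_z\sigma\|_{L^2}^2$ with no boundary terms, because $\sigma|_{z=0,1}=0$. Note that the clean transport structure of \eqref{new-eq-vz-1} is what makes this work: differentiating the original advection in $z$ cancels the would-be awkward terms $\sigma\,\partial_x v$, and the pressure term $-\partial_x q$ drops out since it is $z$-independent.

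The only surviving term is the buoyancy coupling $-\text{R}\int_D \partial_x\Theta\,\sigma\,dx\,dz$. Integrating by parts in $x$ and applying Young's inequality yields
\begin{align*}
-\text{R}\int_D \partial_x\Theta\,\sigma\,dx\,dz
= \text{R}\int_D \Theta\,\partial_x\sigma\,dx\,dz
\le \frac{\text{Pr}_x}{2}\|\partial_x\sigma\|_{L^2}^2 + \frac{\text{R}^2}{2\,\text{Pr}_x}\|\Theta\|_{L^2}^2,
\end{align*}
so the $\partial_x\sigma$ piece is absorbed into the horizontal dissipation, leaving a forcing proportional to $\|\Theta\|_{L^2}^2$. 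Because $\sigma$ vanishes on $z=0,1$, the Poincaré inequality $\|\sigma\|_{L^2}^2 \le C\|\partial_z\sigma\|_{L^2}^2$ converts the vertical dissipation into a coercive term, giving a differential inequality of the form
\begin{align*}
\frac{d}{dt}\|\sigma\|_{L^2}^2 + c\,\|\nabla\sigma\|_{L^2}^2
\le -2\lambda'\|\sigma\|_{L^2}^2 + \frac{\text{R}^2}{\text{Pr}_x}\|\Theta\|_{L^2}^2
\end{align*}
for some $\lambda',c>0$.

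Finally, invoking Lemma~\ref{v-l2-l2} to bound $\|\Theta(t)\|_{L^2}^2 \le e^{-2\lambda t}\|(v_0,\Theta_0)\|_{L^2}^2$ reduces the estimate to $\frac{d}{dt}\|\sigma\|_{L^2}^2 \le -2\lambda'\|\sigma\|_{L^2}^2 + C e^{-2\lambda t}$. An integrating-factor/Gronwall argument then delivers $\|\sigma(t)\|_{L^2}^2 \le C e^{-2\min(\lambda,\lambda')t}\big(\|(v_0,\Theta_0)\|_{L^2}^2 + \|\partial_z v_0\|_{L^2}^2\big)$, after relabelling the decay rate as the stated $\lambda$; integrating the differential inequality over $(0,\infty)$ simultaneously yields the uniform bound $\partial_z v\in L^\infty((0,\infty);L^2(D))$ and the space-time bound $\nabla\partial_z v\in L^2((0,\infty);L^2(D))$, which is precisely \eqref{infty-vz-1}. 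The one genuine obstacle is that the $\sigma$-estimate is not self-contained owing to the buoyancy coupling $-\text{R}\partial_x\Theta$; the resolution is exactly the two-step mechanism highlighted in the paper, namely leveraging the previously established exponential decay of $\|\Theta\|_{L^2}$ so that the coupling acts only as an exponentially small forcing.
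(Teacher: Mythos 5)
Your proposal is correct and follows essentially the same route as the paper: test the $\sigma=\partial_z v$ equation \eqref{three-proof--8} with $\sigma$, use the cancellation of the transport terms and the boundary condition $\sigma|_{z=0,1}=0$, absorb the buoyancy term $\mathrm{R}\int_D\Theta\,\partial_x\sigma$ into the dissipation via Young and Poincar\'e, and close with the exponential decay of $\norm{\Theta}_{L^2}$ from Lemma~\ref{v-l2-l2} together with the generalized Gronwall inequality (Lemma~\ref{general-inequality1}). Your additional remarks—the exact cancellation of the $\sigma\,\partial_x v$ terms upon differentiating in $z$, the disappearance of $\partial_x q$, and the minor relabelling of the rate when $\lambda'=\lambda$—are all accurate and consistent with the paper's argument.
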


\begin{proof}
Set $T_1 > T_0$ in \eqref{ondimensional-equations-2}. Observe that $\sigma = \partial_z v$ satisfies the initial-boundary value problem
\begin{align}\label{three-proof--8}
\begin{cases}
\begin{aligned}
\partial_t \sigma = &\ \text{Pr}_x \partial_{xx} \sigma + \text{Pr}_z \partial_{zz} \sigma - \text{R}  \partial_x \Theta \\
& - v \partial_x \sigma + \left( \int_0^z \partial_x v(t,x,\xi)  d\xi \right) \partial_z \sigma,
\end{aligned} \\
\sigma |_{z=0} = 0, \quad \sigma |_{z=1} = 0, \\
\sigma |_{t=0} = \partial_z v_0.
\end{cases}
\end{align}
Multiplying the equation \eqref{three-proof--8} by $\sigma$ and integrating by parts over $D$, we obtain
\begin{align}\label{three-proof--9-1}
\begin{aligned}
\frac{1}{2} \frac{d}{dt} \int_D |\sigma|^2 dx dz 
&= -\text{Pr}_x \int_D |\partial_x \sigma|^2 dx dz - \text{Pr}_z \int_D |\partial_z \sigma|^2 dx dz + \text{R} \int_D \Theta  \partial_x \sigma  dx dz \\
&\leq - \frac{\lambda}{2} \left( \int_D |\partial_x \sigma|^2 dx dz + \int_D |\partial_z \sigma|^2 dx dz \right) + C \int_D \Theta^2 dx dz \\
&\leq - \frac{\lambda}{2} \int_D |\sigma|^2 dx dz + C \int_D \Theta^2 dx dz,
\end{aligned}
\end{align}
where the last inequality follows from the Poincaré inequality.

From this, Lemma~\ref{v-l2-l2}, and \eqref{general-inequality1}, it follows that there exists $\lambda > 0$ such that
\begin{align*}
\norm{\sigma(t)}_{L^2(D)}^2 \leq C e^{-\lambda t} \left( \norm{v_0}_{L^2(D)}^2 + \norm{\Theta_0}_{L^2(D)}^2 + \norm{\partial_z v_0}_{L^2(D)}^2 \right).
\end{align*}
Finally, integrating \eqref{three-proof--9-1} in time yields the regularity properties stated in \eqref{infty-vz-1}.
\end{proof}

\begin{lemma}\label{lemma-grad-1-3}
Let $T_1 > T_0$ and consider any smooth solution $(v, \Theta)$ to the system \eqref{ondimensional-equations-2} emanating from initial data $(v_0, \Theta_0) \in L^2(D)$ with $\nabla v_0 \in L^2(D)$. Then the horizontal derivative of $v$ satisfies the following regularity estimates:
\begin{align}\label{infty}
\partial_x v \in L^\infty\big( (0,\infty); L^2(D) \big) \cap L^2\big( (0,\infty); H^1(D) \big).
\end{align}
\end{lemma}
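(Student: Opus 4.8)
The plan is to establish both bounds in \eqref{infty} through a single first-order energy estimate for $\partial_x v$, exploiting the structural cancellations of the primitive equations together with the decay and integrability already supplied by Lemma~\ref{v-l2-l2} and Lemma~\ref{lemma-grad-1-2}. First I would differentiate the $v$-equation in \eqref{ondimensional-equations-2} with respect to $x$, multiply by $\partial_x v$, and integrate over $D$. This yields $\tfrac{1}{2}\tfrac{d}{dt}\|\partial_x v\|_{L^2}^2$ together with the dissipation $\text{Pr}_x\|\partial_{xx}v\|_{L^2}^2 + \text{Pr}_z\|\partial_z\partial_x v\|_{L^2}^2$. Two cancellations are immediate. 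The pressure contribution $\int_D \partial_{xx}q\,\partial_x v$ vanishes because $q=q(t,x)$ and $\int_0^1\partial_x v\,dz = \partial_x\int_0^1 v\,dz = 0$. Moreover, writing the advective terms as $v\partial_x v + w\partial_z v$ with $w=-\int_0^z\partial_x v\,d\xi$, the transport piece $\int_D\bigl(v\partial_x(\partial_x v)+w\partial_z(\partial_x v)\bigr)\partial_x v$ vanishes by the incompressibility relation $\partial_x v+\partial_z w=0$ together with $w|_{z=0,1}=0$ and $x$-periodicity.

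After these cancellations, three contributions survive: the buoyancy term $\text{R}\int_D\bigl(\int_0^z\partial_x\Theta\,d\xi\bigr)\partial_{xx}v$ (after one integration by parts in $x$), the cubic self-interaction $\int_D(\partial_x v)^3$, and the nonlocal term $\int_D(\partial_x w)(\partial_z v)(\partial_x v)$ with $\partial_x w=-\int_0^z\partial_{xx}v\,d\xi$. The central idea is to show that, after Young's inequality, each surviving term is either absorbed into the dissipation or produces a \emph{time-integrable} coefficient. The buoyancy term is controlled by $\epsilon\|\partial_{xx}v\|_{L^2}^2 + C\|\partial_x\Theta\|_{L^2}^2$, and $\|\partial_x\Theta\|_{L^2}^2\in L^1(0,\infty)$ since $\Theta\in L^2((0,\infty);H^1)$ by Lemma~\ref{v-l2-l2}. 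The decisive tool for the nonlinear terms is the Poincaré inequality in $z$: because $\int_0^1\partial_x v\,dz=0$, one has $\|\partial_x v\|_{L^2(D)}\le C\|\partial_z\partial_x v\|_{L^2(D)}$, and since $\nabla\partial_z v\in L^2((0,\infty);L^2)$ by Lemma~\ref{lemma-grad-1-2}, this already gives $\|\partial_x v\|_{L^2}^2\in L^1(0,\infty)$.

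For the cubic term I would use the two-dimensional estimate $\|\partial_x v\|_{L^3}^3\le C\|\partial_x v\|_{L^2}^2\|\nabla\partial_x v\|_{L^2}$, and then Young's inequality and the Poincaré bound above to write $\int_D(\partial_x v)^3\le\epsilon\|\nabla\partial_x v\|_{L^2}^2+C\|\partial_z\partial_x v\|_{L^2}^2\,\|\partial_x v\|_{L^2}^2$, where the coefficient $\|\partial_z\partial_x v\|_{L^2}^2$ multiplying $\|\partial_x v\|_{L^2}^2$ is integrable in time. For the nonlocal term I would bound $\|\int_0^z\partial_{xx}v\,d\xi\|_{L^2(D)}\le\|\partial_{xx}v\|_{L^2(D)}$ and apply Ladyzhenskaya's inequality to $\|\partial_z v\|_{L^4}\|\partial_x v\|_{L^4}$, followed by Young's inequality to split off $\epsilon\|\nabla\partial_x v\|_{L^2}^2$ and leave a factor $\|\partial_z v\|_{L^2}^2\|\nabla\partial_z v\|_{L^2}^2\|\partial_x v\|_{L^2}^2$; since $\|\partial_z v\|_{L^2}$ is bounded and $\|\nabla\partial_z v\|_{L^2}^2\in L^1(0,\infty)$ by Lemma~\ref{lemma-grad-1-2}, this again has the form $a(t)\|\partial_x v\|_{L^2}^2$ with $a\in L^1(0,\infty)$.

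Collecting these estimates yields a differential inequality
\[
\frac{d}{dt}\|\partial_x v\|_{L^2}^2 + c_0\|\nabla\partial_x v\|_{L^2}^2 \le a(t)\,\|\partial_x v\|_{L^2}^2 + g(t),\qquad a,\,g\in L^1(0,\infty).
\]
Gronwall's inequality then gives $\partial_x v\in L^\infty((0,\infty);L^2(D))$, and integrating in time controls $\int_0^\infty\|\nabla\partial_x v\|_{L^2}^2\,dt$; combined with $\|\partial_x v\|_{L^2}^2\in L^1(0,\infty)$ from the zero-mean Poincaré bound, this delivers $\partial_x v\in L^2((0,\infty);H^1(D))$, completing \eqref{infty}. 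I expect the main obstacle to be precisely the nonlinear terms: naive estimates produce only \emph{bounded} coefficients, so that global boundedness without any smallness hypothesis fails; the proof hinges on converting them into $L^1(0,\infty)$-coefficients by systematically trading one factor of $\|\partial_x v\|_{L^2}$ against the zero-mean Poincaré inequality and invoking the time-integrability of the second vertical gradients furnished by Lemma~\ref{lemma-grad-1-2}.
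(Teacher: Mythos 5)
Your proof is correct, and at the structural level it matches the paper's: both derive a single differential inequality $\frac{d}{dt}\norm{\partial_x v}_{L^2}^2 + c_0\left(\norm{\partial_{xx}v}_{L^2}^2+\norm{\partial_{xz}v}_{L^2}^2\right) \le \phi(t)\norm{\partial_x v}_{L^2}^2 + h(t)$ with $\phi, h \in L^1(0,\infty)$ — the integrability coming exactly from Lemma~\ref{v-l2-l2} ($v,\Theta \in L^2\big((0,\infty);H^1\big)$) and Lemma~\ref{lemma-grad-1-2} ($\nabla\partial_z v \in L^2\big((0,\infty);L^2\big)$, $\partial_z v \in L^\infty\big((0,\infty);L^2\big)$) — and close with Gronwall. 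Where you genuinely diverge is the machinery for the nonlinearities. The paper tests the undifferentiated equation against $\partial_{xx}v$ and estimates $\norm{v\partial_x v}_{L^2}$ and $\norm{\left(\int_0^z\partial_x v\,d\xi\right)\partial_z v}_{L^2}$ via the anisotropic mixed-norm inequalities of Lemma~\ref{mixed-norm}--Lemma~\ref{mixed-norm-1}, arriving at $\phi(t)=C\norm{\partial_z v}_{L^2}^2\left(\norm{v}_{L^2}^2+\norm{\partial_{zz}v}_{L^2}^2\right)$. You differentiate in $x$ first, cancel the transport part by incompressibility, and treat the surviving terms $\int_D(\partial_x v)^3$ and $\int_D\left(\int_0^z\partial_{xx}v\,d\xi\right)\partial_z v\,\partial_x v$ with isotropic two-dimensional Gagliardo--Nirenberg/Ladyzhenskaya bounds plus the zero-vertical-mean Poincar\'e inequality $\norm{\partial_x v}_{L^2}\le C\norm{\partial_{xz}v}_{L^2}$. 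This works: by moving the extra $x$-derivative inside the nonlocal integral so that $\norm{\int_0^z\partial_{xx}v\,d\xi}_{L^2}\le\norm{\partial_{xx}v}_{L^2}$, only a product of first derivatives remains, which is precisely the situation where Ladyzhenskaya \emph{does} apply — so for this lemma you legitimately sidestep the obstruction the introduction flags; note also that your absorption of $\epsilon\norm{\nabla\partial_x v}_{L^2}^2$ succeeds only because the $v$-equation carries both $\mathrm{Pr}_x\partial_{xx}$ and $\mathrm{Pr}_z\partial_{zz}$ dissipation, so both mixed second derivatives sit on the left. In trade-offs, your route is more elementary and self-contained, while the paper's mixed norms are tailored to the anisotropy and are reused verbatim in all the later $H^1$ and $H^2$ lemmas. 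Two cosmetic points: in the Ladyzhenskaya step the coefficient is really $C\norm{\partial_z v}_{L^2}^2\norm{\partial_z v}_{H^1}^2$ (the lower-order piece of the $H^1$ norm cannot be dropped, since $\partial_z v$ has no zero mean), but this is harmless because $\norm{\partial_z v}_{L^2}^2\in L^1(0,\infty)$ by Lemma~\ref{v-l2-l2}; and $\norm{\partial_x v}_{L^2}^2\in L^1(0,\infty)$ already follows from Lemma~\ref{v-l2-l2} without your Poincar\'e detour, though the detour is equally valid.
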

\begin{proof}
For the case $T_1 > T_0$ in the system \eqref{ondimensional-equations-2}, we multiply its first equation by $\partial_{xx} v$ and integrate over the domain, which yields
\begin{align}\label{estimatevx-infinity}
\begin{aligned}
&\frac{d}{dt}\norm{\partial_x v}_{L^2}^2
+2\text{Pr}_x \int_{D}\abs{\partial_{xx}v}^2
+2\text{Pr}_z \int_{D}\abs{\partial_{xz}v}^2
\\&=2\int_{D}
\left(v \partial_x v-\left(\int_0^z\partial_xv(t,x,\xi)\,d\xi \right)\partial_z v
\right)\partial_{xx} v\,dx\,dz
\\&\quad+2\int_{D}
\left(\text{R}
\partial_x \left(
\int_0^z\Theta(t,x,\xi)\,d\xi\right)+\partial_x q(t,x)
\right)\partial_z
\left(\int_0^z\partial_{xx}v(t,x,\xi)\,d\xi \right)
\,dx\,dz
\\&=2\int_{D}
\left(v \partial_x v-\left(\int_0^z\partial_xv(t,x,\xi)\,d\xi \right)\partial_z v
\right)\partial_{xx} v\,dx\,dz
\\&\quad-2\text{R}\int_{D}
\partial_x \Theta
\left(\int_0^z\partial_{xx}v(t,x,\xi)\,d\xi \right)
\,dx\,dz=:H_1+H_2+H_3.
\end{aligned}
\end{align}

We now estimate the terms $H_j$ for $j = 1, 2, 3$. For $H_1$, 
with the help of the Cauchy–Schwarz inequality, we obtain
\begin{align}\label{estimateh1-1}
\begin{aligned}
H_1 &= 2 \int_D (v \partial_x v) \partial_{xx} v  dx dz \leq 2 \norm{\partial_{xx} v}_{L^2} \norm{v \partial_x v}_{L^2}.
%&\leq \epsilon \norm{\partial_{xx} v}_{L^2}^2 + C_\epsilon \norm{v \partial_x v}_{L^2}^2.
\end{aligned}
\end{align}
To control the nonlinear term in \eqref{estimateh1}, we employ the following inequalities
\begin{align}\label{ut-proof-333-}
\begin{aligned}
&\norm{v \partial_x v}_{L^2}^2 \leq \int_0^\alpha \left( \norm{|v|^2}_{L_z^\infty} \int_0^1 (\partial_x v)^2 dz \right) dx 
\leq \norm{\partial_x v}_{L_x^\infty L_z^2}^2 \norm{v}_{L_x^2 L_z^\infty}^2.
\end{aligned}
\end{align}
Furthermore, based on Lemma \ref{mixed-norm}-
 Lemma \ref{mixed-norm-1}, the following bounds hold:
\begin{align}\label{ut-proof-333}
\begin{aligned}
&\norm{\partial_x v}_{L_x^\infty L_z^2}^2 \leq 2 \int_0^1 \left( \int_0^\alpha |\partial_x v \partial_{xx} v| dx \right) dz 
\leq 2 \norm{\partial_x v}_{L^2} \norm{\partial_{xx} v}_{L^2}, \\
&\norm{v}_{L_x^2 L_z^\infty}^2 \leq 2 \int_0^\alpha \left( \int_0^1 |v \partial_z v| dz \right) dx 
\leq 2 \norm{\partial_z v}_{L^2} \norm{v}_{L^2}.
\end{aligned}
\end{align}
Substituting these into the previous estimate \eqref{estimateh1-1} 
and applying Young’s inequality yields
\begin{align}\label{estimateh1}
\begin{aligned}
H_1 \leq \epsilon \norm{\partial_{xx} v}_{L^2}^2 + C_\epsilon \norm{v}_{L^2}^2 \norm{\partial_z v}_{L^2}^2 \norm{\partial_x v}_{L^2}^2,
\end{aligned}
\end{align}

We now turn to the estimate for $H_2$. Recall that
\[
H_2 = -2 \int_D \left( \int_0^z \partial_x v(t,x,\xi)  d\xi \right) \partial_z v  \partial_{xx} v  dx dz.
\]
Applying the Cauchy–Schwarz inequality and Young’s inequality, we obtain
\begin{align}\label{estimateh2}
\begin{aligned}
H_2 &\leq 2 \norm{\partial_{xx} v}_{L^2} \cdot \norm{ \left( \int_0^z \partial_x v(t,x,\xi)  d\xi \right) \partial_z v }_{L^2} \\
&\leq \epsilon \norm{\partial_{xx} v}_{L^2}^2 + C_\epsilon \norm{ \left( \int_0^z \partial_x v(t,x,\xi)  d\xi \right) \partial_z v }_{L^2}^2.
\end{aligned}
\end{align}
To control the nonlinear term, we note that
\begin{align*}
\begin{aligned}
\norm{ \left( \int_0^z \partial_x v(t,x,\xi)  d\xi \right) \partial_z v }_{L^2}^2  &\leq \int_0^1 \left( \norm{ \int_0^z \partial_x v(\cdot,\xi)  d\xi }_{L_x^\infty}^2 \int_0^\alpha |\partial_z v|^2 dx \right) dz \\
&\leq \left( \int_0^1 \norm{ \int_0^z \partial_x v(\cdot,\xi)  d\xi }_{L_x^\infty}^2 dz \right) \norm{\partial_z v}_{L_z^\infty L_x^2}^2 \\
&\leq \norm{\partial_x v}_{L_z^2 L_x^\infty}^2 \cdot \norm{\partial_z v}_{L_z^\infty L_x^2}^2.
\end{aligned}
\end{align*}
Lemma \ref{mixed-norm}-
 Lemma \ref{mixed-norm-1} means that the following bounds hold:
\begin{align*}
\begin{aligned}
&\norm{\partial_x v}_{L_z^2 L_x^\infty}^2 \leq 2 \norm{\partial_x v}_{L^2} \norm{\partial_{xx} v}_{L^2}, \\
&\norm{\partial_z v}_{L_z^\infty L_x^2}^2 \leq 2 \norm{\partial_z v}_{L^2} \norm{\partial_{zz} v}_{L^2}.
\end{aligned}
\end{align*}
Substituting these into the previous estimate \eqref{estimateh2} yields
\[
\norm{ \left( \int_0^z \partial_x v(t,x,\xi)  d\xi \right) \partial_z v }_{L^2}^2 \leq 4 \norm{\partial_x v}_{L^2} \norm{\partial_{xx} v}_{L^2} \norm{\partial_z v}_{L^2} \norm{\partial_{zz} v}_{L^2}.
\]
Inserting this into \eqref{estimateh2} and applying Young's inequality, we conclude that
\begin{align*}
H_2 &\leq \epsilon \norm{\partial_{xx} v}_{L^2}^2 + C_\epsilon \norm{\partial_x v}_{L^2} \norm{\partial_{xx} v}_{L^2} \norm{\partial_z v}_{L^2} \norm{\partial_{zz} v}_{L^2} \\
&\leq \epsilon \norm{\partial_{xx} v}_{L^2}^2 + C_\epsilon \norm{\partial_z v}_{L^2}^2 \norm{\partial_{zz} v}_{L^2}^2 \norm{\partial_x v}_{L^2}^2.
\end{align*}

We now estimate the term $H_3$, defined by
\[
H_3 = \text{R} \int_D \partial_x \Theta \left( \int_0^z \partial_{xx} v(t,x,\xi)  d\xi \right) dx dz.
\]
Applying the Cauchy–Schwarz inequality, we obtain
\begin{align}\label{estimateh3-1}
\begin{aligned}
H_3 &\leq \text{R} \cdot \norm{\partial_x \Theta}_{L^2} \cdot \norm{ \int_0^z \partial_{xx} v(t,x,\xi)  d\xi }_{L^2}.
\end{aligned}
\end{align}
Note that by Cauchy–Schwarz inequality in the vertical direction, we have
\[
\norm{ \int_0^z \partial_{xx} v(t,x,\xi)  d\xi }_{L^2} \leq C \norm{\partial_{xx} v}_{L^2}.
\]
Substituting this bound into \eqref{estimateh3-1} and applying Young's inequality yields
\[
H_3 \leq \epsilon \norm{\partial_{xx} v}_{L^2}^2 + C_\epsilon \norm{\partial_x \Theta}_{L^2}^2.
\]

By selecting $\epsilon$ sufficiently small and combining the estimates for $H_1$, $H_2$, and $H_3$, we arrive at the following differential inequality:
\begin{align*}
\begin{aligned}
\frac{d}{dt} \norm{\partial_x v}_{L^2}^2 
&+ \text{Pr}_x \int_D |\partial_{xx} v|^2  dx  dz 
+ \text{Pr}_z \int_D |\partial_{xz} v|^2  dx  dz \\
&\leq \phi(t) \norm{\partial_x v}_{L^2}^2 + h(t),
\end{aligned}
\end{align*}
where the functions $h(t)$ and $\phi(t)$ are defined as
\begin{align*}
h(t) &= C \norm{\partial_x \Theta}_{L^2}^2, \\
\phi(t) &= C \norm{\partial_z v}_{L^2}^2 \left( \norm{v}_{L^2}^2 + \norm{\partial_{zz} v}_{L^2}^2 \right).
\end{align*}

Integrating the preceding differential inequality with respect to time from $0$ to $t$, we obtain
\begin{align}\label{onexxx}
\begin{aligned}
&\norm{\partial_x v(t)}_{L^2}^2
+ \int_0^t \left( \text{Pr}_x \int_D |\partial_{xx} v|^2 + \text{Pr}_z \int_D |\partial_{xz} v|^2 \right) dt' \\
&\leq \int_0^t \phi(t') \norm{\partial_x v(t')}_{L^2}^2 dt' 
+ \int_0^t h(t') dt' + \norm{\partial_x v_0}_{L^2}^2.
\end{aligned}
\end{align}
This implies the integral inequality
\begin{align*}
\begin{aligned}
\norm{\partial_x v(t)}_{L^2}^2 \leq \int_0^t \phi(t') \norm{\partial_x v(t')}_{L^2}^2 dt' + \psi(t),
\end{aligned}
\end{align*}
where $\phi(t)$ and $\psi(t)$ satisfy
\[
\begin{aligned}
&\int_0^\infty \phi(t') dt' < C \int_0^\infty \norm{\nabla \partial_z v}_{L^2}^2 dt' < \infty, \\
&\psi(t) := \int_0^t h(t') dt' + \norm{\partial_x v_0}_{L^2}^2 
\\&\leq C \left( \norm{v_0}_{L^2(D)}^2 + \norm{\Theta_0}_{L^2(D)}^2 + \norm{\partial_z v_0}_{L^2}^2 \right).
\end{aligned}
\]
Applying Gronwall's inequality yields
\begin{align*}
\begin{aligned}
\norm{\partial_x v(t)}_{L^2}^2 
&\leq \psi(t) + \int_0^t \phi(s) \psi(s) \exp\left( \int_s^t \phi(t') dt' \right) ds \\
&\leq C \left( \norm{v_0}_{L^2(D)}^2 + \norm{\Theta_0}_{L^2(D)}^2 + \norm{\partial_x v_0}_{L^2}^2 \right) \\&\quad
\times \exp\left( C \left( \norm{v_0}_{L^2(D)}^2 + \norm{\Theta_0}_{L^2(D)}^2 + \norm{\partial_z v_0}_{L^2}^2 \right) \right).
\end{aligned}
\end{align*}

Finally, returning to \eqref{onexxx}, we get 
the following uniform-in-time estimates:
\[
\int_0^t \left( \text{Pr}_x \int_D |\partial_{xx} v|^2 + \text{Pr}_z \int_D |\partial_{xz} v|^2 \right) dt'  <C_0< \infty,
\]
where the constant $C_0$ depends on $\norm{v_0}_{H^1(D)}$ and $\norm{\Theta_0}_{L^2(D)}$.
\end{proof}

\begin{lemma}\label{lemma-grad-1-4}
Let $T_1 > T_0$ and consider any smooth solution $(v, \Theta)$ to the system \eqref{ondimensional-equations-2} with initial data $(v_0, \Theta_0) \in H^1(D)$. Then the vertical derivative of temperature satisfies the regularity estimate
\begin{align}\label{infty-thetaz-1}
\partial_z \Theta \in L^\infty\big( (0,\infty); L^2(D) \big) \cap L^2\big( (0,\infty); H^1(D) \big).
\end{align}
Moreover, there exists a constant $\lambda > 0$ such that
\begin{align}\label{decaythetaz-1}
\norm{\partial_z \Theta(t)}_{L^2(D)}^2 \leq C C_0 e^{-2\lambda t} \norm{(v_0, \Theta_0)}_{H^1(D)}^2, \quad \forall t \geq 0,
\end{align}
where the constant $C_0$ depends on $\norm{v_0}_{H^1(D)}$ and $\norm{\Theta_0}_{L^2(D)}$.
\end{lemma}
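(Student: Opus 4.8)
The plan is to derive a closed differential inequality for $\|\partial_z\Theta\|_{L^2}^2$ and then invoke the Gronwall-type estimate \eqref{general-inequality1} together with the space--time bounds from Lemmas \ref{v-l2-l2}--\ref{lemma-grad-1-3}. Writing $\eta=\partial_z\Theta$ and $W=\int_0^z\partial_x v(t,x,\xi)\,d\xi$, I would first differentiate the temperature equation \eqref{ondimensional-equations-2}$_2$ (with $T_1>T_0$) in $z$ to obtain
\[
\partial_t\eta=\partial_{xx}\eta+\kappa_a\partial_{zz}\eta+\text{R}\,\partial_x v-\partial_z v\,\partial_x\Theta-v\,\partial_x\eta+\partial_x v\,\eta+W\,\partial_z\eta .
\]
A preliminary but essential step is to record the boundary behaviour: since $\Theta\equiv0$ on $\{z=0,1\}$ one has $\partial_x\Theta=\partial_{xx}\Theta=0$ there, and evaluating the temperature equation on the boundary (using $W|_{z=0}=0$ and $W|_{z=1}=0$, the latter from $\int_0^1 v\,dz=0$) forces $\partial_{zz}\Theta|_{z=0,1}=0$, i.e. $\partial_z\eta|_{z=0,1}=0$. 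This Neumann-type condition, together with $W|_{z=0,1}=0$, guarantees that all boundary contributions vanish under integration by parts.

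Testing the $\eta$-equation against $\eta$ then yields
\[
\tfrac12\tfrac{d}{dt}\|\eta\|_{L^2}^2+\|\partial_x\eta\|_{L^2}^2+\kappa_a\|\partial_z\eta\|_{L^2}^2=\text{R}\int_D\partial_x v\,\eta-\int_D\partial_z v\,\partial_x\Theta\,\eta+\int_D\partial_x v\,\eta^2,
\]
where the transport terms $-v\,\partial_x\eta$ and $W\,\partial_z\eta$ have been integrated by parts (in $x$ and $z$ respectively) and combined with $\partial_x v\,\eta$ into the single cubic term $\int_D\partial_x v\,\eta^2$. The delicate point is the forcing $\text{R}\int_D\partial_x v\,\eta$: at this stage $\partial_x v$ is only known to be bounded (Lemma \ref{lemma-grad-1-3}), not decaying, so a direct bound fails. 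I would integrate by parts, using $\partial_x\Theta|_{z=0,1}=0$, to rewrite it as $\text{R}\int_D\partial_z v\,\partial_x\Theta$, trading the non-decaying $\partial_x v$ for the exponentially decaying $\partial_z v$ supplied by Lemma \ref{lemma-grad-1-2}; Young's inequality together with the Poincaré bound $\|\partial_x\Theta\|_{L^2}\le C\|\partial_x\eta\|_{L^2}$ (valid since $\partial_x\Theta$ vanishes at $z=0,1$) then absorbs the $\partial_x\eta$ factor into the dissipation and leaves a genuinely decaying forcing $h(t)\le C\|\partial_z v\|_{L^2}^2\le Ce^{-2\lambda t}$.

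For the two cubic terms I would apply the anisotropic mixed-norm inequalities of Lemmas \ref{mixed-norm}--\ref{mixed-norm-1}: for instance $\int_D\partial_x v\,\eta^2\le\|\partial_x v\|_{L^\infty_x L^2_z}\|\eta\|_{L^2_x L^\infty_z}^2\le\big(2\|\partial_x v\|_{L^2}\|\partial_{xx}v\|_{L^2}\big)^{1/2}\,2\|\eta\|_{L^2}\|\partial_z\eta\|_{L^2}$, with an analogous splitting for $\int_D\partial_z v\,\partial_x\Theta\,\eta$. After Young's inequality the $\|\partial_z\eta\|_{L^2}^2$ and $\|\partial_x\eta\|_{L^2}^2$ factors are absorbed into the dissipation, leaving a coefficient $\phi(t)$ on $\|\eta\|_{L^2}^2$ of the form $\phi(t)=C\|\partial_x v\|_{L^2}\|\partial_{xx}v\|_{L^2}+(\text{similar terms})$. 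The key observation is that $\phi\in L^1(0,\infty)$, which follows from Cauchy--Schwarz and the space--time bounds $\int_0^\infty\|\partial_{xx}v\|_{L^2}^2\,dt<\infty$, $\int_0^\infty\|\partial_x v\|_{L^2}^2\,dt<\infty$ (Lemma \ref{lemma-grad-1-3}) and $\int_0^\infty\|\partial_{zz}v\|_{L^2}^2\,dt<\infty$ (Lemma \ref{lemma-grad-1-2}). This delivers the closed inequality $\frac{d}{dt}\|\eta\|_{L^2}^2+c\|\nabla\eta\|_{L^2}^2\le\phi(t)\|\eta\|_{L^2}^2+h(t)$ with $\int_0^\infty\phi\,dt<\infty$ and $h(t)\le Ce^{-2\lambda t}$; invoking the Poincaré inequality $\|\nabla\eta\|_{L^2}^2\ge c_1\|\eta\|_{L^2}^2$ (legitimate since $\eta$ has zero mean in both $x$ and $z$) and the comparison lemma \eqref{general-inequality1} then gives the exponential decay \eqref{decaythetaz-1}, while integrating the inequality in time yields $\nabla\eta\in L^2((0,\infty);L^2)$, i.e. \eqref{infty-thetaz-1}.

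I expect the main obstacle to be the coupling forcing $\text{R}\int_D\partial_x v\,\eta$: because $\partial_x v$ is not yet known to decay at this point in the chain of lemmas, exponential decay of $\partial_z\Theta$ cannot be read off directly, and the whole estimate hinges on the integration-by-parts identity $\int_D\partial_x v\,\partial_z\Theta=\int_D\partial_z v\,\partial_x\Theta$ that converts it into a decaying contribution. A secondary difficulty is ensuring the cubic terms contribute only a \emph{time-integrable} coefficient rather than a merely bounded one; this is precisely where the anisotropic mixed norms (in place of Ladyzhenskaya's inequality) and the previously established space--time $L^2$ bounds on the second-order derivatives of $v$ are indispensable.
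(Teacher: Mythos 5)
Your proposal is correct, and it is a genuine variant of the paper's argument rather than a reproduction of it. The paper never differentiates the temperature equation: it multiplies \eqref{ondimensional-equations-2}$_2$ directly by $\partial_{zz}\Theta$, so only the Dirichlet data $\Theta|_{z=0,1}=0$ is needed to kill boundary terms; the linear coupling $2\mathrm{R}\int_D \partial_z\Theta\,\partial_x v$ is integrated by parts in $x$ to become $-2\mathrm{R}\int_D v\,\partial_{xz}\Theta$ and traded against the decaying $\norm{v}_{L^2}$ from Lemma~\ref{v-l2-l2}, and the cubic term $\int_D \partial_x v\,(\partial_z\Theta)^2$ is likewise rewritten as $-2\int_D v\,\partial_z\Theta\,\partial_{xz}\Theta$, so that \emph{every} term in the resulting forcing $g(t)$ in \eqref{thetazt} carries an exponentially decaying factor ($\norm{v}_{L^2}$ or $\norm{v}_{L^2}\norm{\partial_z v}_{L^2}$) and Lemma~\ref{general-inequality1} applies verbatim. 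You instead differentiate in $z$, which obliges you to establish the compatibility condition $\partial_{zz}\Theta|_{z=0,1}=0$ by evaluating the equation on the boundary — a correct and, for your route, indispensable observation (legitimate for smooth solutions, and it is exactly what makes $\kappa_a\int_D \eta\,\partial_{zz}\eta$ integrate cleanly); and you convert the forcing via the identity $\int_D \partial_x v\,\partial_z\Theta=\int_D \partial_z v\,\partial_x\Theta$, exploiting the decay of $\norm{\partial_z v}_{L^2}$ from Lemma~\ref{lemma-grad-1-2} together with the Poincar\'e bound $\norm{\partial_x\Theta}_{L^2}\le C\norm{\partial_x\eta}_{L^2}$ (valid since $\partial_x\Theta$ vanishes at $z=0,1$). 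Both conversions are sound, your mixed-norm estimates are justified by Lemmas~\ref{mixed-norm}--\ref{mixed-norm-1} (note $\int_0^1\eta\,dz=0$ and $\int_0^\alpha \partial_x v\,dx=0$ hold, as required), and the integrability of your coefficient $\phi$ follows from Lemmas~\ref{lemma-grad-1-2}--\ref{lemma-grad-1-3} exactly as you say.

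One step deserves repair. Because you keep the cubic term as $\int_D \partial_x v\,\eta^2$, your coefficient $\phi(t)=C\norm{\partial_x v}_{L^2}\norm{\partial_{xx} v}_{L^2}+\cdots$ is merely $L^1(0,\infty)$, not exponentially weighted, so the inequality $\frac{d}{dt}\norm{\eta}_{L^2}^2+\lambda_1\norm{\eta}_{L^2}^2\le \phi(t)\norm{\eta}_{L^2}^2+h(t)$ does \emph{not} fit Lemma~\ref{general-inequality1} as stated: there the right-hand side may not depend on $f$, and absorbing $\phi\norm{\eta}^2\le C\phi$ after a boundedness step puts it in the $D_2$ slot, which yields only uniform boundedness. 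The conclusion is still true, but you must invoke the classical integrating-factor Gronwall rather than the appendix lemma: since $\exp\bigl(\int_s^t\phi\,d\tau\bigr)\le e^{\norm{\phi}_{L^1}}$ is a constant, one gets $\norm{\eta(t)}_{L^2}^2\le Ce^{-\mu t}$ for any $\mu<\min\{\lambda_1,2\lambda\}$. Alternatively, mimic the paper's trick: integrate by parts once more, $\int_D \partial_x v\,\eta^2=-2\int_D v\,\eta\,\partial_x\eta$, absorb $\norm{\partial_x\eta}_{L^2}$ into the dissipation, and gain the exponentially decaying factor $\norm{v}_{L^2}\norm{\partial_z v}_{L^2}$, after which Lemma~\ref{general-inequality1} applies directly. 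With either repair your proof is complete.
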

 \begin{proof}Setting $T_1 > T_0$ in \eqref{ondimensional-equations-2}, multiplying  \eqref{ondimensional-equations-2}$_2$ by $\partial_{zz} \Theta$ and integrating by parts yields
\begin{align}
\begin{aligned}
&\frac{d}{dt} \norm{\partial_z \Theta}_{L^2}^2
+ 2\kappa_a \int_D |\partial_{zz} \Theta|^2 dx dz + 2 \int_D |\partial_{xz} \Theta|^2 dx dz \\
&= 2\text{R} \int_D \partial_z \Theta  \partial_x v dx dz 
+ 2 \int_D \left( v \partial_x \Theta - \left( \int_0^z \partial_x v(t,x,\xi) d\xi \right) \partial_z \Theta \right) \partial_{zz} \Theta dx dz \\
&= -2\text{R} \int_D v  \partial_{zx} \Theta dx dz 
+ 2 \int_D v \partial_x \Theta  \partial_{zz} \Theta dx dz 
+ \int_D \partial_x v (\partial_z \Theta)^2 dx dz \\
&= -2\text{R} \int_D v  \partial_{zx} \Theta dx dz 
+ 2 \int_D v \partial_x \Theta  \partial_{zz} \Theta dx dz 
- 2 \int_D v (\partial_z \Theta) (\partial_{xz} \Theta) dx dz \\
&= I_1 + I_2 + I_3.
\end{aligned}
\end{align}

We now estimate the terms $I_j$ for $j = 1, 2, 3$. For $I_1$ and $I_2$, 
applying Young’s inequality yields
\begin{align*}
&\begin{aligned}
I_1 &:= -2\text{R} \int_D v  \partial_{zx} \Theta dx dz \leq \epsilon \norm{\partial_{zx} \Theta}_{L^2}^2 + C_\epsilon \norm{v}_{L^2}^2.
\end{aligned} \\
&\begin{aligned}
I_2 &:= 2 \int_D v \partial_x \Theta  \partial_{zz} \Theta dx dz \leq \epsilon \norm{\partial_{zz} \Theta}_{L^2}^2 + C_\epsilon \norm{v \partial_x \Theta}_{L^2}^2 \\
&\leq \epsilon \norm{\partial_{zz} \Theta}_{L^2}^2 + C_\epsilon \norm{v}_{L^2} \norm{\partial_x \Theta}_{L^2} \norm{\partial_{xz} \Theta}_{L^2} \left( \norm{v}_{L^2} + \norm{\partial_x v}_{L^2} \right) \\
&\leq \epsilon \norm{\partial_{zz} \Theta}_{L^2}^2 + \epsilon \norm{\partial_{zx} \Theta}_{L^2}^2 + C_\epsilon \norm{v}_{L^2}^2 \norm{\partial_x \Theta}_{L^2}^2 \left( \norm{v}_{L^2} + \norm{\partial_x v}_{L^2} \right)^2,
\end{aligned}
\end{align*}
where we have used the inequalities
\begin{align*}
&\begin{aligned}
&\norm{v \partial_x \Theta}_{L^2}^2 \leq \int_0^\alpha \left( \norm{\partial_x \Theta}_{L_z^\infty}^2 \int_0^1 |v|^2 dz \right) dx \leq \norm{\partial_x \Theta}_{L_x^2 L_z^\infty}^2 \norm{v}_{L_x^\infty L_z^2}^2, \\
&\norm{\partial_x \Theta}_{L_x^2 L_z^\infty}^2 \leq 2 \int_D |\partial_x \Theta  \partial_{xz} \Theta| dx dz \leq 2 \norm{\partial_x \Theta}_{L^2} \norm{\partial_{xz} \Theta}_{L^2}, \\
&\norm{v}_{L_x^\infty L_z^2}^2 \leq C \norm{v}_{L^2} \left( \norm{v}_{L^2} + \norm{\partial_x v}_{L^2} \right),
\end{aligned}
\end{align*}
which are derived by using Lemma \ref{mixed-norm}-
 Lemma \ref{mixed-norm-1}.

For the third term $I_3$, we have
\begin{align*}
\begin{aligned}
I_3 &:= -2 \int_D v (\partial_z \Theta) (\partial_{zx} \Theta) dx dz \leq \epsilon \norm{\partial_{zx} \Theta}_{L^2}^2 + C_\epsilon \norm{v \partial_z \Theta}_{L^2}^2 \\
&\leq \epsilon \norm{\partial_{zx} \Theta}_{L^2}^2 + C_\epsilon \norm{v}_{L^2} \norm{\partial_z v}_{L^2} \norm{\partial_z \Theta}_{L^2} \left( \norm{\partial_z \Theta}_{L^2} + \norm{\partial_{xz} \Theta}_{L^2} \right) \\
&\leq \epsilon \norm{\partial_{zx} \Theta}_{L^2}^2 + C_\epsilon \norm{v}_{L^2} \norm{\partial_z v}_{L^2} \norm{\partial_z \Theta}_{L^2}^2 \left( 1 + \norm{v}_{L^2} \norm{\partial_z v}_{L^2} \right),
\end{aligned}
\end{align*}
where we have used
\begin{align*}
&\begin{aligned}
&\norm{v \partial_z \Theta}_{L^2}^2 \leq \norm{\partial_z \Theta}_{L_x^\infty L_z^2}^2 \norm{v}_{L_x^2 L_z^\infty}^2, \\
&\norm{\partial_z \Theta}_{L_x^\infty L_z^2}^2 \leq C \norm{\partial_z \Theta}_{L^2} \left( \norm{\partial_z \Theta}_{L^2} + \norm{\partial_{xz} \Theta}_{L^2} \right), \\
&\norm{v}_{L_x^2 L_z^\infty}^2 \leq 2 \norm{v}_{L^2} \norm{\partial_z v}_{L^2}.
\end{aligned}
\end{align*}

Combining the estimates for $I_1$--$I_3$, we obtain the differential inequality
\begin{align}\label{thetazt}
\begin{aligned}
\frac{d}{dt} \norm{\partial_z \Theta}_{L^2}^2 
+ \kappa_a \int_D |\partial_{zz} \Theta|^2 dx dz 
+ \int_D |\partial_{xz} \Theta|^2 dx dz \leq g(t),
\end{aligned}
\end{align}
where the function $g(t)$ is defined by
\begin{align*}
\begin{aligned}
g(t) := & C \norm{v}_{L^2}^2 
+ C \norm{v}_{L^2}^2 \norm{\partial_x \Theta}_{L^2}^2 \left( \norm{v}_{L^2} + \norm{\partial_x v}_{L^2} \right)^2 \\
& + C \norm{v}_{L^2} \norm{\partial_z v}_{L^2} \norm{\partial_z \Theta}_{L^2}^2 \left( 1 + \norm{v}_{L^2} \norm{\partial_z v}_{L^2} \right) \in L^1(0,\infty).
\end{aligned}
\end{align*}
Integrating \eqref{thetazt} with respect to time from $0$ to $t$ gives
\[
\norm{\partial_z \Theta(t)}_{L^2}^2 
+ \int_0^t \left( \kappa_a \norm{\partial_{zz} \Theta(t')}_{L^2}^2 + \norm{\partial_{zx} \Theta(t')}_{L^2}^2 \right) dt' 
\leq \int_0^t g(t') dt' + \norm{\partial_z \Theta(0)}_{L^2}^2,
\]
which establishes \eqref{infty-thetaz-1}. There exists a constant $\lambda_1 > 0$ such that $g(t)$ can be expressed as
\begin{align*}
\begin{aligned}
g(t) &= e^{-\lambda_2 t} D_1(t), \quad \lambda_2 > 0, \\
D_1(t) &= C \norm{v}_{L^2} 
+ C \norm{v}_{L^2} \norm{\partial_x \Theta}_{L^2}^2 \left( \norm{v}_{L^2} + \norm{\partial_x v}_{L^2} \right)^2 \\
&\quad + C \norm{\partial_z v}_{L^2} \norm{\partial_z \Theta}_{L^2}^2 \left( 1 + \norm{v}_{L^2} \norm{\partial_z v}_{L^2} \right) \in L^1(0,\infty).
\end{aligned}
\end{align*}
This decomposition, together with Lemma \ref{general-inequality1}, yields the exponential decay estimate \eqref{decaythetaz-1}.

\end{proof}

\begin{lemma}\label{lemma-grad-1-5}
Let $T_1 > T_0$ and consider any smooth solution $(v, \Theta)$ to the system \eqref{ondimensional-equations-2} with initial data $(v_0, \Theta_0) \in H^1(D)$. Then the horizontal derivatives satisfy the regularity estimate
\begin{align}\label{infty-1}
(\partial_x v, \partial_x \Theta) \in L^\infty\big( (0,\infty); L^2(D) \big) \cap L^2\big( (0,\infty); H^1(D) \big).
\end{align}
Moreover, there exists a constant $\lambda > 0$ such that
\begin{align}\label{dcayvxthetax}
\norm{\partial_x v(t)}_{L^2(D)}^2 + \norm{\partial_x \Theta(t)}_{L^2(D)}^2 \leq C C_0 e^{-2\lambda t} \norm{(v_0, \Theta_0)}_{H^1(D)}^2, \quad \forall t \geq 0,
\end{align}
where the constant $C_0$ depends on $\norm{v_0}_{H^1(D)}$ and $\norm{\Theta_0}_{L^2(D)}$.
\end{lemma}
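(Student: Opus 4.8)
The plan is to establish the decay of the combined horizontal energy $E(t) := \|\partial_x v\|_{L^2}^2 + \|\partial_x \Theta\|_{L^2}^2$ by deriving a single coupled differential inequality for $v$ and $\Theta$ simultaneously. The decisive structural observation—which is precisely what upgrades the mere boundedness of $\partial_x v$ obtained in Lemma~\ref{lemma-grad-1-3} to genuine decay—is that the buoyancy and thermal-forcing cross-terms cancel exactly once the two $\partial_{xx}$-estimates are summed.

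First I would multiply $\eqref{ondimensional-equations-2}_1$ by $-\partial_{xx}v$ and $\eqref{ondimensional-equations-2}_2$ by $-\partial_{xx}\Theta$, integrate over $D$ and integrate by parts. The $v$-component reproduces the identity of Lemma~\ref{lemma-grad-1-3},
\[
\frac{d}{dt}\|\partial_x v\|_{L^2}^2 + 2\text{Pr}_x\|\partial_{xx}v\|_{L^2}^2 + 2\text{Pr}_z\|\partial_{xz}v\|_{L^2}^2 = H_1 + H_2 + H_3,
\]
with $H_3 = -2\text{R}\int_D \partial_x\Theta\,\big(\int_0^z\partial_{xx}v\,d\xi\big)\,dx\,dz$, while the $\Theta$-component gives
\[
\frac{d}{dt}\|\partial_x\Theta\|_{L^2}^2 + 2\|\partial_{xx}\Theta\|_{L^2}^2 + 2\kappa_a\|\partial_{xz}\Theta\|_{L^2}^2 = J_1 + J_2 + B_\Theta,
\]
where $B_\Theta = -2\text{R}\int_D\big(\int_0^z\partial_x v\,d\xi\big)\partial_{xx}\Theta\,dx\,dz$ is the contribution of the thermal forcing $\text{R}(\int_0^z\partial_x v\,d\xi)$. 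Integrating $B_\Theta$ by parts in $x$ (using periodicity and moving $\partial_x$ inside the $\xi$-integral) yields $B_\Theta = 2\text{R}\int_D\big(\int_0^z\partial_{xx}v\,d\xi\big)\partial_x\Theta\,dx\,dz = -H_3$. Adding the two identities therefore annihilates exactly the term $H_3$, which in Lemma~\ref{lemma-grad-1-3} had left behind the undecaying source $\|\partial_x\Theta\|_{L^2}^2$.

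Next I would control the surviving nonlinear terms. For $H_1,H_2$ I reuse the bounds of Lemma~\ref{lemma-grad-1-3}, giving $H_1+H_2 \leq \epsilon\|\partial_{xx}v\|_{L^2}^2 + C_\epsilon\|\partial_z v\|_{L^2}^2(\|v\|_{L^2}^2 + \|\partial_{zz}v\|_{L^2}^2)\|\partial_x v\|_{L^2}^2$. For the thermal advection $J_1 = 2\int_D v\partial_x\Theta\,\partial_{xx}\Theta\,dx\,dz$ I apply Young together with $\|v\partial_x\Theta\|_{L^2}^2 \leq \|v\|_{L_x^\infty L_z^2}^2\|\partial_x\Theta\|_{L_x^2 L_z^\infty}^2$ and Lemmas~\ref{mixed-norm}--\ref{mixed-norm-1}, and for the nonlocal term $J_2 = -2\int_D\big(\int_0^z\partial_x v\,d\xi\big)\partial_z\Theta\,\partial_{xx}\Theta\,dx\,dz$ I use the splitting $\|(\int_0^z\partial_x v\,d\xi)\partial_z\Theta\|_{L^2}^2 \leq \|\partial_x v\|_{L_z^2 L_x^\infty}^2\|\partial_z\Theta\|_{L_z^\infty L_x^2}^2 \leq 4\|\partial_x v\|_{L^2}\|\partial_{xx}v\|_{L^2}\|\partial_z\Theta\|_{L^2}\|\partial_{zz}\Theta\|_{L^2}$ followed by Young. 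Choosing $\epsilon$ small enough to absorb all $\|\partial_{xx}v\|_{L^2}^2$ and $\|\partial_{xx}\Theta\|_{L^2}^2$ contributions into the dissipation, I arrive at
\[
\frac{d}{dt}E + c\big(\|\partial_{xx}v\|_{L^2}^2 + \|\partial_{xz}v\|_{L^2}^2 + \|\partial_{xx}\Theta\|_{L^2}^2 + \|\partial_{xz}\Theta\|_{L^2}^2\big) \leq \phi(t)E + h(t),
\]
where $\phi$ is built from $\|v\|_{L^2}^2$, $\|\partial_z v\|_{L^2}^2$, $\|\partial_{zz}v\|_{L^2}^2$ and $\|\partial_{xx}v\|_{L^2}^2$ and hence lies in $L^1(0,\infty)$ by Lemmas~\ref{v-l2-l2}--\ref{lemma-grad-1-4}, while $h$ (made of products such as $\|\partial_z\Theta\|_{L^2}^2\|\partial_{zz}\Theta\|_{L^2}^2$) decays exponentially.

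Finally, since $\partial_x v$ and $\partial_x\Theta$ have zero horizontal mean and $\partial_x\Theta$ vanishes on $z=0,1$, the Poincaré inequality lets the dissipation dominate $2\lambda E$, so that $\frac{d}{dt}E \leq -2\lambda E + \phi(t)E + h(t)$; Lemma~\ref{general-inequality1} (Gronwall with an $L^1$ amplification factor and exponentially decaying forcing) then yields \eqref{dcayvxthetax}, and integrating the dissipation in time gives the $L^2((0,\infty);H^1(D))$ regularity in \eqref{infty-1}. I expect the main obstacle to be the nonlocal term $J_2$, which must be bounded without any decay estimate for $\partial_x v$ yet available; the mixed-norm splitting resolves this by writing it as a product of the bounded quantity $\|\partial_x v\|_{L^2}$, the time-integrable $\|\partial_{xx}v\|_{L^2}^2$, and the exponentially decaying $\|\partial_z\Theta\|_{L^2}$, so that it feeds only the $L^1$ coefficient $\phi$ and the decaying source $h$ rather than a bare multiple of $E$.
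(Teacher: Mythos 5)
Your proposal is correct and takes essentially the same route as the paper: summing the two $\partial_{xx}$-energy identities so that the $\mathrm{R}$-cross terms cancel exactly, bounding $H_1,H_2,S_1,S_2$ with the same anisotropic mixed-norm estimates from Lemma~\ref{mixed-norm}--Lemma~\ref{mixed-norm-1}, and closing via Poincar\'e and the generalized Gronwall Lemma~\ref{general-inequality1}. The only cosmetic difference is at the end: Lemma~\ref{general-inequality1} has no amplification factor, so instead of your $\phi(t)E+h(t)$ formulation the paper absorbs the $\phi(t)E$ terms into a single forcing $g(t)=e^{-\lambda_2 t}D_1(t)$ with $D_1\in L^1(0,\infty)$, using the boundedness of $\norm{\partial_x v}_{L^2}$ from Lemma~\ref{lemma-grad-1-3} and the time-integrability of $\norm{\partial_x\Theta}_{L^2}^2$ from Lemma~\ref{v-l2-l2}---an equivalent reduction that your own first step (uniform boundedness of $E$) also justifies.
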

\begin{proof}
Setting $T_1 > T_0$ in \eqref{ondimensional-equations-2}, multiplying \eqref{ondimensional-equations-2}$_2$ by $\partial_{xx} \Theta$ and integrating by parts, we obtain
\begin{align}\label{estimate-thetax-vx}
\begin{aligned}
&\frac{d}{dt} \norm{\partial_x \Theta}_{L^2}^2
+ 2 \int_D |\partial_{xx} \Theta|^2 dx dz 
+ 2\kappa_a \int_D |\partial_{xz} \Theta|^2 dx dz \\
&= 2\text{R} \int_D \partial_x \Theta \left( \int_0^z \partial_{xx} v(t,x,\xi) d\xi \right) dx dz \\
&\quad + 2 \int_D \left( v \partial_x \Theta - \left( \int_0^z \partial_x v(t,x,\xi) d\xi \right) \partial_z \Theta \right) \partial_{xx} \Theta dx dz.
\end{aligned}
\end{align}
Combining \eqref{estimatevx-infinity} and \eqref{estimate-thetax-vx}, we derive
\begin{align}
\begin{aligned}
&\frac{d}{dt} \left( \norm{\partial_x v}_{L^2}^2 + \norm{\partial_x \Theta}_{L^2}^2 \right)
+ 2\text{Pr}_x \int_D |\partial_{xx} v|^2 dx dz 
+ 2\text{Pr}_z \int_D |\partial_{xz} v|^2 dx dz \\
&+ 2 \int_D |\partial_{xx} \Theta|^2 dx dz 
+ 2\kappa_a \int_D |\partial_{xz} \Theta|^2 dx dz \\
&= 2 \int_D \left( v \partial_x v - \left( \int_0^z \partial_x v(t,x,\xi) d\xi \right) \partial_z v \right) \partial_{xx} v dx dz \\
&\quad + 2 \int_D \left( v \partial_x \Theta - \left( \int_0^z \partial_x v(t,x,\xi) d\xi \right) \partial_z \Theta \right) \partial_{xx} \Theta dx dz \\
&=: H_1 + H_2 + S_1 + S_2.
\end{aligned}
\end{align}

For $H_1 + H_2$, it follows from \eqref{estimateh1} and \eqref{estimateh2} that
\begin{align*}
\begin{aligned}
H_1 + H_2 \leq & \epsilon \norm{\partial_{xx} v}_{L^2}^2 
+ C_\epsilon \norm{v}_{L^2}^2 \norm{\partial_z v}_{L^2}^2 \norm{\partial_x v}_{L^2}^2 \\
& + C_\epsilon \norm{\partial_z v}_{L^2}^2 \norm{\partial_{zz} v}_{L^2}^2 \norm{\partial_x v}_{L^2}^2.
\end{aligned}
\end{align*}

For $S_1$, applying the Cauchy–Schwarz inequality, we obtain
\begin{align*}
\begin{aligned}
S_1 &= 2 \int_D (v \partial_x \Theta) \partial_{xx} \Theta dx dz \leq 2 \norm{\partial_{xx} \Theta}_{L^2} \norm{v \partial_x \Theta}_{L^2}.
%&\leq \epsilon \norm{\partial_{xx} \Theta}_{L^2}^2 + C_\epsilon \norm{v \partial_x \Theta}_{L^2}^2.
\end{aligned}
\end{align*}
To control the nonlinear term, Lemma \ref{mixed-norm}-
 Lemma \ref{mixed-norm-1} means that the following bounds hold:
\begin{align*}
&\begin{aligned}
&\norm{v \partial_x \Theta}_{L^2}^2 \leq \norm{\partial_x \Theta}_{L_x^\infty L_z^2}^2 \norm{v}_{L_x^2 L_z^\infty}^2, \\
&\norm{\partial_x \Theta}_{L_x^\infty L_z^2}^2 \leq 2 \int_D |\partial_x \Theta  \partial_{xx} \Theta| dx dz \leq 2 \norm{\partial_x \Theta}_{L^2} \norm{\partial_{xx} \Theta}_{L^2}, \\
&\norm{v}_{L_x^2 L_z^\infty}^2 \leq 2 \norm{\partial_z v}_{L^2} \norm{v}_{L^2}.
\end{aligned}
\end{align*}
Substituting these into the previous estimate  and 
applying the Young’s inequality yields
\[
S_1\leq \epsilon \norm{\partial_{xx} \Theta}_{L^2}^2 + C_\epsilon \norm{v}_{L^2}^2 \norm{\partial_z v}_{L^2}^2 \norm{\partial_x \Theta}_{L^2}^2.
\]

Regarding $S_2$, applying the Cauchy–Schwarz inequality, we obtain
\begin{align}\label{S1-S2}
\begin{aligned}
S_2 &= -2 \int_D \left( \int_0^z \partial_x v(t,x,\xi) d\xi \right) \partial_z \Theta  \partial_{xx} \Theta dx dz \\
&\leq 2 \norm{\partial_{xx} \Theta}_{L^2} \norm{ \left( \int_0^z \partial_x v(t,x,\xi) d\xi \right) \partial_z \Theta }_{L^2}.
%&\leq \epsilon \norm{\partial_{xx} \Theta}_{L^2}^2 + C_\epsilon  \norm{ \left( \int_0^z \partial_x v(t,x,\xi) d\xi \right) \partial_z \Theta }_{L^2}^2.
\end{aligned}
\end{align}
To control the nonlinear term, we note that
\begin{align*}
&\begin{aligned}
&\norm{ \left( \int_0^z \partial_x v(t,x,\xi) d\xi \right) \partial_z \Theta }_{L^2}^2 \\
&\quad \leq \int_0^1 \left( \left( \int_0^z \norm{\partial_x v(\cdot,\xi)}_{L_x^\infty} d\xi \right)^2 \int_0^\alpha |\partial_z \Theta|^2 dx \right) dz \\
&\quad \leq C \norm{\partial_z \Theta}_{L_z^\infty L_x^2}^2 \norm{\partial_x v}_{L_z^2 L_x^\infty}^2.
\end{aligned}
\end{align*}
Furthermore, Lemma \ref{mixed-norm}-
 Lemma \ref{mixed-norm-1} means that the following bounds hold:
\[
\begin{aligned}
&\norm{\partial_z \Theta}_{L_z^\infty L_x^2}^2 \leq C \norm{\partial_z \Theta}_{L^2} \left( \norm{\partial_z \Theta}_{L^2} + \norm{\partial_{zz} \Theta}_{L^2} \right), \\
&\norm{\partial_x v}_{L_z^2 L_x^\infty}^2 \leq 2 \norm{\partial_x v}_{L^2} \norm{\partial_{xx} v}_{L^2}.
\end{aligned}
\]
Substituting these into the previous estimate \eqref{S1-S2} and
applying Young’s inequality yields
\[
S_2\leq\epsilon \norm{\partial_{xx} \Theta}_{L^2}^2 + \epsilon \norm{\partial_{xx} v}_{L^2}^2 + C_\epsilon \norm{\partial_z \Theta}_{L^2}^2 \norm{\partial_x v}_{L^2}^2 \norm{\partial_{zz} \Theta}_{L^2}^2.
\]

A suitable choice of a sufficiently small $\epsilon$ leads to the differential inequality
\begin{align}\label{thetazt-new} 
\begin{aligned}
&\frac{d}{dt} \left( \norm{\partial_x v}_{L^2}^2 + \norm{\partial_x \Theta}_{L^2}^2 \right)
+ \text{Pr}_x \int_D |\partial_{xx} v|^2 dx dz 
+ \text{Pr}_z \int_D |\partial_{xz} v|^2 dx dz \\
&+ \int_D |\partial_{xx} \Theta|^2 dx dz 
+ \kappa_a \int_D |\partial_{xz} \Theta|^2 dx dz \leq g(t),
\end{aligned}
\end{align}
where there exists a constant $\lambda_2 > 0$ such that $g(t)$ is given by
\begin{align*}
\begin{aligned}
g(t) &= e^{-\lambda_2 t} D_1(t), \\
D_1(t) &= C e^{\lambda_2 t} \norm{\partial_z v}_{L^2}^2 \left( \norm{v}_{L^2}^2 \norm{\partial_x v}_{L^2}^2 + \norm{\partial_{zz} v}_{L^2}^2 \norm{\partial_x v}_{L^2}^2 \right) \\
&\quad + C e^{\lambda_2 t} \norm{v}_{L^2}^2 \norm{\partial_z v}_{L^2}^2 \norm{\partial_x \Theta}_{L^2}^2 \\
&\quad + C e^{\lambda_2 t} \norm{\partial_z \Theta}_{L^2}^2 \norm{\partial_x v}_{L^2}^2 \left( \norm{\partial_{zz} \Theta}_{L^2}^2 + \norm{\partial_z \Theta}_{L^2}^2 \right) \in L^1([0,+\infty)).
\end{aligned}
\end{align*}
This decomposition, together with Lemma \ref{general-inequality1}, yields the exponential decay estimate \eqref{dcayvxthetax}.
Integrating \eqref{thetazt-new} with respect to time from $0$ to $t$ gives
\eqref{infty-1}.
\end{proof}
  
\subsubsection{$H^2$-Estimates and Global Stability}

\begin{lemma}\label{lemma-grad-1-6}
Let $T_1 > T_0$ and consider any smooth solution $(v, \Theta)$ to the system \eqref{ondimensional-equations-2} with initial data $(v_0, \Theta_0) \in H^2(D)$. Then the following regularity estimates hold:
\begin{align}\label{vh2-infty-16}
\partial_z(\nabla v, \nabla \Theta) \in L^\infty\big( (0,\infty); L^2(D) \big), \quad
\Delta(\partial_z v, \partial_z \Theta) \in L^2\big( (0,\infty); L^2(D) \big).
\end{align}
Moreover, there exists a constant $\lambda > 0$ such that
\begin{align}\label{decy-vxz-vzz-16}
\norm{\partial_z (\nabla v, \nabla \Theta)(t)}_{L^2(D)}^2 \leq C C_0 e^{-2\lambda t} \norm{(v_0, \Theta_0)}_{H^2(D)}^2, \quad \forall t \geq 0,
\end{align}
where the constant $C_0$ depends on $\norm{v_0}_{H^1(D)}$ and $\norm{\Theta_0}_{L^2(D)}$.
\end{lemma}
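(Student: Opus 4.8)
The plan is to work at the level of the vertical derivatives $\sigma := \partial_z v$ and $\tau := \partial_z\Theta$ and to close a single differential inequality for
\[
E(t) := \|\nabla\sigma\|_{L^2}^2 + \|\nabla\tau\|_{L^2}^2 = \|\partial_z\nabla v\|_{L^2}^2 + \|\partial_z\nabla\Theta\|_{L^2}^2 .
\]
The function $\sigma$ already satisfies \eqref{new-eq-vz-1}, and an equation for $\tau$ is obtained by differentiating $\eqref{ondimensional-equations-2}_2$ in $z$; this produces the local terms $\sigma\,\partial_x\Theta + v\,\partial_x\tau - \partial_x v\,\tau$, the nonlocal term $-\left(\int_0^z\partial_x v\,d\xi\right)\partial_z\tau$, and the linear coupling $\text{R}\,\partial_x v$. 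Two boundary facts make the subsequent integrations by parts clean: $\sigma|_{z=0,1}=0$ (from $v_z|_{z=0,1}=0$), and $\partial_{zz}\Theta|_{z=0,1}=0$. The latter is an equation-induced compatibility condition: evaluating $\eqref{ondimensional-equations-2}_2$ on $z=0,1$, where $\Theta,\partial_t\Theta,\partial_{xx}\Theta$ vanish and the nonlocal terms vanish as well (using $\int_0^1 v\,dz=0$ at $z=1$), forces $\partial_{zz}\Theta=0$ there, and hence $\partial_z\tau=\partial_{zz}\Theta$ and $\partial_{xzz}\Theta$ both vanish on the horizontal boundaries.

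I would then test the $\sigma$-equation with $-\Delta\sigma$ and the $\tau$-equation with $-\Delta\tau$, integrate over $D$, and add. Owing to the boundary conditions above, all boundary terms vanish and the diffusion yields a negative-definite quadratic form bounded above by $-c\big(\|\Delta\sigma\|_{L^2}^2 + \|\Delta\tau\|_{L^2}^2\big)$, which by the Poincaré inequality dominates $-2\lambda E$. The linear buoyancy couplings contribute $\text{R}\int_D\partial_x\Theta\,\Delta\sigma$ and $-\text{R}\int_D\partial_x v\,\Delta\tau$, which I would absorb by Young's inequality into $\epsilon\big(\|\Delta\sigma\|_{L^2}^2 + \|\Delta\tau\|_{L^2}^2\big)$ plus $C\big(\|\partial_x\Theta\|_{L^2}^2 + \|\partial_x v\|_{L^2}^2\big)$, the latter being an exponentially decaying forcing by Lemma~\ref{lemma-grad-1-5}.

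The main obstacle is the nonlinear terms, and in particular the nonlocal ones $\left(\int_0^z\partial_x v\,d\xi\right)\partial_z\sigma$ and $\left(\int_0^z\partial_x v\,d\xi\right)\partial_z\tau$, for which Ladyzhenskaya's inequality is unavailable. Following the mixed-norm strategy, I would estimate their $L^2$ norms by products such as $\|\partial_x v\|_{L_z^2 L_x^\infty}^2\,\|\partial_z\sigma\|_{L_z^\infty L_x^2}^2$, and then convert the mixed norms into products of standard $L^2$ norms of first- and second-order derivatives using Lemma~\ref{mixed-norm}--Lemma~\ref{mixed-norm-1}. After Young's inequality, each such term splits into $\epsilon\big(\|\Delta\sigma\|_{L^2}^2+\|\Delta\tau\|_{L^2}^2\big)$ plus a contribution of the form $\phi(t)\,E(t)$, where the coefficient $\phi$ is built from the lower-order norms $\|v\|_{L^2},\|\partial_z v\|_{L^2},\|\partial_x v\|_{L^2}$ already controlled in Lemma~\ref{v-l2-l2}--Lemma~\ref{lemma-grad-1-5}; the same scheme handles the local terms $v\,\partial_x\sigma$, $\sigma\,\partial_x\Theta$, $v\,\partial_x\tau$ and $\partial_x v\,\tau$. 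The $L^2(0,\infty)$ integrability of the second gradients established earlier guarantees $\phi\in L^1(0,\infty)$.

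Collecting everything yields
\[
\frac{d}{dt}E + c\big(\|\Delta\sigma\|_{L^2}^2 + \|\Delta\tau\|_{L^2}^2\big) \le \phi(t)\,E + h(t),\qquad \phi\in L^1(0,\infty),
\]
with $h$ exponentially decaying. Integrating in time gives the regularity \eqref{vh2-infty-16}, namely $\partial_z(\nabla v,\nabla\Theta)\in L^\infty\big((0,\infty);L^2\big)$ and $\Delta(\partial_z v,\partial_z\Theta)\in L^2\big((0,\infty);L^2\big)$. Finally, writing $h(t)=e^{-\lambda_2 t}D_1(t)$ with $D_1\in L^1(0,\infty)$ and invoking the Gronwall-type Lemma~\ref{general-inequality1}, exactly as in the proofs of Lemma~\ref{lemma-grad-1-4}--Lemma~\ref{lemma-grad-1-5}, I would extract the exponential decay \eqref{decy-vxz-vzz-16}, with the constant $C_0$ inherited from Lemma~\ref{lemma-grad-1-3}.
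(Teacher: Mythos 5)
Your proposal is correct and follows essentially the same route as the paper's proof: you work with $\sigma=\partial_z v$ and $\eta=\partial_z\Theta$ (your $\tau$), test with second-order derivatives, control the local and nonlocal nonlinearities through exactly the mixed-norm machinery of Lemma~\ref{mixed-norm}--Lemma~\ref{mixed-norm-1}, and conclude via the weighted Gronwall Lemma~\ref{general-inequality1}, precisely as in \eqref{nablasigma}--\eqref{fff}. The one genuine divergence is the treatment of the linear buoyancy couplings $\mathrm{R}(\partial_x\Theta,\Delta\sigma)$ and $-\mathrm{R}(\partial_x v,\Delta\eta)$: the paper never estimates these terms, but adds the two energy identities and invokes the exact cancellation
\[
2\mathrm{R}\left(\partial_x\Theta,\ \partial_{xx}\sigma+\partial_{zz}\sigma\right)
-2\mathrm{R}\left(\partial_x v,\ \partial_{xx}\eta+\partial_{zz}\eta\right)=0,
\]
whereas you absorb each coupling by Young's inequality into $\epsilon\norm{\Delta(\sigma,\eta)}_{L^2}^2$ plus the forcing $C\left(\norm{\partial_x v}_{L^2}^2+\norm{\partial_x\Theta}_{L^2}^2\right)$, which decays exponentially by \eqref{dcayvxthetax}. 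Both arguments close: since Lemma~\ref{lemma-grad-1-5} is already available at this stage, your forcing has exactly the structure $e^{-\lambda_2 t}D_1(t)$, $D_1\in L^1(0,\infty)$, demanded by Lemma~\ref{general-inequality1}; the paper's cancellation is marginally more economical in that it requires no decay information on the first-order horizontal derivatives, and it is the same mechanism the paper reuses at the next order in \eqref{h2dtestimate-all-2}. A point in your favor is that you make explicit the equation-induced compatibility condition $\partial_{zz}\Theta|_{z=0,1}=0$ (obtained by evaluating $\eqref{ondimensional-equations-2}_2$ on the horizontal boundaries and using $\int_0^1 v\,dz=0$), which the paper leaves implicit in deriving \eqref{nablaeta} but which, together with $\sigma|_{z=0,1}=0$, is precisely what annihilates the boundary terms in the integrations by parts and validates the cancellation displayed above.
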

\begin{proof}
Multiplying \eqref{three-proof--8} successively by $\partial_{xx}\sigma$
and $\partial_{zz}\sigma$, then integrating by parts and applying the Cauchy–Schwarz inequality and Young’s inequality, we obtain
 \begin{align*}
\begin{aligned}
&\frac{d}{dt}
\int_{D}\abs{\nabla \sigma}^2
\,dx\,dz
+2\text{Pr}_x\norm{\partial_{xx}\sigma}_{L^2}^2
+2\text{Pr}_z \norm{\partial_{zz}\sigma}_{L^2}^2
%&\leq \epsilon 
%\left(\norm{\partial_{xx}\sigma}_{L^2}^2
%+
%\norm{\partial_{zz}\sigma}_{L^2}^2\right)
%+2\text{R}(\partial_x\Theta,\partial_{xx}\sigma+\partial_{zz}\sigma)
%\\&\quad+2\norm{v \partial_x \sigma}_{L^2}\left(\norm{\partial_{xx}\sigma}_{L^2}
%+
%\norm{\partial_{zz}\sigma}_{L^2}\right)
%\\&
%+2\norm{\left(\int_0^z\partial_xv(t,x,\xi)\,d\xi \right)\partial_z \sigma}
%_{L^2}\left(\norm{\partial_{xx}\sigma}_{L^2}
%+
%\norm{\partial_{zz}\sigma}_{L^2}\right)
\\&\leq \epsilon 
\left(\norm{\partial_{xx}\sigma}_{L^2}^2
+
\norm{\partial_{zz}\sigma}_{L^2}^2\right)
+2\text{R}(\partial_x\Theta,\partial_{xx}\sigma+\partial_{zz}\sigma)
\\&\quad+C_{\epsilon}\norm{v \partial_x \sigma}_{L^2}^2+C_{\epsilon}\norm{\left(\int_0^z\partial_xv(t,x,\xi)\,d\xi \right)\partial_z \sigma}
_{L^2}^2.
\end{aligned}
\end{align*}

To control the nonlinear terms, we apply Lemma \ref{mixed-norm}-
 Lemma \ref{mixed-norm-1}  to get
    \begin{align*}
&\begin{aligned}
&\norm{v\partial_x\sigma}_{L^2}^2
\leq
\norm{\partial_x\sigma}_{L^{\infty}_{x}L^2_{z}}^2
\norm{v}_{L^2_{x}L^{\infty}_{z}}^2,\\
&\norm{\partial_x\sigma}
_{L^{\infty}_{x}L^2_{z}}^2
\leq 2\int_{D}\abs{\partial_x\sigma
\partial_{xx}\sigma}\,dx\,dz
\leq 2\norm{
\partial_x\sigma}_{L^2}
\norm{
\partial_{xx}\sigma}_{L^2},\\
&\norm{v}_{L^2_{x}L^{\infty}_{z}}^2\leq 
2\norm{
\partial_zv}_{L^2}
\norm{v}_{L^2},
\end{aligned}
\end{align*}
and
    \begin{align*}
&\begin{aligned}
&\begin{aligned}
& \norm{
\left(\int_0^z\partial_xv(t,x,\xi)\,d\xi \right)\partial_z\sigma}_{L^2}^2
\\& \leq \int_{0}^{\alpha}\left(
\norm{\partial_z\sigma}_{L_z^{\infty}}^2
\int_{0}^1
\left(\int_0^z\partial_xv(\xi)\,d\xi\right)^2\,dz\right)\,dx\\
&\leq  \norm{\partial_z\sigma}^2_{L_x^{2}L_z^{\infty}}
\norm{\partial_xv}_{L_x^{\infty}L_z^{2}}^2,
\end{aligned}\\
&\norm{\partial_z\sigma}
_{L^{2}_{x}L^{\infty}_{z}}^2
\leq 2\int_{D}\abs{\partial_z\sigma
\partial_{zz}\sigma}\,dx\,dz
\leq 2\norm{
\partial_z\sigma}_{L^2}
\norm{
\partial_{zz}\sigma}_{L^2},\\
&\norm{\partial_xv}_{L_x^{\infty}L_z^{2}}^2\leq 2\norm{
\partial_xv}_{L^2}\norm{
\partial_{xx}v}_{L^2}.
\end{aligned}
\end{align*}
 Substituting these into the previous estimate yields
 \begin{align}\label{nablasigma}
\begin{aligned}
&\frac{d}{dt}
\int_{D}\abs{\nabla \sigma}^2
\,dx\,dz
+2\text{Pr}_x\norm{\partial_{xx}\sigma}_{L^2}^2
+2\text{Pr}_z \norm{\partial_{zz}\sigma}_{L^2}^2\\
&\leq \epsilon 
\left(\norm{\partial_{xx}\sigma}_{L^2}^2
+
\norm{\partial_{zz}\sigma}_{L^2}^2\right)
%+C_{\epsilon}
%\norm{\Theta }_{L^2}^2
+C_{\epsilon}
\norm{
\partial_zv}_{L^2}^2
\norm{v}_{L^2}^2\norm{
\partial_x\sigma}_{L^2}^2
\\
&\quad+C_{\epsilon}
\norm{
\partial_xv}_{L^2}^2\norm{
\partial_{xx}v}_{L^2}^2
\norm{
\partial_z\sigma}_{L^2}^2+2\text{R}(\partial_x\Theta,\partial_{xx}\sigma+\partial_{zz}\sigma).
\end{aligned}
\end{align}

Let us denot $\eta:=\partial_z\Theta$, then $\eta$ solves
\begin{align}\label{thetaz-eta} 
\begin{aligned}
&\partial_t\eta+v\partial_x \eta-\left(\int_0^z\partial_xv(t,x,\xi)\,d\xi \right)\partial_z\eta= \partial_{xx}\eta+
\kappa_{a}\partial_{zz} \eta\\
&+\text{R}\partial_xv
-\sigma \partial_x \Theta+\partial_xv \eta.
\end{aligned}
\end{align}
Multiplying the equation \eqref{thetaz-eta} successively by $\partial_{xx}\eta$
and $\partial_{zz}\eta$, then integrating by parts, we obtain
 \begin{align*}
\begin{aligned}
&\frac{d}{dt}
\int_{D}\abs{\nabla \eta}^2
\,dx\,dz
+2\norm{\partial_{xx}\eta}_{L^2}^2
+2\kappa_a \norm{\partial_{zz}\eta}_{L^2}^2
+2(1+\kappa_a) \norm{\partial_{xz}\eta}_{L^2}^2\\
&\leq \epsilon 
\left(\norm{\partial_{xx}\eta}_{L^2}^2
+
\norm{\partial_{zz}\eta}_{L^2}^2\right)
-2\text{R}(\partial_xv,\partial_{xx}\eta+\partial_{zz}\eta)
\\&\quad+2\norm{v \partial_x \eta}_{L^2}\left(\norm{\partial_{xx}\eta}_{L^2}
+
\norm{\partial_{zz}\eta}_{L^2}\right)
\\&
+2\norm{\left(\int_0^z\partial_xv(t,x,\xi)\,d\xi \right)\partial_z \eta}
_{L^2}\left(\norm{\partial_{xx}\eta}_{L^2}
+
\norm{\partial_{zz}\eta}_{L^2}\right)\\
&\quad+2\norm{\sigma \partial_x \Theta}_{L^2}\left(\norm{\partial_{xx}\eta}_{L^2}
+
\norm{\partial_{zz}\eta}_{L^2}\right)\\
&\quad+2\norm{\eta \partial_x v}_{L^2}\left(\norm{\partial_{xx}\eta}_{L^2}
+
\norm{\partial_{zz}\eta}_{L^2}\right).
%&\leq 
% \epsilon 
%\left(\norm{\partial_{xx}\eta}_{L^2}^2
%+\norm{\partial_{xz}\eta}_{L^2}^2+
%\norm{\partial_{zz}\eta}_{L^2}^2\right)
%+C_{\epsilon}\norm{
%\partial_x\eta}_{L^2}^2
%\norm{
%\partial_zv}_{L^2}^2
%\norm{v}_{L^2}^2\\
%&\quad +C_{\epsilon}\norm{
%\partial_z\eta}_{L^2}^2
%\norm{
%\partial_xv}_{L^2}\norm{
%\partial_{zx}v}_{L^2}+
%C_{\epsilon}
%\norm{
%\partial_z\eta}_{L^2}^2
%\norm{
%\partial_xv}_{L^2}^2\norm{
%\partial_{zx}v}_{L^2}^2\\
%&\quad+C_{\epsilon}
%\norm{
%\partial_z\sigma}_{L^2}
%\norm{\sigma}_{L^2}
%\norm{
%\partial_x\Theta}_{L^2}
%\norm{
%\partial_{xx}\Theta}_{L^2}
%\\&\quad+C_{\epsilon}\norm{
%\partial_xv}_{L^2}
%\norm{
%\partial_{xx}v}_{L^2}
%\norm{
%\partial_z\eta }_{L^2}
%\norm{\eta }_{L^2}\\
%&\quad-2\text{R}(\partial_xv,\partial_{xx}\eta+\partial_{zz}\eta),
\end{aligned}
\end{align*}
To establish these bounds, we employ the following inequalities. First, we observe that
    \begin{align*}
&\begin{aligned}
&\norm{v\partial_x\eta}_{L^2}^2
\leq
\norm{\partial_x\eta}_{L^{\infty}_{x}L^2_{z}}^2
\norm{v}_{L^2_{x}L^{\infty}_{z}}^2,\\
\end{aligned}\\
&\begin{aligned}
 \norm{
\left(\int_0^z\partial_xv(t,x,\xi)\,d\xi \right)\partial_z\eta}_{L^2}^2
&\leq \int_{0}^{\alpha}
\norm{\left(\int_0^z\partial_xv(\xi)\,d\xi\right)}_{L_z^{\infty}}^2
\left(
\int_{0}^1
\abs{\partial_z\eta}^2\,dz\right)\,dx\\
&\leq  \norm{\partial_z\eta}^2_{L_x^{\infty}L_z^{2}}
\norm{\partial_xv}_{L_x^{2}L_z^{\infty}}^2,
\end{aligned}\\
&\norm{\sigma \partial_x\Theta}_{L^2}^2
\leq
\norm{\partial_x\Theta}_{L^{\infty}_{x}L^2_{z}}^2
\norm{\sigma}_{L^2_{x}L^{\infty}_{z}}^2,\quad
\norm{\eta \partial_xv}_{L^2}^2
\leq
\norm{\partial_xv}_{L^{\infty}_{x}L^2_{z}}^2
\norm{\eta }_{L^2_{x}L^{\infty}_{z}}^2,
\end{align*}
and we further bound the mixed norms via
    \begin{align*}
&\begin{aligned}
&\norm{\partial_x\eta}
_{L^{\infty}_{x}L^2_{z}}^2
\leq 2\int_{D}\abs{\partial_x\eta
\partial_{xx}\eta}\,dx\,dz
\leq 2\norm{
\partial_x\eta}_{L^2}
\norm{
\partial_{xx}\eta}_{L^2},\\
&\norm{v}_{L^2_{x}L^{\infty}_{z}}^2\leq 
2\norm{
\partial_zv}_{L^2}
\norm{v}_{L^2},\\
&\norm{\partial_z\eta}
_{L^{\infty}_{x}L^{2}_{z}}^2
\leq C\norm{
\partial_z\eta}_{L^2}^2
+C\norm{
\partial_z\eta}_{L^2}
\norm{
\partial_{xz}\eta}_{L^2},\\
&\norm{\partial_xv}_{L_x^{2}L_z^{\infty}}^2\leq 2\norm{
\partial_xv}_{L^2}\norm{
\partial_{zx}v}_{L^2},
\end{aligned}\\
&\begin{aligned}
&\norm{\partial_x\Theta}
_{L^{\infty}_{x}L^2_{z}}^2
\leq 2\int_{D}\abs{\partial_x\Theta
\partial_{xx}\Theta}\,dx\,dz
\leq 2\norm{
\partial_x\Theta}_{L^2}
\norm{
\partial_{xx}\Theta}_{L^2},\\
&\norm{\sigma}_{L^2_{x}L^{\infty}_{z}}^2\leq 
2\norm{
\partial_z\sigma}_{L^2}
\norm{\sigma}_{L^2},
\end{aligned}\\
&\begin{aligned}
&\norm{\partial_xv}
_{L^{\infty}_{x}L^2_{z}}^2
\leq 2\int_{D}\abs{\partial_xv
\partial_{xx}v}\,dx\,dz
\leq 2\norm{
\partial_xv}_{L^2}
\norm{
\partial_{xx}v}_{L^2},\\
&\norm{\eta }_{L^2_{x}L^{\infty}_{z}}^2\leq 
2\norm{
\partial_z\eta }_{L^2}
\norm{\eta }_{L^2},
\end{aligned}
\end{align*}
where Lemma \ref{mixed-norm}-
 Lemma \ref{mixed-norm-1} have been used.
  Substituting these into the previous estimate yields
 \begin{align}\label{nablaeta}
\begin{aligned}
&\frac{d}{dt}
\int_{D}\abs{\nabla \eta}^2
\,dx\,dz
+2\norm{\partial_{xx}\eta}_{L^2}^2
+2\kappa_a \norm{\partial_{zz}\eta}_{L^2}^2
+2(1+\kappa_a) \norm{\partial_{xz}\eta}_{L^2}^2\\
&\leq 
 \epsilon 
\left(\norm{\partial_{xx}\eta}_{L^2}^2
+\norm{\partial_{xz}\eta}_{L^2}^2+
\norm{\partial_{zz}\eta}_{L^2}^2\right)
+C_{\epsilon}\norm{
\partial_x\eta}_{L^2}^2
\norm{
\partial_zv}_{L^2}^2
\norm{v}_{L^2}^2\\
&\quad +C_{\epsilon}\norm{
\partial_z\eta}_{L^2}^2
\norm{
\partial_xv}_{L^2}\norm{
\partial_{zx}v}_{L^2}+
C_{\epsilon}
\norm{
\partial_z\eta}_{L^2}^2
\norm{
\partial_xv}_{L^2}^2\norm{
\partial_{zx}v}_{L^2}^2\\
&\quad+C_{\epsilon}
\norm{
\partial_z\sigma}_{L^2}
\norm{\sigma}_{L^2}
\norm{
\partial_x\Theta}_{L^2}
\norm{
\partial_{xx}\Theta}_{L^2}
\\&\quad+C_{\epsilon}\norm{
\partial_xv}_{L^2}
\norm{
\partial_{xx}v}_{L^2}
\norm{
\partial_z\eta }_{L^2}
\norm{\eta }_{L^2}\\
&\quad-2\text{R}(\partial_xv,\partial_{xx}\eta+\partial_{zz}\eta),
\end{aligned}
\end{align}

Note that the cross terms cancel identically:
\[
2\text{R} (\partial_x \Theta, \partial_{xx} \sigma + \partial_{zz} \sigma)
- 2\text{R} (\partial_x v, \partial_{xx} \eta + \partial_{zz} \eta) = 0.
\]
Using this cancellation and adding \eqref{nablasigma} and \eqref{nablaeta}, then choosing $\epsilon$ sufficiently small, we obtain
\begin{align}\label{h2dtestimate-all-1}
\begin{aligned}
&\frac{d}{dt}\left(
\int_{D}\abs{\nabla \sigma}^2
\,dx\,dz
+\int_{D}\abs{\nabla \eta}^2
\,dx\,dz
\right)
\\&+\lambda_1\left(\norm{\partial_{xx} \sigma}_{L^2}^2
+ \norm{\partial_{zx} \sigma}_{L^2}^2
+\norm{\partial_{xx}\eta}_{L^2}^2
+ \norm{\partial_{zx}\eta}_{L^2}^2
\right)
\\&\leq C\left(
\norm{
\partial_zv}_{L^2}^2
\norm{v}_{L^2}^2\norm{
\partial_x\sigma}_{L^2}^2
+\norm{
\partial_xv}_{L^2}^2\norm{
\partial_{xx}v}_{L^2}^2
\norm{
\partial_z\sigma}_{L^2}^2\right)\\
&\quad+C\norm{
\partial_x\eta}_{L^2}^2
\norm{
\partial_zv}_{L^2}^2
\norm{v}_{L^2}^2 +C\norm{
\partial_z\eta}_{L^2}^2
\norm{
\partial_xv}_{L^2}\norm{
\partial_{zx}v}_{L^2}\\&\quad+
C
\norm{
\partial_z\eta}_{L^2}^2
\norm{
\partial_xv}_{L^2}^2\norm{
\partial_{zx}v}_{L^2}^2\\
&\quad+C
\norm{
\partial_z\sigma}_{L^2}
\norm{\sigma}_{L^2}
\norm{
\partial_x\Theta}_{L^2}
\norm{
\partial_{xx}\Theta}_{L^2}
\\&\quad+C\norm{
\partial_xv}_{L^2}
\norm{
\partial_{xx}v}_{L^2}
\norm{
\partial_z\eta }_{L^2}
\norm{\eta }_{L^2}.
\end{aligned}
\end{align}
This, together with Gronwall’s inequality, yields 
\[
\partial_z( \nabla v,\nabla \Theta) \in L^{\infty}\left((0,\infty);L^{2}(D)\right).
\]

Let us denote $f:=\norm{(\sigma,\eta)}_{L^2}^2+\norm{(\nabla\sigma,\nabla\eta)}_{L^2}$, we then use \eqref{h2dtestimate-all-1},  \eqref{three-proof--9-1} and \eqref{thetazt} to get that there exists a $\lambda_1>0$ such that
$f$ satisfies
\begin{align}\label{fff}
\frac{df}{dt}+\lambda_1f\leq g(t),
\end{align}
where $g(t)=D_1(t)e^{-\lambda_2t}$ for some $\lambda_2>0$, and 
 is given by
\[
\begin{aligned}
D_1(t)&:=e^{-\lambda_2t}\bigg(C\left(
\norm{
\partial_zv}_{L^2}^2
\norm{v}_{L^2}^2\norm{
\partial_x\sigma}_{L^2}^2
+\norm{
\partial_xv}_{L^2}^2\norm{
\partial_{xx}v}_{L^2}^2
\norm{
\partial_z\sigma}_{L^2}^2\right)\\
&\quad+C\norm{
\partial_x\eta}_{L^2}^2
\norm{
\partial_zv}_{L^2}^2
\norm{v}_{L^2}^2 +C\norm{
\partial_z\eta}_{L^2}^2
\norm{
\partial_xv}_{L^2}\norm{
\partial_{zx}v}_{L^2}\\&\quad+
C
\norm{
\partial_z\eta}_{L^2}^2
\norm{
\partial_xv}_{L^2}^2\norm{
\partial_{zx}v}_{L^2}^2\\
&\quad+C
\norm{
\partial_z\sigma}_{L^2}
\norm{\sigma}_{L^2}
\norm{
\partial_x\Theta}_{L^2}
\norm{
\partial_{xx}\Theta}_{L^2}
\\&\quad+C\norm{
\partial_xv}_{L^2}
\norm{
\partial_{xx}v}_{L^2}
\norm{
\partial_z\eta }_{L^2}
\norm{\eta }_{L^2}+
C\norm{\Theta}_{L^2}^2\\
&\quad+C \norm{v}_{L^2}^2 
+ C \norm{v}_{L^2}^2 \norm{\partial_x \Theta}_{L^2}^2 \left( \norm{v}_{L^2} + \norm{\partial_x v}_{L^2} \right)^2 \\
& \quad+ C \norm{v}_{L^2} \norm{\partial_z v}_{L^2} \norm{\partial_z \Theta}_{L^2}^2 \left( 1 + \norm{v}_{L^2} \norm{\partial_z v}_{L^2} \right)\bigg)
\in L^1([0,+\infty)).
\end{aligned}
\]

Based on the estimates established in Lemmas~\ref{v-l2-l2}--\ref{lemma-grad-1-5}, integrating \eqref{h2dtestimate-all-1} with respect to time yields the regularity result \eqref{vh2-infty-16}. Furthermore, combining \eqref{fff}
and \eqref{vh2-infty-16} and employing arguments similar to those used in the proof of \eqref{dcayvxthetax}, we establish the decay estimate \eqref{decy-vxz-vzz-16}.

\end{proof}

\begin{lemma}\label{lemma-grad-1-7}
Let $T_1 > T_0$ and consider any smooth solution $(v, \Theta)$ to the system \eqref{ondimensional-equations-2} with initial data $(v_0, \Theta_0) \in H^2(D)$. Then the following regularity estimates hold:
\begin{align}\label{infty-vxx-delta-17-1}
(\partial_{xx} v, \partial_{xx} \Theta) \in L^\infty\big( (0,\infty); L^2(D) \big), \quad
\Delta(\partial_x v, \partial_x \Theta) \in L^2\big( (0,\infty); L^2(D) \big).
\end{align}
Moreover, there exists a constant $\lambda > 0$ such that
\begin{align}\label{decy-vxz-vzz-17-1-1}
\norm{(\partial_{xx} v, \partial_{xx} \Theta)(t)}_{L^2(D)}^2 \leq C C_0 e^{-2\lambda t} \norm{(v_0, \Theta_0)}_{H^2(D)}^2, \quad \forall t \geq 0,
\end{align}
where the constant $C_0$ depends on $\norm{v_0}_{H^1(D)}$ and $\norm{\Theta_0}_{L^2(D)}$.
\end{lemma}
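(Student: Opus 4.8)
Since Lemma~\ref{lemma-grad-1-6} already furnishes the exponential decay of the vertical and mixed second derivatives $\partial_z\nabla v$ and $\partial_z\nabla\Theta$, the only top-order quantities left uncontrolled are the purely horizontal ones, $\partial_{xx}v$ and $\partial_{xx}\Theta$. The plan is to estimate them by an $H^1$-type energy argument for the once-$x$-differentiated system, in exact parallel with the treatment of $\sigma=\partial_z v$ and $\eta=\partial_z\Theta$ in Lemma~\ref{lemma-grad-1-6}. Setting $\tau:=\partial_x v$ and $\zeta:=\partial_x\Theta$ and differentiating \eqref{ondimensional-equations-2} in $x$ (legitimate by $x$-periodicity), the pair $(\tau,\zeta)$ solves a system whose principal part coincides with that of \eqref{ondimensional-equations-2}, with homogeneous boundary data $\partial_z\tau|_{z=0,1}=\partial_x(v_z)|_{z=0,1}=0$ and $\zeta|_{z=0,1}=0$, buoyancy term $-\text{R}\,\partial_x(\int_0^z\zeta\,d\xi)$, thermal source $\text{R}(\int_0^z\partial_x\tau\,d\xi)$, and additional quadratic forcing generated by commuting $\partial_x$ through the advection terms.

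I would multiply the $\tau$-equation successively by $\partial_{xx}\tau$ and $\partial_{zz}\tau$, the $\zeta$-equation by $\partial_{xx}\zeta$ and $\partial_{zz}\zeta$, integrate by parts, and add. The time-derivative terms assemble into $\frac{d}{dt}(\norm{\nabla\tau}_{L^2}^2+\norm{\nabla\zeta}_{L^2}^2)$, whose horizontal component is the desired $\norm{\partial_{xx}v}_{L^2}^2+\norm{\partial_{xx}\Theta}_{L^2}^2$ (the mixed component $\norm{\partial_{xz}v}_{L^2}^2+\norm{\partial_{xz}\Theta}_{L^2}^2$ is already controlled by Lemma~\ref{lemma-grad-1-6}), while the dissipation supplies $\norm{\partial_{xxx}v}_{L^2}^2$, $\norm{\partial_{xzz}v}_{L^2}^2$ and their $\Theta$-analogues, i.e.\ $\norm{\Delta(\partial_x v,\partial_x\Theta)}_{L^2}^2$ up to a constant. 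The first technical point is that the two $\text{R}$-coupling terms cancel: exactly as when \eqref{estimatevx-infinity} and \eqref{estimate-thetax-vx} are added in Lemma~\ref{lemma-grad-1-5}, one integrates the buoyancy term by parts in $z$ (the boundary contributions vanishing because $\int_0^z\partial_x v\,d\xi$ vanishes at $z=0,1$ thanks to $\int_0^1 v\,dz=0$) and in $x$, after which it reproduces, with opposite sign, the contribution of the thermal source. Hence no linear coupling survives on the right-hand side.

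The main obstacle is the estimation of the genuinely nonlinear terms, which are now one derivative higher than in Lemma~\ref{lemma-grad-1-5} and still contain the nonlocal contributions $(\int_0^z\partial_x\tau\,d\xi)\partial_z v$, $(\int_0^z\partial_x v\,d\xi)\partial_z\tau$ and their $\Theta$-counterparts. Since Ladyzhenskaya's inequality is unavailable, I would bound each such term in $L^2$ through the anisotropic mixed norms of Lemma~\ref{mixed-norm}--\ref{mixed-norm-1}, precisely as in \eqref{ut-proof-333} and \eqref{S1-S2}; for example
\[
\norm{\left(\int_0^z\partial_x\tau\,d\xi\right)\partial_z v}_{L^2}^2\leq\norm{\partial_x\tau}_{L^2_zL^\infty_x}^2\norm{\partial_z v}_{L^\infty_zL^2_x}^2\leq 4\norm{\partial_x\tau}_{L^2}\norm{\partial_{xx}\tau}_{L^2}\norm{\partial_z v}_{L^2}\norm{\partial_{zz}v}_{L^2}.
\]
After a Cauchy--Schwarz and Young step, every top-order factor ($\partial_{xx}\tau=\partial_{xxx}v$, $\partial_{zz}\tau=\partial_{xzz}v$, and the $\zeta$-analogues) is absorbed into the dissipation with a small coefficient $\epsilon$, and what remains is a differential inequality of the form
\[
\frac{d}{dt}f+\lambda_1 f\leq\phi(t)\,f+g(t),\qquad f:=\norm{\nabla\tau}_{L^2}^2+\norm{\nabla\zeta}_{L^2}^2,
\]
whose coefficients $\phi$ and $g$ are polynomial expressions in the strictly lower-order norms estimated in Lemmas~\ref{v-l2-l2}--\ref{lemma-grad-1-6}; the coercive term $\lambda_1 f$ is produced by the Poincaré inequality, applicable since $\tau$ is $x$-periodic with zero mean and $\zeta$ vanishes at $z=0,1$.

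The final point, and the reason exponential decay rather than mere boundedness can be reached, is the time-integrability of these coefficients. By Lemmas~\ref{v-l2-l2}--\ref{lemma-grad-1-6} every factor entering $\phi$ is either exponentially decaying in time or only time-integrable (for instance the second-order norms $\norm{\partial_{xx}v}_{L^2}^2$ and $\norm{\partial_{zz}v}_{L^2}^2$, which are controlled in $L^2(0,\infty)$ by Lemmas~\ref{lemma-grad-1-3}, \ref{lemma-grad-1-5} and \ref{lemma-grad-1-6}); a Cauchy--Schwarz argument in $t$ then gives $\phi\in L^1(0,\infty)$. Integrating the differential inequality yields $f\in L^\infty(0,\infty)$ together with $\Delta(\partial_x v,\partial_x\Theta)\in L^2((0,\infty);L^2(D))$, which is \eqref{infty-vxx-delta-17-1}. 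To upgrade uniform boundedness to the exponential estimate \eqref{decy-vxz-vzz-17-1-1}, I would, exactly as in the proofs of Lemma~\ref{lemma-grad-1-4} and Lemma~\ref{lemma-grad-1-6}, write the forcing as $g(t)=D_1(t)e^{-\lambda_2 t}$ with $D_1\in L^1(0,\infty)$ and invoke Lemma~\ref{general-inequality1}. The subtlety is that several prefactors are only time-integrable rather than pointwise small, so the decay cannot be read off from a naive Gronwall bound and must instead be extracted through this exponential-weight decomposition.
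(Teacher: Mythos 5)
Your proposal is correct and follows essentially the same route as the paper: differentiate \eqref{ondimensional-equations-2} in $x$, run a second-order energy estimate on $(\partial_x v,\partial_x\Theta)$ with the $\mathrm{R}$-coupling terms cancelling after integration by parts (using $\int_0^1 v\,dz=0$ and the boundary conditions), control the nonlocal quadratic terms through the anisotropic mixed-norm bounds of Lemma~\ref{mixed-norm}--Lemma~\ref{mixed-norm-1}, and conclude boundedness by Gronwall and exponential decay via the weight decomposition $g(t)=D_1(t)e^{-\lambda_2 t}$ together with Lemma~\ref{general-inequality1}. The only deviation is that you test with both $\partial_{xx}$ and $\partial_{zz}$, whereas the paper tests the $(\omega,\theta)=(\partial_x v,\partial_x\Theta)$ system with $\partial_{xx}$ alone and inherits the remaining component $\partial_{xzz}$ of $\Delta\partial_x v$ from Lemma~\ref{lemma-grad-1-6}; your variant works, but you should verify the coupling cancellation under the $\partial_{zz}$ multiplier explicitly, which holds because $\int_D\left(\partial_z\tau\,\partial_x\zeta-\partial_x\tau\,\partial_z\zeta\right)dx\,dz=0$ by $x$-periodicity and $\zeta|_{z=0,1}=0$.
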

\begin{proof}
For $T_1 > T_0$, the pair $(\omega, \theta) := (\partial_x v, \partial_x \Theta)$ satisfies the system
   \begin{align}\label{three-proof--8-1-1}
\begin{cases}
\begin{aligned}
&\partial_t\omega=\text{Pr}_x\partial_{xx}\omega+
\text{Pr}_z\partial_{zz}\omega-\text{R}\left(
\int_0^z\partial_{x} \theta(t,x,\xi)\,d\xi\right)\\&-v \partial_x \omega+\left(\int_0^z\omega(t,x,\xi)\,d\xi \right)\partial_z\omega-\omega^2+\left(\int_0^z\partial_{x}\omega(t,x,\xi)\,d\xi \right)\partial_zv,
\end{aligned}\\
\begin{aligned}
&\partial_t\theta+v\partial_x \theta-\left(\int_0^z\partial_xv(t,x,\xi)\,d\xi \right)\partial_z \theta= \partial_{xx}\theta+
\kappa_{a}\partial_{zz}\theta\\
&+\text{R}\left(\int_0^z\partial_x\omega(t,x,\xi)\,d\xi \right)
-\omega \theta+\left(\int_0^z\partial_x\omega(t,x,\xi)\,d\xi \right)\partial_z \Theta,
\end{aligned}\\
\omega_z |_{z=0}=0,\quad \omega_z |_{z=1}=0,\\
\omega_z|_{t=0}=\omega_0.
\end{cases}
\end{align}

Multiplying the first equation in \eqref{three-proof--8-1-1} by $\partial_{xx} \omega$ and integrating by parts over the domain yields
 \begin{align}\label{nablaomega}
\begin{aligned}
&\frac{d}{dt}
\int_{D}\abs{\partial_x\omega}^2
\,dx\,dz
+2\text{Pr}_x\norm{\partial_{xx}\omega}_{L^2}^2
+2\text{Pr}_z \norm{\partial_{zx}\omega}_{L^2}^2\\&
\leq \epsilon 
\left(\norm{\partial_{xx}\omega}_{L^2}^2
+
\norm{\partial_{zz}\omega}_{L^2}^2\right)
+2\text{R}\left(
\int_0^z\partial_{x} \theta(t,x,\xi)\,d\xi,\partial_{xx}\omega\right)
\\
&\quad+K_1+K_2+K_3+K_4.
\end{aligned}
\end{align}
We now estimate the terms $K_j$ for $j = 1, \cdots, 4$. 
We observe that
\begin{align*}
\begin{aligned}
&\norm{v \partial_x \omega}_{L^2}^2 \leq \int_0^\alpha \left( \norm{v}_{L_z^\infty}^2 \int_0^1 (\partial_x \omega)^2 dz \right) dx 
\leq \norm{\partial_x \omega}_{L_x^\infty L_z^2}^2 \norm{v}_{L_x^2 L_z^\infty}^2, \\
&\norm{\partial_x \omega}_{L_x^\infty L_z^2}^2 \leq 2 \int_0^1 \left( \int_0^\alpha |\partial_x \omega  \partial_{xx} \omega| dx \right) dz 
\leq 2 \norm{\partial_x \omega}_{L^2} \norm{\partial_{xx} \omega}_{L^2}, \\
&\norm{v}_{L_x^2 L_z^\infty}^2 \leq 2 \int_0^\alpha \left( \int_0^1 |\partial_z v  \partial_{zz} v| dz \right) dx 
\leq 2 \norm{\partial_z v}_{L^2} \norm{v}_{L^2}.
\end{aligned}
\end{align*}
This, combined with Young’s inequality, yields the estimate for $K_1$
as follows
\begin{align}\label{kkkk-1}
\begin{aligned}
K_1 &:= C_\epsilon \norm{v \partial_x \omega}_{L^2}^2 
\leq \epsilon \norm{\partial_{xx} \omega}_{L^2}^2 
+ C_\epsilon \norm{\partial_z v}_{L^2}^2 \norm{v}_{L^2}^2 \norm{\partial_x \omega}_{L^2}^2.
\end{aligned}
\end{align}
Note that
\begin{align*}
\begin{aligned}
&\norm{ \left( \int_0^z \partial_x v(t,x,\xi) d\xi \right) \partial_z \omega }_{L^2}^2 \\
&\quad \leq \int_0^1 \int_0^\alpha \left( \norm{\partial_z \omega}_{L_x^\infty}^2 \left( \int_0^z \partial_x v(\xi) d\xi \right)^2 \right) dx dz \\
&\quad \leq C \norm{\partial_z \omega}_{L_z^2 L_x^\infty}^2 \norm{\partial_x v}_{L_x^2 L_z^\infty}^2, \\
&\norm{\partial_z \omega}_{L_z^2 L_x^\infty}^2 \leq 2 \int_D |\partial_z \omega  \partial_{zx} \omega| dx dz 
\leq 2 \norm{\partial_z \omega}_{L^2} \norm{\partial_{xz} \omega}_{L^2}, \\
&\norm{\partial_x v}_{L_x^2 L_z^\infty}^2 \leq 2 \norm{\partial_x v}_{L^2} \norm{\partial_{xz} v}_{L^2}.
\end{aligned}
\end{align*}
This, combined with Young’s inequality, yields the estimate for $K_2$
as follows
\begin{align}\label{kkkk-2}
\begin{aligned}
K_2 &:= C_\epsilon \norm{ \left( \int_0^z \partial_x v(t,x,\xi) d\xi \right) \partial_z \omega }_{L^2}^2 
\\&\leq \epsilon \norm{\partial_{xz} \omega}_{L^2}^2 
+ C_\epsilon \norm{\partial_z \omega}_{L^2}^2 \norm{\partial_x v}_{L^2}^2 \norm{\partial_{xz} v}_{L^2}^2.
\end{aligned}
\end{align}
For $K_3$, one has 
\begin{align}\label{kkkk-3}
\begin{aligned}
K_3 &:= C_\epsilon \norm{\omega^2}_{L^2}^2 
\leq C \norm{\partial_x v}_{L^2}^2 \norm{\partial_{xx} v}_{L^2} \norm{\partial_z \omega}_{L^2}.
\end{aligned}
\end{align}

Making use of the following estimates
\begin{align*}
\begin{aligned}
&\norm{ \left( \int_0^z \partial_{xx} v(t,x,\xi) d\xi \right) \partial_z v }_{L^2}^2 \\
&\quad \leq \int_0^1 \int_0^\alpha \left( \norm{\partial_z \omega}_{L_x^\infty}^2 \left( \int_0^z \partial_{xx} v(\xi) d\xi \right)^2 \right) dx dz \\
&\quad \leq C \norm{\partial_z \omega}_{L_z^2 L_x^\infty}^2 \norm{\partial_{xx} v}_{L_x^2 L_z^\infty}^2, \\
&\norm{\partial_z \omega}_{L_z^2 L_x^\infty}^2 \leq 2 \int_D |\partial_z \omega  \partial_{zx} \omega| dx dz 
\leq 2 \norm{\partial_z \omega}_{L^2} \norm{\partial_{xz} \omega}_{L^2}, \\
&\norm{\partial_x \omega}_{L_x^2 L_z^\infty}^2 \leq 2 \norm{\partial_x \omega}_{L^2} \norm{\partial_{xz} \omega}_{L^2}, 
\end{aligned}
\end{align*}
 and applying Young’s inequality,  the estimate for $K_2$ is as follows
 \begin{align}\label{kkkk-4}
\begin{aligned}
K_4 &:= C_\epsilon \norm{ \left( \int_0^z \partial_{xx} v(t,x,\xi) d\xi \right) \partial_z v }_{L^2}^2 
\\&\leq \epsilon \norm{\partial_{xz} \omega}_{L^2}^2 
+ C_\epsilon \norm{\partial_z \omega}_{L^2}^2 \norm{\partial_x \omega}_{L^2}^2 \norm{\partial_{xz} \omega}_{L^2}^2.
\end{aligned}
\end{align}

Multiplying the second equation in the system \eqref{three-proof--8-1-1} by $\partial_{xx} \theta$, integrating by parts and applying Young’s inequality, we obtain
\begin{align}\label{nablatheta}
\begin{aligned}
&\frac{d}{dt} \int_D |\partial_x \theta|^2 dx dz 
+ 2\norm{\partial_{xx} \theta}_{L^2}^2 
+ 2\kappa_a \norm{\partial_{zx} \theta}_{L^2}^2 \\
&\leq \epsilon \left( \norm{\partial_{xx} \theta}_{L^2}^2 + \norm{\partial_{zz} \theta}_{L^2}^2 \right) 
- 2\text{R} \left( \int_0^z \partial_x \omega(t,x,\xi) d\xi, \partial_{xx} \theta \right) \\
&\quad + C_\epsilon \norm{v \partial_x \theta}_{L^2}^2 
 + C_\epsilon \norm{ \left( \int_0^z \partial_x v(t,x,\xi) d\xi \right) \partial_z \theta }_{L^2}^2 \\& + C_\epsilon \norm{\omega \theta}_{L^2}^2+ C_\epsilon \norm{ \left( \int_0^z \partial_x \omega(t,x,\xi) d\xi \right) \partial_z \Theta }_{L^2}^2 \\
&\leq \epsilon \left( \norm{\partial_{xx} \theta}_{L^2}^2 + \norm{\partial_{zz} \theta}_{L^2}^2 \right) 
- 2\text{R} \left( \int_0^z \partial_x \omega(t,x,\xi) d\xi, \partial_{xx} \theta \right) \\
&\quad + J_1 + J_2 + J_3 + J_4.
\end{aligned}
\end{align}
We now estimate the terms $J_j$ for $j = 1, \cdots, 4$. 
We observe that
\begin{align*}
\begin{aligned}
&\norm{v \partial_x \theta}_{L^2}^2 \leq \int_0^\alpha \left( \norm{v}_{L_z^\infty}^2 \int_0^1 (\partial_x \theta)^2 dz \right) dx 
\leq \norm{\partial_x \theta}_{L_x^\infty L_z^2}^2 \norm{v}_{L_x^2 L_z^\infty}^2, \\
&\norm{\partial_x \theta}_{L_x^\infty L_z^2}^2 \leq 2 \int_0^1 \left( \int_0^\alpha |\partial_x \theta  \partial_{xx} \theta| dx \right) dz 
\leq 2 \norm{\partial_x \theta}_{L^2} \norm{\partial_{xx} \theta}_{L^2}, \\
&\norm{v}_{L_x^2 L_z^\infty}^2 \leq 2 \int_0^\alpha \left( \int_0^1 |\partial_z v  \partial_{zz} v| dz \right) dx 
\leq 2 \norm{\partial_z v}_{L^2} \norm{v}_{L^2},
\end{aligned}
\end{align*}
This, combined with Young’s inequality, yields the estimate for $J_1$
as follows
\begin{align}\label{jjjj-1}
\begin{aligned}
J_1 &:= C_\epsilon \norm{v \partial_x \theta}_{L^2}^2 
\leq \epsilon \norm{\partial_{xx} \theta}_{L^2}^2 
+ C_\epsilon \norm{\partial_z v}_{L^2}^2 \norm{v}_{L^2}^2 \norm{\partial_x \theta}_{L^2}^2.
\end{aligned}
\end{align}

Making use of 
\[
\begin{aligned}
&\norm{ \left( \int_0^z \partial_x v(t,x,\xi) d\xi \right) \partial_z \theta }_{L^2}^2 \\
&\quad \leq \int_0^1 \int_0^\alpha \left( \norm{\partial_z \theta}_{L_x^\infty}^2 \left( \int_0^z \partial_x v(\xi) d\xi \right)^2 \right) dx dz \\
&\quad \leq C \norm{\partial_z \theta}_{L_z^2 L_x^\infty}^2 \norm{\partial_x v}_{L_x^2 L_z^\infty}^2, \\
&\norm{\partial_z \theta}_{L_z^2 L_x^\infty}^2 \leq 2 \int_D |\partial_z \theta  \partial_{zx} \theta| dx dz 
\leq 2 \norm{\partial_z \theta}_{L^2} \norm{\partial_{xz} \theta}_{L^2}, \\
&\norm{\partial_x v}_{L_x^2 L_z^\infty}^2 \leq 2 \norm{\partial_x v}_{L^2} \norm{\partial_{xz} v}_{L^2},
\end{aligned}
\]
 and applying Young’s inequality,  the estimate for $J_2$ is as follows
\begin{align}\label{jjjj-2}
\begin{aligned}
J_2 &:= C_\epsilon \norm{ \left( \int_0^z \partial_x v(t,x,\xi) d\xi \right) \partial_z \theta }_{L^2}^2 
\\&\leq \epsilon \norm{\partial_{xz} \theta}_{L^2}^2 
+ C_\epsilon \norm{\partial_z \theta}_{L^2}^2 \norm{\partial_x v}_{L^2}^2 \norm{\partial_{xz} v}_{L^2}^2.
\end{aligned}
\end{align}

For $J_3$, we get
\begin{align}\label{jjjj-3}
\begin{aligned}
J_3 &:= C_\epsilon \norm{\omega \theta}_{L^2}^2 
\leq C_\epsilon \norm{\omega}_{L^2} \norm{\partial_x \omega}_{L^2} \norm{\theta}_{L^2} \norm{\partial_z \theta}_{L^2}.
\end{aligned}
\end{align}
We note that
\begin{align*}
&\begin{aligned}
&\norm{ \left( \int_0^z \partial_x \omega(t,x,\xi) d\xi \right) \partial_z \Theta }_{L^2}^2 \\
&\quad \leq \int_0^1 \int_0^\alpha \left( \norm{\partial_z \Theta}_{L_x^\infty}^2 \left( \int_0^z \partial_x \omega(\xi) d\xi \right)^2 \right) dx dz \\
&\quad \leq C \norm{\partial_z \Theta}_{L_z^2 L_x^\infty}^2 \norm{\partial_x \omega}_{L_x^\infty L_z^2}^2, \\
&\norm{\partial_z \Theta}_{L_x^2 L_z^\infty}^2 \leq 2 \int_D |\partial_z \Theta  \partial_{zz} \Theta| dx dz 
\leq 2 \norm{\partial_z \Theta}_{L^2} \norm{\partial_{zz} \Theta}_{L^2}, \\
&\norm{\partial_x \omega}_{L_x^\infty L_z^2}^2 \leq 2 \norm{\partial_x \omega}_{L^2} \norm{\partial_{xx} \omega}_{L^2}.
\end{aligned}
\end{align*}
This, combined with Young’s inequality, yields the estimate for $J_1$
as follows
\begin{align}\label{jjjj-4}
\begin{aligned}
J_4 &:= C_\epsilon \norm{ \left( \int_0^z \partial_x \omega(t,x,\xi) d\xi \right) \partial_z \Theta }_{L^2}^2 
\\&\leq \epsilon \norm{\partial_{xx} \omega}_{L^2}^2 
+ C_\epsilon \norm{\partial_x \omega}_{L^2}^2 \norm{\partial_z \Theta}_{L^2}^2 \norm{\partial_{zz} \Theta}_{L^2}^2,
\end{aligned}
\end{align}

Note that the cross terms exhibit the following cancellation:
\[
2\text{R} \left( \int_0^z \partial_x \theta(t,x,\xi) d\xi, \partial_{xx} \omega \right)
- 2\text{R} \left( \int_0^z \partial_x \omega(t,x,\xi) d\xi, \partial_{xx} \theta \right) = 0.
\]

Adding \eqref{nablaomega} and \eqref{nablatheta}, and using \eqref{kkkk-1}-\eqref{kkkk-4} and \eqref{jjjj-1}-\eqref{jjjj-4} with $\epsilon$ chosen sufficiently small, we obtain
\begin{align}\label{h2dtestimate-all-2}
\begin{aligned}
&\frac{d}{dt} \left( \int_D |\partial_x \omega|^2 dx dz + \int_D |\partial_x \theta|^2 dx dz \right) \\
&+ \text{Pr}_x \norm{\partial_{xx} \omega}_{L^2}^2 
+ \text{Pr}_z \norm{\partial_{zx} \omega}_{L^2}^2 
+ \norm{\partial_{xx} \theta}_{L^2}^2 
+ \kappa_a \norm{\partial_{zx} \theta}_{L^2}^2 \\
&\leq C \norm{\partial_z v}_{L^2}^2 \norm{v}_{L^2}^2 \norm{\partial_x \omega}_{L^2}^2 
+ C \norm{\partial_z \omega}_{L^2}^2 \norm{\partial_x v}_{L^2}^2 \norm{\partial_{xz} v}_{L^2}^2 \\
&\quad + C \norm{\partial_x v}_{L^2}^2 \norm{\partial_{xx} v}_{L^2} \norm{\partial_z \omega}_{L^2} 
+ C \norm{\partial_z \omega}_{L^2}^2 \norm{\partial_x \omega}_{L^2}^2 \norm{\partial_{xz} \omega}_{L^2}^2\\
&\quad +C \norm{\partial_z v}_{L^2}^2 \norm{v}_{L^2}^2 \norm{\partial_x \theta}_{L^2}^2+C
\norm{\partial_z \theta}_{L^2}^2 \norm{\partial_x v}_{L^2}^2 \norm{\partial_{xz} v}_{L^2}^2\\
&\quad +C
 \norm{\omega}_{L^2} \norm{\partial_x \omega}_{L^2} \norm{\theta}_{L^2} \norm{\partial_z \theta}_{L^2} 
 +C\norm{\partial_x \omega}_{L^2}^2 \norm{\partial_z \Theta}_{L^2}^2 \norm{\partial_{zz} \Theta}_{L^2}^2.
\end{aligned}
\end{align}
Furthermore, based on Lemma~\ref{v-l2-l2}--Lemma \ref{lemma-grad-1-6}, integrating \eqref{h2dtestimate-all-2} with respect to time establishes \eqref{infty-vxx-delta-17-1}. Finally, combining \eqref{h2dtestimate-all-2} and \eqref{infty-vxx-delta-17-1} and employing arguments similar to those used in the proof of \eqref{decy-vxz-vzz-16}, we obtain the exponential decay estimate \eqref{decy-vxz-vzz-17-1-1}.
\end{proof}

\begin{lemma}\label{lemma-grad-1-8}
Let $T_1 > T_0$ and consider any smooth solution $(v, \Theta)$ to system \eqref{ondimensional-equations-2} with initial data $(v_0, \Theta_0) \in H^2(D)$. Then there exists a constant $\lambda > 0$ such that
\begin{align}\label{decy-vxz-vzz-17-1}
\norm{\partial_x q}_{L^2(D)}^2 + \norm{\partial_t (v, \Theta)}_{L^2(D)}^2 \leq C C_0 e^{-2\lambda t} \norm{(v_0, \Theta_0)}_{H^2(D)}^2, \quad \forall t \geq 0,
\end{align}
where the constant $C_0$ depends on $\norm{v_0}_{H^1(D)}$ and $\norm{\Theta_0}_{L^2(D)}$.
\end{lemma}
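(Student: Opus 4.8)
The plan is to read off the pressure gradient and the time derivatives algebraically from the two evolution equations in \eqref{ondimensional-equations-2} and to bound every resulting term by quantities already known to decay. Combining Lemmas~\ref{v-l2-l2}--\ref{lemma-grad-1-7} gives a constant $\lambda>0$ with $\|v(t)\|_{H^2}^2+\|\Theta(t)\|_{H^2}^2\le CC_0 e^{-2\lambda t}\|(v_0,\Theta_0)\|_{H^2}^2$, and the two-dimensional embedding $H^2(D)\hookrightarrow L^\infty(D)$ then furnishes $\|v\|_{L^\infty},\|\Theta\|_{L^\infty}\le C\|(v,\Theta)\|_{H^2}$, decaying at the same rate. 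Every estimate below reduces to a product of two such exponentially decaying factors (hence decaying at least like $e^{-2\lambda t}$ after squaring) or to a single linear term bounded by the decaying $H^2$-norm.

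First I would recover $\partial_x q$. Since $\partial_x q$ is independent of $z$, I integrate the first equation of \eqref{ondimensional-equations-2} over $z\in(0,1)$. The constraint $\int_0^1 v\,dz=0$ forces $\int_0^1\partial_t v\,dz=\int_0^1\partial_{xx}v\,dz=0$, and the boundary condition $v_z|_{z=0,1}=0$ forces $\int_0^1\partial_{zz}v\,dz=0$, so the time-derivative and diffusion contributions drop out and
\[
\partial_x q=-\text{R}\int_0^1\!\!\int_0^z\partial_x\Theta\,d\xi\,dz-\int_0^1 v\partial_x v\,dz+\int_0^1\left(\int_0^z\partial_x v\,d\xi\right)\partial_z v\,dz.
\]
An integration by parts in $z$ in the last term, using $\int_0^1\partial_x v\,d\xi=0$ to kill the boundary contribution, shows it equals $-\int_0^1 v\partial_x v\,dz$, so the two nonlinear terms combine into $-\partial_x\!\int_0^1 v^2\,dz$. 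Bounding the first term by $C\|\partial_x\Theta\|_{L^2(D)}$ via Cauchy--Schwarz in $z$, and the second by $2\|v\|_{L^\infty}\|\partial_x v\|_{L^2(D)}$, yields $\|\partial_x q\|_{L^2(D)}^2\le CC_0 e^{-2\lambda t}\|(v_0,\Theta_0)\|_{H^2}^2$.

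Next I would estimate the time derivatives. Solving the two equations of \eqref{ondimensional-equations-2} for $\partial_t v$ and $\partial_t\Theta$ and taking $L^2$-norms, every right-hand term is either linear---$\partial_{xx}v,\partial_{zz}v,\int_0^z\partial_x\Theta\,d\xi,\partial_x q$ for $v$, and $\partial_{xx}\Theta,\partial_{zz}\Theta,\int_0^z\partial_x v\,d\xi$ for $\Theta$, the nonlocal integrals being controlled by $\|\partial_x\Theta\|_{L^2}$ resp. $\|\partial_x v\|_{L^2}$ through Cauchy--Schwarz in $z$---or nonlinear: $v\partial_x v$ and $(\int_0^z\partial_x v\,d\xi)\partial_z v$ for $v$, and $v\partial_x\Theta$, $(\int_0^z\partial_x v\,d\xi)\partial_z\Theta$ for $\Theta$. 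The nonlinear terms are handled by the same mixed-norm device used throughout Section~\ref{theorem21}; for example $\|v\partial_x\Theta\|_{L^2}\le\|v\|_{L^\infty}\|\partial_x\Theta\|_{L^2}$ and $\|(\int_0^z\partial_x v\,d\xi)\partial_z\Theta\|_{L^2}\le C\|\partial_x v\|_{L^2}\|\partial_z\Theta\|_{L^{\infty}_{z}L^{2}_{x}}$, each a product of exponentially decaying factors. Collecting all contributions gives the stated bound on $\|\partial_t(v,\Theta)\|_{L^2(D)}^2$.

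The step I expect to require the most care is the recovery and control of $\partial_x q$: unlike $v$ and $\Theta$, the pressure enjoys neither an evolution equation nor diffusive smoothing, so its bound rests entirely on the vertical-average identity and the cancellations forced by $\int_0^1 v\,dz=0$ and $v_z|_{z=0,1}=0$. The crux is verifying that the surviving nonlinear averages are genuinely dominated by the decaying $H^2$-norm rather than merely bounded; once $\partial_x q$ is shown to decay, it feeds into the $\partial_t v$ estimate and the remaining time-derivative bounds follow routinely.
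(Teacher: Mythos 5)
Your overall strategy is sound and, for the time derivatives, essentially identical to the paper's: both read off $\partial_t(v,\Theta)$ from the equations and bound everything by the decaying $H^2$-norms. For the pressure you take a genuinely different (and arguably cleaner) route. The paper multiplies the $v$-equation by $\partial_x q$, kills the term $\int_D \partial_t v\,\partial_x q\,dx\,dz$ using $\int_0^1 v\,dz=0$, and then absorbs $\norm{\partial_x q}_{L^2}^2$ by Young's inequality; you instead average the $v$-equation in $z$, use the same cancellations ($\int_0^1\partial_t v\,dz=\int_0^1\partial_{zz}v\,dz=0$), and obtain the pointwise identity
\begin{align*}
\partial_x q \;=\; -\mathrm{R}\int_0^1\!\!\int_0^z \partial_x\Theta\,d\xi\,dz \;-\; \partial_x\!\int_0^1 v^2\,dz,
\end{align*}
where your integration by parts (valid since $\int_0^1\partial_x v\,d\xi=0$) correctly collapses the two nonlinear averages. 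This explicit reconstruction is slightly sharper than the paper's duality argument and avoids the absorption step; both rest on exactly the same structural facts, so the two proofs of the pressure decay are interchangeable.

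There is, however, one concrete error in your time-derivative step: the inequality
$\norm{\left(\int_0^z\partial_x v\,d\xi\right)\partial_z\Theta}_{L^2}\le C\norm{\partial_x v}_{L^2}\norm{\partial_z\Theta}_{L^{\infty}_{z}L^{2}_{x}}$
is false. Applying H\"older in $x$ at each fixed $z$ forces the pairing $L^\infty_x$--$L^2_x$, and with $\partial_z\Theta$ measured in $L^\infty_z L^2_x$ the other factor must be $\norm{\int_0^z\partial_x v\,d\xi}_{L^2_z L^\infty_x}$, which is \emph{not} controlled by $\norm{\partial_x v}_{L^2}$. A concentration example makes this quantitative: take $v=a(x)c(z)$ with $\int_0^1 c\,dz=0$ and $\partial_z\Theta=\abs{a'(x)}\phi(z)$, where $a'$ is a spike of height $M$ and width $\epsilon$ (plus a compensating bump for periodicity); then the left-hand side scales like $\norm{a'}_{L^4}^2\sim M^2\epsilon^{1/2}$ while the right-hand side scales like $\norm{a'}_{L^2}^2\sim M^2\epsilon$, so the ratio blows up as $\epsilon\to0$. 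The repair is immediate and stays within the paper's toolkit: use either the paper's pairing $\norm{\partial_x v}_{L^2_zL^\infty_x}\norm{\partial_z\Theta}_{L^\infty_zL^2_x}$ (exactly the estimate for $S_2$ in the proof of Lemma~\ref{lemma-grad-1-5}) or the alternative $\norm{\partial_x v}_{L^2}\norm{\partial_z\Theta}_{L^2_zL^\infty_x}$; in either case Lemma~\ref{mixed-norm} and Lemma~\ref{mixed-norm-1} bound the mixed norms by products of $L^2$-norms of first and second derivatives, all of which decay by Lemmas~\ref{v-l2-l2}--\ref{lemma-grad-1-7}, and your argument then closes as intended.
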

\begin{proof}
Multiplying the equation \eqref{ondimensional-equations-2}$_1$ by $\partial_x q$ in $L^2(D)$, we obtain
\begin{align}\label{xqq}
\begin{aligned}
\norm{\partial_x q}_{L^2}^2 
&= \int_D \left( \text{Pr}_x \partial_{xx} v + \text{Pr}_z \partial_{zz} v \right) \partial_x q  dx dz - \int_D \partial_t v  \partial_x q  dx dz \\
&\quad - \int_D \left( v \partial_x v - \left( \int_0^z \partial_x v(t,x,\xi) d\xi \right) \partial_z v \right) \partial_x q  dx dz \\
&\quad - \text{R} \int_D \left( \partial_x \left( \int_0^z \Theta(t,x,\xi) d\xi \right) \right) \partial_x q  dx dz.
\end{aligned}
\end{align}

Note that the time derivative term vanishes due to the incompressibility condition:
\begin{align*}
\int_D \partial_t v  \partial_x q  dx dz 
&= \int_D \left( \partial_z \left( \int_0^z v_t(t,x,\xi) d\xi \right) \right) \partial_x q  dx dz \\
&= - \int_D \left( \int_0^z v_t(t,x,\xi) d\xi \right) \partial_z (\partial_x q)  dx dz = 0.
\end{align*}
Thus, we have the estimate
\begin{align*}
\norm{\partial_x q}_{L^2}^2 
&\leq \frac{1}{2} \norm{\partial_x q}_{L^2}^2 
+ C \left( \norm{v}_{H^2}^2 + \norm{\Theta}_{L^2}^2 \right) \\
&\quad + C \norm{v}_{L^2} \norm{\partial_z v}_{L^2} \norm{\partial_x v}_{L^2} \norm{\partial_{xx} v}_{L^2} \\
&\quad + C_\epsilon \norm{\partial_z v}_{L^2} \norm{\partial_{xx} v}_{L^2} \norm{\partial_{zz} v}_{L^2} \norm{\partial_x v}_{L^2}.
\end{align*}
Combining this with the estimates in Lemmas~\ref{v-l2-l2}--\ref{lemma-grad-1-7}, there exists a constant $\lambda > 0$ such that
\begin{align}\label{decy-vxz-vzz-17-q}
\norm{\partial_x q}_{L^2(D)}^2 \leq C C_0 e^{-2\lambda t} \norm{(v_0, \Theta_0)}_{H^2(D)}^2,
\end{align}
where the constant $C_0$ depends on $\norm{v_0}_{H^1(D)}$.

Furthermore, from the system \eqref{ondimensional-equations-2}, we estimate the time derivatives:
\begin{align*}
\norm{\partial_t (v, \Theta)}_{L^2(D)} 
&\leq C \norm{(v, \Theta)}_{H^2(D)}  + \norm{ v \partial_x v - \left( \int_0^z \partial_x v(t,x,\xi) d\xi \right) \partial_z v }_{L^2(D)} \\
&\quad + \norm{ v \partial_x \Theta - \left( \int_0^z \partial_x v(t,x,\xi) d\xi \right) \partial_z \Theta }_{L^2(D)} \\
&\leq C \norm{(v, \Theta)}_{H^2(D)} \leq C C_0 e^{-2\lambda t} \norm{(v_0, \Theta_0)}_{H^2(D)}^2.
\end{align*}
Combining \eqref{decy-vxz-vzz-17-q} with the time derivative estimate completes the proof of \eqref{decy-vxz-vzz-17-1}.
\end{proof}

\subsection{Case $T_1=T_0$}
For case $T_1 = T_0$, Theorem \ref{global-stability} can be derived by
the following 
Lemma~\ref{lemma-grad-1-1-cases2}--Lemma \ref{lemma-grad-1-8-cases2}.

\subsubsection{$L^2$-Estimates and Global Stability}
\begin{lemma}\label{lemma-grad-1-1-cases2}
Let $T_1 = T_0$ and consider any smooth solution $(v, \Theta)$ to the system \eqref{ondimensional-equations-2} with initial data $(v_0, \Theta_0) \in L^2(D)$. Then the following regularity estimates hold:
\begin{align}\label{u21-1-2}
\begin{aligned}
&v \in L^\infty\big( (0,\infty); L^2(D) \big) \cap L^2\big( (0,\infty); H^1(D) \big), \\
&\Theta \in L^\infty\big( (0,\infty); L^2(D) \big) \cap L^2\big( (0,\infty); H^1(D) \big),
\end{aligned}
\end{align}
and there exists a constant $\lambda > 0$ such that
\begin{align}\label{case-2-decay-theta-v-2}
\norm{v(t)}_{L^2(D)}^2 + \norm{\Theta(t)}_{L^2(D)}^2 \leq e^{-2\lambda t} \left( \norm{v_0}_{L^2(D)}^2 + \norm{\Theta_0}_{L^2(D)}^2 \right), \quad \forall t \geq 0.
\end{align}
\end{lemma}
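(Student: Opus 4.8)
The plan is to exploit the structural simplification that occurs when $T_1=T_0$: since $\text{Sgn}(T_0-T_1)=0$ in \eqref{parameters-2}, the forcing term $-\text{Sgn}(T_0-T_1)\text{R}\left(\int_0^z\partial_xv\,d\xi\right)$ vanishes from the temperature equation, so $\Theta$ now satisfies a \emph{source-free} advection--diffusion equation that decouples from $v$. This is precisely the feature absent in the regime $T_1>T_0$: there the buoyancy term in the $v$-equation and the source term in the $\Theta$-equation both carried the factor $\text{R}$ and cancelled exactly when the two $L^2$ identities were added (as in Lemma~\ref{v-l2-l2}), leaving a dissipation-only identity. Here that cancellation is broken --- the buoyancy coefficient reduces to $\text{R}^0=1$ while the temperature source disappears --- so a residual coupling term survives and no symmetric combined estimate is available. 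I would therefore argue in two ordered steps rather than treating $v$ and $\Theta$ together.

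First I would establish autonomous exponential decay of $\norm{\Theta}_{L^2}$. Testing $\eqref{ondimensional-equations-2}_2$ with $\Theta$, the two advective contributions combine, after integration by parts in $z$ using $\Theta|_{z=0,1}=0$, into $\tfrac12\int_D\partial_x v\,\Theta^2$ and $-\tfrac12\int_D\partial_x v\,\Theta^2$, which cancel; this is just the skew-symmetry of transport by the divergence-free field $(v,w)$. One is left with the clean identity
\begin{align*}
\tfrac12\tfrac{d}{dt}\norm{\Theta}_{L^2}^2=-\norm{\partial_x\Theta}_{L^2}^2-\kappa_a\norm{\partial_z\Theta}_{L^2}^2,
\end{align*}
and since $\Theta$ vanishes on $z=0,1$ the Poincaré inequality gives $\tfrac{d}{dt}\norm{\Theta}_{L^2}^2\le-2\lambda_1\norm{\Theta}_{L^2}^2$ for some $\lambda_1>0$, hence $\norm{\Theta(t)}_{L^2}^2\le e^{-2\lambda_1 t}\norm{\Theta_0}_{L^2}^2$.

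Second, I would close the estimate for $v$ with this decay acting as a forcing. Testing $\eqref{ondimensional-equations-2}_1$ with $v$, the advection again cancels, and the buoyancy--pressure terms reduce, after integrating by parts in $x$ and then in $z$ (the boundary terms vanishing because $\int_0^z\partial_x v\,d\xi$ equals zero at $z=0$ and, by the constraint $\int_0^1 v\,dz=0$, also at $z=1$), to the single coupling term $-\int_D\Theta\left(\int_0^z\partial_x v\,d\xi\right)$. Bounding $\norm{\int_0^z\partial_x v\,d\xi}_{L^2}\le C\norm{\partial_x v}_{L^2}$ by Cauchy--Schwarz in $z$ and applying Young's inequality absorbs half of the horizontal dissipation and leaves a term $C\norm{\Theta}_{L^2}^2$; the vertical Poincaré inequality (valid since $\int_0^1 v\,dz=0$) then yields
\begin{align*}
\tfrac{d}{dt}\norm{v}_{L^2}^2\le-2\lambda_2\norm{v}_{L^2}^2+Ce^{-2\lambda_1 t}\norm{\Theta_0}_{L^2}^2.
\end{align*}
Applying the Gronwall-type Lemma~\ref{general-inequality1} transfers the exponential decay to $v$, and choosing $\lambda<\min\{\lambda_1,\lambda_2\}$ (to absorb the polynomial prefactor arising in the resonant case $\lambda_1=\lambda_2$) gives \eqref{case-2-decay-theta-v-2}. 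The regularity \eqref{u21-1-2} then follows by integrating the two $L^2$ identities in time, since the time integrals of all dissipation terms are controlled by the initial energy.

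The main obstacle is exactly the loss of the exact source--buoyancy cancellation: one cannot imitate the symmetric argument of the $T_1>T_0$ case to obtain a single closed Gronwall inequality for $\norm{v}_{L^2}^2+\norm{\Theta}_{L^2}^2$. The remedy is to recognise that the temperature equation is now self-contained, decay it first, and feed that decay into the velocity estimate through the surviving coupling term; all remaining computations (advection cancellation, Poincaré, Young's inequality) are routine.
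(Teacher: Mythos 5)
Your proposal is correct and follows essentially the same route as the paper's own proof: exploiting that the temperature equation becomes source-free when $T_1 = T_0$, you first derive the autonomous exponential decay of $\norm{\Theta}_{L^2}$ via the energy identity and Poincar\'e, then feed it as a forcing into the $v$-estimate through the surviving coupling term $-\int_D \Theta\left(\int_0^z \partial_x v\,d\xi\right)$ and close with the Gronwall-type Lemma~\ref{general-inequality1}. Your observation that the symmetric cancellation of the $T_1 > T_0$ case is broken, forcing this sequential two-step argument, is exactly the structural point the paper makes at the start of its proof.
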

\begin{proof}
The mathematical structure of the system \eqref{ondimensional-equations-2} in the regime $T_1 > T_0$ is fundamentally different from that in the case $T_1 = T_0$. Consequently, the proof of the present lemma requires an approach distinct from the one used for Lemma~\ref{v-l2-l2}. First, we establish the exponential decay of $\Theta$. Multiplying the equation \eqref{ondimensional-equations-2}$_2$ by $\Theta$ and integrating by parts over $D$, we obtain
\begin{align}\label{t1=t0-10}
\frac{d}{dt} \int_D \Theta^2 dx dz = -2 \int_D |\partial_x \Theta|^2 dx dz - 2\kappa_a \int_D |\partial_z \Theta|^2 dx dz.
\end{align}
The Poincaré inequality then yields the existence of $\lambda > 0$ such that
\begin{align}\label{case2-Theta-l2}
\norm{\Theta(t)}_{L^2(D)}^2 \leq e^{-\lambda t} \norm{\Theta_0}_{L^2(D)}^2, \quad \forall t \geq 0.
\end{align}

Next, we prove the exponential decay of $v$. Multiplying equation \eqref{ondimensional-equations-2}$_1$ by $v$ and integrating by parts gives
\begin{align}\label{t1=t0-3}
\begin{aligned}
\frac{d}{dt} \int_D v^2 dx dz 
&= -2\text{Pr}_x \int_D |\partial_x v|^2 dx dz - 2\text{Pr}_z \int_D |\partial_z v|^2 dx dz \\
&\quad - 2 \int_D \left( \partial_x \left( \int_0^z \Theta(t,x,\xi) d\xi \right) + \partial_x q(t,x) \right) v dx dz \\
&= -2\text{Pr}_x \int_D |\partial_x v|^2 dx dz - 2\text{Pr}_z \int_D |\partial_z v|^2 dx dz \\
&\quad - 2 \int_D \left( \Theta \int_0^z \partial_x v(t,x,\xi) d\xi \right) dx dz \\
&\leq -\text{Pr}_x \int_D |\partial_x v|^2 dx dz - 2\text{Pr}_z \int_D |\partial_z v|^2 dx dz \\
&\quad + C \int_D |\Theta|^2 dx dz.
\end{aligned}
\end{align}
Here, the pressure term vanishes due to the incompressibility condition and boundary conditions.

Combining \eqref{t1=t0-3} with the decay estimate \eqref{case2-Theta-l2} and applying the generalized Gronwall inequality (Lemma~\ref{general-inequality1}), we obtain
\begin{align}\label{t1=t0-1}
\norm{v(t)}_{L^2(D)}^2 \leq C e^{-\lambda t} \left( \norm{v_0}_{L^2(D)}^2 + \norm{\Theta_0}_{L^2(D)}^2 \right), \quad \forall t \geq 0.
\end{align}

The combination of \eqref{case2-Theta-l2} and \eqref{t1=t0-1} establishes the decay estimate \eqref{case-2-decay-theta-v-2}. The regularity properties \eqref{u21-1-2} follow from the time integration of the corresponding differential inequalities.
\end{proof}

\subsubsection{$H^1$-Estimates and Global Stability}

\begin{lemma}\label{lemma-grad-1-2-cases2}
Let $T_1 = T_0$ and consider any smooth solution $(v, \Theta)$ to the system \eqref{ondimensional-equations-2} with initial data $(v_0, \Theta_0) \in L^2(D)$ and $\partial_z v_0 \in L^2(\Omega)$. Then the following regularity estimates hold:
\begin{align}\label{infty-2}
\partial_z v \in L^\infty\big( (0,\infty); L^2(D) \big), \quad
\nabla \partial_z v \in L^2\big( (0,\infty); L^2(D) \big),
\end{align}
and there exist constants $C > 0$ and $\lambda > 0$ such that
\begin{align}\label{infty-decay-2}
\norm{\partial_z v(t)}_{L^2(D)}^2 \leq C e^{-2\lambda t} \left( \norm{(v_0, \Theta_0)}_{L^2(D)}^2 + \norm{\partial_z v_0}_{L^2(D)}^2 \right), \quad \forall t \geq 0.
\end{align}
\end{lemma}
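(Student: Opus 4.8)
The plan is to follow the energy-method strategy already employed for Lemma~\ref{lemma-grad-1-2}, adapting it to the structural simplifications that occur when $T_1=T_0$. Setting $\sigma=\partial_z v$ and differentiating the first equation of \eqref{ondimensional-equations-2} in $z$, I obtain that $\sigma$ solves an equation of exactly the same form as \eqref{three-proof--8}, except that the buoyancy coefficient is $\text{R}^{\abs{\text{Sgn}(T_0-T_1)}}=1$ rather than $\text{R}$, namely
\begin{align*}
\partial_t\sigma=\text{Pr}_x\partial_{xx}\sigma+\text{Pr}_z\partial_{zz}\sigma-\partial_x\Theta-v\partial_x\sigma+\left(\int_0^z\partial_xv(t,x,\xi)\,d\xi\right)\partial_z\sigma,
\end{align*}
together with the homogeneous boundary conditions $\sigma|_{z=0}=\sigma|_{z=1}=0$ inherited from $v_z|_{z=0,1}=0$. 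The derivation uses only that the cross term $\sigma\,\partial_x v$ produced by differentiating the two advection terms cancels identically.

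Next I would test this equation against $\sigma$ in $L^2(D)$. The two transport terms integrate to zero: writing the advecting field as $(v,w)$ with $w=-\int_0^z\partial_xv\,d\xi$, one has $\partial_xv+\partial_zw=0$, and the boundary flux vanishes because $\sigma=0$ on $z=0,1$ while all fields are $x$-periodic. This yields the energy identity
\begin{align*}
\frac{1}{2}\frac{d}{dt}\norm{\sigma}_{L^2}^2=-\text{Pr}_x\norm{\partial_x\sigma}_{L^2}^2-\text{Pr}_z\norm{\partial_z\sigma}_{L^2}^2+\int_D\Theta\,\partial_x\sigma\,dx\,dz.
\end{align*}
Applying Young's inequality to the coupling term so as to absorb a small multiple of $\norm{\partial_x\sigma}_{L^2}^2$ into the dissipation, and then invoking the Poincaré inequality (valid since $\sigma$ vanishes on $z=0,1$), I arrive at a differential inequality of the form $\tfrac{d}{dt}\norm{\sigma}_{L^2}^2\le-2\lambda\norm{\sigma}_{L^2}^2+C\norm{\Theta}_{L^2}^2$.

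Finally, the exponential decay of the source is supplied by Lemma~\ref{lemma-grad-1-1-cases2}, which gives $\norm{\Theta(t)}_{L^2}^2\le e^{-\lambda t}\norm{\Theta_0}_{L^2}^2$; feeding this into the generalized Gronwall inequality (Lemma~\ref{general-inequality1}) yields the decay bound \eqref{infty-decay-2} (after a harmless reduction of the rate to avoid a resonant $t e^{-\lambda t}$ factor should the two rates coincide), and integrating the differential inequality in time produces the regularity \eqref{infty-2}. The only point genuinely requiring care—and the main obstacle—is the cancellation of the nonlocal transport terms; once the divergence-free structure of $(v,w)$ and the boundary conditions are exploited, the argument is identical in spirit to Lemma~\ref{lemma-grad-1-2}, the simplification being that here the temperature equation decouples from $v$ entirely, so that the decay of $\norm{\Theta}_{L^2}$ follows directly from pure diffusion rather than from the coupled $L^2$ balance.
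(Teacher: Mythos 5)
Your proposal is correct and follows essentially the same route as the paper: the paper's proof of this lemma simply invokes the argument of Lemma~\ref{lemma-grad-1-2}, i.e., the energy estimate for $\sigma=\partial_z v$ from the system \eqref{three-proof--8} (here with buoyancy coefficient $\mathrm{R}^{0}=1$), cancellation of the nonlocal transport terms via incompressibility and the boundary conditions, Young and Poincar\'e inequalities, and then the decay of $\norm{\Theta}_{L^2}$ from Lemma~\ref{lemma-grad-1-1-cases2} fed into the generalized Gronwall inequality of Lemma~\ref{general-inequality1}. Your observation that for $T_1=T_0$ the temperature decay comes from pure diffusion (the $\Theta$-equation decoupling from $v$) matches the paper's treatment in Lemma~\ref{lemma-grad-1-1-cases2}, so no genuinely different machinery is involved.
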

\begin{proof}
This result can be proved using the same approach as in the proof of 
Lemma~\ref{lemma-grad-1-2}.
\end{proof}

\begin{lemma}\label{lemma-grad-1-3-cases2}
Let $T_1 = T_0$ and consider any smooth solution $(v, \Theta)$ to the system \eqref{ondimensional-equations-2} with initial data $(v_0, \Theta_0) \in H^2(D)$ and $\nabla v_0 \in L^2(\Omega)$. Then the horizontal derivative satisfies
\begin{align}\label{case-2-v_x-infty-2}
\partial_x v \in L^\infty\big( (0,\infty); L^2(D) \big) \cap L^2\big( (0,\infty); H^1(D) \big).
\end{align}
\end{lemma}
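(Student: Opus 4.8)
The plan is to follow the template of Lemma~\ref{lemma-grad-1-3}, which transfers directly because the momentum equation \eqref{ondimensional-equations-2}$_1$ retains the same structure when $T_1 = T_0$: the buoyancy coefficient merely degenerates from $\text{R}$ to $\text{R}^{0} = 1$, while the thermal forcing in the $\Theta$-equation drops out entirely. Concretely, I would multiply \eqref{ondimensional-equations-2}$_1$ by $\partial_{xx} v$ and integrate by parts over $D$ to obtain
\[
\frac{d}{dt}\norm{\partial_x v}_{L^2}^2 + 2\text{Pr}_x \norm{\partial_{xx} v}_{L^2}^2 + 2\text{Pr}_z \norm{\partial_{xz} v}_{L^2}^2 = H_1 + H_2 + H_3,
\]
where $H_1$ and $H_2$ are the convective and nonlocal nonlinear contributions handled exactly as in \eqref{estimateh1}--\eqref{estimateh2}, and $H_3 = \int_D \partial_x \Theta\left(\int_0^z \partial_{xx} v(t,x,\xi)\,d\xi\right)dx\,dz$ is the buoyancy term (now without the factor $\text{R}$).

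Reusing the mixed-norm bounds of Lemma~\ref{mixed-norm}--Lemma~\ref{mixed-norm-1} and absorbing the highest-order quantities via Young's inequality, this reduces to the differential inequality
\[
\frac{d}{dt}\norm{\partial_x v}_{L^2}^2 + \text{Pr}_x \norm{\partial_{xx} v}_{L^2}^2 + \text{Pr}_z \norm{\partial_{xz} v}_{L^2}^2 \leq \phi(t)\norm{\partial_x v}_{L^2}^2 + h(t),
\]
with $\phi(t) = C\norm{\partial_z v}_{L^2}^2\big(\norm{v}_{L^2}^2 + \norm{\partial_{zz} v}_{L^2}^2\big)$ and $h(t) = C\norm{\partial_x \Theta}_{L^2}^2$, precisely as in the proof of Lemma~\ref{lemma-grad-1-3}.

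The closure rests on $\phi, h \in L^1(0,\infty)$, and the necessary inputs are already available in this regime. Lemma~\ref{lemma-grad-1-2-cases2} provides $\nabla \partial_z v \in L^2((0,\infty);L^2(D))$ together with the uniform bounds on $\norm{v}_{L^2}$ and $\norm{\partial_z v}_{L^2}$, so that $\int_0^\infty \phi(t)\,dt \leq C\int_0^\infty \norm{\nabla \partial_z v}_{L^2}^2\,dt < \infty$; and Lemma~\ref{lemma-grad-1-1-cases2} yields $\Theta \in L^2((0,\infty);H^1(D))$, whence $\int_0^\infty h(t)\,dt < \infty$. Since the hypothesis $(v_0,\Theta_0) \in H^2(D)$ guarantees $\partial_x v_0 \in L^2(D)$, Gronwall's inequality gives $\partial_x v \in L^\infty((0,\infty);L^2(D))$, and integrating the differential inequality in time then delivers $\partial_x v \in L^2((0,\infty);H^1(D))$, which is \eqref{case-2-v_x-infty-2}.

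I do not anticipate a substantive obstacle, since the case $T_1 = T_0$ in fact decouples the thermal forcing: the temperature enters the $v$-equation only through buoyancy, and the resulting term $h(t)$ is controlled solely by the space-time integrability of $\partial_x \Theta$, with no need for its pointwise-in-time decay (that sharper bound is deferred to the subsequent step of the five-step iteration). The only point requiring genuine care is confirming $\phi \in L^1(0,\infty)$, which hinges on pairing the uniform bounds on $\norm{v}_{L^2}$ and $\norm{\partial_z v}_{L^2}$ with the time-integrability $\int_0^\infty \norm{\partial_{zz} v}_{L^2}^2\,dt < \infty$ — exactly the information furnished by Lemma~\ref{lemma-grad-1-2-cases2}.
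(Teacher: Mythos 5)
Your proposal is correct and follows essentially the same route as the paper, which proves this lemma simply by invoking the argument of Lemma~\ref{lemma-grad-1-3}: you reproduce that argument faithfully (same test function $\partial_{xx}v$, same estimates for $H_1$--$H_3$, same differential inequality with $\phi,h\in L^1(0,\infty)$, closed by Gronwall), and you correctly verify that the required inputs are supplied in the $T_1=T_0$ regime by Lemmas~\ref{lemma-grad-1-1-cases2} and~\ref{lemma-grad-1-2-cases2}. The only minor slip is attribution: the uniform bound on $\norm{v}_{L^2}$ and the time-integrability $\int_0^\infty\norm{\partial_z v}_{L^2}^2\,dt<\infty$ come from Lemma~\ref{lemma-grad-1-1-cases2} (not Lemma~\ref{lemma-grad-1-2-cases2}, which supplies the uniform bound on $\norm{\partial_z v}_{L^2}$ and $\nabla\partial_z v\in L^2((0,\infty);L^2(D))$), and this pairing is exactly what makes $\phi\in L^1(0,\infty)$, so the argument is unaffected.
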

\begin{proof}
The proof follows from arguments similar to those used in the proof of Lemma~\ref{lemma-grad-1-3}.
\end{proof}

\begin{lemma}\label{lemma-grad-1-4-cases2}
Let $T_1 = T_0$ and consider any smooth solution $(v, \Theta)$ to the system \eqref{ondimensional-equations-2} with initial data $(v_0, \Theta_0) \in H^1(D)$. Then the vertical derivative of temperature satisfies
\begin{align}\label{infty-thetaz-2}
\partial_z \Theta \in L^\infty\big( (0,\infty); L^2(D) \big) \cap L^2\big( (0,\infty); H^1(D) \big),
\end{align}
and there exists a constant $\lambda > 0$ such that
\begin{align}\label{decaythetaz-2}
\norm{\partial_z \Theta(t)}_{L^2(D)}^2 \leq C C_0 e^{-2\lambda t} \norm{(v_0, \Theta_0)}_{H^1(D)}^2, \quad \forall t \geq 0,
\end{align}
where the constant $C_0$ depends on $\norm{v_0}_{H^1(D)}$ and $\norm{\Theta_0}_{L^2(D)}$.
\end{lemma}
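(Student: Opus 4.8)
The plan is to mirror the argument of Lemma~\ref{lemma-grad-1-4}, exploiting the crucial structural simplification that for $T_1 = T_0$ the temperature equation \eqref{ondimensional-equations-2}$_2$ carries no buoyancy source: since $\text{Sgn}(T_0 - T_1) = 0$, the term $-\text{Sgn}(T_0-T_1)\text{R}\left(\int_0^z\partial_x v\,d\xi\right)$ vanishes identically. First I would test the temperature equation against $\partial_{zz}\Theta$ and integrate by parts over $D$. Using $\Theta|_{z=0,1}=0$ (whence $\partial_t\Theta$ and $\partial_x\Theta$ also vanish on $z=0,1$) together with the identity $\int_D \partial_{xx}\Theta\,\partial_{zz}\Theta\,dx\,dz = \norm{\partial_{xz}\Theta}_{L^2}^2$, this yields
\begin{align*}
\frac{d}{dt}\norm{\partial_z\Theta}_{L^2}^2 + 2\norm{\partial_{xz}\Theta}_{L^2}^2 + 2\kappa_a\norm{\partial_{zz}\Theta}_{L^2}^2 = I_2 + I_3,
\end{align*}
where $I_2 = 2\int_D v\,\partial_x\Theta\,\partial_{zz}\Theta\,dx\,dz$ and $I_3 = -2\int_D \left(\int_0^z\partial_x v\,d\xi\right)\partial_z\Theta\,\partial_{zz}\Theta\,dx\,dz$. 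The decisive point is that the term $I_1 = -2\text{R}\int_D v\,\partial_{zx}\Theta\,dx\,dz$ present in Lemma~\ref{lemma-grad-1-4} is now absent, so the present estimate is a strict simplification of that case.

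Next I would estimate $I_2$ and $I_3$ exactly as in Lemma~\ref{lemma-grad-1-4}, via the mixed-norm bounds of Lemma~\ref{mixed-norm}--Lemma~\ref{mixed-norm-1}. For $I_3$ one first integrates by parts in $z$ to rewrite it as $\int_D\partial_x v\,(\partial_z\Theta)^2\,dx\,dz$ and then in $x$ to obtain $-2\int_D v\,\partial_z\Theta\,\partial_{xz}\Theta\,dx\,dz$, controlling $\norm{v\,\partial_z\Theta}_{L^2}$ by $\norm{\partial_z\Theta}_{L_x^\infty L_z^2}\norm{v}_{L_x^2 L_z^\infty}$; for $I_2$ one bounds $\norm{v\,\partial_x\Theta}_{L^2}$ by $\norm{\partial_x\Theta}_{L_x^2 L_z^\infty}\norm{v}_{L_x^\infty L_z^2}$. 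After absorbing the resulting second-order factors with a small $\epsilon$, this produces a differential inequality of the form
\begin{align*}
\frac{d}{dt}\norm{\partial_z\Theta}_{L^2}^2 + \kappa_a\norm{\partial_{zz}\Theta}_{L^2}^2 + \norm{\partial_{xz}\Theta}_{L^2}^2 \leq \phi(t)\norm{\partial_z\Theta}_{L^2}^2 + \tilde g(t),
\end{align*}
with $\phi(t) = C\norm{v}_{L^2}\norm{\partial_z v}_{L^2}\left(1+\norm{v}_{L^2}\norm{\partial_z v}_{L^2}\right)$ from $I_3$ and $\tilde g(t) = C\norm{v}_{L^2}^2\left(\norm{v}_{L^2}+\norm{\partial_x v}_{L^2}\right)^2\norm{\partial_x\Theta}_{L^2}^2$ from $I_2$. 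All inputs needed to control these are already available: the exponential decay of $\norm{v}_{L^2}$ and $\norm{\partial_z v}_{L^2}$ from Lemma~\ref{lemma-grad-1-1-cases2} and Lemma~\ref{lemma-grad-1-2-cases2}, the uniform boundedness of $\norm{\partial_x v}_{L^2}$ from Lemma~\ref{lemma-grad-1-3-cases2}, and the time-integrability $\norm{\partial_x\Theta}_{L^2}^2 \in L^1(0,\infty)$ furnished by Lemma~\ref{lemma-grad-1-1-cases2}.

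The conclusion then proceeds in two stages. Since $\phi \in L^1(0,\infty)$ (it decays like $e^{-2\lambda t}$) and $\tilde g \in L^1(0,\infty)$ (a bounded factor times the integrable $\norm{\partial_x\Theta}_{L^2}^2$), Gronwall's inequality yields the uniform bound $\partial_z\Theta \in L^\infty\big((0,\infty);L^2(D)\big)$, and time-integration of the differential inequality then gives $\nabla\partial_z\Theta \in L^2\big((0,\infty);L^2(D)\big)$, establishing \eqref{infty-thetaz-2}. For the sharper decay \eqref{decaythetaz-2}, I would use the boundedness just obtained to replace $\norm{\partial_z\Theta}_{L^2}^2$ in the coefficient term $\phi(t)\norm{\partial_z\Theta}_{L^2}^2$ by its uniform bound, so that the whole right-hand side can be written as $e^{-\lambda_2 t}D_1(t)$ with $D_1 \in L^1(0,\infty)$, after factoring out a single power of the slowest-decaying quantity; Lemma~\ref{general-inequality1} then delivers the exponential decay.

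The main obstacle is not a new analytic difficulty but the bookkeeping imposed by the dependency ordering. At this step $\partial_x\Theta$ (and $\partial_x v$) are known only to be bounded, not to decay, so their contributions to the source must be absorbed through time-integrability, $\norm{\partial_x\Theta}_{L^2}^2 \in L^1$, rather than pointwise decay; meanwhile the self-coupling term $\phi(t)\norm{\partial_z\Theta}_{L^2}^2$ must first be treated by Gronwall to secure boundedness before the exponential rate can be extracted through the $e^{-\lambda_2 t}D_1(t)$ decomposition. Checking that every second-order term is absorbable into the dissipation with a uniform small $\epsilon$, and that the boundary integrations by parts are justified, completes the argument.
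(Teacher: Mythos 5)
Your proposal is correct and follows essentially the same route as the paper: the paper proves this lemma by simply invoking the argument of Lemma~\ref{lemma-grad-1-4}, and your proof reproduces that argument—testing against $\partial_{zz}\Theta$, the identical mixed-norm estimates for $I_2$ and $I_3$ via Lemma~\ref{mixed-norm}--Lemma~\ref{mixed-norm-1}, then time-integration and Gronwall for \eqref{infty-thetaz-2} followed by the $e^{-\lambda_2 t}D_1(t)$ decomposition and Lemma~\ref{general-inequality1} for \eqref{decaythetaz-2}—while correctly identifying the only structural change, namely that the buoyancy-induced term $I_1$ disappears because $\mathrm{Sgn}(T_0-T_1)=0$. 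Your explicit two-stage handling of the self-coupling term (uniform boundedness first, then extraction of the exponential rate) is precisely the bookkeeping on which the paper's Lemma~\ref{lemma-grad-1-4} implicitly relies, so no gap remains.
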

\begin{proof}
This result can be proved using the same approach as in the proof of 
Lemma~\ref{lemma-grad-1-4}.
\end{proof}

\begin{lemma}\label{lemma-grad-1-5-cases2}
Let $T_1 = T_0$ and consider any smooth solution $(v, \Theta)$ to the system \eqref{ondimensional-equations-2} with initial data $(v_0, \Theta_0) \in H^1(D)$. Then the horizontal derivatives satisfy the regularity estimate
\begin{align}\label{cases-2-theta-v-infty-xx-33}
(\partial_x v, \partial_x \Theta) \in L^\infty\big( (0,\infty); L^2(D) \big) \cap L^2\big( (0,\infty); H^1(D) \big).
\end{align}
Moreover, there exist constants $\lambda > 0$ and $C \geq 1$ such that
\begin{align}\label{cases-2-theta-v-decay-x}
\norm{\partial_x v(t)}_{L^2(D)}^2 + \norm{\partial_x \Theta(t)}_{L^2(D)}^2 \leq C C_0 e^{-2\lambda t} \norm{(v_0, \Theta_0)}_{H^1(D)}^2, \quad \forall t \geq 0,
\end{align}
where the constant $C_0$ depends on $\norm{v_0}_{H^1(D)}$ and $\norm{\Theta_0}_{L^2(D)}$.
\end{lemma}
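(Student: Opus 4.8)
The plan is to exploit a structural simplification that is special to the case $T_1 = T_0$: in this regime the temperature equation $\eqref{ondimensional-equations-2}_2$ loses its linear velocity-forcing term, since $\text{Sgn}(T_0-T_1)\,\text{R} = 0$, so $\Theta$ receives no source from $v$. This destroys the symmetric buoyancy coupling that, for $T_1 > T_0$, produced the cross-term cancellation underpinning the \emph{simultaneous} estimate of $(\partial_x v, \partial_x\Theta)$ in Lemma~\ref{lemma-grad-1-5}. I would therefore estimate the two horizontal derivatives \emph{sequentially}: first prove the exponential decay of $\norm{\partial_x\Theta}_{L^2}$, which now closes autonomously, and then insert this decay as a source into the velocity equation to control $\norm{\partial_x v}_{L^2}$. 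All estimates draw on the inputs already secured for this regime: the uniform bound on $\norm{\partial_x v}_{L^2}$ (Lemma~\ref{lemma-grad-1-3-cases2}), the exponential decays of $\norm{v}_{L^2}$, $\norm{\partial_z v}_{L^2}$, $\norm{\partial_z\Theta}_{L^2}$ (Lemmas~\ref{lemma-grad-1-1-cases2}, \ref{lemma-grad-1-2-cases2}, \ref{lemma-grad-1-4-cases2}), and the $L^2(0,\infty)$-in-time bounds on $\norm{\partial_{xx}v}_{L^2}$ and $\norm{\partial_{zz}\Theta}_{L^2}$ furnished by those same lemmas.

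For the first step I would multiply $\eqref{ondimensional-equations-2}_2$ by $\partial_{xx}\Theta$ and integrate by parts. Because the velocity-forcing term is absent, the right-hand side reduces exactly to the two advection terms $S_1 = 2\int_D v\,\partial_x\Theta\,\partial_{xx}\Theta$ and $S_2 = -2\int_D\left(\int_0^z\partial_x v\,d\xi\right)\partial_z\Theta\,\partial_{xx}\Theta$ already analysed in Lemma~\ref{lemma-grad-1-5}. Using the mixed-norm bounds of Lemma~\ref{mixed-norm}--Lemma~\ref{mixed-norm-1}, $S_1$ yields a term $\Phi(t)\norm{\partial_x\Theta}_{L^2}^2$ with $\Phi(t) = C\norm{v}_{L^2}^2\norm{\partial_z v}_{L^2}^2 \in L^1(0,\infty)$. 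For $S_2$ the crucial departure from Lemma~\ref{lemma-grad-1-5} is that, lacking any velocity dissipation to absorb an $\norm{\partial_{xx}v}_{L^2}^2$ term, I would \emph{not} apply Young's inequality to the factor $\norm{\partial_{xx}v}_{L^2}$ arising from $\norm{\partial_x v}_{L_z^2 L_x^\infty}^2 \le 2\norm{\partial_x v}_{L^2}\norm{\partial_{xx}v}_{L^2}$; instead I keep it to the first power, so that $S_2$ contributes a pure source $G(t) = C\norm{\partial_z\Theta}_{L^2}\left(\norm{\partial_z\Theta}_{L^2} + \norm{\partial_{zz}\Theta}_{L^2}\right)\norm{\partial_x v}_{L^2}\norm{\partial_{xx}v}_{L^2}$. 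A bookkeeping check, factoring out the exponentially decaying $\norm{\partial_z\Theta}_{L^2}$ and applying Cauchy--Schwarz in time to the two $L^2(0,\infty)$-factors $\norm{\partial_{zz}\Theta}_{L^2}$ and $\norm{\partial_{xx}v}_{L^2}$, shows $G(t) = e^{-\lambda_2 t}D(t)$ with $D \in L^1(0,\infty)$. Since $\partial_x\Theta|_{z=0,1} = 0$, the Poincaré inequality converts part of the dissipation $2\kappa_a\norm{\partial_{xz}\Theta}_{L^2}^2$ into a coercive term $2\lambda\norm{\partial_x\Theta}_{L^2}^2$, producing
\[
\frac{d}{dt}\norm{\partial_x\Theta}_{L^2}^2 + 2\lambda\norm{\partial_x\Theta}_{L^2}^2 \leq \Phi(t)\norm{\partial_x\Theta}_{L^2}^2 + G(t),
\]
whence the generalized Gronwall inequality (Lemma~\ref{general-inequality1}) delivers the exponential decay of $\norm{\partial_x\Theta}_{L^2}^2$, and time integration gives $\partial_x\Theta \in L^2\big((0,\infty); H^1(D)\big)$.

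For the second step I would multiply $\eqref{ondimensional-equations-2}_1$ by $\partial_{xx}v$ and integrate by parts. The advective contributions $H_1, H_2$ are identical to those in Lemma~\ref{lemma-grad-1-3} and reduce to $\epsilon\norm{\partial_{xx}v}_{L^2}^2$ plus a coefficient $\Psi(t) = C\norm{\partial_z v}_{L^2}^2\big(\norm{v}_{L^2}^2 + \norm{\partial_{zz}v}_{L^2}^2\big) \in L^1(0,\infty)$ multiplying $\norm{\partial_x v}_{L^2}^2$; the buoyancy term, which here carries coefficient $1$ rather than $\text{R}$, is bounded by $\epsilon\norm{\partial_{xx}v}_{L^2}^2 + C_\epsilon\norm{\partial_x\Theta}_{L^2}^2$, a source that decays exponentially by the first step. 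Because $\int_0^1\partial_x v\,dz = 0$, the Poincaré inequality again provides a coercive term $2\lambda\norm{\partial_x v}_{L^2}^2$ from the dissipation $2\text{Pr}_z\norm{\partial_{xz}v}_{L^2}^2$, and a second application of the generalized Gronwall inequality yields the exponential decay of $\norm{\partial_x v}_{L^2}^2$. Combining the two steps gives both the decay estimate \eqref{cases-2-theta-v-decay-x} and, after time integration, the regularity \eqref{cases-2-theta-v-infty-xx-33}. I expect the main obstacle to lie not in any individual estimate---each mirrors a computation already carried out for $T_1 > T_0$---but in the sequential architecture and the accompanying bookkeeping: one must verify at each stage that the generated source is a decaying factor times an $L^1(0,\infty)$-factor, which hinges on carefully distinguishing the quantities that decay exponentially from those that are merely square-integrable in time, and on resisting the temptation to absorb $\norm{\partial_{xx}v}_{L^2}$ into a velocity dissipation that is unavailable in the standalone temperature estimate.
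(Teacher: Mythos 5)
Your proposal is correct and follows essentially the same route as the paper: the paper's proof of Lemma~\ref{lemma-grad-1-5-cases2} is likewise sequential, first closing the $\partial_{xx}\Theta$-estimate autonomously (terms $L_1,L_2$ in \eqref{theta-11}, bounded with the same mixed-norm inequalities from Lemma~\ref{mixed-norm}--Lemma~\ref{mixed-norm-1}) to get exponential decay of $\norm{\partial_x\Theta}_{L^2}$ via Lemma~\ref{general-inequality1}, and then feeding that decay as a source $C_\epsilon\norm{\partial_x\Theta}_{L^2}^2$ into the $\partial_{xx}v$-estimate \eqref{v-11}. The one genuine divergence is exactly the point you flagged: for the nonlocal term in the temperature step, the paper applies a second Young inequality, splitting $C_\epsilon\norm{\partial_x v}_{L^2}\norm{\partial_{xx}v}_{L^2}\norm{\partial_z\Theta}_{L^2}\norm{\partial_{zz}\Theta}_{L^2}$ into $\epsilon\norm{\partial_{xx}v}_{L^2}^2 + C_\epsilon\norm{\partial_z\Theta}_{L^2}^2\norm{\partial_x v}_{L^2}^2\norm{\partial_{zz}\Theta}_{L^2}^2$, and the $\epsilon\norm{\partial_{xx}v}_{L^2}^2$ term---which the temperature dissipation cannot absorb and which is only $L^1$ in time, not exponentially weighted---then silently disappears from the final inequality \eqref{lambda-1}; your choice to keep $\norm{\partial_{xx}v}_{L^2}$ at first power and close by Cauchy--Schwarz in time against the $L^2(0,\infty)$-factor $\norm{\partial_{zz}\Theta}_{L^2}$ avoids this defect entirely and is the cleaner closure. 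The remaining micro-differences (your $H_2$-coefficient $\norm{\partial_z v}_{L^2}^2\norm{\partial_{zz}v}_{L^2}^2\norm{\partial_x v}_{L^2}^2$ versus the paper's $\norm{\partial_z v}_{L^2}^2\norm{\partial_x v}_{L^2}^4$, and your explicit invocation of Poincar\'e for coercivity where the paper leaves it implicit) are immaterial, since in each case the source retains an exponentially decaying factor times an $L^1(0,\infty)$-factor, as required by Lemma~\ref{general-inequality1}.
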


\begin{proof}
The proof of this lemma is different from that of Lemma~\ref{lemma-grad-1-5}
due to $T_1=T_0$. We firstly show the decay of $\norm{\partial_x \Theta}_{L^2}^2$. Multiplying the equation $\eqref{ondimensional-equations-2}_2$  by $\Theta_{xx}$, integrating by parts, we get
\begin{align}\label{theta-11}
\begin{aligned}
&\frac{d}{dt}\norm{\partial_x \Theta}_{L^2}^2
+ 2\int_{D}\abs{\partial_{xx}\Theta}^2
+2\kappa_{a} \int_{D}\abs{\partial_{xz}\Theta}^2\\
&=2\int_{D}
\left(v \partial_x \Theta-\left(\int_0^z\partial_xv(t,x,\xi)\,d\xi \right)\partial_z \Theta
\right)\partial_{xx} \Theta\,dx\,dz=:L_1+L_2.
\end{aligned}
\end{align}
For $L_1$, we have
\begin{align}
\begin{aligned}
L_1&=2\int_{D}
(v \partial_x \Theta)\partial_{xx}\Theta\,dx\,dz
\\&\leq 2\norm{\partial_{xx} \Theta}_{L^2}
\norm{v\partial_x\Theta}_{L^2}
\leq\epsilon \norm{\partial_{xx} \Theta}_{L^2}^2+
C_{\epsilon}\norm{v\partial_x\Theta}_{L^2}^2\\
&\leq\epsilon \norm{\partial_{xx} \Theta}_{L^2}^2+
\norm{v}_{L^2}
 \norm{
\partial_zv}_{L^2}
\norm{
\partial_x\Theta}_{L^2}
\norm{
\partial_{xx}\Theta}_{L^2}
\end{aligned}
\end{align}
where we have used Young’s inequality and the following estimates
    \begin{align}
&\begin{aligned}
&\norm{v\partial_x\Theta}_{L^2}^2
\leq
\norm{\partial_x\Theta}_{L^{\infty}_{x}L^2_{z}}^2
\norm{v}_{L^2_{x}L^{\infty}_{z}}^2,\\
&\norm{\partial_x\Theta}
_{L^{\infty}_{x}L^2_{z}}^2
\leq 2\int_{D}\abs{\partial_x\Theta
\partial_{xx}\Theta}\,dx\,dz
\leq 2\norm{
\partial_x\Theta}_{L^2}
\norm{
\partial_{xx}\Theta}_{L^2},\\
&\norm{v}_{L^2_{x}L^{\infty}_{z}}^2\leq 
2\norm{
\partial_zv}_{L^2}
\norm{v}_{L^2}.
\end{aligned}
\end{align}
For $L_2$, we have
    \begin{align}
&\begin{aligned}
L_2&=-2\int_{D}
\left(\int_0^z\partial_xv(t,x,\xi)\,d\xi \right)\partial_z \Theta
\partial_{xx} \Theta\,dx\,dz\\&\leq
 2\norm{\partial_{xx} \Theta}_{L^2}\norm{\left(\int_0^z\partial_xv(t,x,\xi)\,d\xi \right)\partial_z\Theta}_{L^2}\\
&\leq  
\epsilon \norm{\partial_{xx} \Theta}_{L^2}^2+
 C_{\epsilon}
 \norm{
\partial_xv}_{L^2}
\norm{
\partial_{xx}v}_{L^2}
\norm{
\partial_z\Theta}_{L^2}
\norm{
\partial_{zz}\Theta}_{L^2}\\
&\leq 
\epsilon \norm{\partial_{xx} \Theta}_{L^2}^2
+\epsilon\norm{\partial_{xx}v}_{L^2}^2+
C_{\epsilon}
\norm{
\partial_z\Theta}_{L^2}^2
 \norm{
\partial_xv}_{L^2}^2
\norm{
\partial_{zz}\Theta}_{L^2}^2,
\end{aligned}
\end{align}
where we have used Young’s inequality and the following estimates
    \begin{align}
&\begin{aligned}
&\begin{aligned}
 \norm{
\left(\int_0^z\partial_xv(t,x,\xi)\,d\xi \right)\partial_z\Theta}_{L^2}^2
&\leq \int_0^1
\left( \left(\int_0^z\norm{\partial_xv}_{L_x^{\infty}}(\xi)\,d\xi
\right)^2
\int_{0}^{\alpha}\abs{\partial_z\Theta}^2\,dx\right)\,dz\\
&\leq C \norm{\partial_z\Theta}^2_{L_z^{\infty}L_x^{2}}
\norm{\partial_xv}_{L_z^{2}L_x^{\infty}}^2,
\end{aligned}\\
&\norm{\partial_z\Theta}
_{L^{\infty}_{z}L^2_{x}}^2
\leq 2\int_{D}\abs{\partial_z\Theta
\partial_{zz}\Theta}\,dx\,dz
\leq 2\norm{
\partial_z\Theta}_{L^2}
\norm{
\partial_{zz}\Theta}_{L^2},\\
&\norm{\partial_xv}_{L_z^{2}L_x^{\infty}}^2\leq 2\norm{
\partial_xv}_{L^2}
\norm{
\partial_{xx}v}_{L^2},
\end{aligned}
\end{align}

Taking small enough $\epsilon$,there exists a $\lambda>0$ such that
\begin{align}\label{lambda-1}
\begin{aligned}
&\frac{d}{dt}\norm{\partial_x \Theta}_{L^2}^2+
 \int_{D}\abs{\partial_{xx}\Theta}^2
+\kappa_{a} \int_{D}\abs{\partial_{xz}\Theta}^2\leq 
e^{-\lambda t}
R(t),
\end{aligned}
\end{align}
where $R(t)$ is given by
\begin{align}
\begin{aligned}
 R(t):=&e^{\lambda t}\norm{v}_{L^2}^2
 \norm{
\partial_zv}_{L^2}^2
\norm{
\partial_x\Theta}_{L^2}^2\\
&+C_{\epsilon}
e^{\lambda t}
\norm{
\partial_z\Theta}_{L^2}^2
 \norm{
\partial_xv}_{L^2}^2
\norm{
\partial_{zz}\Theta}_{L^2}^2\in L^1(0,\infty).
\end{aligned}
\end{align}
This, together with Lemma \ref{general-inequality1}, yields that 
there exist $\lambda>0$ and $C\geq 1$ such that
\begin{align}
\norm{\partial_x \Theta}_{L^2}^2\leq 
Ce^{-2\lambda t}
\left(\norm{\partial_x \Theta_0}_{L^2}^2
+\norm{\Theta_0}_{L^2}^2
+\norm{v_0}_{L^2}^2\right).
\end{align}

Multiplying equation $\eqref{ondimensional-equations-2}_1$ by $v_{xx}$, after an integration by part, we get
\begin{align}\label{v-11}
\begin{aligned}
&\frac{d}{dt}
\norm{\partial_x v}_{L^2}^2
+2\text{Pr}_x \int_{D}\abs{\partial_{xx}v}^2
+2\text{Pr}_z \int_{D}\abs{\partial_{xz}v}^2
\\&=2\int_{D}
\left(v \partial_x v-\left(\int_0^z\partial_xv(t,x,\xi)\,d\xi \right)\partial_z v
\right)\partial_{xx} v\,dx\,dz\\
&\quad-2\int_{D}\left(\partial_x \left(
\int_0^z\Theta(t,x,\xi)\,d\xi\right)+\partial_xq(t,x)\right)
\partial_z\left(\int_0^zv_{xx}
(t,x,\xi)\,d\xi\right)
\,dx\,dz
\\&:=S_1+S_2+S_3.
\end{aligned}
\end{align}

For $S_1+S_2$, we have
\begin{align}
\begin{aligned}
S_1+S_2=&
\epsilon \norm{\partial_{xx} v}_{L^2}^2+\epsilon \norm{\partial_{xz} v}_{L^2}^2+C_{\epsilon}
\norm{v}_{L^2}^2
 \norm{
\partial_zv}_{L^2}^2
\norm{
\partial_xv}_{L^2}^2
\\
&+
 C_{\epsilon}
 \norm{
\partial_zv}_{L^2}^2
\norm{
\partial_xv}_{L^2}^4.
\end{aligned}
\end{align}

For $S_3$, we have
\begin{align}
\begin{aligned}
S_3&=-2\int_{D}\left(\text{R}_0\partial_x \left(
\int_0^z\Theta(t,x,\xi)\,d\xi\right)+\partial_xq(t,x)\right)
\partial_z\left(\int_0^zv_{xx}
(t,x,\xi)\,d\xi\right)
\,dx\,dz\\
&=2\int_{D}\partial_x\Theta
\left(\int_0^zv_{xx}
(t,x,\xi)\,d\xi\right)
\,dx\,dz
\\&\leq 2\norm{\partial_{xx} v}_{L^2}
\norm{\partial_x\Theta}_{L^2}
\leq\epsilon \norm{\partial_{xx} v}_{L^2}^2+
C_{\epsilon}\norm{\partial_x\Theta}_{L^2}^2.
\end{aligned}
\end{align}
Taking small enough $\epsilon$, there exists a $\lambda>0$ such that
\begin{align}\label{lambda-2}
\begin{aligned}
&\frac{d}{dt}
\norm{\partial_x v}_{L^2}^2
+\text{Pr}_x \int_{D}\abs{\partial_{xx}v}^2
+\text{Pr}_z \int_{D}\abs{\partial_{xz}v}^2\leq 
e^{-\lambda t}
r(t),
\end{aligned}
\end{align}
where $r(t)$ is given by
\begin{align}
\begin{aligned}
 r(t):=&e^{\lambda t}
C_{\epsilon}
\norm{v}_{L^2}^2
 \norm{
\partial_zv}_{L^2}^2
\norm{
\partial_xv}_{L^2}^2
\\
&+e^{\lambda t}
 C_{\epsilon}
 \norm{
\partial_zv}_{L^2}^2
\norm{
\partial_xv}_{L^2}^4
+C_{\epsilon}e^{\lambda t}\norm{\partial_x\Theta}_{L^2}^2\in L^1(0,\infty).
\end{aligned}
\end{align}
This, together with Lemma \ref{general-inequality1}, yields that 
there exist $\lambda>0$ and $C\geq 1$ such that
\begin{align*}
\norm{\partial_x v}_{L^2}^2\leq
CC_0
e^{-2\lambda t}
\norm{(v_0,\Theta_0)}_{H^1(D)}^2.
\end{align*}
by which, \eqref{theta-11} and \eqref{v-11}, we have \eqref{cases-2-theta-v-decay-x}.
  \end{proof}
  
\subsubsection{$H^2$-Estimates and Global Stability}

\begin{lemma}\label{cases-2-lemma-grad-1-6}
Let $T_1 =T_0$ and consider any smooth solution $(v, \Theta)$ to the system \eqref{ondimensional-equations-2} with initial data $(v_0, \Theta_0) \in H^2(D)$. Then the following regularity estimates hold:
\begin{align}\label{vh2-infty-16-case-2}
\partial_z(\nabla v, \nabla \Theta) \in L^\infty\big( (0,\infty); L^2(D) \big), \quad
\Delta(\partial_z v, \partial_z \Theta) \in L^2\big( (0,\infty); L^2(D) \big).
\end{align}
Moreover, there exists a constant $\lambda > 0$ such that
\begin{align}\label{decy-vxz-vzz-16-case-2}
\norm{\partial_z (\nabla v, \nabla \Theta)(t)}_{L^2(D)}^2 \leq C C_0 e^{-2\lambda t} \norm{(v_0, \Theta_0)}_{H^2(D)}^2, \quad \forall t \geq 0,
\end{align}
where the constant $C_0$ depends on $\norm{v_0}_{H^1(D)}$ and $\norm{\Theta_0}_{L^2(D)}$.
\end{lemma}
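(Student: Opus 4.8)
The plan is to mirror the two-stage strategy of Lemma~\ref{lemma-grad-1-6}, working with the vertical derivatives $\sigma:=\partial_z v$ and $\eta:=\partial_z\Theta$, but to adapt it to the one-sided coupling that appears precisely when $T_1=T_0$. Differentiating \eqref{ondimensional-equations-2} in $z$ and using $\text{Sgn}(T_0-T_1)=0$, so that the buoyancy coefficient $\text{R}^{|\text{Sgn}(T_0-T_1)|}$ collapses to $1$ and the thermal source $-\text{Sgn}(T_0-T_1)\text{R}(\int_0^z\partial_xv\,d\xi)$ vanishes, one finds that $\sigma$ solves
\begin{align*}
\partial_t\sigma-\text{Pr}_x\partial_{xx}\sigma-\text{Pr}_z\partial_{zz}\sigma
=-\partial_x\Theta-v\partial_x\sigma+\left(\int_0^z\partial_xv(t,x,\xi)\,d\xi\right)\partial_z\sigma,
\end{align*}
while $\eta$ solves
\begin{align*}
\partial_t\eta-\partial_{xx}\eta-\kappa_a\partial_{zz}\eta
=-v\partial_x\eta+\left(\int_0^z\partial_xv(t,x,\xi)\,d\xi\right)\partial_z\eta-\sigma\partial_x\Theta+\eta\,\partial_xv.
\end{align*}
The decisive difference from the regime $T_1>T_0$ is that the $\eta$-equation carries no linear buoyant source $\text{R}\,\partial_xv$ (compare \eqref{thetaz-eta}); consequently the antisymmetric cancellation $2\text{R}(\partial_x\Theta,\partial_{xx}\sigma+\partial_{zz}\sigma)-2\text{R}(\partial_xv,\partial_{xx}\eta+\partial_{zz}\eta)=0$ exploited in Lemma~\ref{lemma-grad-1-6} is unavailable here. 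I therefore estimate the two quantities sequentially rather than additively, in the order $\nabla\eta$ then $\nabla\sigma$ flagged in the key-ideas discussion.

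First I would test the $\eta$-equation against $\partial_{xx}\eta$ and $\partial_{zz}\eta$, integrate by parts, and keep the dissipation $\|\partial_{xx}\eta\|_{L^2}^2+\|\partial_{zz}\eta\|_{L^2}^2+\|\partial_{xz}\eta\|_{L^2}^2$ on the left. All four forcing terms on the right — the advection terms $v\partial_x\eta$ and $(\int_0^z\partial_xv\,d\xi)\partial_z\eta$ and the quadratic terms $\sigma\partial_x\Theta$, $\eta\,\partial_xv$ — are purely nonlinear, so each is bounded by the anisotropic mixed-norm inequalities of Lemma~\ref{mixed-norm}--Lemma~\ref{mixed-norm-1} exactly as in Lemma~\ref{lemma-grad-1-6}, after which Young's inequality returns a small multiple of each second-order norm to the left. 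This produces a differential inequality of the form $\frac{d}{dt}\|\nabla\eta\|_{L^2}^2+\lambda_1\|\nabla\eta\|_{L^2}^2\le e^{-\lambda_2 t}D_1(t)$ with $D_1\in L^1(0,\infty)$, whose coefficients are assembled from the already-decaying lower-order norms furnished by Lemma~\ref{lemma-grad-1-1-cases2}--Lemma~\ref{lemma-grad-1-5-cases2}. The generalized Gronwall inequality (Lemma~\ref{general-inequality1}) then yields the exponential decay of $\|\nabla\eta\|_{L^2}=\|\partial_z\nabla\Theta\|_{L^2}$.

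With $\nabla\eta$ controlled, I would test the $\sigma$-equation against $\partial_{xx}\sigma$ and $\partial_{zz}\sigma$. The only linear source is $-\partial_x\Theta$ (coefficient $1$, not $\text{R}$), which after integration by parts contributes $2(\partial_x\Theta,\partial_{xx}\sigma+\partial_{zz}\sigma)$; since $\|\partial_x\Theta\|_{L^2}$ decays exponentially by Lemma~\ref{lemma-grad-1-5-cases2}, Young's inequality bounds this by $\epsilon(\|\partial_{xx}\sigma\|_{L^2}^2+\|\partial_{zz}\sigma\|_{L^2}^2)+C_\epsilon\|\partial_x\Theta\|_{L^2}^2$, the first part absorbed into the dissipation and the second an admissible $e^{-\lambda_2 t}$-type remainder. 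The nonlinear advection terms $v\partial_x\sigma$ and $(\int_0^z\partial_xv\,d\xi)\partial_z\sigma$ are handled by the same mixed-norm machinery as in Lemma~\ref{lemma-grad-1-6}. A second application of Lemma~\ref{general-inequality1} gives the exponential decay of $\|\nabla\sigma\|_{L^2}=\|\partial_z\nabla v\|_{L^2}$. Combining the two bounds establishes $\partial_z(\nabla v,\nabla\Theta)\in L^\infty((0,\infty);L^2(D))$, while integrating the two differential inequalities in time yields $\Delta(\partial_z v,\partial_z\Theta)\in L^2((0,\infty);L^2(D))$, which is \eqref{vh2-infty-16-case-2}; the assembled rates give \eqref{decy-vxz-vzz-16-case-2}.

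The hard part will be the bookkeeping required to extract genuine exponential decay rather than mere uniform boundedness. Because the symmetric cancellation is gone, I must verify that every nonlinear product in the $\nabla\eta$ estimate carries at least one factor drawn from the exponentially decaying lower-order norms, so that it can be written as $e^{-\lambda_2 t}$ times an $L^1(0,\infty)$ function before being fed into Lemma~\ref{general-inequality1}. The most delicate term is $\sigma\partial_x\Theta$, which genuinely couples $\partial_z v$ and its derivatives to the temperature gradient: splitting $\|\sigma\partial_x\Theta\|_{L^2}^2\le\|\sigma\|_{L^2_xL^\infty_z}^2\|\partial_x\Theta\|_{L^\infty_xL^2_z}^2$ and bounding the mixed norms introduces the second-order velocity quantity $\|\partial_{zz}v\|_{L^2}=\|\partial_z\sigma\|_{L^2}$, which is only known in $L^2(0,\infty;L^2(D))$ from Lemma~\ref{lemma-grad-1-2-cases2}. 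The key is to treat this as the integrable-in-time coefficient while pairing it with the exponentially decaying $\|\partial_x\Theta\|_{L^2}$ of Lemma~\ref{lemma-grad-1-5-cases2}, so that the product remains an $e^{-\lambda t}$ times $L^1$ source; this is precisely the structural reason for estimating $\nabla\eta$ before $\nabla\sigma$.
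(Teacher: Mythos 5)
Your proposal is correct and follows essentially the same route as the paper's proof: the paper likewise notes that the $\pm\mathrm{R}$ cross-term cancellation of the $T_1>T_0$ case is unavailable at $T_1=T_0$, estimates $\nabla\eta$ first from the equation \eqref{thetaz-eta-cases} using the identical mixed-norm split $\norm{\sigma\partial_x\Theta}_{L^2}^2\leq \norm{\sigma}_{L^2_xL^\infty_z}^2\norm{\partial_x\Theta}_{L^\infty_xL^2_z}^2$ (so that $\norm{\partial_{zz}v}_{L^2}$ enters as the integrable-in-time coefficient paired with exponentially decaying factors), and then closes the $\nabla\sigma$ estimate in \eqref{nablasigma-cases-2} with the linear forcing absorbed as $C\norm{\partial_x\Theta}_{L^2}^2$, applying Lemma~\ref{general-inequality1} at each stage exactly as you describe. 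The only cosmetic difference is that the paper first obtains $\nabla\eta\in L^\infty\big((0,\infty);L^2(D)\big)$ by a plain Gronwall step before rewriting the right-hand side as $e^{-\lambda_2 t}D_1(t)$ with $D_1\in L^1([0,\infty))$, precisely the bookkeeping point you flag yourself.
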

\begin{proof}
The proof of this lemma differs from that of Lemma~\ref{lemma-grad-1-5} due to the condition $T_1 = T_0$. In this case, we introduce the variable $\eta := \partial_z \Theta$. From the equation \eqref{ondimensional-equations-2}$_2$, we find that $\eta$ satisfies
\begin{align}\label{thetaz-eta-cases}
\begin{aligned}
\partial_t \eta + v \partial_x \eta - \left( \int_0^z \partial_x v(t,x,\xi)  d\xi \right) \partial_z \eta 
&= \partial_{xx} \eta + \kappa_a \partial_{zz} \eta  - \sigma \partial_x \Theta + \partial_x v  \eta.
\end{aligned}
\end{align}

Multiplying the equation \eqref{thetaz-eta-cases} successively by $\partial_{xx}\eta$
and $\partial_{zz}\eta$, then integrating by parts and applying Young’s inequality, we obtain
 \begin{align*}
\begin{aligned}
&\frac{d}{dt}
\int_{D}\abs{\nabla \eta}^2
\,dx\,dz
+2\norm{\partial_{xx}\eta}_{L^2}^2
+2\kappa_a \norm{\partial_{zz}\eta}_{L^2}^2
+2(1+\kappa_a) \norm{\partial_{xz}\eta}_{L^2}^2\\
&\leq \epsilon 
\left(\norm{\partial_{xx}\eta}_{L^2}^2
+
\norm{\partial_{zz}\eta}_{L^2}^2\right)
+2\norm{v \partial_x \eta}_{L^2}\left(\norm{\partial_{xx}\eta}_{L^2}
+
\norm{\partial_{zz}\eta}_{L^2}\right)
\\&
+2\norm{\left(\int_0^z\partial_xv(t,x,\xi)\,d\xi \right)\partial_z \eta}
_{L^2}\left(\norm{\partial_{xx}\eta}_{L^2}
+
\norm{\partial_{zz}\eta}_{L^2}\right)\\
&\quad+2\norm{\sigma \partial_x \Theta}_{L^2}\left(\norm{\partial_{xx}\eta}_{L^2}
+
\norm{\partial_{zz}\eta}_{L^2}\right)\\
&\quad+2\norm{\eta \partial_x v}_{L^2}\left(\norm{\partial_{xx}\eta}_{L^2}
+
\norm{\partial_{zz}\eta}_{L^2}\right)\\
&\leq 
 \epsilon 
\left(\norm{\partial_{xx}\eta}_{L^2}^2
+\norm{\partial_{xz}\eta}_{L^2}^2+
\norm{\partial_{zz}\eta}_{L^2}^2\right)
+C_{\epsilon}\norm{
\partial_x\eta}_{L^2}^2
\norm{
\partial_zv}_{L^2}^2
\norm{v}_{L^2}^2\\
&\quad +C_{\epsilon}\norm{
\partial_z\eta}_{L^2}^2
\norm{
\partial_xv}_{L^2}\norm{
\partial_{zx}v}_{L^2}+
C_{\epsilon}
\norm{
\partial_z\eta}_{L^2}^2
\norm{
\partial_xv}_{L^2}^2\norm{
\partial_{zx}v}_{L^2}^2\\
&\quad+C_{\epsilon}
\norm{
\partial_{zz}v}_{L^2}
\norm{\partial_{z}v}_{L^2}
\norm{
\partial_x\Theta}_{L^2}
\norm{
\partial_{xx}\Theta}_{L^2}
\\&\quad+C_{\epsilon}\norm{
\partial_xv}_{L^2}
\norm{
\partial_{xx}v}_{L^2}
\norm{
\partial_z\eta }_{L^2}
\norm{\eta }_{L^2}.
\end{aligned}
\end{align*}
Choosing $\epsilon$ sufficiently small, there exists $\lambda_1 > 0$ such that
\begin{align}\label{nablaeta-cases-2}
\begin{aligned}
&\frac{d}{dt} \int_D |\nabla \eta|^2 dx dz 
+ \lambda_1 \left( \norm{\partial_{xx} \eta}_{L^2}^2 + \norm{\partial_{zz} \eta}_{L^2}^2 + \norm{\partial_{xz} \eta}_{L^2}^2 \right) \\
&\leq \norm{\nabla \eta}_{L^2}^2 g(t) + h(t),
\end{aligned}
\end{align}
where $g(t), h(t) \in L^1([0,\infty))$ are given by
\[
\begin{aligned}
g(t) &:= C \norm{\partial_z v}_{L^2}^2 \norm{v}_{L^2}^2 + C \norm{\partial_x v}_{L^2} \norm{\partial_{zx} v}_{L^2} \\
&\quad + C \norm{\partial_x v}_{L^2}^2 \norm{\partial_{zx} v}_{L^2}^2 + C \norm{\eta}_{L^2}^2, \\
h(t) &:= C \norm{\partial_x v}_{L^2}^2 \norm{\partial_{xx} v}_{L^2}^2 + C \norm{\partial_{zz} v}_{L^2}^2 \norm{\partial_z v}_{L^2}^2 \\
&\quad + C \norm{\partial_x \Theta}_{L^2}^2 \norm{\partial_{xx} \Theta}_{L^2}^2.
\end{aligned}
\]

An application of Gronwall's inequality then yields
\[
\nabla \eta \in L^\infty\big( (0,\infty); L^2(D) \big).
\]
\
Moreover, there exists $\lambda_2 > 0$ such that
\[
\norm{\nabla \eta}_{L^2}^2 g(t) + h(t) = e^{-\lambda_2 t} D_1(t), \quad \text{with } D_1(t) \in L^1([0,\infty)).
\]
Combining this with Lemma~\ref{general-inequality1}, we conclude that there exist $\lambda > 0$ and $C \geq 1$ such that
\begin{align*}
\norm{\nabla \eta(t)}_{L^2(D)}^2 \leq C C_0 e^{-2\lambda t} \norm{(v_0, \Theta_0)}_{H^1(D)}^2.
\end{align*}

Integrating \eqref{nablaeta-cases-2} with respect to time yields
\[
\Delta \eta \in L^2\big( (0,\infty); L^2(D) \big).
\]
Proceeding similarly to the derivation of \eqref{nablasigma}, we multiply the equation \eqref{three-proof--8} successively by $\partial_{xx} \sigma$ and $\partial_{zz} \sigma$, then integrate by parts to obtain
\begin{align}\label{nablasigma-cases-2}
\begin{aligned}
&\frac{d}{dt} \int_D |\nabla \sigma|^2 dx dz 
+ \text{Pr}_x \norm{\partial_{xx} \sigma}_{L^2}^2 
+ \text{Pr}_z \norm{\partial_{zz} \sigma}_{L^2}^2 \\
&\leq C \norm{\partial_z v}_{L^2}^2 \norm{v}_{L^2}^2 \norm{\partial_x \sigma}_{L^2}^2 
+ C \norm{\partial_x v}_{L^2}^2 \norm{\partial_{xx} v}_{L^2}^2 \norm{\partial_z \sigma}_{L^2}^2 + C \norm{\partial_x \Theta}_{L^2}^2.
\end{aligned}
\end{align}
From this inequality, we deduce
\begin{align*}
\norm{\nabla \sigma(t)}_{L^2(D)}^2 \leq C C_0 e^{-2\lambda t} \norm{(v_0, \Theta_0)}_{H^1(D)}^2, \quad
\Delta \sigma \in L^2\big( (0,\infty); L^2(D) \big).
\end{align*}
\end{proof}
\begin{lemma}\label{cases-2-lemma-grad-1-7}
Let $T_1 =T_0$ and consider any smooth solution $(v, \Theta)$ to the system \eqref{ondimensional-equations-2} with initial data $(v_0, \Theta_0) \in H^2(D)$. Then the following regularity estimates hold:
\begin{align}\label{vh2-infty-17-case-2}
\partial_x(\nabla v, \nabla \Theta) \in L^\infty\big( (0,\infty); L^2(D) \big), \quad
\Delta(\partial_x v, \partial_x \Theta) \in L^2\big( (0,\infty); L^2(D) \big).
\end{align}
Moreover, there exists a constant $\lambda > 0$ such that
\begin{align}\label{decy-vxz-vzz-17-case-2}
\norm{\partial_x (\nabla v, \nabla \Theta)(t)}_{L^2(D)}^2 \leq C C_0 e^{-2\lambda t} \norm{(v_0, \Theta_0)}_{H^2(D)}^2, \quad \forall t \geq 0,
\end{align}
where the constant $C_0$ depends on $\norm{v_0}_{H^1(D)}$ and $\norm{\Theta_0}_{L^2(D)}$.
\end{lemma}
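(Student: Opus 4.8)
The plan is to mirror the structure of Lemma~\ref{cases-2-lemma-grad-1-6}, but now for the horizontal second derivatives, exploiting the decisive structural feature of the regime $T_1=T_0$: since $\text{Sgn}(T_0-T_1)=0$, the velocity forcing $\text{Sgn}(T_0-T_1)\text{R}\left(\int_0^z\partial_x v\,d\xi\right)$ in the temperature equation of \eqref{ondimensional-equations-2} vanishes. Consequently the temperature perturbation evolves autonomously up to advection, and the antisymmetric cross-term cancellation that forced a \emph{simultaneous} treatment of $(\omega,\theta)$ in Lemma~\ref{lemma-grad-1-7} is neither available nor needed. Instead I will close the estimate for $\partial_{xx}\Theta$ first and then feed its decay into the velocity estimate to control $\partial_{xx}v$. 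Since Lemma~\ref{cases-2-lemma-grad-1-6} already furnishes the exponential decay of $\partial_{xz}v$ and $\partial_{xz}\Theta$ (as components of $\partial_z\nabla v$ and $\partial_z\nabla\Theta$), the only remaining quantities in \eqref{decy-vxz-vzz-17-case-2} are $\partial_{xx}v$ and $\partial_{xx}\Theta$.

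For the first step I set $\theta:=\partial_x\Theta$, which for $T_0=T_1$ satisfies the second equation of \eqref{three-proof--8-1-1} \emph{without} the $\text{R}\left(\int_0^z\partial_x\omega\,d\xi\right)$ source term. Multiplying by $\partial_{xx}\theta$ and integrating by parts produces $\frac{d}{dt}\norm{\partial_x\theta}_{L^2}^2+2\norm{\partial_{xx}\theta}_{L^2}^2+2\kappa_a\norm{\partial_{zx}\theta}_{L^2}^2$ on the left, balanced on the right by the contributions of $v\partial_x\theta$, the nonlocal $\left(\int_0^z\partial_x v\,d\xi\right)\partial_z\theta$, the quadratic $\omega\theta$, and the nonlocal $\left(\int_0^z\partial_x\omega\,d\xi\right)\partial_z\Theta$. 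Each is estimated exactly as the terms $J_1$--$J_4$ in the proof of Lemma~\ref{lemma-grad-1-7}, using the mixed-norm bounds of Lemmas~\ref{mixed-norm}--\ref{mixed-norm-1} to split every product into a controllable factor times a time-integrable coefficient. After absorbing the $\epsilon$-terms into the dissipation, this yields a differential inequality of the form $\frac{d}{dt}\norm{\partial_{xx}\Theta}_{L^2}^2+\lambda_1\norm{\Delta\theta}_{L^2}^2\le e^{-\lambda t}D(t)$ with $D\in L^1(0,\infty)$, and Lemma~\ref{general-inequality1} delivers the exponential decay of $\norm{\partial_{xx}\Theta}_{L^2}^2$.

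For the second step I set $\omega:=\partial_x v$ and test the first equation of \eqref{three-proof--8-1-1} against $\partial_{xx}\omega$. The buoyancy source, which couples the velocity to $\partial_x\theta=\partial_{xx}\Theta$, is now a \emph{known} decaying forcing: by Young's inequality it contributes at most $\epsilon\norm{\partial_{xx}\omega}_{L^2}^2+C_\epsilon\norm{\partial_{xx}\Theta}_{L^2}^2$, whose second factor decays exponentially by Step~1. The remaining nonlinear terms $K_1$--$K_4$ are controlled verbatim as in Lemma~\ref{lemma-grad-1-7}, again via Lemmas~\ref{mixed-norm}--\ref{mixed-norm-1} and the a priori bounds of Lemmas~\ref{lemma-grad-1-1-cases2}--\ref{cases-2-lemma-grad-1-6}. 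Closing the resulting inequality and applying Lemma~\ref{general-inequality1} gives the exponential decay of $\norm{\partial_{xx}v}_{L^2}^2$. Combining both steps with the already-established decay of $\partial_{xz}v,\partial_{xz}\Theta$ yields \eqref{decy-vxz-vzz-17-case-2}, while time-integration of the two differential inequalities gives the regularity \eqref{vh2-infty-17-case-2}.

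The main obstacle is precisely the one-directional coupling: unlike the $T_1>T_0$ case, there is no antisymmetric buoyancy pair whose cross terms cancel, so one cannot estimate $\partial_{xx}v$ and $\partial_{xx}\Theta$ collectively. The argument hinges on the fact that for $T_0=T_1$ the temperature equation carries no velocity forcing, which lets $\partial_{xx}\Theta$ be closed \emph{first} and then serve as an exponentially small driving term for the velocity. The accompanying technical care lies in verifying that every nonlinear coefficient produced by the mixed-norm splitting—in particular those generated by the nonlocal terms $\left(\int_0^z\partial_x\omega\,d\xi\right)\partial_z\Theta$ and $\left(\int_0^z\partial_x v\,d\xi\right)\partial_z\theta$—carries the correct exponential weight and lies in $L^1(0,\infty)$, so that the generalized Gronwall inequality of Lemma~\ref{general-inequality1} applies at each stage.
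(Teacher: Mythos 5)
Your sequential strategy (close $\partial_{xx}\Theta$ first, then feed its decay into the $\partial_{xx}v$ estimate) departs from the paper, whose proof of this lemma is a one-line reference to the \emph{joint} $(\omega,\theta)$ scheme of Lemma~\ref{lemma-grad-1-7}, where the two second-order horizontal estimates are added and every $\epsilon$-term is absorbed by the combined dissipation. Your route is consistent in spirit with how the paper itself handles $T_1=T_0$ at lower order (Lemma~\ref{lemma-grad-1-5-cases2} closes $\partial_x\Theta$ before $\partial_x v$; Lemma~\ref{cases-2-lemma-grad-1-6} closes $\nabla\partial_z\Theta$ before $\nabla\partial_z v$), and your structural observation that the cross-term cancellation is unavailable but unneeded is correct. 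However, Step~1 as written has a concrete gap. The equation for $\theta=\partial_x\Theta$ still contains the velocity through the nonlocal term $\left(\int_0^z\partial_x\omega\,d\xi\right)\partial_z\Theta$, and you propose to estimate it ``exactly as $J_4$'' in Lemma~\ref{lemma-grad-1-7}. But that estimate reads
\begin{align*}
J_4 \leq \epsilon\norm{\partial_{xx}\omega}_{L^2}^2
+ C_\epsilon\norm{\partial_x\omega}_{L^2}^2\norm{\partial_z\Theta}_{L^2}^2\norm{\partial_{zz}\Theta}_{L^2}^2,
\end{align*}
where $\epsilon\norm{\partial_{xx}\omega}_{L^2}^2=\epsilon\norm{\partial_{xxx}v}_{L^2}^2$ enters via the splitting $\norm{\partial_x\omega}_{L_x^\infty L_z^2}^2\leq 2\norm{\partial_x\omega}_{L^2}\norm{\partial_{xx}\omega}_{L^2}$. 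In the joint scheme this term is absorbed by the velocity dissipation $\mathrm{Pr}_x\norm{\partial_{xx}\omega}_{L^2}^2$; in your temperature-only inequality there is no dissipative counterpart, and $\partial_{xxx}v$ is controlled by no earlier lemma — indeed $\Delta\partial_x v\in L^2((0,\infty);L^2(D))$ is part of the \emph{conclusion} of the present lemma. So the claimed decoupling fails at precisely this term, and Step~1 does not close as stated.

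The repair is straightforward, but it requires a different mixed-norm splitting rather than verbatim reuse: by Cauchy--Schwarz in the vertical variable and \eqref{appeed20},
\begin{align*}
\norm{\left(\int_0^z\partial_x\omega\,d\xi\right)\partial_z\Theta}_{L^2}^2
\leq \norm{\partial_x\omega}_{L^2}^2\,\norm{\partial_z\Theta}_{L_x^\infty L_z^2}^2
\leq C\norm{\partial_{xx}v}_{L^2}^2\,\norm{\partial_z\Theta}_{L^2}\left(\norm{\partial_z\Theta}_{L^2}+\norm{\partial_{xz}\Theta}_{L^2}\right),
\end{align*}
in which $\norm{\partial_z\Theta}_{L^2}$ decays exponentially by Lemma~\ref{lemma-grad-1-4-cases2}, $\norm{\partial_{xz}\Theta}_{L^2}$ decays exponentially by Lemma~\ref{cases-2-lemma-grad-1-6}, and $\norm{\partial_{xx}v}_{L^2}^2\in L^1(0,\infty)$ by Lemma~\ref{lemma-grad-1-5-cases2}; the forcing then has the form $e^{-\lambda_2 t}D_1(t)$ with $D_1\in L^1$, so Lemma~\ref{general-inequality1} applies and your Step~1 closes. (Alternatively one can stay with the paper's joint estimate: for $T_1=T_0$ the single surviving coupling term $2\left(\int_0^z\partial_x\theta\,d\xi,\ \partial_{xx}\omega\right)$ is handled by Young's inequality together with the Poincar\'e inequality in $x$ applied to the zero-$x$-mean function $\partial_x\theta$, after weighting the $\theta$-inequality by a sufficiently large constant so the resulting $C_\epsilon\norm{\partial_x\theta}_{L^2}^2$ is absorbed by the $\norm{\partial_{xx}\theta}_{L^2}^2$ dissipation.) With either modification, your Step~2 and the final bookkeeping — combining the $\partial_{xx}$ bounds with the $\partial_{xz}$ components already supplied by Lemma~\ref{cases-2-lemma-grad-1-6} — go through as you describe.
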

\begin{proof}
This result can be proved using the same approach as in the proof of 
Lemma~\ref{lemma-grad-1-7}.
\end{proof}

\begin{lemma}\label{lemma-grad-1-8-cases2}
Let $T_1 = T_0$ and consider any smooth solution $(v, \Theta)$ to the system \eqref{ondimensional-equations-2} with initial data $(v_0, \Theta_0) \in H^2(D)$. Then there exists a constant $\lambda > 0$ such that
\begin{align}\label{decy-vxz-vzz-17}
\norm{\partial_x q(t)}_{L^2(D)}^2 + \norm{\partial_t (v, \Theta)(t)}_{L^2(D)}^2 \leq C C_0 e^{-2\lambda t} \norm{(v_0, \Theta_0)}_{H^2(D)}^2, \quad \forall t \geq 0,
\end{align}
where the constant $C_0$ depends on $\norm{v_0}_{H^1(D)}$ and $\norm{\Theta_0}_{L^2(D)}$.
\end{lemma}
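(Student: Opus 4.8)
The plan is to mirror the argument of Lemma~\ref{lemma-grad-1-8}, replacing the inputs from the regime $T_1 > T_0$ by the corresponding decay estimates for $T_1 = T_0$ established in Lemmas~\ref{lemma-grad-1-1-cases2}--\ref{cases-2-lemma-grad-1-7}. Concretely, I would test the momentum equation $\eqref{ondimensional-equations-2}_1$ (with $\text{Sgn}(T_0-T_1)=0$, so the buoyancy coefficient $\text{R}^{\abs{\text{Sgn}(T_0-T_1)}}=1$) against $\partial_x q$ in $L^2(D)$. This yields, in complete analogy with \eqref{xqq}, the identity
\begin{align*}
\norm{\partial_x q}_{L^2}^2
&= \int_D \left(\text{Pr}_x\partial_{xx}v+\text{Pr}_z\partial_{zz}v\right)\partial_x q\,dx\,dz
- \int_D \partial_t v\,\partial_x q\,dx\,dz \\
&\quad - \int_D\left(v\partial_x v-\left(\int_0^z\partial_x v(t,x,\xi)\,d\xi\right)\partial_z v\right)\partial_x q\,dx\,dz \\
&\quad - \int_D\left(\int_0^z\partial_x\Theta(t,x,\xi)\,d\xi\right)\partial_x q\,dx\,dz.
\end{align*}

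The first key step is the cancellation of the time-derivative term. Since $q=q(t,x)$ is independent of $z$, writing $\partial_t v=\partial_z\!\left(\int_0^z\partial_t v(t,x,\xi)\,d\xi\right)$ and integrating by parts in $z$ gives $\int_D \partial_t v\,\partial_x q\,dx\,dz=0$, exactly as in Lemma~\ref{lemma-grad-1-8}; this rests only on the incompressibility and boundary structure, which is unaffected by the value of $T_1-T_0$. The linear dissipative term is bounded by $C\norm{v}_{H^2}\norm{\partial_x q}_{L^2}$, the buoyancy term by $C\norm{\partial_x\Theta}_{L^2}\norm{\partial_x q}_{L^2}$ (using $\norm{\int_0^z\partial_x\Theta\,d\xi}_{L^2}\le C\norm{\partial_x\Theta}_{L^2}$), and the nonlocal nonlinear terms by the same mixed-norm inequalities employed in \eqref{ut-proof-333-}--\eqref{ut-proof-333}. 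Applying Young's inequality and absorbing $\tfrac12\norm{\partial_x q}_{L^2}^2$ into the left-hand side then produces a bound of the form
\[
\norm{\partial_x q}_{L^2}^2 \le C\left(\norm{v}_{H^2}^2+\norm{\partial_x\Theta}_{L^2}^2\right)
+ C\norm{v}_{L^2}\norm{\partial_z v}_{L^2}\norm{\partial_x v}_{L^2}\norm{\partial_{xx}v}_{L^2} + \cdots.
\]

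To conclude, I would invoke the exponential decay already established for $T_1=T_0$: Lemmas~\ref{lemma-grad-1-1-cases2}, \ref{lemma-grad-1-5-cases2}, \ref{cases-2-lemma-grad-1-6} and \ref{cases-2-lemma-grad-1-7} furnish the decay of $\norm{(v,\Theta)}_{L^2}$, of $\norm{\partial_x\Theta}_{L^2}$ (via \eqref{cases-2-theta-v-decay-x}), and of the full $H^2$-norm of $(v,\Theta)$, so every term on the right decays like $e^{-2\lambda t}$, giving $\norm{\partial_x q}_{L^2}^2\le CC_0 e^{-2\lambda t}\norm{(v_0,\Theta_0)}_{H^2}^2$. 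For the time derivatives, I would read $\partial_t v$ and $\partial_t\Theta$ directly off \eqref{ondimensional-equations-2}, bounding each linear term by $\norm{(v,\Theta)}_{H^2}$ and each nonlocal nonlinear term by the mixed-norm product estimates; all are controlled by $\norm{(v,\Theta)}_{H^2}$, which decays exponentially, yielding the stated bound for $\norm{\partial_t(v,\Theta)}_{L^2}$. This lemma is a direct adaptation rather than a new argument, and the only points that genuinely require verification are that the buoyancy contribution is now absorbed through the decaying $\norm{\partial_x\Theta}_{L^2}$ (rather than through a velocity--temperature coupling, which is absent when $T_1=T_0$) and that the incompressibility cancellation of the $\partial_t v$ term is indeed independent of the thermal regime.
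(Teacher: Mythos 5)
Your proposal is correct and follows essentially the same route as the paper, which proves this lemma verbatim ``using the same approach as in the proof of Lemma~3.8'': testing the momentum equation against $\partial_x q$, cancelling the $\int_D \partial_t v\,\partial_x q$ term via the zero-vertical-mean structure, absorbing the buoyancy term through the decaying $\norm{\partial_x\Theta}_{L^2}$, and reading $\partial_t(v,\Theta)$ off the equations with the decaying $H^2$-norm bounds from the $T_1=T_0$ lemmas. The two regime-specific checks you flag (buoyancy coefficient equal to $1$ and regime-independence of the incompressibility cancellation) are exactly the right ones, and both hold.
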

\begin{proof}
This result can be proved using the same approach as in the proof of 
Lemma~\ref{lemma-grad-1-8}.
\end{proof}

\subsection{Case $T_1 < T_0$}

For case $0<T_0-T_1<T_c$, Theorem \ref{global-stability} can be derived by
the following 
Lemma~\ref{lemma-grad-1-1-cases3}--Lemma \ref{lemma-grad-1-8-cases3}.

\subsubsection{Critical Temperature Difference $T_c = T_0 - T_1$}

The critical temperature difference $T_c = T_0 - T_1$ signifies the threshold at which the steady-state solution \eqref{steady-state} loses linear stability. Since the square of the Rayleigh number $\mathrm{R}$ is directly proportional to $T_c$, we determine $T_c$ by first finding the critical Rayleigh number $\mathrm{R}_c$. This critical value $\mathrm{R}_c$ is characterized by the following zero eigenvalue problem:
\begin{align}\label{critical-eigenvalue}
\begin{cases}
\begin{aligned}
&\mathrm{Pr}_x \, \partial_{xx} v + \mathrm{Pr}_z \, \partial_{zz} v
- \mathrm{R} \, \partial_x \left( \int_0^z \Theta(t,x,\xi) \, d\xi \right)
- \partial_x q(t,x) = 0,
\end{aligned} \\
\begin{aligned}
&\partial_{xx} \Theta + \kappa_{a} \, \partial_{zz} \Theta
- \mathrm{R} \, \partial_x \left( \int_0^z v(t,x,\xi) \, d\xi \right) = 0,
\end{aligned} \\
 \partial_z v|_{z=0} = \partial_z v|_{z=1} = 0, \quad
\Theta|_{z=0} = \Theta|_{z=1} = 0, \\
 v, \Theta \text{ are periodic in $x$ with period $\alpha$}, \\
 \int_0^1 v(t,x,\xi) \, d\xi = 0.
\end{cases}
\end{align}
Specifically, $\mathrm{R}_c$ is the smallest $\mathrm{R}$ for which the problem \eqref{critical-eigenvalue} admits nontrivial solutions.

This eigenvalue problem \eqref{critical-eigenvalue} possesses a variational structure with respect to the parameter $\mathrm{R}$. Consequently, the critical Rayleigh number $\mathrm{R}_c$ can be characterized as the minimum value of the corresponding objective functional. Specifically, we consider the variational problem:
\begin{align}\label{ve12}
\mathrm{R}_c = \inf_{(v, \Theta) \in \mathcal{X}(D)} \frac{E_1(v, \Theta)}{2 E_2(v, \Theta)}.
\end{align}
Here, the functionals $E_1$ and $E_2$ are defined by:
\begin{align}\label{e12}
\begin{aligned}
E_1(v, \Theta) &= \int_D \left( \mathrm{Pr}_x |\partial_x v|^2 + \mathrm{Pr}_z |\partial_z v|^2 + |\partial_x \Theta|^2 + \kappa_a |\partial_z \Theta|^2 \right) dx\,dz, \\
E_2(v, \Theta) &= -\int_D v \, \partial_x \left( \int_0^z \Theta(t,x,\xi) \, d\xi \right) dx\,dz,
\end{aligned}
\end{align}
and the admissible function space $\mathcal{X}(D)$ is given by:
\begin{align*}
\begin{aligned}
\mathcal{X}(D) = \bigg\{ (v, \Theta) \in H^1(D) \,\bigg|\, & \Theta|_{z=0} = \Theta|_{z=1} = 0,\ \int_0^1 v \, dz = 0, \\
& v, \Theta \text{ are periodic in $x$ with period $\alpha$} \bigg\}.
\end{aligned}
\end{align*}

\begin{lemma}\label{cri-eigenvalue}
There exists a unique pair $(v, \Theta) \in H^2(D) \cap \mathcal{X}(D)$ such that
\[
\frac{E_1(v, \Theta)}{2 E_2(v, \Theta)} = \mathrm{R}_c = \inf_{(u, \theta) \in \mathcal{X}(D)} \frac{E_1(u, \theta)}{2 E_2(u, \theta)},
\]
and the triple $(v, \Theta, \mathrm{R}_c)$ solves the problem \eqref{critical-eigenvalue}.
\end{lemma}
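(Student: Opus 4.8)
The plan is to treat Lemma~\ref{cri-eigenvalue} as a constrained minimization problem, attack it by the direct method in the calculus of variations, and then read off the eigenvalue problem \eqref{critical-eigenvalue} from the Euler--Lagrange equations. Since both $E_1$ and $E_2$ in \eqref{e12} are homogeneous of degree two, the Rayleigh quotient $E_1/(2E_2)$ is scale invariant, so minimizing it is equivalent to minimizing $E_1$ subject to the normalization $E_2(v,\Theta)=\tfrac12$. First I would record two structural facts. By the Poincaré inequality (available because $\Theta|_{z=0,1}=0$, $\int_0^1 v\,dz=0$ and periodicity in $x$ eliminate the kernel of the gradient), $E_1$ is coercive and equivalent to the square of the full $H^1(D)$-norm on $\mathcal{X}(D)$. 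Moreover, integrating by parts in $x$ gives $E_2=\int_D \partial_x v\,(\int_0^z\Theta\,d\xi)\,dx\,dz$, whence Cauchy--Schwarz and Poincaré yield $|E_2|\le C\,E_1$; this bounds the quotient below by a positive constant on $\{E_2>0\}$, so $\mathrm{R}_c>0$, while testing against an explicit separated mode such as $v=\cos(2\pi m x/\alpha)\cos(\pi z)$, $\Theta=\sin(2\pi m x/\alpha)\sin(\pi z)$ produces $E_2>0$ and shows $\mathrm{R}_c<\infty$.

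Next I would run the direct method. Take a minimizing sequence $(v_n,\Theta_n)\in\mathcal{X}(D)$ with $E_2(v_n,\Theta_n)=\tfrac12$; along it $E_1(v_n,\Theta_n)$ is bounded, so $(v_n,\Theta_n)$ is bounded in $H^1(D)$ by coercivity. Passing to a subsequence, $(v_n,\Theta_n)\rightharpoonup(v,\Theta)$ weakly in $H^1(D)$ and, by Rellich--Kondrachov, strongly in $L^2(D)$. The form $E_1$ is convex and continuous, hence weakly lower semicontinuous, so $E_1(v,\Theta)\le\liminf E_1(v_n,\Theta_n)$. The coupling term passes to the limit: in the representation $E_2=\int_D \partial_x v\,(\int_0^z\Theta\,d\xi)$ the factor $\partial_x v_n\rightharpoonup\partial_x v$ weakly in $L^2$ while $\int_0^z\Theta_n\,d\xi\to\int_0^z\Theta\,d\xi$ strongly in $L^2$ (vertical integration is bounded on $L^2$), so the weak--strong pairing gives $E_2(v_n,\Theta_n)\to E_2(v,\Theta)=\tfrac12$. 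Thus the limit is admissible and attains the infimum, which yields existence of a minimizer.

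I would then derive the Euler--Lagrange system: at the minimizer $\delta E_1=2\mathrm{R}_c\,\delta E_2$ in every admissible direction $(\phi,\psi)$. Varying in $\psi$ (with $\psi|_{z=0,1}=0$) and using the constraint $\int_0^1 v\,dz=0$ to rewrite $\int_D v\,\partial_x(\int_0^z\psi\,d\xi)$ as $-\int_D \psi\,\partial_x(\int_0^z v\,d\xi)$ reproduces the weak form of \eqref{critical-eigenvalue}$_2$ with $\mathrm{R}=\mathrm{R}_c$. Varying in $\phi$ subject to $\int_0^1\phi\,dz=0$ yields \eqref{critical-eigenvalue}$_1$ modulo a function independent of $z$; a de Rham/Lagrange-multiplier argument for this pointwise-in-$x$ constraint supplies exactly the horizontal pressure term $\partial_x q(t,x)$, while the Neumann condition $\partial_z v|_{z=0,1}=0$ emerges as the natural boundary condition. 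Standard elliptic regularity for the resulting coupled second-order system then bootstraps $(v,\Theta)$ from $H^1(D)$ to $H^2(D)$.

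The main obstacle I anticipate lies in the Euler--Lagrange step rather than in the compactness: one must verify that the single constraint $\int_0^1 v\,dz=0$ plays a double role, simultaneously symmetrizing the coupling so that the two variations reproduce the exact coefficients of \eqref{critical-eigenvalue}, and supplying, via de Rham, the pressure multiplier $q$ with the correct $z$-independence. For the uniqueness assertion I would pass to Fourier series in $x$ (using periodicity) and to the $\sin(k\pi z)$ basis in $z$, which decouples \eqref{critical-eigenvalue} into scalar problems indexed by the horizontal wavenumber; minimizing the reduced quotient over modes isolates the critical integer $m_c$ and shows the minimizer is determined up to the unavoidable scaling and horizontal-translation invariances, thereby confirming both attainment and the explicit profile invoked later.
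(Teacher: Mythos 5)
Your proposal is correct and follows essentially the same route as the paper's proof: recast the Rayleigh quotient as minimization of $E_1$ under a normalization of $E_2$, obtain a minimizer by the direct method using weak lower semicontinuity of $E_1$ and weak continuity of $E_2$ (which you justify via the weak--strong pairing after integrating by parts in $x$, a step the paper merely asserts), read off \eqref{critical-eigenvalue} from the first variation, and upgrade to $H^2(D)$ by elliptic regularity. The additional details you supply---coercivity via Poincar\'e, the explicit test mode showing $0<\mathrm{R}_c<\infty$, and the de Rham argument producing the multiplier $q$---fill in steps the paper leaves implicit, and your honest caveat about horizontal-translation and scaling invariance matches the fact that the paper's own proof never actually establishes the literal uniqueness claimed in the statement.
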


\begin{proof}
Define the constrained set
\[
\mathcal{J} = \left\{ (v, \Theta) \in \mathcal{X}(D) \,\middle|\, E_2(v, \Theta) = 1 \right\}.
\]
Then the variational problem \eqref{ve12} is equivalent to
\[
\inf_{(v, \Theta) \in \mathcal{X}(D)} \frac{E_1(v, \Theta)}{E_2(v, \Theta)} = \inf_{(v, \Theta) \in \mathcal{J}} E_1(v, \Theta).
\]

Since $E_1(v, \Theta)$ is weakly lower semicontinuous and $E_2(v, \Theta)$ is weakly continuous on $H^1(D)$, the functional $E_1$ attains its minimum on $\mathcal{J}$. Let $(v, \Theta)$ be a minimizer. To show that it satisfies \eqref{critical-eigenvalue}, consider the first variation at $(v, \Theta)$: for any test function $(u, \theta) \in \mathcal{X}(D)$,
\[
\begin{aligned}
&\int_D \left( \mathrm{Pr}_x \, \partial_x v \, \partial_x u + \mathrm{Pr}_z \, \partial_z v \, \partial_z u + \partial_x \Theta \, \partial_x \theta + \kappa_a \, \partial_z \Theta \, \partial_z \theta \right) dx\,dz \\
&=- \mathrm{R}_c \int_D v \, \partial_x \left( \int_0^z \theta(t,x,\xi) \, d\xi \right) dx\,dz 
- \mathrm{R}_c \int_D u \, \partial_x \left( \int_0^z \Theta(t,x,\xi) \, d\xi \right) dx\,dz.
\end{aligned}
\]
This identity confirms that $(v, \Theta)$ is a weak solution of \eqref{critical-eigenvalue}. An application of standard elliptic regularity theory then yields the higher regularity $(v, \Theta) \in H^2(D)$, which guarantees the existence of a pressure function $q(x)$ such that the triple $(v, \Theta, q)$ is a strong solution of \eqref{critical-eigenvalue}. 
\end{proof}
\subsubsection{Exact Value of $\mathrm{R}_c$}

To determine the exact value of the critical Rayleigh number $\mathrm{R}_c$, we employ a spectral method. Consider the Fourier expansions for the perturbation fields:
\begin{align}\label{T-e}
\begin{aligned}
&v_z = \sum_{m\in \mathbb{Z},n \in \mathbb{Z}^+} (-n\pi) v_{m,n} e^{i2\pi m x / \alpha} \sin(n\pi z),\quad  v_{-m,n}=\overline{v_{m,n}},\\
&\Theta = \sum_{m\in \mathbb{Z},n \in \mathbb{Z}^+} \Theta_{m,n} e^{i2\pi m x / \alpha} \sin(n\pi z),\quad
 \Theta_{-m,n}=\overline{ \Theta_{m,n}}
\end{aligned}
\end{align}
Substituting these expansions into the linearized system
\begin{align}\label{perturbation-33}
\begin{cases}
\mathrm{Pr}_x \, \partial_{xx} v_z + \mathrm{Pr}_z \, \partial_{zz} v_z - \mathrm{R} \, \partial_x \Theta = 0, \\
\partial_{xx} \Theta + \kappa_a \, \partial_{zz} \Theta - \mathrm{R} \, \partial_x v_z = 0, \\
v_z|_{z=0} = v_z|_{z=1} = 0, \quad \Theta|_{z=0} = \Theta|_{z=1} = 0, \\
v_z, \Theta \text{ are periodic in $x$ with period $\alpha$},
\end{cases}
\end{align}
yields, for each Fourier mode $(m, n)$, the algebraic system:
\begin{align}\label{algebraic-system}
\begin{cases}
n\pi \left( \mathrm{Pr}_x \frac{4\pi^2 m^2}{\alpha^2} + \mathrm{Pr}_z n^2 \pi^2 \right) v_{m,n} 
- \mathrm{R} \, \frac{i2\pi m}{\alpha} \Theta_{m,n} = 0, \\
\mathrm{R} \, \frac{i2\pi m}{\alpha} v_{m,n} 
+ \left( \frac{4\pi^2 m^2}{\alpha^2} + \kappa_a n^2 \pi^2 \right) \Theta_{m,n} = 0.
\end{cases}
\end{align}

For $m=0$, the system \eqref{algebraic-system} only has the trivial solution
$v_{0,n}=\Theta_{0,n}=0$. Hence, we focus on the case $m\neq 0$.
For nontrivial solutions $(v_{m,n}, \Theta_{m,n}) \neq (0,0)$ to exist, the determinant of system \eqref{algebraic-system} must vanish. This solvability condition leads to:
\begin{align*}
n^2 \pi^2 \left( \mathrm{Pr}_x \frac{4\pi^2 m^2}{\alpha^2} + \mathrm{Pr}_z n^2 \pi^2 \right)
\left( \frac{4\pi^2 m^2}{\alpha^2} + \kappa_a n^2 \pi^2 \right)
- \mathrm{R}^2 \frac{4\pi^2 m^2}{\alpha^2} = 0.
\end{align*}

Solving for $\mathrm{R}^2$ yields the dispersion relation:
\begin{align}\label{RRR}
\mathrm{R}^2 = n^2 \pi^2 \left( \mathrm{Pr}_x \frac{4\pi^2 m^2}{\alpha^2} + \mathrm{Pr}_z n^2 \pi^2 \right)
\left( \frac{4\pi^2 m^2}{\alpha^2} + \kappa_a n^2 \pi^2 \right) \bigg/\left( \frac{\alpha^2}{4\pi^2 m^2} \right).
\end{align}

The critical Rayleigh number $\mathrm{R}_c$ corresponds to the minimum value of $\mathrm{R}$ over all $(m,n)\in \mathbb{Z}\times \mathbb{Z}^+$. The minimum is typically achieved for $n=1$. Setting $n=1$ in \eqref{RRR}, we obtain:
\begin{align*}
\begin{aligned}
\mathrm{R}_c^2 &= \min_{m \in \mathbb{Z}^+} \pi^2 \left( \mathrm{Pr}_x \frac{4\pi^2 m^2}{\alpha^2} + \mathrm{Pr}_z \pi^2 \right)
\left( \frac{4\pi^2 m^2}{\alpha^2} + \kappa_a \pi^2 \right) \bigg/\left( \frac{\alpha^2}{4\pi^2 m^2} \right) \\
&= \min_{m \in \mathbb{Z}^+} \pi^4 \left( \mathrm{Pr}_x \frac{4 m^2}{\alpha^2} + \mathrm{Pr}_z \right)
\left( 1 + \frac{\kappa_a \alpha^2}{4 m^2} \right).
\end{aligned}
\end{align*}

To find the minimizing wavenumber $m_c$, we rewrite the expression:
\begin{align}\label{Rc-minimization}
\begin{aligned}
\mathrm{R}_c^2 &= \pi^4 \mathrm{Pr}_z \left( 1 + \frac{\kappa_a}{\mathrm{Pr}_a} \right)
+ \pi^4 \mathrm{Pr}_z \min_{m \in \mathbb{Z}^+} \left( \frac{4 m^2}{\mathrm{Pr}_a \alpha^2} + \frac{\kappa_a \alpha^2}{4 m^2} \right),
\end{aligned}
\end{align}
where we have introduced $\mathrm{Pr}_a = \mathrm{Pr}_z / \mathrm{Pr}_x$ and the diffusivity ratio $\kappa_a = \kappa_z / \kappa_x$. The critical wavenumber $m_c \in \mathbb{Z}^+$ is the integer that minimizes the expression in parentheses.

\begin{figure}[h]
  \centering
  \includegraphics[width=.6\textwidth,height=.4\textwidth]{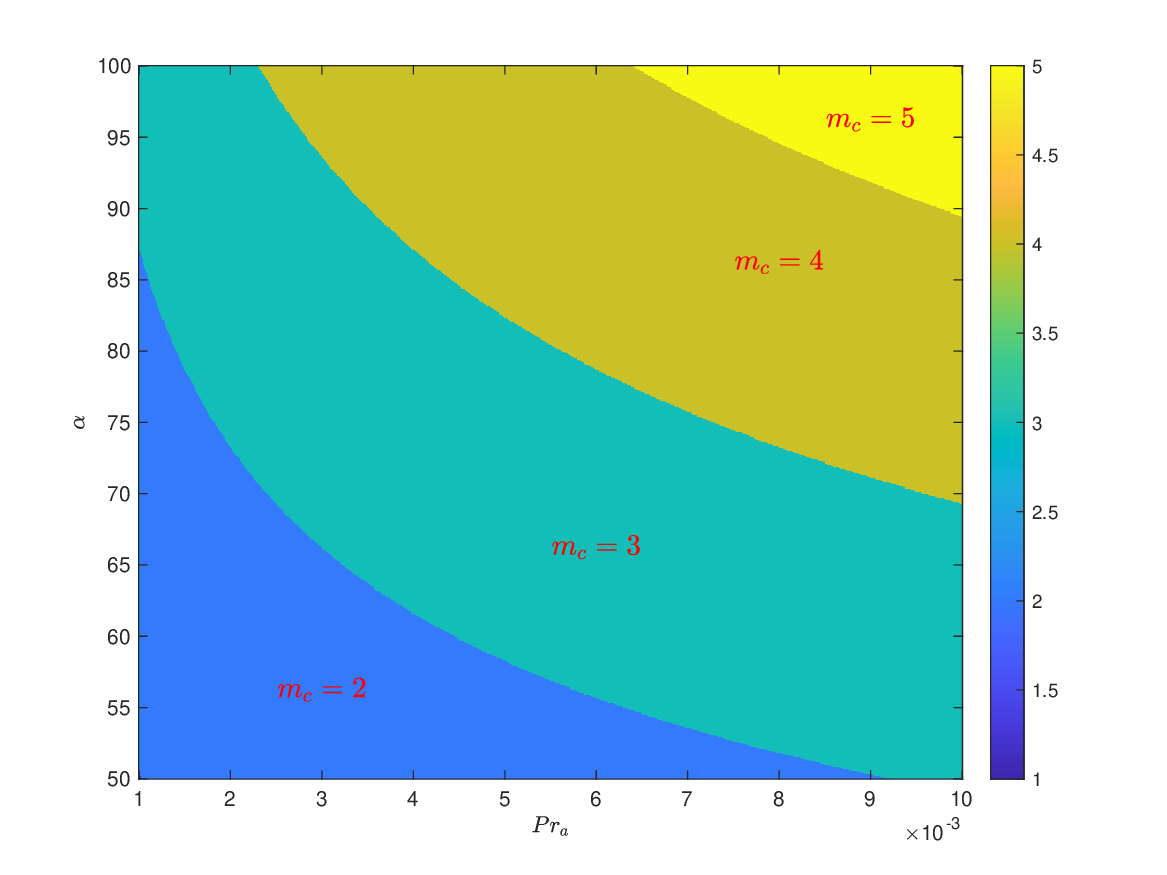}\\
  \caption{The cell number $m_c$ with $\kappa_a=0.01$ and $\alpha
  \in [1,40]$ and $\text{Pr}_a\in [0.01,1]$.
  }\label{Critical_p11}
\end{figure}

Using the expression
$\text{R} = \frac{H \sqrt{H \rho_0 g \beta |T_0 - T_1|}}{\kappa_x}$
 defined in \eqref{parameters-2} and  the dimensionless parameters in  \eqref{parameters}, the critical temperature difference $T_c$ is given by
\begin{align}\label{difference}
T_c = \frac{\pi^4 \nu_z \kappa_x}{H^3 \rho_0 g \beta}
\left( \frac{\kappa_z\nu_x}{\nu_z\kappa_x} + 1 \right)
+ \frac{\pi^4 \nu_z \kappa_x}{H^3 \rho_0 g \beta}
\left( \frac{4 m_c^2}{\text{Pr}_a \alpha^2} + \frac{\kappa_a \alpha^2}{4 m_c^2} \right).
\end{align}
We define the function
\[
F(\alpha, \mathrm{Pr}_{a}, \kappa_a, m) = \frac{4 m^2}{\mathrm{Pr}_a \alpha^2} + \frac{\kappa_a \alpha^2}{4 m^2},
\]
so that the critical wavenumber $m_c$ corresponds to the integer $m \in \mathbb{Z}^+$ that minimizes $F$.

It is not hard to see that $m_c$
is completely determined by $\alpha, \mathrm{Pr}_{a}$ and $\kappa_a$. For the fixed parameters $\kappa_a = 0.01$, $\alpha
  \in [50,100]$ and $\text{Pr}_a\in [0.001,0.01]$, the value of $m_c$ is determined numerically by minimizing $F$; the results are presented in Fig.~\ref{Critical_p11}.
Fig.~\ref{Critical_p11} reveals that $m_c$ decreases as both $\alpha$ and $\mathrm{Pr}_a$ decrease.   We can also see from \eqref{difference} that
 $T_c$ is proportional to the inverse of $\rho_0\beta$.

\subsubsection{Principle of Exchange Stability}\label{pescondition}
In the preceding section, we determined the critical Rayleigh number $\mathrm{R}_c$, or equivalently, the critical temperature difference $T_c = T_0 - T_1$. Having established this threshold for linear instability, we now turn to the principle of exchange of stabilities at $T_c$. To this end, we consider the eigenvalue problem associated with the linearization of the system \eqref{ondimensional-equations-2}:
\begin{align}\label{perturbation-31}
\begin{cases}
\begin{aligned}
&\text{Pr}_x\partial_{xx}v+
\text{Pr}_z\partial_{zz}v-\text{R}\partial_x \left(
\int_0^z\Theta(t,x,\xi)\,d\xi\right)-\partial_xq(t,x)=\beta v,
\end{aligned}\\
\begin{aligned}
&\partial_{xx} \Theta+
\kappa_{a}\partial_{zz} \Theta
-\text{R}\partial_x\left(\int_0^zv(t,x,\xi)\,d\xi \right)=\beta \Theta,
\end{aligned}\\
v_z|_{z=0}= v_z|_{z=1}=0, \quad
\Theta|_{z=0}=0,\quad \Theta|_{z=1}=0,\\
%v|_{x=0}= v|_{x=2\pi \alpha}=0, \quad
%\partial_x\Theta|_{x=0}=0,\quad \partial_x\Theta|_{x=2\pi\alpha}=0,\\
v, \Theta~\text{ are periodic in $x$ with period $\alpha$},\\
\int_0^1v(t,x,\xi)\,d\xi=0.
\end{cases}
\end{align}

To analyze this eigenvalue problem, we introduce the following function spaces:
\begin{align}
\begin{aligned}
\mathcal{Y}(D) &= \left\{ (v, \Theta) \in L^2(D) \,\middle|\,
v, \Theta \text{ are periodic in $x$ with period $\alpha$} \right\}, \\
\mathcal{X}_0(D) &= \left\{ (v, \Theta) \in \mathcal{Y}(D) \,\middle|\,
\int_0^1 v(x,z) \, dz = 0 \right\}, \\
\mathcal{X}_1(D) &= \left\{ (v, \Theta) \in \mathcal{X}_0(D) \cap H^1(D) \,\middle|\,
\Theta|_{z=0} = \Theta|_{z=1} = 0 \right\}, \\
\mathcal{X}_2(D) &= \left\{ (v, \Theta) \in \mathcal{X}_0(D) \cap H^2(D) \,\middle|\,
\partial_z v|_{z=0} = \partial_z v|_{z=1} = 0 \right\}.
\end{aligned}
\end{align}
Note that the sets $\left\{ e^{i m x / \alpha} \sin(n\pi z) \right\}_{m \in \mathbb{Z}, n \geq 1}$ and $\left\{ e^{i m x / \alpha} \cos(n\pi z) \right\}_{m \in \mathbb{Z}, n \geq 0}$ form two distinct orthonormal bases for $\mathcal{Y}(D)$. They are also
 two distinct orthonormal bases for $\mathcal{X}_1(D) $ and
 $\mathcal{X}_2(D)$.

We define the bounded linear projection operator $\mathcal{P}: L^2(D) \to L^2(D)$ by
\begin{align}
\mathcal{P} v = v - \int_0^1 v(x,z) \, dz,
\end{align}
which enforces the zero vertical mean condition. The eigenvalue problem can then be expressed in terms of a linear operator $\mathcal{L}_{\mathrm{R}}: \mathcal{X}_2(D) \to \mathcal{X}_0(D)$ defined as follows:
\begin{align}\label{linear-operator}
\begin{aligned}
\mathcal{L}_{\mathrm{R}} \psi &= \mathcal{A} \psi + \mathrm{R} \, \mathcal{B} \psi, \quad \psi = (v, \Theta), \\
\mathcal{A} \psi &=
\begin{pmatrix}
\mathcal{P} \left( \mathrm{Pr}_x \, \partial_{xx} v + \mathrm{Pr}_z \, \partial_{zz} v - \partial_x q \right) \\
\partial_{xx} \Theta + \kappa_{a} \, \partial_{zz} \Theta
\end{pmatrix}, \\
\mathcal{B} \psi &=
\begin{pmatrix}
- \mathcal{P} \, \partial_x \left( \int_0^z \Theta(t,x,\xi) \, d\xi \right) \\
\partial_x \left( \int_0^z v(t,x,\xi) \, d\xi \right)
\end{pmatrix}.
\end{aligned}
\end{align}

\begin{lemma}\label{eigen-all}
For the eigenvalue problem \eqref{perturbation-31}, the following properties hold:
\begin{enumerate}
    \item[(1)] Every eigenvalue $\beta$ is real.
    \item[(2)] There exists a constant $\beta_0 > 0$ such that every eigenvalue $\sigma$ of the modified problem
    \begin{align}\label{perturbation-313}
    \mathcal{L}_{\mathrm{R}} \psi - \beta_0 \psi = \sigma \psi
    \end{align}
    is negative. And, $(\mathcal{L}_{\mathrm{R}}-\beta_0I)^{-1}:
    \mathcal{X}_0(D)\to \mathcal{X}_0(D)$ is a compact operator.
\end{enumerate}
\end{lemma}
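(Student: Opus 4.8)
The plan is to diagonalize $\mathcal{L}_{\mathrm{R}}$ with respect to the trigonometric systems introduced in \eqref{T-e} and thereby reduce both assertions to elementary facts about a family of $2\times2$ matrices. Since the coefficients of \eqref{perturbation-31} are independent of $x$ and the vertical boundary conditions are $\partial_z v|_{z=0,1}=0$ and $\Theta|_{z=0,1}=0$, I expand $v$ in $\{e^{i2\pi mx/\alpha}\cos(n\pi z)\}_{m\in\mathbb{Z},\,n\ge1}$ (the constraint $\int_0^1 v\,dz=0$ eliminates the mode $n=0$) and $\Theta$ in $\{e^{i2\pi mx/\alpha}\sin(n\pi z)\}_{m\in\mathbb{Z},\,n\ge1}$; these furnish orthonormal bases of $\mathcal{X}_0(D)$, $\mathcal{X}_1(D)$ and $\mathcal{X}_2(D)$. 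The key structural point is that the nonlocal operator $K=\int_0^z(\cdot)\,d\xi$ sends $\sin(n\pi z)$ to $(1-\cos(n\pi z))/(n\pi)$ and $\cos(n\pi z)$ to $\sin(n\pi z)/(n\pi)$, so that—once the projection $\mathcal{P}$ removes the $z$-independent part—the coupling preserves the index $n$. Consequently $\mathcal{L}_{\mathrm{R}}$ is block-diagonal, with one block per pair $(m,n)$, $n\ge1$, namely $M_{m,n}=\begin{pmatrix}-a_{m,n}&\mathrm{R}\,ik/(n\pi)\\ -\mathrm{R}\,ik/(n\pi)&-b_{m,n}\end{pmatrix}$, where $k=2\pi m/\alpha$, $a_{m,n}=\mathrm{Pr}_x k^2+\mathrm{Pr}_z n^2\pi^2>0$ and $b_{m,n}=k^2+\kappa_a n^2\pi^2>0$.

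For assertion (1) I observe that each $M_{m,n}$ is Hermitian: its diagonal is real and its off-diagonal entries are complex conjugates of one another. Hence every $M_{m,n}$ has real spectrum, and since $\mathrm{spec}(\mathcal{L}_{\mathrm{R}})=\bigcup_{m,n}\mathrm{spec}(M_{m,n})$, every eigenvalue $\beta$ of \eqref{perturbation-31} is real. Conceptually this reflects the self-adjointness of $\mathcal{L}_{\mathrm{R}}$ on $\mathcal{X}_0(D)$: the operator $\mathcal{A}$ is symmetric by integration by parts under the stated boundary conditions, while the first-order coupling is symmetric precisely because $K+K^{*}$ equals the vertical-averaging projection, which annihilates every field in $\mathcal{X}_0(D)$ by the constraint $\int_0^1 v\,dz=0$.

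For the first half of assertion (2) I bound the larger eigenvalue $\beta_{m,n}^{+}$ of $M_{m,n}$, i.e.\ the larger root of $\beta^2+(a_{m,n}+b_{m,n})\beta+\big(a_{m,n}b_{m,n}-\mathrm{R}^2k^2/(n^2\pi^2)\big)=0$. Using $\sqrt{(a-b)^2+4\gamma^2}\le |a-b|+2\gamma$ with $\gamma=\mathrm{R}|k|/(n\pi)$ yields $\beta_{m,n}^{+}\le-\min(a_{m,n},b_{m,n})+\mathrm{R}|k|/(n\pi)$. Since $\min(a_{m,n},b_{m,n})\ge c_0\,(k^2+n^2\pi^2)$ grows quadratically in $(m,n)$ while $\mathrm{R}|k|/(n\pi)$ grows only linearly in $|m|$, I conclude $\beta_{m,n}^{+}\to-\infty$ as $m^2+n^2\to\infty$. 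Therefore $\beta_{\max}:=\sup_{m,n}\beta_{m,n}^{+}<\infty$, and choosing any $\beta_0>\max(\beta_{\max},0)$ forces every eigenvalue $\sigma=\beta-\beta_0$ of \eqref{perturbation-313} to be negative.

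Finally, compactness follows from the same decay: $(\mathcal{L}_{\mathrm{R}}-\beta_0 I)^{-1}$ is block-diagonal with Hermitian blocks $(M_{m,n}-\beta_0 I)^{-1}$ of norm $(\beta_0-\beta_{m,n}^{+})^{-1}\to0$, and a block-diagonal operator whose block norms tend to zero is the operator-norm limit of its finite-rank truncations, hence compact; alternatively, elliptic regularity places the range of the resolvent in $H^2(D)$ and compactness then follows from the Rellich embedding $H^2(D)\hookrightarrow\hookrightarrow L^2(D)$. I expect the main technical obstacle to be the rigorous justification of the block reduction itself—verifying completeness of the trigonometric systems in the constrained spaces $\mathcal{X}_0(D)$ and $\mathcal{X}_2(D)$, identifying the operator domain correctly from the boundary conditions, and confirming that $\mathcal{L}_{\mathrm{R}}$ carries no spectrum beyond $\bigcup_{m,n}\mathrm{spec}(M_{m,n})$. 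The self-adjointness established in (1) is exactly what licenses this reduction via the spectral theorem and simultaneously yields the bijectivity of $\mathcal{L}_{\mathrm{R}}-\beta_0 I:\mathcal{X}_2(D)\to\mathcal{X}_0(D)$ needed to define the resolvent.
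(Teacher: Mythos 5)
Your argument is correct, but it takes a genuinely different route from the paper's. The paper's proof is abstract and very short: it asserts the symmetry of $\mathcal{L}_{\mathrm{R}}$ (which rests on exactly the integration-by-parts identity you package as $K+K^{*}=\int_0^1(\cdot)\,dz$ annihilating the mean-free velocity component), deduces realness of the spectrum from self-adjointness, obtains the shift $\beta_0$ from negativity of the quadratic form $(\mathcal{L}_{\mathrm{R}}\psi,\psi)-\beta_0\norm{\psi}_{L^2}^2<0$ for large $\beta_0$ (in effect bounding the coupling term $2\mathrm{R}E_2(\psi)$ by $\epsilon\norm{\nabla\psi}_{L^2}^2+C_\epsilon\norm{\psi}_{L^2}^2$), and dismisses compactness of the resolvent with ``it is not hard to show,'' implicitly via elliptic regularity and the Rellich embedding. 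You instead diagonalize $\mathcal{L}_{\mathrm{R}}$ over Fourier modes and reduce everything to the $2\times2$ Hermitian blocks $M_{m,n}$; your blocks coincide with the paper's own mode system \eqref{all-eign}. (Note that your $(2,1)$ entry $-\mathrm{R}\,ik/(n\pi)$ follows the sign conventions of \eqref{perturbation-31} and \eqref{all-eign} rather than the second component of $\mathcal{B}$ as displayed in \eqref{linear-operator}, where the plus sign appears to be a typo: with that sign the blocks would be complex symmetric rather than Hermitian, and realness of the spectrum would actually fail for large $\mathrm{R}$. You implicitly corrected this, which is the right call.) Your route buys more than the paper's: the explicit bound $\beta^{+}_{m,n}\le-\min(a_{m,n},b_{m,n})+\mathrm{R}|k|/(n\pi)\to-\infty$ quantifies the top of the spectrum; compactness of $(\mathcal{L}_{\mathrm{R}}-\beta_0 I)^{-1}$ drops out of the decay of the block norms, with no elliptic theory needed; and the same computation already yields the eigenvalue formulas \eqref{all-eigenvalue} and the eigenfunction completeness that the paper uses later in Remark~\ref{remark51}, Lemma~\ref{pes-lemma}, and the proof of Theorem~\ref{critical-stability}. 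What the paper's abstract argument buys is brevity and independence from separation of variables, so it would survive, e.g., non-constant coefficients. The technical caveats you flag—completeness of the trigonometric systems in the constrained spaces, correct identification of the domain, and absence of spectrum beyond $\bigcup_{m,n}\mathrm{spec}(M_{m,n})$—are genuine but standard, and the paper itself assumes the same completeness when it declares the two trigonometric families to be bases of $\mathcal{X}_1(D)$ and $\mathcal{X}_2(D)$; your proposal is, if anything, more explicit about these points than the paper's proof.
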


\begin{proof}
We first note that the operator $\mathcal{L}_{\mathrm{R}}$ satisfies $(\mathcal{L}_{\mathrm{R}} \psi, \psi) = (\psi, \mathcal{L}_{\mathrm{R}} \psi)$, which implies that $\mathcal{L}_{\mathrm{R}}$ is self-adjoint. Consequently, all eigenvalues $\beta$ of problem \eqref{perturbation-31} are real.
Furthermore, there exists a sufficiently large $\beta_0 > 0$ such that the quadratic form satisfies
\[
(\mathcal{L}_{\mathrm{R}} \psi, \psi) - \beta_0 (\psi, \psi) < 0 \quad \text{for all } \psi \neq 0.
\]
This inequality implies that all eigenvalues $\sigma$ of the shifted operator in \eqref{perturbation-313} are negative. It is not hard to show that 
$(\mathcal{L}_{\mathrm{R}}-\beta_0I)^{-1}:
    \mathcal{X}_0(D)\to \mathcal{X}_0(D)$ is a compact operator.
\end{proof}

\begin{remark}\label{remark51}
It follows from Lemma \ref{eigen-all} that the eigenvalue problem \eqref{perturbation-31} admits a countable sequence of real eigenvalues, each of finite multiplicity.
\end{remark}

Let us now show the existence of the largest eigenvalue for the problem 
\eqref{perturbation-31}. To this end, we define the first eigenvalue $\beta_1$ as 
follows:
\begin{align}\label{ve12-3}
-\beta_1 = \inf_{(u, \theta) \in \mathcal{X}_1(D)} \frac{E_1(u,\theta)-2\text{R}E_2(u, \theta)}{E_3(u, \theta)},
\end{align}
where $E_1$ and $E_2$ are given in \eqref{e12}, and the functionals $E_3$ are defined by:
\begin{align}\label{e13-2}
\begin{aligned}
E_1(u, \theta) &= \int_D \left( u^2 +  \theta^2 \right) dx\,dz.
\end{aligned}
\end{align}

\begin{lemma}
There exists a unique pair $(v, \Theta) \in H^2(D) \cap \mathcal{X}_1(D)$ such that
\[
-\beta_1 = \frac{E_1(v_1, \Theta_1) - 2\mathrm{R} E_2(v_1, \Theta_1)}{E_3(v_1, \Theta_1)},
\]
and the triple $(v_1, \Theta_1, \beta_1)$ satisfies the eigenvalue problem \eqref{perturbation-31}.
\end{lemma}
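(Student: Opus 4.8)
The plan is to establish existence of the minimizer by the direct method of the calculus of variations, exactly in the spirit of the proof of Lemma~\ref{cri-eigenvalue}, and then to identify the minimizer with a solution of \eqref{perturbation-31} through the associated Euler--Lagrange system. Since the Rayleigh quotient in \eqref{ve12-3} is homogeneous of degree zero in $(u,\theta)$, I first normalize by the constraint $E_3(u,\theta)=1$ and minimize the numerator $N(u,\theta):=E_1(u,\theta)-2\text{R}\,E_2(u,\theta)$ over the set $\{(u,\theta)\in\mathcal{X}_1(D):E_3(u,\theta)=1\}$, with $E_1,E_2$ as in \eqref{e12} and $E_3$ as in \eqref{e13-2}.

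The first step is boundedness below and coercivity. The nonlocal factor is controlled by Cauchy--Schwarz in the vertical variable, $\|\int_0^z\partial_x\theta\,d\xi\|_{L^2}\le\|\partial_x\theta\|_{L^2}$, giving $|E_2|\le\|u\|_{L^2}\|\partial_x\theta\|_{L^2}$; Young's inequality then yields $2\text{R}|E_2|\le\epsilon\|\partial_x\theta\|_{L^2}^2+\text{R}^2\epsilon^{-1}\|u\|_{L^2}^2$. As the coefficient of $|\partial_x\theta|^2$ in $E_1$ equals one, choosing $\epsilon<1$ shows $N\ge(1-\epsilon)E_1-\text{R}^2\epsilon^{-1}\|u\|_{L^2}^2$, and on the constraint set $\|u\|_{L^2}^2\le1$, so $N$ is bounded below and $E_1$ remains bounded along any minimizing sequence. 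Using the constraints defining $\mathcal{X}_1(D)$ (namely $\theta|_{z=0,1}=0$ and $\int_0^1 u\,dz=0$) together with the Poincaré inequality, $E_1$ controls the full $H^1$ norm, so a minimizing sequence is bounded in $H^1(D)$. By the Rellich--Kondrachov theorem I extract a subsequence converging weakly in $H^1(D)$ and strongly in $L^2(D)$ to a limit $(v_1,\Theta_1)$. Weak lower semicontinuity of $E_1$, weak continuity of $E_2$ (both recorded in Lemma~\ref{cri-eigenvalue}), and strong $L^2$-continuity of $E_3$ then force $E_3(v_1,\Theta_1)=1$ and $N(v_1,\Theta_1)\le\liminf N$, so $(v_1,\Theta_1)\in\mathcal{X}_1(D)$ is a nonzero minimizer with $N(v_1,\Theta_1)=-\beta_1$.

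The crux is the Euler--Lagrange step and its identification with \eqref{perturbation-31}. Introducing a Lagrange multiplier for the constraint $E_3=1$, the first variation against any $(\phi,\psi)\in\mathcal{X}_1(D)$ reads $DE_1-2\text{R}\,DE_2=-\beta_1\,DE_3$, the multiplier being forced to equal $-\beta_1$ by testing with $(v_1,\Theta_1)$ itself and using homogeneity. Integration by parts in the diffusion terms is routine; the delicate point is the nonlocal coupling contribution $\text{R}\int_D v_1\,\partial_x(\int_0^z\psi\,d\xi)\,dx\,dz$, which must be rewritten as a multiple of $\psi$. I would integrate by parts in $x$ and then apply the Fubini identity $\int_0^1 f(z)\bigl(\int_0^z g\,d\xi\bigr)dz=\int_0^1 g(\xi)\bigl(\int_\xi^1 f\,dz\bigr)d\xi$ on the triangle $\{0\le\xi\le z\le1\}$; invoking the mean-zero constraint $\int_0^1 v_1\,dz=0$ converts $\int_\xi^1\partial_x v_1\,dz$ into $-\partial_x\int_0^\xi v_1\,dz$, reproducing exactly the operator $\text{R}\,\partial_x(\int_0^z v_1\,d\xi)$ appearing in \eqref{perturbation-31}$_2$. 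For the $v_1$-equation the pressure term is absent from the weak formulation, since every test function $\phi\in\mathcal{X}_1(D)$ has zero vertical mean and $q=q(x)$, whence $\int_D\partial_x q\,\phi=0$; the pressure is then recovered a posteriori by a de Rham argument, precisely as in Lemma~\ref{cri-eigenvalue}. Standard elliptic regularity (the nonlocal integral terms being smoothing) finally upgrades the $H^1$ weak solution to $(v_1,\Theta_1)\in H^2(D)$.

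For the uniqueness assertion, observe that the value $-\beta_1$ is uniquely determined as the infimum in \eqref{ve12-3}; equivalently, since $(\mathcal{L}_{\mathrm{R}}\psi,\psi)=-N(\psi)$, the number $\beta_1=\sup_{\psi\neq0}(\mathcal{L}_{\mathrm{R}}\psi,\psi)/(\psi,\psi)$ is the largest element of the discrete real spectrum of $\mathcal{L}_{\mathrm{R}}$ guaranteed by Lemma~\ref{eigen-all} and Remark~\ref{remark51}, and any minimizer is a corresponding principal eigenfunction pinned down (up to the principal eigenspace) by the normalization $E_3=1$. I expect the main obstacle to be exactly the nonlocal manipulation in the Euler--Lagrange step — verifying that the Fubini/triangle identity combined with $\int_0^1 v_1\,dz=0$ yields the precise coupling operator of \eqref{perturbation-31} and that the pressure is legitimately eliminated against mean-zero test functions and then recovered — rather than the comparatively routine coercivity and compactness arguments.
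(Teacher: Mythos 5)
Your proposal is correct and follows essentially the same route the paper intends: its proof is deferred to the argument of Lemma~\ref{cri-eigenvalue} (direct method with the normalization $E_3=1$, weak lower semicontinuity of $E_1$ and weak continuity of $E_2$, Euler--Lagrange identification with \eqref{perturbation-31} via the adjoint identity $\int_D v\,\partial_x\bigl(\int_0^z\psi\,d\xi\bigr)dx\,dz=\int_D\psi\,\partial_x\bigl(\int_0^z v\,d\xi\bigr)dx\,dz$, which your Fubini-plus-mean-zero computation establishes, followed by pressure recovery and elliptic regularity). Your honest qualification that the minimizer is pinned down only up to the principal eigenspace is, if anything, more careful than the paper's stated ``unique pair,'' so there is no gap relative to the paper's own level of rigor.
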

The proof of this lemma is analogous to that of Lemma~\ref{cri-eigenvalue}.

For any solution $(v, \Theta, \beta)$ of the problem \eqref{perturbation-31},
based on the definition of $-\beta_1$,
$(v, \Theta, \beta)$ satisfies 
\[
\beta =\frac{E_1(v, \Theta) - 2\mathrm{R} E_2(v, \Theta)}{E_3(v, \Theta)}
\leq -\frac{E_1(v_1, \Theta_1) - 2\mathrm{R} E_2(v_1, \Theta_1)}{E_3(v_1, \Theta_1)}
=\beta_1.
\]
This infers that $\beta_1$ is the largest eigenvalue of  the problem \eqref{perturbation-31}.

We now proceed to solve the eigenvalue problem \eqref{perturbation-31} exactly.
Substituting the Fourier expansions \eqref{T-e} into \eqref{perturbation-31} yields the algebraic system for each mode $(m, n)$:
\begin{align}\label{all-eign}
\begin{cases}
n\pi \left( \mathrm{Pr}_x \dfrac{4\pi^2 m^2}{\alpha^2} + \mathrm{Pr}_z n^2 \pi^2 + \beta \right) v_{m,n}
- \mathrm{R} \, \dfrac{i 2\pi m}{\alpha} \Theta_{m,n} = 0, \\[2ex]
\mathrm{R} \, \dfrac{i 2\pi m}{\alpha} v_{m,n} +
n\pi \left( \dfrac{4\pi^2 m^2}{\alpha^2} + \kappa_{a} n^2 \pi^2 + \beta \right) \Theta_{m,n} = 0.
\end{cases}
\end{align}
The system \eqref{all-eign} admits $(v_{m,n}, \Theta_{m,n}) \neq (0,0)$ if and only if its determinant vanishes:
\begin{align*}
n^2 \pi^2 \left( \mathrm{Pr}_x \frac{4\pi^2 m^2}{\alpha^2} + \mathrm{Pr}_z n^2 \pi^2 + \beta \right)
\left( \frac{4\pi^2 m^2}{\alpha^2} + \kappa_{a} n^2 \pi^2 + \beta \right)
= \mathrm{R}^2 \frac{4\pi^2 m^2}{\alpha^2}.
\end{align*}

Solving this characteristic equation for $\beta$, we obtain the eigenvalues:
\begin{align}\label{all-eigenvalue}
\beta = \beta_{m,n}^{\pm} = -A_{m,n} \pm B_{m,n},
\end{align}
where
\begin{align*}
A_{m,n} &= \frac{ (\mathrm{Pr}_x + 1) \dfrac{4\pi^2 m^2}{\alpha^2} + (\mathrm{Pr}_z + \kappa_a) n^2 \pi^2 }{2}, \\
B_{m,n} &= \sqrt{ A_{m,n}^2 - C_{m,n} }, \\
C_{m,n} &= \left( \mathrm{Pr}_x \frac{4\pi^2 m^2}{\alpha^2} + \mathrm{Pr}_z n^2 \pi^2 \right)
\left( \frac{4\pi^2 m^2}{\alpha^2} + \kappa_a n^2 \pi^2 \right)
- \mathrm{R}^2 \frac{4\pi^2 m^2}{\alpha^2 n^2 \pi^2}.
\end{align*}

Let the eigenvector corresponding to the eigenvalue $\beta_{m,n}^{\pm}$ in the system \eqref{all-eign} be denoted by
\begin{align}\label{eigenvector-0}
\begin{pmatrix}
v_{m,n}^{\pm} \\
\Theta_{m,n}^{\pm}
\end{pmatrix},
\end{align}
where the pair $(v_{m,n}^{\pm}, \Theta_{m,n}^{\pm})$ is a nontrivial solution of \eqref{all-eign} with $\beta = \beta_{m,n}^{\pm}$.

Owing to the fact that all coefficients in the problem \eqref{perturbation-31} are real, both the real and imaginary parts of any complex-valued solution are themselves real solutions. Consequently, for each eigenvalue $\beta_{m,n}^{\pm}$, we obtain two linearly independent real eigenvectors:
\begin{align}\label{eigenvector}
\begin{aligned}
\psi_{m,n}^{\pm,1} &= \Re \begin{pmatrix}
v_{m,n}^{\pm} \, e^{i 2\pi m x / \alpha} \cos(n\pi z) \\
\Theta_{m,n}^{\pm} \, e^{i 2\pi m x / \alpha} \sin(n\pi z)
\end{pmatrix}, \\
\psi_{m,n}^{\pm,2} &= \Im \begin{pmatrix}
v_{m,n}^{\pm} \, e^{i 2\pi m x / \alpha} \cos(n\pi z) \\
\Theta_{m,n}^{\pm} \, e^{i 2\pi m x / \alpha} \sin(n\pi z)
\end{pmatrix}.
\end{aligned}
\end{align}

Note that $\beta_{0,n}^{\pm} < 0$ for all $n$. For $m \neq 0$, we have $\beta_{m,n}^{-} < 0$, while $\beta_{m,n}^{+} = 0$ if and only if $C_{m,n} = 0$. The condition $C_{m,n} = 0$ is equivalent to
\[
\left( \mathrm{Pr}_x \frac{4\pi^2 m^2}{\alpha^2} + \mathrm{Pr}_z n^2 \pi^2 \right)
\left( \frac{4\pi^2 m^2}{\alpha^2} + \kappa_{a} n^2 \pi^2 \right)
= \mathrm{R}^2 \frac{4\pi^2 m^2}{\alpha^2 n^2 \pi^2},
\]
which recovers the relation \eqref{RRR} used for the determination of the critical Rayleigh number.

Therefore, when $\mathrm{R}$ is near the critical value $\mathrm{R}_c$, the first eigenvalue to cross zero is $\beta_{m_c,1}^{+}$, where $m_c$ is the critical wavenumber. This leads to the following lemma:

\begin{lemma}\label{pes-lemma}
For the eigenvalues $\beta_{m,n}^{\pm}$, the following conclusions hold:
\begin{align}\label{pes-condition}
\begin{cases}
\beta_{m,1}^{+}
\begin{cases}
>0, & \text{if } \mathrm{R} > \mathrm{R}_c, \\
=0, & \text{if } \mathrm{R} = \mathrm{R}_c, \\
<0, & \text{if } \mathrm{R} < \mathrm{R}_c,
\end{cases}
\quad \text{for } (m,1) = (m_c,1), \\[2ex]
\beta_{m,n}^{\pm} < 0, \quad \text{for } \mathrm{R} \leq \mathrm{R}_c,\ (m,n) \neq (m_c,1).
\end{cases}
\end{align}
\end{lemma}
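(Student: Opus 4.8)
The plan is to extract every claim from the closed-form eigenvalues $\beta_{m,n}^{\pm}=-A_{m,n}\pm B_{m,n}$ already derived in \eqref{all-eigenvalue}, converting sign statements about the eigenvalues into sign statements about $C_{m,n}$, which the marginal curve \eqref{RRR} then pins down. Throughout I abbreviate $X=\tfrac{4\pi^2 m^2}{\alpha^2}$, $Y=n^2\pi^2$, $P=\mathrm{Pr}_x X+\mathrm{Pr}_z Y$ and $Q=X+\kappa_a Y$, so that $A_{m,n}=\tfrac{P+Q}{2}$ and $C_{m,n}=PQ-\mathrm{R}^2 X/Y$. For $m\neq 0$ one has $X,Y>0$, hence $A_{m,n}>0$, and completing the square gives
\[
A_{m,n}^2-C_{m,n}=\tfrac{(P-Q)^2}{4}+\mathrm{R}^2\tfrac{X}{Y}\ge 0,
\]
so $B_{m,n}=\sqrt{A_{m,n}^2-C_{m,n}}$ is real and nonnegative, consistent with Lemma~\ref{eigen-all}. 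It follows at once that $\beta_{m,n}^{-}=-A_{m,n}-B_{m,n}<0$ for every mode, while $\beta_{m,n}^{+}=-A_{m,n}+B_{m,n}$ is positive, zero, or negative according as $B_{m,n}>A_{m,n}$, $=A_{m,n}$, or $<A_{m,n}$; since $A_{m,n}>0$, this trichotomy is equivalent to $C_{m,n}<0$, $=0$, or $>0$, respectively. In short, the sign of $\beta_{m,n}^{+}$ is opposite to that of $C_{m,n}$.

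Next I would identify the zero set of $C_{m,n}$ with the neutral curve. The equation $C_{m,n}=0$ reads $\mathrm{R}^2=\mathcal{R}^2(m,n):=PQ\,Y/X$, which is precisely the dispersion relation \eqref{RRR}, and more generally $C_{m,n}\gtrless 0$ if and only if $\mathrm{R}^2\lessgtr\mathcal{R}^2(m,n)$. Since $\mathrm{R}_c$ is by definition the least value of $\mathcal{R}(m,1)$ and is attained at $m=m_c$, we have $\mathcal{R}(m_c,1)=\mathrm{R}_c$. Specialising the sign rule of the previous paragraph to the mode $(m,n)=(m_c,1)$ yields part~(1) immediately: $\mathrm{R}>\mathrm{R}_c$ forces $C_{m_c,1}<0$ and hence $\beta_{m_c,1}^{+}>0$; $\mathrm{R}=\mathrm{R}_c$ gives $C_{m_c,1}=0$ and $\beta_{m_c,1}^{+}=0$; and $\mathrm{R}<\mathrm{R}_c$ gives $C_{m_c,1}>0$ and $\beta_{m_c,1}^{+}<0$.

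For part~(2) it suffices to show $\mathcal{R}(m,n)>\mathrm{R}_c$ for every $(m,n)\neq(m_c,1)$, for then $\mathrm{R}\le\mathrm{R}_c<\mathcal{R}(m,n)$ forces $C_{m,n}>0$ and hence $\beta_{m,n}^{+}<0$, while $\beta_{m,n}^{-}<0$ holds automatically. The mode $m=0$ is immediate: there $X=0$, so $C_{0,n}=\mathrm{Pr}_z\kappa_a Y^2>0$ unconditionally. For $m\neq 0$ I would first exploit monotonicity in $n$: with $X$ held fixed, each of the positive factors $Y$, $P$, $Q$ is strictly increasing in $n$, so $\mathcal{R}^2(m,n)=YPQ/X$ is strictly increasing in $n$, whence $\mathcal{R}(m,n)>\mathcal{R}(m,1)$ for all $n\ge 2$. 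Combining this with the fact that $\mathrm{R}_c=\min_{m\in\mathbb{Z}^+}\mathcal{R}(m,1)$ is attained only at $m=m_c$ gives $\mathcal{R}(m,1)>\mathrm{R}_c$ for every $m\neq m_c$, and therefore $\mathcal{R}(m,n)>\mathrm{R}_c$ for all $(m,n)\neq(m_c,1)$, as required.

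The sign bookkeeping and the monotonicity in $n$ are routine. The one substantive point—and the step I expect to demand care—is the \emph{uniqueness} of the integer minimiser $m_c$ of $\mathcal{R}(\cdot,1)$, equivalently of $F(\alpha,\mathrm{Pr}_a,\kappa_a,m)$, which is exactly what makes the inequality $\mathcal{R}(m,1)>\mathrm{R}_c$ strict for $m\neq m_c$. For exceptional parameter values two consecutive integers can tie for the minimum, in which case a second neutral mode crosses zero simultaneously and part~(2) fails as literally stated; I would therefore carry out the argument under the standing (generic) assumption that $m_c$ is the unique minimiser, which is the setting implicitly fixed by the definition of $m_c$ in Theorem~\ref{reduction}.
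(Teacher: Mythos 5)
Your proof is correct and takes essentially the same route as the paper: the argument the paper gives in the discussion preceding the lemma likewise reads the sign of $\beta_{m,n}^{\pm}=-A_{m,n}\pm B_{m,n}$ off the sign of $C_{m,n}$, identifies the neutral set $C_{m,n}=0$ with the dispersion relation \eqref{RRR}, and invokes the minimization of $\mathcal{R}(m,1)$ at $m=m_c$. Your two refinements are both correct and welcome tightenings of steps the paper leaves implicit: the strict monotonicity of $\mathcal{R}(m,n)$ in $n$ upgrades the paper's ``the minimum is typically achieved for $n=1$'' to an unconditional fact, and your explicit genericity assumption that $m_c$ is the \emph{unique} minimizer of $F$ is genuinely needed, since at a parameter tie between two wavenumbers a second mode crosses zero at $\mathrm{R}=\mathrm{R}_c$ and part (2) fails as literally stated.
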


In the vicinity of $\mathrm{R} = \mathrm{R}_c$, the first two eigenvectors $\psi_{m_c,1}^{+,1}$ and $\psi_{m_c,1}^{+,2}$ take the form:
\begin{align}\label{first-eigenvector}
\begin{aligned}
\psi_{m_c,1}^{+,1} &=
v_{m_c,1}^{+,1}
\begin{pmatrix}
\cos \left( \dfrac{2\pi m_c x}{\alpha} \right) \cos (\pi z) \\[1.5ex]
A_{m_c} \sin \left( \dfrac{2\pi m_c x}{\alpha} \right) \sin (\pi z)
\end{pmatrix}, \\[3ex]
\psi_{m_c,1}^{+,2} &=
v_{m_c,1}^{+,1}
\begin{pmatrix}
\sin \left( \dfrac{2\pi m_c x}{\alpha} \right) \cos (\pi z) \\[1.5ex]
-A_{m_c} \cos \left( \dfrac{2\pi m_c x}{\alpha} \right) \sin (\pi z)
\end{pmatrix},
\end{aligned}
\end{align}
where $v_{m_c,1}^{+,1}$ is a normalization constant determined by the condition 
\[
\|\psi_{m_c,1}^{+,1}\|_{L^2} = \|\psi_{m_c,1}^{+,2}\|_{L^2} = 1,
\] 
which and the coefficent $A_{m_c}$ are given by
\begin{align}\label{A-V}
\begin{aligned}
A_{m_c}=\frac{2\pi m_c\text{R}}{\alpha \pi\left(
\frac{4\pi^2m_c^2}{\alpha^2}
+\kappa_{a} \pi^2+\beta_{m_c,1}^{+}\right)}
\end{aligned},\quad
\left(v_{m_c,1}^{+,1}\right)^{2}=\left(
\frac{\alpha }{4}+\frac{\alpha  }{4} A_{m_c}^2\right)^{-2}.
\end{align}

\begin{remark}\label{remark52}
$\beta_{m_c,1}^{+}$ is the largest eigenvalue $\beta_1$ defined by 
\eqref{ve12-3}.
\end{remark}

\subsubsection{Global $L^2$-Stability for $0 < T_0 - T_1 < T_c$}

\begin{lemma}\label{lemma-grad-1-1-cases3}
Let $0 < T_0 - T_1 < T_c$. For any smooth solution $(v, \Theta)$ of the system \eqref{ondimensional-equations-2} with initial data $(v_0, \Theta_0) \in L^2(D)$, the following estimates hold:
\begin{align}\label{u21-1}
\begin{aligned}
&v \in L^{\infty}\big( (0,\infty); L^2(D) \big) \cap L^{2} \big( (0,\infty); H^1(D) \big), \\
&\Theta \in L^{\infty}\big( (0,\infty); L^2(D) \big) \cap L^{2} \big( (0,\infty); H^1(D) \big),
\end{aligned}
\end{align}
and there exists a constant $\lambda > 0$ such that
\begin{align}\label{u21-1-cases3}
\norm{v(t)}_{L^2(D)}^2 + \norm{\Theta(t)}_{L^2(D)}^2
\leq e^{-2\lambda t} \left( \norm{v_0}_{L^2(D)}^2 + \norm{\Theta_0}_{L^2(D)}^2 \right).
\end{align}
\end{lemma}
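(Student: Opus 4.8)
The plan is to run a single $L^2$ energy estimate on the full perturbation system and to close it using the variational characterization of the principal eigenvalue $\beta_1=\beta_{m_c,1}^{+}$. First I would multiply the first equation of \eqref{ondimensional-equations-2} (noting that here $\text{Sgn}(T_0-T_1)=1$) by $v$ and the second by $\Theta$, integrate over $D$, and add. Exactly as in the proof of Lemma~\ref{v-l2-l2}, the nonlinear advection contributions $\int_D\big(v\partial_x v-(\int_0^z\partial_x v\,d\xi)\partial_z v\big)v$ and the analogous temperature terms vanish by the skew-symmetry inherited from the divergence-free structure, while the pressure term $-\int_D\partial_x q\,v$ vanishes after integrating by parts in $z$ and invoking the constraint $\int_0^1 v\,dz=0$. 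The dissipative terms assemble into precisely $-E_1(v,\Theta)$, with $E_1$ as defined in \eqref{e12}.

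The decisive difference from the case $T_1>T_0$ is that the two buoyancy--coupling terms no longer cancel. Integrating $-\text{R}\int_D\partial_x\!\big(\int_0^z\Theta\,d\xi\big)v$ by parts in $x$ and then in $z$ converts it into $-\text{R}\int_D\Theta\big(\int_0^z\partial_x v\,d\xi\big)$, and since the coupling in the temperature equation now carries a negative sign, the term $-\text{R}\int_D\big(\int_0^z\partial_x v\,d\xi\big)\Theta$ reinforces rather than cancels it. Their sum is $2\text{R}E_2(v,\Theta)$, with $E_2$ as in \eqref{e12}, so the energy identity reads
\[
\frac{d}{dt}\|(v,\Theta)\|_{L^2(D)}^2 = -2\big(E_1(v,\Theta)-2\text{R}E_2(v,\Theta)\big).
\]

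To close the inequality I would invoke the principle of exchange of stabilities. By Lemma~\ref{pes-lemma}, the condition $\text{R}<\text{R}_c$ forces every eigenvalue $\beta_{m,n}^{\pm}$ to be strictly negative, so the largest eigenvalue satisfies $\beta_1=\beta_{m_c,1}^{+}<0$. The variational formula \eqref{ve12-3} then provides the coercivity bound $E_1(v,\Theta)-2\text{R}E_2(v,\Theta)\ge -\beta_1\,\|(v,\Theta)\|_{L^2(D)}^2$ valid for every admissible pair in $\mathcal{X}_1(D)$ (in particular without requiring $E_2>0$). Writing $\lambda:=-\beta_1>0$, this yields
\[
\frac{d}{dt}\|(v,\Theta)\|_{L^2(D)}^2 \le -2\lambda\,\|(v,\Theta)\|_{L^2(D)}^2,
\]
and Gronwall's inequality delivers the exponential decay \eqref{u21-1-cases3}. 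Integrating the energy identity over $[0,t]$ then bounds $\int_0^\infty E_1(v,\Theta)\,dt'$ uniformly, giving the $L^2\big((0,\infty);H^1(D)\big)$ regularity asserted in \eqref{u21-1}.

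I expect the main obstacle to lie not in the routine integrations by parts but in the coercivity step. Unlike the strictly dissipative regimes $T_1\ge T_0$, the destabilizing term $2\text{R}E_2$ cannot be absorbed by elementary inequalities such as Poincar\'e; it can only be controlled through the sharp spectral gap encoded in $\beta_1<0$, which is available precisely because $\text{R}<\text{R}_c$. A secondary point requiring care is verifying that the admissibility constraint $\int_0^1 v\,dz=0$—needed both to annihilate the pressure term and to apply \eqref{ve12-3}—is propagated by the flow, so that $(v(t),\Theta(t))\in\mathcal{X}_1(D)$ for all $t\ge 0$.
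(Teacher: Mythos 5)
Your proposal is correct and takes essentially the same route as the paper's proof: the identical $L^2$ energy identity (nonlinear and pressure terms annihilated by skew-symmetry and the constraint $\int_0^1 v\,dz=0$), the buoyancy coupling recognized as $2\mathrm{R}E_2$, coercivity from the variational characterization \eqref{ve12-3} of the principal eigenvalue $\beta_1=\beta_{m_c,1}^{+}<0$ for $\mathrm{R}<\mathrm{R}_c$, then Gronwall and time integration for the $L^2\big((0,\infty);H^1(D)\big)$ regularity. The paper compresses the spectral step into the single inequality in \eqref{two-proof--1-cases3} by citing the exchange of stabilities \eqref{pes-condition}, but the underlying mechanism is exactly the Rayleigh-quotient bound you spell out, so no genuine difference remains.
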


\begin{proof}
Under the condition $0 < T_0 - T_1 < T_c$, we multiply the first equation in \eqref{ondimensional-equations-2} by $v$ and the second by $\Theta$. Adding the resulting equations and integrating over $D$ yields:
\begin{align}\label{two-proof--1-cases3}
\begin{aligned}
\frac{d}{dt} \int_{D} \left( v^2 + \Theta^2 \right) dx\,dz
=& -2 \mathrm{Pr}_x \int_{D} \abs{\partial_x v}^2 dx\,dz
   -2 \mathrm{Pr}_z \int_{D} \abs{\partial_z v}^2 dx\,dz \\
  & -2 \int_{D} \abs{\partial_x \Theta}^2 dx\,dz
   -2 \kappa_{a} \int_{D} \abs{\partial_z \Theta}^2 dx\,dz \\
  & -4 \mathrm{R} \int_{D} \left( \Theta \int_0^z \partial_x v(t,x,\xi) \, d\xi \right) dx\,dz \\
\leq& \; 2 \beta_1 \int_{D} \left( v^2 + \Theta^2 \right) dx\,dz,
\end{aligned}
\end{align}
where the inequality follows from the principle of exchange of stabilities (see \eqref{pes-condition}), and $\beta_1$ is the maximum eigenvalue, which is negative.
Integrating inequality \eqref{two-proof--1-cases3} in time, we obtain 
\eqref{u21-1} and
the exponential decay estimate \eqref{u21-1-cases3} with $\lambda = -\beta_1 > 0$
when $0<T_0-T_1<T_c$.
\end{proof}

\subsubsection{Global $H^1$-Stability for $T_0 - T_1 < T_c$}

\begin{lemma}\label{lemma-grad-1-cases3-h1}
Let $0 < T_0 - T_1 < T_c$. For any smooth solution $(v, \Theta)$ of the system \eqref{ondimensional-equations-2} with initial data $(v_0, \Theta_0) \in H^1(D) \times L^2(D)$, the following estimates hold:
\begin{align}\label{infty}
\partial_z v \in L^{\infty}\big( (0,\infty); L^2(D) \big), \quad
\nabla \partial_z v \in L^2\big( (0,\infty); L^2(D) \big),
\end{align}
Moreover, there exists a constant $\lambda > 0$ such that
\begin{align}\label{cases-3-vz}
\norm{\partial_z v(t)}_{L^2}^2 \leq C e^{-2\lambda t} \left( \norm{(v_0, \Theta_0)}_{L^2(D)}^2 + \norm{\partial_z v_0}_{L^2}^2 \right).
\end{align}
\end{lemma}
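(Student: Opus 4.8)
The plan is to follow verbatim the template of Lemma~\ref{lemma-grad-1-2}, since the equation governing $\sigma := \partial_z v$ is structurally identical in the subcritical regime $0 < T_0 - T_1 < T_c$ and in the regime $T_1 > T_0$. Indeed, differentiating the first equation of \eqref{ondimensional-equations-2} in $z$ and noting that $|\text{Sgn}(T_0-T_1)| = 1$ regardless of the sign of $T_0-T_1$, one finds that $\sigma$ satisfies the same initial-boundary value problem \eqref{three-proof--8}, with buoyancy coupling $-\text{R}\,\partial_x\Theta$, homogeneous Dirichlet conditions $\sigma|_{z=0}=\sigma|_{z=1}=0$, and initial datum $\sigma|_{t=0}=\partial_z v_0$.

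First I would test \eqref{three-proof--8} against $\sigma$ in $L^2(D)$. Writing the velocity field as $(v,w)=(v,-\int_0^z \partial_x v\,d\xi)$, which is divergence-free and satisfies the no-penetration conditions identified in the compatibility discussion of \eqref{compatibility-cond}, the two advection contributions combine into $-(v\partial_x+w\partial_z)\sigma$, whose integral against $\sigma$ vanishes after integration by parts. This reproduces exactly \eqref{three-proof--9-1}, namely
\begin{align*}
\frac{1}{2}\frac{d}{dt}\int_D |\sigma|^2\,dx\,dz
= -\text{Pr}_x\int_D |\partial_x\sigma|^2\,dx\,dz
-\text{Pr}_z\int_D |\partial_z\sigma|^2\,dx\,dz
+\text{R}\int_D \Theta\,\partial_x\sigma\,dx\,dz.
\end{align*}
Applying Young's inequality to the buoyancy term and the Poincaré inequality (legitimate because $\sigma$ has homogeneous Dirichlet data in $z$) to absorb part of the dissipation, I obtain the differential inequality
\begin{align*}
\frac{d}{dt}\int_D |\sigma|^2\,dx\,dz
\leq -\lambda\int_D |\sigma|^2\,dx\,dz
+C\int_D \Theta^2\,dx\,dz
\end{align*}
for a suitable $\lambda>0$.

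Next I would invoke the subcritical temperature decay: by Lemma~\ref{lemma-grad-1-1-cases3}, $\|\Theta(t)\|_{L^2}^2 \leq e^{-2\lambda t}(\|v_0\|_{L^2}^2+\|\Theta_0\|_{L^2}^2)$, so the forcing $C\|\Theta\|_{L^2}^2$ takes the form $e^{-\mu t}D(t)$ with $D\in L^1(0,\infty)$. Feeding this into the generalized Gronwall inequality (Lemma~\ref{general-inequality1}) then yields the exponential decay \eqref{cases-3-vz}. Finally, integrating the differential inequality over $[0,t]$ and retaining the dissipative terms produces the uniform bound $\sigma\in L^\infty((0,\infty);L^2(D))$ together with $\nabla\sigma\in L^2((0,\infty);L^2(D))$, which is precisely \eqref{infty}.

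The only point requiring care—rather than a genuine obstacle—is the source of the $L^2$-decay of $\Theta$. Unlike the case $T_1>T_0$, where the temperature equation is self-contained and its decay follows from Poincaré alone, here the decay of $\|v\|_{L^2}+\|\Theta\|_{L^2}$ rests on the exchange-of-stabilities estimate \eqref{pes-condition}, which couples $v$ and $\Theta$ through the principal eigenvalue $\beta_1<0$. I must therefore choose the rate $\lambda$ in the $\sigma$-estimate no larger than $-\beta_1$, so that the exponential forcing genuinely lies in $L^1$ and the Gronwall step closes; beyond this bookkeeping, the argument is a direct adaptation of Lemma~\ref{lemma-grad-1-2}.
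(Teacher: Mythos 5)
Your proposal is correct and coincides with the paper's intended argument: the paper's proof of Lemma~\ref{lemma-grad-1-cases3-h1} is exactly a transplant of the proof of Lemma~\ref{lemma-grad-1-2}, testing the $\sigma=\partial_z v$ equation \eqref{three-proof--8} against $\sigma$, absorbing the buoyancy term via Young and Poincar\'e, and closing with Lemma~\ref{general-inequality1}. Your one substantive addition—that the exponentially decaying forcing $C\norm{\Theta}_{L^2}^2$ now comes from the exchange-of-stabilities estimate of Lemma~\ref{lemma-grad-1-1-cases3} (with $\beta_1<0$) rather than from the self-contained decay available when $T_1>T_0$, and that the rate $\lambda$ must be chosen accordingly—is precisely the right bookkeeping and is consistent with the paper.
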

\begin{proof}
The proof follows using the approach developed in the proof of Lemma~\ref{lemma-grad-1-2}.
\end{proof}

\begin{lemma}\label{lemma-grad-1-2-cases3}
Let $0 < T_0 - T_1 < T_c$. For any smooth solution $(v, \Theta)$ of the system \eqref{ondimensional-equations-2} with initial data $(v_0, \Theta_0) \in H^1(D) \times L^2(D)$, we have
\begin{align}\label{infty-cases3-3}
\partial_x v \in L^{\infty}\big( (0,\infty); L^2(D) \big) \cap L^2\big( (0,\infty); H^1(D) \big).
\end{align}
\end{lemma}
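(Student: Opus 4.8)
The plan is to imitate the proof of Lemma~\ref{lemma-grad-1-3} almost verbatim, since in the regime $0<T_0-T_1<T_c$ one has $\text{Sgn}(T_0-T_1)=1$ and $\abs{\text{Sgn}(T_0-T_1)}=1$, so the momentum equation in \eqref{ondimensional-equations-2} is structurally identical to the one treated there. First I would multiply the first equation of \eqref{ondimensional-equations-2} by $\partial_{xx}v$ and integrate by parts over $D$, using the boundary conditions together with the incompressibility/stream-function identities, to reproduce exactly the energy identity \eqref{estimatevx-infinity}, namely $\frac{d}{dt}\norm{\partial_x v}_{L^2}^2+2\text{Pr}_x\norm{\partial_{xx}v}_{L^2}^2+2\text{Pr}_z\norm{\partial_{xz}v}_{L^2}^2=H_1+H_2+H_3$, with the same three nonlinear/buoyancy contributions.

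Next I would bound $H_1$, $H_2$, $H_3$ by the mixed-norm technique of Lemma~\ref{mixed-norm}--Lemma~\ref{mixed-norm-1}, exactly as in \eqref{estimateh1}, \eqref{estimateh2}, and the Cauchy--Schwarz bound for $H_3$. Choosing $\epsilon$ small to absorb the second-order terms then yields the differential inequality
\[
\frac{d}{dt}\norm{\partial_x v}_{L^2}^2+\text{Pr}_x\norm{\partial_{xx}v}_{L^2}^2+\text{Pr}_z\norm{\partial_{xz}v}_{L^2}^2\le \phi(t)\norm{\partial_x v}_{L^2}^2+h(t),
\]
with $\phi(t)=C\norm{\partial_z v}_{L^2}^2\big(\norm{v}_{L^2}^2+\norm{\partial_{zz}v}_{L^2}^2\big)$ and $h(t)=C\norm{\partial_x\Theta}_{L^2}^2$, identical to the coefficients appearing below \eqref{onexxx}.

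The only regime-dependent point — where I must be careful rather than merely copy — is verifying that $\phi,h$ have the integrability needed to close the Gronwall argument, now drawing the a priori inputs from the present subsection rather than the $T_1>T_0$ one. Lemma~\ref{lemma-grad-1-1-cases3} supplies the uniform bound and $L^2_tH^1$ integrability of $(v,\Theta)$, in particular $\int_0^\infty\norm{\partial_x\Theta}_{L^2}^2\,dt<\infty$, so $h\in L^1(0,\infty)$ and $\psi(t):=\int_0^t h\,dt'+\norm{\partial_x v_0}_{L^2}^2$ is bounded; while Lemma~\ref{lemma-grad-1-cases3-h1} supplies $\norm{\partial_z v}_{L^\infty_t L^2}<\infty$ together with $\int_0^\infty\norm{\nabla\partial_z v}_{L^2}^2\,dt<\infty$, whence $\int_0^\infty\phi(t)\,dt<\infty$. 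Gronwall's inequality (Lemma~\ref{general-inequality1}) then gives the uniform-in-time bound on $\norm{\partial_x v(t)}_{L^2}^2$, i.e. $\partial_x v\in L^\infty((0,\infty);L^2(D))$, and integrating the differential inequality in time yields $\int_0^\infty(\norm{\partial_{xx}v}_{L^2}^2+\norm{\partial_{xz}v}_{L^2}^2)\,dt<\infty$, i.e. $\partial_x v\in L^2((0,\infty);H^1(D))$.

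I expect no new analytic obstacle here: the two genuine difficulties of the primitive-equation setting — the mutual coupling of the four first-order derivative norms and the nonlocal quadratic term $\big(\int_0^z\partial_x v\,d\xi\big)\partial_z v$ — are dissolved by the same mixed-norm estimates regardless of the sign of $T_0-T_1$, so they behave identically. The one conceptual difference from the $T_1>T_0$ case is that the underlying $L^2$ decay now rests on the exchange-of-stabilities bound $\beta_1<0$ rather than on a direct energy cancellation; but this fact is already internalized in Lemma~\ref{lemma-grad-1-1-cases3} and does not re-enter the present estimate. The only bookkeeping point to watch is therefore that $\phi$ and $h$ lie in $L^1(0,\infty)$ through the correct pair of preceding lemmas.
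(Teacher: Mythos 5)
Your proposal is correct and takes essentially the same approach as the paper: the paper's entire proof of this lemma is the one-line remark that it ``follows using the approach developed in the proof of Lemma~\ref{lemma-grad-1-3},'' which is exactly the verbatim transfer you carry out, exploiting that $\abs{\text{Sgn}(T_0-T_1)}=1$ makes the momentum equation identical in both regimes. Your explicit verification that $\phi,h\in L^1(0,\infty)$ now follows from Lemma~\ref{lemma-grad-1-1-cases3} and Lemma~\ref{lemma-grad-1-cases3-h1} (rather than their case-one counterparts) is precisely the regime-dependent bookkeeping that justifies the paper's citation, with no circularity since both input lemmas precede this one.
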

\begin{proof}
The proof follows using the approach developed in the proof of Lemma~\ref{lemma-grad-1-3}.
\end{proof}

\begin{lemma}\label{lemma-grad-1-3-cases}
Let $0 < T_0 - T_1 < T_c$. For any smooth solution $(v, \Theta)$ of the system \eqref{ondimensional-equations-2} with initial data $(v_0, \Theta_0) \in H^1(D) \times L^2(D)$, the following estimates hold:
\begin{align}\label{infty-thetaz}
\partial_z \Theta \in L^{\infty}\big( (0,\infty); L^2(D) \big) \cap L^2\big( (0,\infty); H^1(D) \big).
\end{align}
Moreover, there exists a constant $\lambda > 0$ such that
\begin{align}\label{decaythetaz-cases3}
\norm{\partial_z \Theta(t)}_{L^2}^2 \leq C C_0 e^{-2\lambda t} \norm{(v_0, \Theta_0)}_{H^1(D)}^2,
\end{align}
where the constant $C_0$ depends on $\norm{v_0}_{H^1(D)}$ and $\norm{\Theta_0}_{L^2(D)}$.
\end{lemma}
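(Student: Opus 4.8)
The plan is to mirror the argument of Lemma~\ref{lemma-grad-1-4}, the corresponding statement for the regime $T_1>T_0$. The only structural difference between the two regimes is the sign of the buoyancy coupling term in $\eqref{ondimensional-equations-2}_2$: for $0<T_0-T_1<T_c$ one has $\mathrm{Sgn}(T_0-T_1)=1$, so the source reads $-\mathrm{R}\big(\int_0^z\partial_x v\,d\xi\big)$ rather than $+\mathrm{R}\big(\int_0^z\partial_x v\,d\xi\big)$. Since every term on the right-hand side of the energy identity will be controlled in absolute value, this sign flip is immaterial; what genuinely changes is only the source of the exponential $L^2$-decay, which here is supplied by Lemma~\ref{lemma-grad-1-1-cases3} (through the principle of exchange of stabilities \eqref{pes-condition} and the negative spectral gap $\beta_1<0$), together with Lemma~\ref{lemma-grad-1-cases3-h1} and Lemma~\ref{lemma-grad-1-2-cases3}.

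First I would test $\eqref{ondimensional-equations-2}_2$ against $\partial_{zz}\Theta$ and integrate by parts over $D$, using the boundary conditions $\Theta|_{z=0,1}=0$ and $\partial_z v|_{z=0,1}=0$ together with the constraint $\int_0^1 v\,dz=0$, which is exactly what makes the boundary contribution of the nonlocal source vanish at $z=1$ (since there $\int_0^1\partial_x v\,d\xi=\partial_x\int_0^1 v\,d\xi=0$). This produces the identity
\begin{align*}
\frac{d}{dt}\norm{\partial_z\Theta}_{L^2}^2+2\kappa_a\norm{\partial_{zz}\Theta}_{L^2}^2+2\norm{\partial_{xz}\Theta}_{L^2}^2=I_1+I_2+I_3,
\end{align*}
where $I_1$ collects the (sign-flipped) coupling term and reduces, after one integration by parts in $z$ and one in $x$, to $\pm 2\mathrm{R}\int_D v\,\partial_{xz}\Theta\,dx\,dz$, while $I_2$ and $I_3$ arise from the advective term $v\partial_x\Theta$ and the nonlocal term $\big(\int_0^z\partial_x v\,d\xi\big)\partial_z\Theta$, which are identical to those in Lemma~\ref{lemma-grad-1-4} because they carry no dependence on $\mathrm{Sgn}(T_0-T_1)$.

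Next I would bound each $I_j$ via the mixed-norm inequalities of Lemma~\ref{mixed-norm}--Lemma~\ref{mixed-norm-1} followed by Young's inequality, obtaining
\begin{align*}
I_1&\le\epsilon\norm{\partial_{xz}\Theta}_{L^2}^2+C_\epsilon\norm{v}_{L^2}^2,\\
I_2&\le\epsilon\big(\norm{\partial_{zz}\Theta}_{L^2}^2+\norm{\partial_{xz}\Theta}_{L^2}^2\big)+C_\epsilon\norm{v}_{L^2}^2\norm{\partial_x\Theta}_{L^2}^2\big(\norm{v}_{L^2}+\norm{\partial_x v}_{L^2}\big)^2,\\
I_3&\le\epsilon\norm{\partial_{xz}\Theta}_{L^2}^2+C_\epsilon\norm{v}_{L^2}\norm{\partial_z v}_{L^2}\norm{\partial_z\Theta}_{L^2}^2\big(1+\norm{v}_{L^2}\norm{\partial_z v}_{L^2}\big).
\end{align*}
Choosing $\epsilon$ small enough to absorb the dissipative contributions into the left-hand side then yields a differential inequality of the form $\frac{d}{dt}\norm{\partial_z\Theta}_{L^2}^2+\kappa_a\norm{\partial_{zz}\Theta}_{L^2}^2+\norm{\partial_{xz}\Theta}_{L^2}^2\le g(t)$, where $g(t)$ is the sum of the three forcing terms above. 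Integrating in time and invoking the $L^1(0,\infty)$-property of $g$ established below gives the regularity \eqref{infty-thetaz}.

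The decisive step—and the one most prone to error—is verifying the decomposition $g(t)=e^{-\lambda_2 t}D_1(t)$ with $D_1\in L^1(0,\infty)$, which is precisely what Lemma~\ref{general-inequality1} requires in order to upgrade integrability to the exponential decay \eqref{decaythetaz-cases3}. The point is that each forcing term carries at least one factor that decays exponentially, namely $\norm{v}_{L^2}$ from Lemma~\ref{lemma-grad-1-1-cases3} or $\norm{\partial_z v}_{L^2}$ from Lemma~\ref{lemma-grad-1-cases3-h1}, whereas the remaining factors are either uniformly bounded ($\norm{\partial_x v}_{L^2}$ by Lemma~\ref{lemma-grad-1-2-cases3}) or merely time-integrable—specifically $\norm{\partial_x\Theta}_{L^2}^2$ and $\norm{\partial_z\Theta}_{L^2}^2$ belong to $L^1(0,\infty)$ by the $L^2_t H^1$ regularity in Lemma~\ref{lemma-grad-1-1-cases3}. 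Pulling out one decaying factor as $e^{-\lambda_2 t}$ for a suitable $\lambda_2\in(0,2\lambda)$ therefore leaves $D_1$ as the product of a bounded function with an $L^1$ function, so $D_1\in L^1(0,\infty)$. Feeding this into Lemma~\ref{general-inequality1} delivers \eqref{decaythetaz-cases3} and completes the proof.
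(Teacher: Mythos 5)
Your proposal is correct and follows essentially the same route as the paper, which proves this lemma simply by invoking the argument of Lemma~\ref{lemma-grad-1-4}: you reproduce that energy estimate faithfully, correctly observe that the sign flip in the buoyancy coupling is immaterial, and correctly substitute the case-specific decay inputs (Lemmas~\ref{lemma-grad-1-1-cases3}, \ref{lemma-grad-1-cases3-h1}, \ref{lemma-grad-1-2-cases3}) into the decomposition $g(t)=e^{-\lambda_2 t}D_1(t)$ required by Lemma~\ref{general-inequality1}. Your treatment of the boundary term at $z=1$ via $\int_0^1 v\,dz=0$ and the identification of which factors decay, which are bounded, and which are merely time-integrable matches the paper's reasoning exactly.
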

\begin{proof}
The proof follows using the approach developed in the proof of Lemma~\ref{lemma-grad-1-4}.
\end{proof}

\begin{lemma}\label{lemma-grad-1-4-cases3}
Let $0 < T_0 - T_1 < T_c$. For any smooth solution $(v, \Theta)$ of the system \eqref{ondimensional-equations-2} with initial data $(v_0, \Theta_0) \in H^1(D) \times L^2(D)$, the following estimates hold:
\begin{align}\label{infty-cases3}
(\partial_x v, \partial_x \Theta) \in L^{\infty}\big( (0,\infty); L^2(D) \big) \cap L^2\big( (0,\infty); H^1(D) \big).
\end{align}
Moreover, there exists a constant $\lambda > 0$ such that
\begin{align}\label{decayvxthetax-cases-3}
\norm{\partial_x v(t)}_{L^2}^2 + \norm{\partial_x \Theta(t)}_{L^2}^2 \leq C C_0 e^{-2\lambda t} \norm{(v_0, \Theta_0)}_{H^1(D)}^2,
\end{align}
where the constant $C_0$ depends on $\norm{v_0}_{H^1(D)}$ and $\norm{\Theta_0}_{L^2(D)}$.
\end{lemma}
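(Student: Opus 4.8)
The plan is to follow the proof of Lemma~\ref{lemma-grad-1-5} verbatim for the nonlinear part, since the quadratic and nonlocal nonlinearities are structurally identical; the one genuinely new ingredient is that in the regime $T_0 > T_1$ the two Rayleigh coupling terms add instead of cancelling, so they must be absorbed through the variational characterization of the threshold rather than by the algebraic cancellation available when $T_1 > T_0$. Concretely, I would differentiate the system \eqref{ondimensional-equations-2} in $x$, set $(\omega,\theta):=(\partial_x v,\partial_x\Theta)$, and test the resulting equations against $(\omega,\theta)$ in $L^2(D)$. One first checks that $(\omega,\theta)\in\mathcal{X}_1(D)$: indeed $\int_0^1\omega\,dz=\partial_x\!\int_0^1 v\,dz=0$, $\theta|_{z=0,1}=\partial_x\Theta|_{z=0,1}=0$, and both remain $\alpha$-periodic in $x$.

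First I would dispose of the pressure term $\int_D \omega\,\partial_{xx}q\,dx\,dz$, which vanishes because $q$ is $z$-independent and $\int_0^1\omega\,dz=0$. The remaining linear terms then combine, exactly as in the $L^2$ computation \eqref{two-proof--1-cases3}, into $-\big(E_1(\omega,\theta)-2\mathrm{R}\,E_2(\omega,\theta)\big)$, with the two buoyancy contributions each equal to $\mathrm{R}\,E_2(\omega,\theta)$ and hence reinforcing rather than cancelling. At this stage I would invoke two coercivity bounds. From the variational definition \eqref{ve12} of $\mathrm{R}_c$ one has $E_1\geq 2\mathrm{R}_c E_2$ whenever $E_2>0$, so that $E_1-2\mathrm{R} E_2\geq (1-\mathrm{R}/\mathrm{R}_c)E_1$ holds unconditionally on $\mathcal{X}_1(D)$; from \eqref{ve12-3} one has $E_1-2\mathrm{R} E_2\geq \abs{\beta_1}\,\norm{(\omega,\theta)}_{L^2}^2$, where $\beta_1=\beta_{m_c,1}^{+}<0$ by the principle of exchange of stabilities \eqref{pes-condition} since $\mathrm{R}<\mathrm{R}_c$. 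A convex combination of these two bounds yields a dissipation that simultaneously dominates the full second-order energy $E_1(\omega,\theta)$ and the $L^2$-norm $\norm{(\omega,\theta)}_{L^2}^2$.

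Next I would estimate the nonlinear terms precisely as in Lemma~\ref{lemma-grad-1-5}, controlling the local contributions $v\partial_x\omega$, $\omega^2$ and the nonlocal contributions $\big(\int_0^z\partial_x v\,d\xi\big)\partial_z\omega$, $\big(\int_0^z\partial_x v\,d\xi\big)\partial_z\theta$ through the mixed-norm inequalities of Lemma~\ref{mixed-norm}--Lemma~\ref{mixed-norm-1} followed by Young's inequality. The top-order factors are absorbed into the coercive part $(1-\mathrm{R}/\mathrm{R}_c)E_1(\omega,\theta)$ just extracted, leaving a remainder of the form $g(t)\norm{(\omega,\theta)}_{L^2}^2+h(t)$, whose coefficients are assembled from the quantities $\norm{(v,\Theta)}_{L^2}^2$, $\norm{\partial_z v}_{L^2}^2$, $\norm{\partial_z\Theta}_{L^2}^2$, $\norm{\partial_{xx}v}_{L^2}^2$ and $\norm{\partial_{zz}\Theta}_{L^2}^2$ already shown to be integrable in time and exponentially decaying by Lemmas~\ref{lemma-grad-1-1-cases3}--\ref{lemma-grad-1-3-cases}. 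This produces a differential inequality $\frac{d}{dt}\norm{(\omega,\theta)}_{L^2}^2+\lambda_1\big(E_1(\omega,\theta)+\norm{(\omega,\theta)}_{L^2}^2\big)\leq g(t)\norm{(\omega,\theta)}_{L^2}^2+h(t)$, and the generalized Gronwall inequality (Lemma~\ref{general-inequality1}) then gives the exponential decay \eqref{decayvxthetax-cases-3}; integrating the same inequality in time yields the regularity \eqref{infty-cases3}.

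The main obstacle is exactly the loss of the cross-term cancellation: here the indefinite, reinforcing buoyancy coupling can only be tamed spectrally, and the crux is to verify that the strictly positive gap $1-\mathrm{R}/\mathrm{R}_c$ supplies enough top-order dissipation $E_1(\omega,\theta)$ to absorb the quadratic and nonlocal nonlinearities while the complementary piece $\abs{\beta_1}\,\norm{(\omega,\theta)}_{L^2}^2$ drives the decay. A subsidiary technical point, easily overlooked, is confirming that $(\omega,\theta)$ genuinely belongs to $\mathcal{X}_1(D)$ so that both variational inequalities \eqref{ve12} and \eqref{ve12-3} are legitimately applicable to the differentiated fields.
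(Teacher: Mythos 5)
Your proposal is correct, and at the structural level it coincides with the paper's proof: same testing (differentiating in $x$ and pairing with $(\omega,\theta)=(\partial_x v,\partial_x\Theta)$ is, after an integration by parts in $x$, identical to the paper's multiplication of \eqref{ondimensional-equations-2} by $-\partial_{xx}v$ and $-\partial_{xx}\Theta$ in \eqref{decayusing}), same mixed-norm treatment of the local and nonlocal nonlinearities via Lemma~\ref{mixed-norm}--Lemma~\ref{mixed-norm-1}, and the same key replacement of the lost cancellation by the spectral bound from \eqref{ve12-3} (this is exactly the inequality the paper invokes, without comment, in passing to $\beta\left(\norm{\partial_x v}_{L^2}^2+\norm{\partial_x\Theta}_{L^2}^2\right)$ in \eqref{decayusing-2}). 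Where you genuinely diverge is in how the decay is extracted. The paper runs two passes: first a crude inequality \eqref{vthetagradient} with $r(t)\in L^1(0,\infty)$, absorbing the $\epsilon$-terms into the explicit dissipation on the left, which yields the boundedness and $L^2_t H^1$ regularity \eqref{infty-cases3}; then a separate time-weighted argument, estimating $\int_0^t e^{\eta\tau}H_i(\tau)\,d\tau$ and $\int_0^t e^{\eta\tau}S_i(\tau)\,d\tau$ by Cauchy--Schwarz in time against the previously established exponential decay of $\norm{v}_{L^2}$, $\norm{\partial_z v}_{L^2}$, $\norm{\partial_z\Theta}_{L^2}$ and the $L^2$-in-time bounds on second derivatives, arriving at \eqref{decayusing-3} and concluding with Lemma~\ref{general-inequality1}. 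You instead run a single pass by splitting the coercive form as a convex combination of the gap bound $E_1-2\mathrm{R}E_2\geq(1-\mathrm{R}/\mathrm{R}_c)E_1$ (which is valid unconditionally on $\mathcal{X}_1(D)$, including when $E_2\leq 0$) and the eigenvalue bound $E_1-2\mathrm{R}E_2\geq\abs{\beta_1}\norm{(\omega,\theta)}_{L^2}^2$; the retained fraction of $E_1(\omega,\theta)$ absorbs the $\epsilon\norm{\partial_{xx}v}_{L^2}^2$ and $\epsilon\norm{\partial_{xx}\Theta}_{L^2}^2$ contributions (note $E_1(\omega,\theta)$ contains no $\partial_{zz}$ terms, but the estimates of Lemma~\ref{lemma-grad-1-5} never require absorbing those — the $\norm{\partial_{zz}v}_{L^2}$, $\norm{\partial_{zz}\Theta}_{L^2}$ factors remain in the coefficient $g(t)$, which is in $L^1$ as a product of exponentially decaying and time-integrable quantities from Lemmas~\ref{lemma-grad-1-1-cases3}--\ref{lemma-grad-1-3-cases}), so that one Gronwall application delivers both \eqref{decayvxthetax-cases-3} and, upon integration, \eqref{infty-cases3}. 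Your version is a legitimate streamlining: it avoids the weighted-in-time integrals entirely and makes the mechanism transparent — the gap $1-\mathrm{R}/\mathrm{R}_c$ pays for the nonlinearity, the eigenvalue $\beta_1<0$ pays for the decay. You also make explicit a hypothesis the paper leaves tacit, namely that $(\omega,\theta)\in\mathcal{X}_1(D)$ (zero vertical mean of $\omega$, vanishing of $\theta$ at $z=0,1$, periodicity), without which neither \eqref{ve12} nor \eqref{ve12-3} may be applied to the differentiated fields; checking this is indeed necessary for the argument to be complete.
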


\begin{proof}
For the case $0 < T_0 - T_1 < T_c$, we multiply the $\Theta$-equation and $v$-equation in \eqref{ondimensional-equations-2} by $-\partial_{xx} \Theta$ and $-\partial_{xx} v$, respectively. After integration by parts, we obtain:
\begin{align}\label{decayusing}
\begin{aligned}
&\frac{d}{dt} \left( \norm{\partial_x v}_{L^2}^2 + \norm{\partial_x \Theta}_{L^2}^2 \right)
+ 2\mathrm{Pr}_x \int_D |\partial_{xx} v|^2 + 2\mathrm{Pr}_z \int_D |\partial_{xz} v|^2 \\
&\quad + 2 \int_D |\partial_{xx} \Theta|^2 + 2\kappa_a \int_D |\partial_{xz} \Theta|^2 \\
&= -4\mathrm{R} \int_D \partial_x \Theta \left( \int_0^z \partial_{xx} v(t,x,\xi) \, d\xi \right) dx\,dz \\
&\quad + 2 \int_D \left( v \partial_x v - \left( \int_0^z \partial_x v(t,x,\xi) \, d\xi \right) \partial_z v \right) \partial_{xx} v \, dx\,dz \\
&\quad + 2 \int_D \left( v \partial_x \Theta - \left( \int_0^z \partial_x v(t,x,\xi) \, d\xi \right) \partial_z \Theta \right) \partial_{xx} \Theta \, dx\,dz \\
&:= J_0 + H_1 + H_2 + S_1 + S_2.
\end{aligned}
\end{align}

For the term $J_0$, we estimate:
\begin{align}
J_0 \leq C \norm{\partial_x \Theta}_{L^2} \norm{\partial_{xx} v}_{L^2}
\leq \epsilon \norm{\partial_{xx} v}_{L^2}^2 + C_{\epsilon} \norm{\partial_x \Theta}_{L^2}^2.
\end{align}
Following a similar approach as in the proof of Lemma \ref{lemma-grad-1-5}, we deduce from \eqref{S1-S2} that:
\begin{align}
\begin{aligned}
H_1 + H_2 + S_1 + S_2 &= \epsilon \norm{\partial_{xx} v}_{L^2}^2 + \epsilon \norm{\partial_{xx} \Theta}_{L^2}^2 
+ \epsilon \norm{\partial_{zz} \Theta}_{L^2}^2 + \epsilon \norm{\partial_{zz} v}_{L^2}^2 \\
&\quad + C_{\epsilon} \norm{v}_{L^2}^2 \norm{\partial_z v}_{L^2}^2 \norm{\partial_x v}_{L^2}^2 \\
&\quad + C_{\epsilon} \norm{v}_{L^2} \norm{\partial_z v}_{L^2} \norm{\partial_x \Theta}_{L^2} \norm{\partial_{xx} \Theta}_{L^2} \\
&\quad + C_{\epsilon} \norm{\partial_z v}_{L^2}^2 \norm{\partial_{zz} v}_{L^2}^2 \norm{\partial_x v}_{L^2}^2 \\
&\quad + C_{\epsilon} \norm{\partial_z \Theta}_{L^2}^2 \norm{\partial_x v}_{L^2}^2 \norm{\partial_{zz} \Theta}_{L^2}^2.
\end{aligned}
\end{align}

Taking $\epsilon > 0$ sufficiently small, we arrive at:
\begin{align}\label{vthetagradient}
\begin{aligned}
&\frac{d}{dt} \left( \norm{\partial_x v}_{L^2}^2 + \norm{\partial_x \Theta}_{L^2}^2 \right)
+ \mathrm{Pr}_x \int_D |\partial_{xx} v|^2 + \mathrm{Pr}_z \int_D |\partial_{xz} v|^2 \\
&\quad + \int_D |\partial_{xx} \Theta|^2 + \kappa_a \int_D |\partial_{xz} \Theta|^2 \leq r(t),
\end{aligned}
\end{align}
where $r(t)$ is defined by:
\begin{align}
\begin{aligned}
r(t) := &\; C \norm{\partial_x \Theta}_{L^2}^2 
+ C \norm{v}_{L^2}^2 \norm{\partial_z v}_{L^2}^2 \norm{\partial_x v}_{L^2}^2 \\
& + C \norm{v}_{L^2}^2 \norm{\partial_z v}_{L^2}^2 \norm{\partial_x \Theta}_{L^2}^2 \\
& + C \norm{\partial_z v}_{L^2}^2 \norm{\partial_{zz} v}_{L^2}^2 \norm{\partial_x v}_{L^2}^2 \\
& + C \norm{\partial_z \Theta}_{L^2}^2 \norm{\partial_x v}_{L^2}^2 \norm{\partial_{zz} \Theta}_{L^2}^2.
\end{aligned}
\end{align}
The estimate \eqref{infty-cases3} is then obtained by integrating \eqref{vthetagradient} in time and using the fact that $\int_0^{\infty} r(t) \, dt < \infty$.
Returning to \eqref{decayusing}, we also have:
\begin{align}\label{decayusing-2}
\begin{aligned}
&\frac{d}{dt} \left( \norm{\partial_x v}_{L^2}^2 + \norm{\partial_x \Theta}_{L^2}^2 \right) \\
&= -2\mathrm{Pr}_x \int_D |\partial_{xx} v|^2 - 2\mathrm{Pr}_z \int_D |\partial_{xz} v|^2 
- 2 \int_D |\partial_{xx} \Theta|^2 - 2\kappa_a \int_D |\partial_{xz} \Theta|^2 \\
&\quad - 4\mathrm{R} \int_D \partial_x \Theta \left( \int_0^z \partial_{xx} v(t,x,\xi) \, d\xi \right) dx\,dz \\
&\quad + 2 \int_D \left( v \partial_x v - \left( \int_0^z \partial_x v(t,x,\xi) \, d\xi \right) \partial_z v \right) \partial_{xx} v \, dx\,dz \\
&\quad + 2 \int_D \left( v \partial_x \Theta - \left( \int_0^z \partial_x v(t,x,\xi) \, d\xi \right) \partial_z \Theta \right) \partial_{xx} \Theta \, dx\,dz \\
&\leq \beta \left( \norm{\partial_x v}_{L^2}^2 + \norm{\partial_x \Theta}_{L^2}^2 \right) + H_1(t) + H_2(t) + S_1(t) + S_2(t).
\end{aligned}
\end{align}

Let $\eta$ be a positive parameter satisfying $0 < \eta < \lambda$, with $\lambda$ being the stability constant as introduced in \eqref{cases-3-vz} and \eqref{decaythetaz-cases3}. 
We now estimate the time integrals of the nonlinear terms $H_1$, $H_2$, $S_1$, and $S_2$ with an exponential weight $e^{\eta \tau}$.
For $H_1$, we have
\begin{align}
\begin{aligned}
&\int_0^t e^{\eta \tau} H_1(\tau) \, d\tau 
\\&= 2 \int_0^t e^{\eta \tau} \left( \int_D (v \partial_x v) \partial_{xx} v \, dx\,dz \right) d\tau \\
&\leq C \left( \int_0^t \norm{\partial_{xx} v}_{L^2}^2 \, d\tau \right)^{\frac{1}{2}}
   \left( \int_0^t e^{2\eta \tau} \norm{v}_{L^2} \norm{\partial_z v}_{L^2} \norm{\partial_x v}_{L^2} \norm{\partial_{xx} v}_{L^2} \, d\tau \right)^{\frac{1}{2}} \\
&\leq C_0 \left( \int_0^t \norm{\partial_{xx} v}_{L^2}^2 \, d\tau \right)^{\frac{1}{2}}
   \left( \int_0^t e^{(-\lambda + 2\eta) \tau} \norm{\partial_x v}_{L^2} \norm{\partial_{xx} v}_{L^2} \, d\tau \right)^{\frac{1}{2}},
\end{aligned}
\end{align}
where we have used the previously established decay estimates for $\norm{v}_{L^2}$ and $\norm{\partial_z v}_{L^2}$.

For $H_2$, one can get
\begin{align}
\begin{aligned}
&\int_0^t e^{\eta \tau} H_2(\tau) \, d\tau 
\\&= 2 \int_0^t e^{\eta \tau} \left( \int_D \left( \int_0^z \partial_x v(t,x,\xi) \, d\xi \right) \partial_z v \, \partial_{xx} v \, dx\,dz \right) d\tau \\
&\leq C \left( \int_0^t \norm{\partial_{xx} v}_{L^2}^2 \, d\tau \right)^{\frac{1}{2}}
   \left( \int_0^t e^{2\eta \tau} \norm{ \left( \int_0^z \partial_x v(t,x,\xi) \, d\xi \right) \partial_z v }_{L^2}^2 \, d\tau \right)^{\frac{1}{2}} \\
&\leq C \left( \int_0^t \norm{\partial_{xx} v}_{L^2}^2 \, d\tau \right)^{\frac{1}{2}}
   \left( \int_0^t e^{2\eta \tau} \norm{\partial_z v}_{L^2} \norm{\partial_x v}_{L^2} \norm{\partial_{xx} v}_{L^2} \norm{\partial_{xz} v}_{L^2} \, d\tau \right)^{\frac{1}{2}} \\
&\leq C \left( \int_0^t \norm{\partial_{xx} v}_{L^2}^2 \, d\tau \right)^{\frac{1}{2}}
   \left( \int_0^t e^{(-2\lambda + 2\eta) \tau} \norm{\partial_x v}_{L^2} \norm{\partial_{xx} v}_{L^2} \norm{\partial_{xz} v}_{L^2} \, d\tau \right)^{\frac{1}{2}}.
\end{aligned}
\end{align}

For the terms $S_1$ and $S_2$, we get
\begin{align}
\begin{aligned}
&\int_0^t e^{\eta \tau} S_1(\tau) \, d\tau 
\\&\leq C \left( \int_0^t \norm{\partial_{xx} \Theta}_{L^2}^2 \, d\tau \right)^{\frac{1}{2}}
   \left( \int_0^t e^{2\eta \tau} \norm{v \partial_x \Theta}_{L^2}^2 \, d\tau \right)^{\frac{1}{2}} \\
&\leq C \left( \int_0^t \norm{\partial_{xx} \Theta}_{L^2}^2 \, d\tau \right)^{\frac{1}{2}}
   \left( \int_0^t e^{2\eta \tau} \norm{v}_{L^2} \norm{\partial_z v}_{L^2} \norm{\partial_x \Theta}_{L^2} \norm{\partial_{xx} \Theta}_{L^2} \, d\tau \right)^{\frac{1}{2}} \\
&\leq C_0 \left( \int_0^t \norm{\partial_{xx} \Theta}_{L^2}^2 \, d\tau \right)^{\frac{1}{2}}
   \left( \int_0^t e^{(-2\lambda + 2\eta) \tau} \norm{\partial_x \Theta}_{L^2} \norm{\partial_{xx} \Theta}_{L^2} \, d\tau \right)^{\frac{1}{2}},
\end{aligned}
\end{align}
and
\begin{align}
\begin{aligned}
&\int_0^t e^{\eta \tau} S_2(\tau) \, d\tau 
\\&\leq C \left( \int_0^t \norm{\partial_{xx} \Theta}_{L^2}^2 \, d\tau \right)^{\frac{1}{2}}
   \left( \int_0^t e^{2\eta \tau} \norm{ \left( \int_0^z \partial_x v(t,x,\xi) \, d\xi \right) \partial_z \Theta }_{L^2}^2 \, d\tau \right)^{\frac{1}{2}} \\
&\leq C \left( \int_0^t \norm{\partial_{xx} \Theta}_{L^2}^2 \, d\tau \right)^{\frac{1}{2}}
   \left( \int_0^t e^{2\eta \tau} \norm{\partial_x v}_{L^2} \norm{\partial_{xx} v}_{L^2} \norm{\partial_z \Theta}_{L^2} \norm{\partial_{zz} \Theta}_{L^2} \, d\tau \right)^{\frac{1}{2}} \\
&\leq C \left( \int_0^t \norm{\partial_{xx} \Theta}_{L^2}^2 \, d\tau \right)^{\frac{1}{2}}
   \left( \int_0^t e^{(-\lambda + 2\eta) \tau} \norm{\partial_x v}_{L^2} \norm{\partial_{xx} v}_{L^2} \norm{\partial_{zz} \Theta}_{L^2} \, d\tau \right)^{\frac{1}{2}}.
\end{aligned}
\end{align}

Returning to inequality \eqref{decayusing-2}, we derive the differential form:
\begin{align}\label{decayusing-3}
\begin{aligned}
\frac{d}{dt} \left( \norm{\partial_x v}_{L^2}^2 + \norm{\partial_x \Theta}_{L^2}^2 \right)
- \beta \left( \norm{\partial_x v}_{L^2}^2 + \norm{\partial_x \Theta}_{L^2}^2 \right)
\leq e^{-\eta t} M(t),
\end{aligned}
\end{align}
where the function $M(t) = e^{\eta t} \left( H_1(t) + H_2(t) + S_1(t) + S_2(t) \right)$ belongs to $L^1(0,\infty)$.
Finally, applying Lemma \ref{general-inequality1} to \eqref{decayusing-3} yields the decay estimate \eqref{decayvxthetax-cases-3} holds.

\end{proof}

\subsubsection{Global $H^2$-Stability for $T_0 - T_1 < T_c$}

\begin{lemma}\label{lemma-grad-1-6-cases3}
Let $0 < T_0 - T_1 < T_c$. For any smooth solution $(v, \Theta)$ of the system \eqref{ondimensional-equations-2} with initial data $(v_0, \Theta_0) \in H^2(D)$, the following estimates hold:
\begin{align}\label{vh2-infty-16-cases-3}
\partial_z(\nabla v, \nabla \Theta) \in L^{\infty}\big( (0,\infty); L^2(D) \big), \quad
\Delta(\partial_z v, \partial_z \Theta) \in L^2\big( (0,\infty); L^2(D) \big).
\end{align}
Moreover, there exists a constant $\lambda > 0$ such that
\begin{align}\label{decy-vxz-vzz-16-cases}
\norm{\partial_z (\nabla v, \nabla \Theta)(t)}_{L^2}^2 \leq C C_0 e^{-2\lambda t} \norm{(v_0, \Theta_0)}_{H^2(D)}^2,
\end{align}
where the constant $C_0$ depends on $\norm{v_0}_{H^1(D)}$ and $\norm{\Theta_0}_{L^2(D)}$.
\end{lemma}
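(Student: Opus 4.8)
The plan is to follow the two-stage scheme of Lemma~\ref{lemma-grad-1-6} (the stably stratified case $T_1>T_0$) and to adapt it to the sign structure imposed by $\text{Sgn}(T_0-T_1)=+1$. Writing $\sigma:=\partial_z v$ and $\eta:=\partial_z\Theta$ and differentiating \eqref{ondimensional-equations-2} in $z$, one finds that $\sigma$ again solves \eqref{three-proof--8} — the buoyancy term $-\text{R}\,\partial_x\Theta$ is unaffected because $|\text{Sgn}(T_0-T_1)|=1$ in both regimes — whereas $\eta$ solves the analogue of \eqref{thetaz-eta} in which the linear forcing $+\text{R}\,\partial_x v$ is replaced by $-\text{R}\,\partial_x v$. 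I would then test the $\sigma$-equation successively with $\partial_{xx}\sigma$ and $\partial_{zz}\sigma$, and the $\eta$-equation with $\partial_{xx}\eta$ and $\partial_{zz}\eta$, integrate by parts, and bound the nonlinear contributions verbatim as in the derivation of \eqref{nablasigma} and \eqref{nablaeta}, using the mixed-norm inequalities of Lemma~\ref{mixed-norm}--Lemma~\ref{mixed-norm-1}. Since each resulting product of norms carries at least one factor among $\norm{v}_{L^2}$, $\norm{\partial_z v}_{L^2}$, $\norm{\partial_x v}_{L^2}$, $\norm{\partial_z\Theta}_{L^2}$, $\norm{\partial_x\Theta}_{L^2}$ that decays exponentially by Lemmas~\ref{lemma-grad-1-1-cases3}--\ref{lemma-grad-1-4-cases3}, these terms will assemble into a forcing of the form $D_1(t)e^{-\lambda_2 t}$ with $D_1\in L^1(0,\infty)$, exactly as in Lemma~\ref{lemma-grad-1-6}.

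The one genuinely new point — and the step I expect to be the principal obstacle — is the buoyancy cross-terms. In the regime $T_1>T_0$ the two contributions $2\text{R}(\partial_x\Theta,\partial_{xx}\sigma+\partial_{zz}\sigma)$ and $-2\text{R}(\partial_x v,\partial_{xx}\eta+\partial_{zz}\eta)$ cancel identically; here the sign flip in the $\eta$-equation turns the second contribution into $+2\text{R}(\partial_x v,\partial_{xx}\eta+\partial_{zz}\eta)$, so the exact cancellation exploited in Lemma~\ref{lemma-grad-1-6} is no longer available. Instead of seeking a cancellation I would absorb these terms by Cauchy--Schwarz and Young's inequality:
\begin{align*}
&2\text{R}\big|(\partial_x\Theta,\partial_{xx}\sigma+\partial_{zz}\sigma)\big|
+2\text{R}\big|(\partial_x v,\partial_{xx}\eta+\partial_{zz}\eta)\big| \\
&\le \epsilon\big(\norm{\partial_{xx}\sigma}_{L^2}^2+\norm{\partial_{zz}\sigma}_{L^2}^2+\norm{\partial_{xx}\eta}_{L^2}^2+\norm{\partial_{zz}\eta}_{L^2}^2\big)
+C_\epsilon\big(\norm{\partial_x\Theta}_{L^2}^2+\norm{\partial_x v}_{L^2}^2\big).
\end{align*}
For $\epsilon$ small the first group is absorbed into the dissipative terms $2\text{Pr}_x\norm{\partial_{xx}\sigma}_{L^2}^2$, $2\text{Pr}_z\norm{\partial_{zz}\sigma}_{L^2}^2$, $2\norm{\partial_{xx}\eta}_{L^2}^2$ and $2\kappa_a\norm{\partial_{zz}\eta}_{L^2}^2$ on the left, while the remaining $C_\epsilon(\norm{\partial_x\Theta}_{L^2}^2+\norm{\partial_x v}_{L^2}^2)$ decays like $e^{-2\lambda t}$ thanks to the $H^1$ decay estimate \eqref{decayvxthetax-cases-3} of Lemma~\ref{lemma-grad-1-4-cases3}, and is therefore again of the form $D_1(t)e^{-\lambda_2 t}$. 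Thus the previously proven $H^1$ decay plays the role that the algebraic cancellation played in the stratified case.

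With the cross-terms disposed of, I would add the two resulting estimates and choose $\epsilon$ small enough that a strictly positive multiple of $\norm{\partial_{xx}\sigma}_{L^2}^2+\norm{\partial_{zz}\sigma}_{L^2}^2+\norm{\partial_{xx}\eta}_{L^2}^2+\norm{\partial_{zz}\eta}_{L^2}^2$ survives on the left. Combining this with the already established exponential decay of $\norm{\sigma}_{L^2}^2=\norm{\partial_z v}_{L^2}^2$ (Lemma~\ref{lemma-grad-1-cases3-h1}) and of $\norm{\eta}_{L^2}^2=\norm{\partial_z\Theta}_{L^2}^2$ (Lemma~\ref{lemma-grad-1-3-cases}), and setting $f:=\norm{(\sigma,\eta)}_{L^2}^2+\norm{(\nabla\sigma,\nabla\eta)}_{L^2}^2$, one arrives exactly as in Lemma~\ref{lemma-grad-1-6} at a differential inequality of the form \eqref{fff}, namely $\tfrac{df}{dt}+\lambda_1 f\le g(t)$ with $g(t)=D_1(t)e^{-\lambda_2 t}$ and $D_1\in L^1(0,\infty)$. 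Integrating in time yields the regularity \eqref{vh2-infty-16-cases-3}, and the generalized Gronwall inequality (Lemma~\ref{general-inequality1}) then gives the exponential decay \eqref{decy-vxz-vzz-16-cases}. The only substantive departure from the stratified case is thus localized in the cross-term step.
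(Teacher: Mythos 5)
Your proposal is correct and takes essentially the paper's route: the paper proves this lemma by a one-line appeal to the method of Lemma~\ref{lemma-grad-1-6}, i.e., exactly the $\sigma=\partial_z v$, $\eta=\partial_z\Theta$ energy scheme you carry out, with the same mixed-norm bounds and the same reduction to a differential inequality of the form \eqref{fff} closed by Lemma~\ref{general-inequality1}. Your single departure is also the right one: since $\mathrm{Sgn}(T_0-T_1)=+1$ flips the forcing in the $\eta$-equation to $-\mathrm{R}\,\partial_x v$, the exact cancellation $2\mathrm{R}(\partial_x\Theta,\partial_{xx}\sigma+\partial_{zz}\sigma)-2\mathrm{R}(\partial_x v,\partial_{xx}\eta+\partial_{zz}\eta)=0$ exploited in Lemma~\ref{lemma-grad-1-6} is indeed unavailable here, and absorbing the cross terms via Young's inequality into the dissipation while bounding the remainder $C_\epsilon\left(\norm{\partial_x v}_{L^2}^2+\norm{\partial_x\Theta}_{L^2}^2\right)$ by the decay \eqref{decayvxthetax-cases-3} is precisely the device the paper itself uses for the analogous term $J_0$ in the proof of Lemma~\ref{lemma-grad-1-4-cases3}, so your write-up is a faithful — and in fact more explicit — rendering of the intended argument.
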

\begin{proof}
The proof follows using the approach developed in the proof of Lemma~\ref{lemma-grad-1-6}.
\end{proof}

\begin{lemma}\label{lemma-grad-1-7-cases3}
Let $0 < T_0 - T_1 < T_c$. For any smooth solution $(v, \Theta)$ of the system \eqref{ondimensional-equations-2} with initial data $(v_0, \Theta_0) \in H^2(D)$, the following estimates hold:
\begin{align}\label{infty-vxx-delta-17-cases3}
(\partial_{xx} v, \partial_{xx} \Theta) \in L^{\infty}\big( (0,\infty); L^2(D) \big), \quad
\Delta(\partial_x v, \partial_x \Theta) \in L^2\big( (0,\infty); L^2(D) \big).
\end{align}
Moreover, there exists a constant $\lambda > 0$ such that
\begin{align}\label{decy-vxx-thetaxx-17-cases-3}
\norm{(\partial_{xx} v, \partial_{xx} \Theta)(t)}_{L^2}^2 \leq C C_0 e^{-2\lambda t} \norm{(v_0, \Theta_0)}_{H^2(D)}^2,
\end{align}
where the constant $C_0$ depends on $\norm{v_0}_{H^1(D)}$ and $\norm{\Theta_0}_{L^2(D)}$.
\end{lemma}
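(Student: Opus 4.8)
The plan is to estimate the horizontal second derivatives through the once-differentiated unknowns $(\omega,\theta):=(\partial_x v,\partial_x\Theta)$, which satisfy a system of the same shape as \eqref{three-proof--8-1-1}, except that in the regime $0<T_0-T_1<T_c$ one has $\mathrm{Sgn}(T_0-T_1)=1$, so that the buoyancy coupling carries the \emph{same} sign in the $\omega$- and $\theta$-equations. I would multiply the $\omega$-equation by $-\partial_{xx}\omega$ and the $\theta$-equation by $-\partial_{xx}\theta$, integrate by parts over $D$, and add, producing an identity for $\tfrac{d}{dt}\big(\norm{\partial_{xx}v}_{L^2}^2+\norm{\partial_{xx}\Theta}_{L^2}^2\big)$ together with the full third-order dissipation.

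The decisive structural difference from the case $T_1>T_0$ is that the two Rayleigh coupling integrals no longer cancel. Instead, since $\partial_{xx}$ commutes with the linear part of the system, the same integration by parts that produced \eqref{two-proof--1-cases3} at the $L^2$ level shows that the linear contribution equals $-2\big(E_1(\partial_{xx}v,\partial_{xx}\Theta)-2\mathrm{R}\,E_2(\partial_{xx}v,\partial_{xx}\Theta)\big)$. Because $\partial_{xx}v$ inherits the zero vertical-mean constraint and the conditions $\partial_z(\partial_{xx}v)|_{z=0,1}=0$, while $\partial_{xx}\Theta$ vanishes on $z=0,1$, the pair $(\partial_{xx}v,\partial_{xx}\Theta)$ belongs to $\mathcal{X}_1(D)$. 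Hence the principle of exchange of stabilities (Lemma~\ref{pes-lemma}, in the variational form \eqref{ve12-3}) applies at this order and bounds the linear part by $2\beta_{m_c,1}^{+}\big(\norm{\partial_{xx}v}_{L^2}^2+\norm{\partial_{xx}\Theta}_{L^2}^2\big)$ with $\beta_{m_c,1}^{+}<0$, furnishing the decay rate $\lambda=-\beta_{m_c,1}^{+}$.

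The nonlinear terms generated by $v\partial_x\omega$, $(\int_0^z\partial_x v)\partial_z\omega$, $\omega^2$, $(\int_0^z\partial_{xx}v)\partial_z v$ and their $\theta$-analogues are then controlled exactly as in \eqref{kkkk-1}--\eqref{kkkk-4} and \eqref{jjjj-1}--\eqref{jjjj-4}, using the mixed-norm inequalities of Lemma~\ref{mixed-norm}--Lemma~\ref{mixed-norm-1}; absorbing the $\epsilon$-dissipation yields a differential inequality of the form $\tfrac{d}{dt}\big(\norm{\partial_{xx}v}_{L^2}^2+\norm{\partial_{xx}\Theta}_{L^2}^2\big)-\beta_{m_c,1}^{+}\big(\norm{\partial_{xx}v}_{L^2}^2+\norm{\partial_{xx}\Theta}_{L^2}^2\big)\le e^{-\eta t}M(t)$ with $M\in L^1(0,\infty)$. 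Integrating the dissipation in time gives the regularity \eqref{infty-vxx-delta-17-cases3}, and the generalized Gronwall inequality (Lemma~\ref{general-inequality1}) then yields the exponential decay \eqref{decy-vxx-thetaxx-17-cases-3}, in direct analogy with the derivation of \eqref{decayusing-3}.

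The main obstacle is verifying that the forcing $M(t)$ is genuinely integrable after insertion of the exponential weight. This rests on the full cascade of decay estimates from Lemmas~\ref{lemma-grad-1-1-cases3}--\ref{lemma-grad-1-6-cases3}: in particular, the nonlocal terms such as $(\int_0^z\partial_{xx}v)\partial_z v$ produce, through the mixed norms, products that couple $\norm{\partial_{xx}v}_{L^2}$ to the vertical second-order quantities of Lemma~\ref{lemma-grad-1-6-cases3}, and one must choose the weight $0<\eta<\lambda$ small enough that every such product remains in $L^1$. Since all lower-order norms decay at the common rate governed by $\lambda=-\beta_{m_c,1}^{+}$, this can be arranged by the weighted Cauchy--Schwarz argument already employed for $H_1,H_2,S_1,S_2$ in the proof of Lemma~\ref{lemma-grad-1-4-cases3}.
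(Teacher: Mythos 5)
Your proposal is correct, and it is in fact more careful than what the paper records: the paper's proof of this lemma is a one-line appeal to the method of Lemma~\ref{lemma-grad-1-7}, but the pivotal step there is the antisymmetric cancellation
$2\mathrm{R}\left( \int_0^z \partial_x \theta\, d\xi,\, \partial_{xx}\omega \right) - 2\mathrm{R}\left( \int_0^z \partial_x \omega\, d\xi,\, \partial_{xx}\theta \right) = 0$,
which is special to the regime $T_1 > T_0$ and, as you correctly observe, is unavailable here: with $\mathrm{Sgn}(T_0 - T_1) = 1$ the buoyancy coupling is symmetric, and the two coupling integrals add rather than cancel. The right adaptation is precisely the hybrid you construct: retain the nonlinear estimates \eqref{kkkk-1}--\eqref{kkkk-4} and \eqref{jjjj-1}--\eqref{jjjj-4} for the system \eqref{three-proof--8-1-1} verbatim, and replace the cancellation by the treatment the paper itself uses one derivative lower, in Lemma~\ref{lemma-grad-1-4-cases3}: either absorb the coupling by Young's inequality against the third-order dissipation (the $J_0$-type estimate, which together with $\int_0^\infty \norm{\partial_{xx}\Theta}_{L^2}^2\, dt < \infty$ from Lemma~\ref{lemma-grad-1-4-cases3} yields the $L^\infty_t$ bound and the time-integrated dissipation in \eqref{infty-vxx-delta-17-cases3}), or bound the entire linear part by $2\beta_1\left( \norm{\partial_{xx}v}_{L^2}^2 + \norm{\partial_{xx}\Theta}_{L^2}^2 \right)$ via the variational characterization \eqref{ve12-3} and Remark~\ref{remark52}. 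Your verification that $(\partial_{xx}v, \partial_{xx}\Theta)$ lies in $\mathcal{X}_1(D)$ — periodicity, $\partial_{xx}\Theta|_{z=0,1} = 0$, and the zero vertical mean, all inherited because $\partial_{xx}$ commutes with the linear structure — is exactly the point that legitimizes this, and it is left entirely implicit in the paper.

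One expository caveat: you cannot do both things in a single inequality. Once the full dissipation is spent on the variational bound \eqref{ve12-3}, there is no $\epsilon$-room left to absorb the nonlinear remainders, so your sentence ``absorbing the $\epsilon$-dissipation yields a differential inequality of the form $\frac{dh}{dt} - \beta_{m_c,1}^{+} h \le e^{-\eta t} M(t)$'' conflates the two stages. As in the pair \eqref{decayusing} versus \eqref{decayusing-2}, the energy identity must be run twice: first keeping the dissipation and treating the coupling by Young to obtain boundedness and the $L^2_t$ control of the third-order derivatives, and only then invoking the eigenvalue form together with the weighted-in-time Cauchy--Schwarz estimates (which consume the first-stage dissipation integrals rather than the pointwise-in-time dissipation) before applying Lemma~\ref{general-inequality1}. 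Since you explicitly cite this two-stage mechanism from Lemma~\ref{lemma-grad-1-4-cases3} in your final paragraph, this is an imprecision of wording rather than a gap; the proof closes as you describe.
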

\begin{proof}
The proof follows using the approach developed in the proof of Lemma~\ref{lemma-grad-1-7}.
\end{proof}

\begin{lemma}\label{lemma-grad-1-8-cases3}
Let $0 < T_0 - T_1 < T_c$. For any smooth solution $(v, \Theta)$ of the system \eqref{ondimensional-equations-2} with initial data $(v_0, \Theta_0) \in H^2(D)$, there exists a constant $\lambda > 0$ such that
\begin{align}\label{decy-pressure-temporal-17}
\norm{\partial_x q(t)}_{L^2}^2 + \norm{\partial_t (v, \Theta)(t)}_{L^2}^2 \leq C C_0 e^{-2\lambda t} \norm{(v_0, \Theta_0)}_{H^2(D)}^2,
\end{align}
where the constant $C_0$ depends on $\norm{v_0}_{H^1(D)}$ and $\norm{\Theta_0}_{L^2(D)}$.
\end{lemma}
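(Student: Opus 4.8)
The plan is to follow the strategy of Lemma~\ref{lemma-grad-1-8} essentially verbatim, the key observation being that in the regime $0 < T_0 - T_1 < T_c$ one has $|\text{Sgn}(T_0-T_1)| = 1$, so the buoyancy term in \eqref{ondimensional-equations-2}$_1$ carries the coefficient $\text{R}$ exactly as in the case $T_1 > T_0$; the pressure equation is therefore structurally identical in the two regimes. First I would test the momentum equation \eqref{ondimensional-equations-2}$_1$ against $\partial_x q$ in $L^2(D)$ to obtain an identity for $\norm{\partial_x q}_{L^2}^2$ whose right-hand side consists of the viscous terms $\text{Pr}_x \partial_{xx} v + \text{Pr}_z \partial_{zz} v$, the time-derivative term $\partial_t v$, the local and nonlocal advection terms, and the buoyancy term $\text{R}\,\partial_x(\int_0^z \Theta\, d\xi)$, each paired against $\partial_x q$.

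The essential structural cancellation is that the time-derivative contribution vanishes: writing $\partial_t v = \partial_z(\int_0^z \partial_t v\, d\xi)$, integrating by parts in $z$, and using that $q = q(t,x)$ is independent of $z$ (hence $\partial_{zx} q = 0$) together with the boundary conditions, one finds $\int_D \partial_t v\,\partial_x q\, dx\,dz = 0$. With this term removed, I would apply Cauchy--Schwarz and Young's inequality to absorb $\tfrac12 \norm{\partial_x q}_{L^2}^2$ onto the left, bounding the remainder by $C(\norm{v}_{H^2}^2 + \norm{\Theta}_{L^2}^2)$ plus nonlinear products. The nonlocal advection term $(\int_0^z \partial_x v\, d\xi)\partial_z v$ is controlled, exactly as before, through the mixed-norm inequalities (Lemma~\ref{mixed-norm}--Lemma~\ref{mixed-norm-1}), producing factors such as $\norm{\partial_z v}_{L^2}\norm{\partial_{xx} v}_{L^2}\norm{\partial_{zz} v}_{L^2}\norm{\partial_x v}_{L^2}$.

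At this stage I would invoke the exponential-decay estimates already established in Lemma~\ref{lemma-grad-1-1-cases3} through Lemma~\ref{lemma-grad-1-7-cases3}, which guarantee $\norm{(v,\Theta)}_{H^2}^2 \le C C_0 e^{-2\lambda t}\norm{(v_0,\Theta_0)}_{H^2(D)}^2$ as well as the decay of $\norm{\Theta}_{L^2}^2$ at the same rate; substituting these into the absorbed inequality yields the claimed decay of $\norm{\partial_x q}_{L^2}^2$. Finally, for the time-derivative, I would read $\partial_t v$ and $\partial_t \Theta$ directly off \eqref{ondimensional-equations-2}: each equals a sum of second-order spatial derivatives, a pressure gradient (for $v$), the forcing/buoyancy term, and the two advection terms. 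Bounding every term in $L^2(D)$ by $\norm{(v,\Theta)}_{H^2(D)}$—again using the mixed-norm control of the nonlinear pieces and the already-derived decay of $\norm{\partial_x q}_{L^2}$—and inserting the exponential decay of the $H^2$ norm gives the bound for $\norm{\partial_t(v,\Theta)}_{L^2}$. The only delicate point is the treatment of the nonlocal quadratic terms, but these have already been tamed by the mixed-norm machinery of the preceding lemmas, so no new analytical ingredient is needed; the argument is a direct transcription of Lemma~\ref{lemma-grad-1-8} with the decay inputs supplied by the $0 < T_0 - T_1 < T_c$ chain.
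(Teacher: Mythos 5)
Your proposal is correct and coincides with the paper's own treatment: the paper proves this lemma simply by invoking the argument of Lemma~\ref{lemma-grad-1-8}, which is exactly what you reconstruct — testing \eqref{ondimensional-equations-2}$_1$ against $\partial_x q$, killing the time-derivative term via $\partial_t v = \partial_z\left(\int_0^z \partial_t v\,d\xi\right)$, the zero vertical mean, and $\partial_z\partial_x q = 0$, absorbing $\tfrac12\norm{\partial_x q}_{L^2}^2$, controlling the nonlocal advection by the mixed-norm lemmas, and then reading $\partial_t(v,\Theta)$ off the equations, with the decay inputs now supplied by Lemmas~\ref{lemma-grad-1-1-cases3}--\ref{lemma-grad-1-7-cases3}. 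Your observation that $\abs{\mathrm{Sgn}(T_0-T_1)}=1$ makes the momentum (hence pressure) equation structurally identical to the $T_1>T_0$ case is precisely the point that justifies the transfer, so no new ingredient is needed.
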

\begin{proof}
The proof follows using the approach developed in the proof of Lemma~\ref{lemma-grad-1-8}.
\end{proof}

\section{Proof of Theorem \ref{critical-stability}}\label{proof-theorem22}

Theorem~\ref{critical-stability} follows from Corollary~\ref{colarry}, which is established by Lemmas~\ref{attractor5} and~\ref{attractor6}.

\subsection{Existence of Global Attractor} 

\begin{lemma}\label{attractor1}
Let $(v, \Theta)$ be a smooth solution of the system \eqref{ondimensional-equations-2} with initial data $(v_0, \Theta_0) \in L^2(D) \times L^2(D)$. Then there exists a constant $\lambda > 0$ such that the following estimates hold:
\begin{align}\label{l2bouned-1}
&\norm{v(t)}_{L^2}^2 + \norm{\Theta(t)}_{L^2}^2 \leq C_0 e^{-\lambda t} + C,
\\&\label{l2bouned-2}
\int_t^{t+1} \left( \norm{\nabla v(\tau)}_{L^2}^2 + \norm{\nabla \Theta(\tau)}_{L^2}^2 \right) d\tau \leq C_0 e^{-\lambda t} + C,
\end{align}
where $C_0$ depends on $\norm{v_0}_{L^2}$ and $\norm{\Theta_0}_{L^2}$, while $C > 0$ is a constant independent of the initial data.
\end{lemma}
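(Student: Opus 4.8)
The plan is to derive a single differential inequality for the $L^2$ energy $E(t) := \|v(t)\|_{L^2}^2 + \|\Theta(t)\|_{L^2}^2$ and then read off both \eqref{l2bouned-1} and \eqref{l2bouned-2} from it. First I would test the first equation of \eqref{ondimensional-equations-2} with $v$ and the second with $\Theta$, integrate over $D$, and add. Exactly as in the derivation of \eqref{two-proof--1-cases3}, the advective terms cancel (the transporting field $(v,-\int_0^z\partial_x v\,d\xi)$ is divergence free and tangent to the boundary) and the pressure term drops by incompressibility, leaving
\[
\frac{d}{dt}E = -2E_1(v,\Theta) - 4\mathrm{R}\int_D \Theta\,\Big(\int_0^z\partial_x v(t,x,\xi)\,d\xi\Big)\,dx\,dz,
\]
which in the notation of \eqref{linear-operator} is precisely $2(\mathcal{L}_{\mathrm R}\psi,\psi)$ with $\psi=(v,\Theta)$. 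The key structural point is that the nonlinearity contributes nothing to the $L^2$ balance, so the right-hand side is a purely linear quadratic form.

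Next I would control $2(\mathcal{L}_{\mathrm R}\psi,\psi)$ using Lemma~\ref{eigen-all} (self-adjointness, discrete real spectrum) and Lemma~\ref{pes-lemma}. Writing $\psi = P\psi + Q\psi$, where $P$ is the $L^2$-orthogonal projection onto the two-dimensional neutral eigenspace $\mathrm{span}\{\psi_{m_c,1}^{+,1},\psi_{m_c,1}^{+,2}\}$ and $Q=I-P$ projects onto the strictly stable complement with spectral gap $\delta := -\max_{(m,n)\neq(m_c,1)}\beta_{m,n}^{\pm}>0$, self-adjointness gives
\[
2(\mathcal{L}_{\mathrm R}\psi,\psi) \le 2\beta_{m_c,1}^{+}\|P\psi\|_{L^2}^2 - 2\delta\|Q\psi\|_{L^2}^2 .
\]
In the strictly subcritical case this is already $\le 2\beta_1 E$ with $\beta_1<0$, and Gronwall yields \eqref{l2bouned-1} with $C=0$. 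Near threshold the neutral part must be handled by the energy-conserving nonlinearity: projecting the evolution onto the neutral subspace and retaining the leading cubic self-interaction (whose coefficient is the bifurcation constant $l<0$ of Theorem~\ref{reduction}) produces $\frac{d}{dt}\|P\psi\|_{L^2}^2 \le 2\beta_{m_c,1}^{+}\|P\psi\|_{L^2}^2 + 2l\|P\psi\|_{L^2}^3 + (\text{cross terms})$, so that $\|P\psi\|_{L^2}$ cannot exceed a data-independent amplitude of order $(-\beta_{m_c,1}^{+}/l)^{1/2}$. Combining the two bounds gives a differential inequality $\frac{d}{dt}E + \lambda E \le C$ with $\lambda>0$ and $C\ge 0$ independent of the initial data, and Gronwall's inequality then yields \eqref{l2bouned-1}.

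Finally, \eqref{l2bouned-2} follows by integrating the energy identity over a unit window. From $E(t+1)-E(t) = -2\int_t^{t+1}E_1\,d\tau - 4\mathrm{R}\int_t^{t+1}\!\int_D \Theta\int_0^z\partial_x v\,d\xi\,d\tau$, I would bound the cross term by $\epsilon\int_t^{t+1}E_1\,d\tau + C_\epsilon\int_t^{t+1}\|\Theta\|_{L^2}^2\,d\tau$ using Cauchy–Schwarz together with $\|\int_0^z\partial_x v\,d\xi\|_{L^2}\le\|\partial_x v\|_{L^2}$ and the Poincaré inequality, absorb the $\epsilon E_1$ into the dissipation on the left, and estimate the remaining terms by the uniform bound \eqref{l2bouned-1}. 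Since $E_1$ is coercive over $\|\nabla v\|_{L^2}^2+\|\nabla\Theta\|_{L^2}^2$, this produces the time-averaged gradient bound with the same $C_0 e^{-\lambda t}+C$ structure.

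I expect the genuine difficulty to be the neutral-mode step. Because the advective nonlinearity conserves the $L^2$ energy exactly, the dissipation that bounds the critical mode is invisible in the raw balance for $E$ and must instead be extracted from nonlinear transfer into the stable subspace (equivalently, from the sign $l<0$ of the reduced cubic vector field). Carrying this out rigorously at the level of the full PDE, rather than the formal reduced system, is the crux and is what separates this estimate from the routine subcritical one; every other step is a standard energy computation supported by the mixed-norm inequalities already used throughout the paper.
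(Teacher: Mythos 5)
Your proposal is correct for the strictly subcritical case and your unit-window integration at the end is sound, but the crux step --- the data-independent bound on the neutral modes --- has a genuine gap, and it is exactly the step you yourself flag as the difficulty. Bounding $\norm{P\psi}_{L^2}$ by ``retaining the leading cubic self-interaction'' with coefficient $l<0$ is a center-manifold computation, and the reduction of Theorem~\ref{reduction} is \emph{local}: the cubic truncation of the projected equation is only meaningful in a small neighborhood of the origin, while Lemma~\ref{attractor1} must produce an absorbing estimate for arbitrarily large initial data, where the discarded ``cross terms'' and remainders are uncontrolled and there is no a priori reason the solution ever enters the region of validity. Worse, in the setting where this lemma is actually deployed (Section~\ref{proof-theorem22}, $T_0-T_1=T_c$) one has $\beta_{m_c,1}^{+}=0$, so your claimed amplitude $\bigl(-\beta_{m_c,1}^{+}/l\bigr)^{1/2}$ degenerates to zero --- false as a bound on modes that are merely neutrally stable and can be given any amplitude by the data. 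The argument is also circular in structure: the uniform bounds of Lemmas~\ref{attractor1}--\ref{attractor6} are the \emph{input} to the paper's subsequent dynamical analysis (Lemma~\ref{lemma-grad-1-cases3} uses the bounds of Lemma~\ref{attractor4} to derive the contradiction with scaling invariance), so the long-time smallness of the critical modes cannot be invoked to prove the absorbing bound. Since, as you correctly note, the advective nonlinearity conserves $L^2$ exactly, no spectral decomposition of the linear quadratic form can manufacture the data-independent constant $C$ in \eqref{l2bouned-1} at or beyond criticality.

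The paper's proof avoids spectral theory entirely and instead exploits the maximum-principle structure of the temperature equation. Setting $W=\Theta-\mathrm{R}z$ (which undoes the perturbation ansatz), $W$ solves a \emph{source-free} advection--diffusion equation with bounded boundary values $W|_{z=0}=0$, $W|_{z=1}=-\mathrm{R}$: the linear coupling term $-\mathrm{R}\int_0^z\partial_x v\,d\xi$ is absorbed into the transport of the background profile. The truncations $W_+$ and $(W+\mathrm{R})_-$ then satisfy homogeneous Dirichlet conditions and clean energy identities, hence decay exponentially in $L^2$, while the remainder $Q$ in the decomposition $W=W_+-(W+\mathrm{R})_-+Q$ is pointwise trapped in $[-\mathrm{R},0]$. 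This yields $\norm{\Theta(t)}_{L^2}^2\le C_0e^{-\lambda t}+C$ with $C$ depending only on $\mathrm{R}$ and $\abs{D}$, uniformly in the size of the Rayleigh number relative to $\mathrm{R}_c$; the velocity is then slaved through $\frac{d}{dt}\norm{v}_{L^2}^2+\text{dissipation}\le C\norm{\Theta}_{L^2}^2$ and Lemma~\ref{general-inequality1}, and \eqref{l2bouned-2} follows by the unit-window integration you describe. To repair your proof, replace the neutral-mode step by this truncation argument; everything else you wrote can stay.
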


\begin{proof}
Consider the case $T_0 >T_1$. Define the perturbation variable
\[
W = \Theta - \mathrm{R} z.
\]
It follows from \eqref{ondimensional-equations-2} that $W$ satisfies
\begin{align*}
\begin{cases}
\begin{aligned}
\partial_t W = &\partial_{xx} W + \kappa_a \partial_{zz} W 
- v \partial_x W + \left( \int_0^z \partial_x v(t,x,\xi) \, d\xi \right) \partial_z W,
\end{aligned} \\
W|_{z=0} = 0, \quad W|_{z=1} = -\mathrm{R}, \\
W \text{ is periodic in $x$ with period $\alpha$}.
\end{cases}
\end{align*}

For the positive part $W_+$ and the negative part $(W + \mathrm{R})_-$, we derive the energy identities:
\begin{align*}
\begin{aligned}
&\begin{aligned}
&\frac{d}{dt} \int_D |W_+|^2 \, dx\,dz 
+ 2 \int_D |\partial_x W_+|^2 \, dx\,dz 
\\&+ 2\kappa_a \int_D |\partial_z W_+|^2 \, dx\,dz = 0, 
\end{aligned}
\\
&\begin{aligned}
&\frac{d}{dt} \int_D |(W + \mathrm{R})_-|^2 \, dx\,dz 
+ 2 \int_D |\partial_x (W + \mathrm{R})_-|^2 \, dx\,dz 
\\&+ 2\kappa_a \int_D |\partial_z (W + \mathrm{R})_-|^2 \, dx\,dz = 0.
\end{aligned}
\end{aligned}
\end{align*}
These imply the existence of $\lambda > 0$ such that
\begin{align}
\begin{aligned}
&\norm{W_+(t)}_{L^2}^2 \leq e^{-\lambda t} \norm{W_+(0)}_{L^2}^2, \\
&\norm{(W + \mathrm{R})_-(t)}_{L^2}^2 \leq e^{-\lambda t} \norm{(W + \mathrm{R})_-(0)}_{L^2}^2.
\end{aligned}
\end{align}

Now, decompose $W$ as
\[
W = W_+ - (W + \mathrm{R})_- + Q.
\]
From this decomposition, we deduce that
\[
-\mathrm{R} \leq Q \leq 0.
\]
Since $\Theta = W_+ - (W + \mathrm{R})_- + Q + \mathrm{R} z$, it follows that
\begin{align}\label{thetal2}
\norm{\Theta(t)}_{L^2}^2 \leq C_0 e^{-\lambda t} + C,
\end{align}
where the constant $C$ depends only on $\mathrm{R}$ and $|D|$.

For the velocity field $v$ under the condition $T_0 >T_1 $, we have
\begin{align*}
\begin{aligned}
&\frac{d}{dt} \int_D v^2 \, dx\,dz 
+ \mathrm{Pr}_x \int_D |\partial_x v|^2 \, dx\,dz 
\\&+ 2\mathrm{Pr}_z \int_D |\partial_z v|^2 \, dx\,dz 
\leq C \int_D \Theta^2 \, dx\,dz.
\end{aligned}
\end{align*}
Applying Lemma \ref{general-inequality1} together with estimate \eqref{thetal2}, we conclude that there exists $\lambda > 0$ such that
\begin{align*}
\norm{v(t)}_{L^2}^2 + \norm{\Theta(t)}_{L^2}^2 \leq C_0 e^{-\lambda t} + C.
\end{align*}

To establish \eqref{l2bouned-2}, consider the total energy balance:
\begin{align}\label{two-proof--1-cases3-attractor}
\begin{aligned}
\frac{d}{dt} \int_D \left( v^2 + \Theta^2 \right) dx\,dz 
=& -\mathrm{Pr}_x \int_D |\partial_x v|^2 \, dx\,dz 
- 2\mathrm{Pr}_z \int_D |\partial_z v|^2 \, dx\,dz \\
& - 2 \int_D |\partial_x \Theta|^2 \, dx\,dz 
- 2\kappa_a \int_D |\partial_z \Theta|^2 \, dx\,dz \\
& + C \int_D \Theta^2 \, dx\,dz.
\end{aligned}
\end{align}
Integrating this inequality from $t$ to $t+1$ and using the previously obtained bounds yields:
\begin{align*}
\begin{aligned}
&\mathrm{Pr}_x \int_t^{t+1} \norm{\partial_x v(\tau)}_{L^2}^2 \, d\tau 
+ 2\mathrm{Pr}_z \int_t^{t+1} \norm{\partial_z v(\tau)}_{L^2}^2 \, d\tau \\
&\quad + \int_t^{t+1} \norm{\partial_x \Theta(\tau)}_{L^2}^2 \, d\tau 
+ 2\kappa_a \int_t^{t+1} \norm{\partial_z \Theta(\tau)}_{L^2}^2 \, d\tau \\
&\leq \norm{v(t)}_{L^2}^2 + \norm{\Theta(t)}_{L^2}^2 
+ \frac{C_0}{\lambda} \left( e^{-\lambda t} - e^{-\lambda (t+1)} \right) + C \\
&\leq C_0 e^{-\lambda t} + C.
\end{aligned}
\end{align*}
This completes the proof of the lemma.
\end{proof}

\begin{lemma}\label{attractor2}
Let $(v, \Theta)$ be a smooth solution of the system \eqref{ondimensional-equations-2} with initial data $(v_0, \Theta_0) \in H^1(D) \times L^2(D)$. Then there exists a constant $\lambda > 0$ such that
\begin{align}\label{l2bouned}
\norm{\partial_z v(t)}_{L^2}^2 \leq C_0 e^{-\lambda t} + C,
\end{align}
where $C_0$ depends on $\norm{v_0}_{H^1}$ and $\norm{\Theta_0}_{L^2}$, while $C > 0$ is independent of the initial data.
Furthermore, for any $\epsilon > 0$ and $M > 0$, there exists $t_0(M, \epsilon) > 0$ such that for all $t \geq t_0(M, \epsilon)$,
\begin{align}\label{l2bouned-23-v}
\int_t^{t+1} \norm{\partial_z \nabla v(\tau)}_{L^2}^2 \, d\tau \leq C_{\epsilon},
\end{align}
where $C_{\epsilon}$ is a constant depending on $\epsilon$ but independent of $M$.
\end{lemma}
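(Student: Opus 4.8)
The plan is to work with $\sigma := \partial_z v$. Since $T_0 - T_1 = T_c > 0$ places us in the regime $T_0 > T_1$, $\sigma$ satisfies exactly the initial--boundary value problem \eqref{three-proof--8},
\begin{align*}
\partial_t \sigma = \text{Pr}_x \partial_{xx}\sigma + \text{Pr}_z \partial_{zz}\sigma - \text{R}\,\partial_x\Theta - v\,\partial_x\sigma + \left(\int_0^z \partial_x v(t,x,\xi)\,d\xi\right)\partial_z\sigma,
\end{align*}
with $\sigma|_{z=0}=\sigma|_{z=1}=0$ (this equation is independent of the sign of $T_0-T_1$). First I would multiply by $\sigma$ and integrate over $D$, exactly as in the derivation of \eqref{three-proof--9-1}. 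The two transport terms cancel: integrating $-\int_D v\,\partial_x\sigma\,\sigma$ by parts in $x$ gives $\tfrac12\int_D(\partial_x v)\sigma^2$, while integrating $\int_D(\int_0^z\partial_x v)\,\partial_z\sigma\,\sigma$ by parts in $z$ (boundary terms vanishing since $\sigma|_{z=0,1}=0$) gives $-\tfrac12\int_D(\partial_x v)\sigma^2$. For the buoyancy term I would integrate by parts and apply Young's inequality, $-\text{R}\int_D\partial_x\Theta\,\sigma = \text{R}\int_D\Theta\,\partial_x\sigma \le \tfrac{\text{Pr}_x}{2}\norm{\partial_x\sigma}_{L^2}^2 + C\norm{\Theta}_{L^2}^2$.

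Because $\sigma$ vanishes on $z=0,1$, the Poincaré inequality converts part of the dissipation into $\lambda\norm{\sigma}_{L^2}^2$, yielding the closed differential inequality
\begin{align*}
\frac{d}{dt}\norm{\sigma}_{L^2}^2 + \lambda\norm{\sigma}_{L^2}^2 + c\,\norm{\nabla\sigma}_{L^2}^2 \le C\norm{\Theta}_{L^2}^2
\end{align*}
for suitable $\lambda,c>0$. Feeding in the bound $\norm{\Theta(t)}_{L^2}^2 \le C_0 e^{-\lambda t}+C$ from Lemma~\ref{attractor1} and invoking the generalized Gronwall inequality (Lemma~\ref{general-inequality1}) gives \eqref{l2bouned}, with the transient $C_0 e^{-\lambda t}$ carrying all dependence on $\norm{v_0}_{H^1},\norm{\Theta_0}_{L^2}$ and the additive $C$ being universal.

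For the averaged estimate \eqref{l2bouned-23-v}, I would integrate the same inequality over $[t,t+1]$ and drop the nonnegative $\lambda\norm{\sigma}^2$ term to obtain
\begin{align*}
c\int_t^{t+1}\norm{\nabla\sigma(\tau)}_{L^2}^2\,d\tau \le \norm{\sigma(t)}_{L^2}^2 + C\int_t^{t+1}\norm{\Theta(\tau)}_{L^2}^2\,d\tau.
\end{align*}
Since $\partial_z\nabla v = \nabla\sigma$, the left side is precisely the target quantity, and both right-hand terms are bounded via \eqref{l2bouned} and Lemma~\ref{attractor1}, each of the form (initial-data-dependent transient) $+$ (universal constant).

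I expect the one genuinely delicate point to be producing a bound $C_\epsilon$ independent of $M$. Assuming $\norm{v_0}_{H^1}^2+\norm{\Theta_0}_{L^2}^2 \le M$, the transient contributions are all dominated by $C_0(M)e^{-\lambda t}$; choosing $t_0(M,\epsilon)$ so that $C_0(M)e^{-\lambda t_0}\le\epsilon$ then forces the right-hand side below $C+\epsilon=:C_\epsilon$ for all $t\ge t_0$, with $C_\epsilon$ depending only on $\epsilon$ and the universal constants. Tracking which constants sit inside decaying exponentials (hence get absorbed into $t_0$) versus which are universal is the crux; the energy identity and the transport cancellation are routine and mirror the computation behind \eqref{three-proof--9-1}.
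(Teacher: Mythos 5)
Your proposal is correct and is essentially the paper's own argument: the paper proves this lemma by exactly the energy method of Lemma~\ref{lemma-grad-1-2} applied to $\sigma=\partial_z v$ (the identity \eqref{three-proof--9-1}, transport cancellation, Poincar\'e, then the bound $\norm{\Theta(t)}_{L^2}^2\le C_0e^{-\lambda t}+C$ from Lemma~\ref{attractor1} fed into the Gronwall step), with the averaged bound \eqref{l2bouned-23-v} obtained by integrating over $[t,t+1]$ and choosing $t_0(M,\epsilon)$ to absorb the transient, just as you describe. The only cosmetic caveat is that Lemma~\ref{general-inequality1} as stated assumes $D_2\in L^1$, so for the non-decaying constant $C$ one uses the direct integration $f(t)\le f(0)e^{-\lambda t}+\tilde{C}_0e^{-\min(\lambda,\mu)t}+C/\lambda$ instead --- a trivial adaptation that the paper itself glosses over in the same way.
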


\begin{proof}
The proof follows using the approach developed in the proof of Lemma~\ref{lemma-grad-1-2}.
\end{proof}

\begin{lemma}\label{attractor3}
Let $M > 0$ and $\epsilon > 0$ be given. For any smooth solution $(v, \Theta)$ of the system \eqref{ondimensional-equations-2} with initial data $(v_0, \Theta_0) \in H^1(D) \times L^2(D)$ satisfying
\[
\norm{v_0}_{H^1}^2 + \norm{\Theta_0}_{L^2}^2 \leq M,
\]
there exists $t_0(M, \epsilon) > 0$ such that for all $t > t_0(M, \epsilon)$,
\begin{align}\label{l2bouned-22}
\norm{\partial_z \Theta(t)}_{L^2}^2 \leq C_{\epsilon},
\end{align}
and
\begin{align}\label{l2bouned-23}
\int_t^{t+1} \norm{\partial_z \nabla \Theta(\tau)}_{L^2}^2 \, d\tau \leq C_{\epsilon},
\end{align}
where $C_{\epsilon}$ is a constant depending on $\epsilon$ but independent of $M$.
\end{lemma}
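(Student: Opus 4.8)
The plan is to upgrade the vertical $H^1$-estimate of Lemma~\ref{lemma-grad-1-4} for $\partial_z\Theta$ into an absorbing-ball estimate, replacing the classical Gronwall step by a uniform Gronwall argument on the moving intervals $[t,t+1]$. First I would test the temperature equation (the second line of \eqref{ondimensional-equations-2}) with $\partial_{zz}\Theta$ and integrate by parts over $D$. Since $\Theta|_{z=0,1}=0$, all boundary terms in $z$ vanish, including the one generated by the source $-\mathrm{R}\int_0^z\partial_xv\,d\xi$, because $\int_0^1\partial_xv\,d\xi=\partial_x\int_0^1 v\,dz=0$. Estimating the nonlinear and source contributions through the mixed-norm inequalities of Lemma~\ref{mixed-norm}--Lemma~\ref{mixed-norm-1} and absorbing the resulting dissipative $\epsilon$-terms, I obtain a differential inequality of the form
\begin{align*}
\frac{d}{dt}\norm{\partial_z\Theta}_{L^2}^2+c_0\left(\norm{\partial_{zz}\Theta}_{L^2}^2+\norm{\partial_{xz}\Theta}_{L^2}^2\right)\leq\Phi(t)\,\norm{\partial_z\Theta}_{L^2}^2+G(t),
\end{align*}
with coefficient $\Phi$ and forcing $G$ built from the lower-order norms $\norm{v}_{L^2}$, $\norm{\partial_zv}_{L^2}$, $\norm{\partial_xv}_{L^2}$ and $\norm{\partial_x\Theta}_{L^2}$.

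Next I would insert the absorbing bounds already in hand. For $t\geq t_1(M,\epsilon)$, Lemma~\ref{attractor1} gives $\norm{v(t)}_{L^2}^2+\norm{\Theta(t)}_{L^2}^2\leq C_\epsilon$ and $\int_t^{t+1}(\norm{\nabla v}_{L^2}^2+\norm{\nabla\Theta}_{L^2}^2)\,d\tau\leq C_\epsilon$, while Lemma~\ref{attractor2} gives $\norm{\partial_zv(t)}_{L^2}^2\leq C_\epsilon$ and $\int_t^{t+1}\norm{\partial_z\nabla v}_{L^2}^2\,d\tau\leq C_\epsilon$, all with $C_\epsilon$ independent of $M$. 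With these I would verify the three hypotheses of the uniform Gronwall lemma on each interval $[t,t+1]$, $t\geq t_1$: $\int_t^{t+1}\Phi\,d\tau\leq a_1$, $\int_t^{t+1}G\,d\tau\leq a_2$, and $\int_t^{t+1}\norm{\partial_z\Theta}_{L^2}^2\,d\tau\leq a_3$ (the last because $\norm{\partial_z\Theta}_{L^2}\leq\norm{\nabla\Theta}_{L^2}$), with $a_1,a_2,a_3$ depending only on $\epsilon$. The uniform Gronwall lemma then yields $\norm{\partial_z\Theta(t)}_{L^2}^2\leq(a_2+a_3)e^{a_1}\leq C_\epsilon$ for all $t>t_0(M,\epsilon):=t_1+1$, which is \eqref{l2bouned-22}; integrating the differential inequality over $[t,t+1]$ and using this pointwise bound then gives \eqref{l2bouned-23}.

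The hard part will be the verification of $\int_t^{t+1}G\,d\tau\leq a_2$. The crude estimate of Lemma~\ref{lemma-grad-1-4} leaves in the forcing a product like $\norm{v}_{L^2}^2\norm{\partial_x\Theta}_{L^2}^2(\norm{v}_{L^2}+\norm{\partial_xv}_{L^2})^2$, whose time integral is not controlled by separate $L^2_t$-bounds on $\partial_xv$ and $\partial_x\Theta$. To avoid it, I would not keep that bound for the convective term $2\int_D v\,\partial_x\Theta\,\partial_{zz}\Theta$; instead I would integrate it by parts once in $z$ and once in $x$, splitting it (together with the nonlocal term, which reduces to the same shape) into $\int_D\partial_xv\,(\partial_z\Theta)^2$ and $-2\int_D\partial_zv\,\partial_x\Theta\,\partial_z\Theta$. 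The first piece, via the two-dimensional Ladyzhenskaya inequality $\norm{\partial_z\Theta}_{L^4}^2\leq C\norm{\partial_z\Theta}_{L^2}\norm{\partial_z\Theta}_{H^1}$, is bounded by $\epsilon\norm{\partial_z\Theta}_{H^1}^2+C_\epsilon\norm{\partial_xv}_{L^2}^2\norm{\partial_z\Theta}_{L^2}^2$, so that the dangerous factor $\norm{\partial_xv}_{L^2}^2$ is moved into $\Phi$, where only $\int_t^{t+1}\norm{\partial_xv}_{L^2}^2\leq C_\epsilon$ is needed. The second piece, again by Ladyzhenskaya together with the pointwise bound $\norm{\partial_zv}_{L^2}\leq C_\epsilon$ and Young's inequality, reduces to a coefficient whose time integral is dominated by $\int_t^{t+1}\norm{\partial_x\Theta}_{L^2}^{4/3}\norm{\partial_z\nabla v}_{L^2}^{2/3}\,d\tau$, finite by Hölder's inequality in time with exponents $(\tfrac{3}{2},3)$ using precisely the two time-averaged bounds supplied by Lemma~\ref{attractor1} and Lemma~\ref{attractor2}. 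This regrouping---pairing $\norm{\partial_xv}$ with $\norm{\partial_z\Theta}^2$ and exploiting the parabolic smoothing of $\partial_zv$ from Lemma~\ref{attractor2}---is what renders the uniform Gronwall hypotheses verifiable.
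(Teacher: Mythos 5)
Your proposal is correct, and its overall architecture coincides with the paper's proof: an energy inequality for $\norm{\partial_z\Theta}_{L^2}^2$ obtained by testing the temperature equation with $\partial_{zz}\Theta$, followed by a uniform Gronwall argument on the moving intervals $[t,t+1]$ whose hypotheses are supplied by the absorbing bounds of Lemma~\ref{attractor1} and Lemma~\ref{attractor2} (with $t_0$ depending on $M$ but the final constant not), and finally a time integration of the same inequality to obtain \eqref{l2bouned-23}. The paper even carries out the uniform Gronwall step by hand, integrating first from $s$ to $t+1$ and then averaging in $s$ over $[t,t+1]$, which is exactly the lemma you invoke.

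Where you genuinely depart from the paper is in the derivation of the differential inequality itself. The paper asserts, citing the method of Lemma~\ref{lemma-grad-1-4}, a forcing of the form $C\norm{v}_{L^2}^2 + C\norm{v}_{L^2}^2\norm{\partial_x\Theta}_{L^2}^2\norm{\partial_z v}_{L^2}^2 + C\norm{v}_{L^2}^2\norm{\partial_z v}_{L^2}^2\norm{\partial_z\Theta}_{L^2}^2$, in which the coefficient $(\norm{v}_{L^2}+\norm{\partial_x v}_{L^2})^2$ actually produced by the $I_2$-estimate of Lemma~\ref{lemma-grad-1-4} has been replaced by $\norm{\partial_z v}_{L^2}^2$; since $\norm{v}_{L^2}$ and $\norm{\partial_z v}_{L^2}$ are pointwise absorbed after a transient, the paper's $f_1$ is then literally a constant and its $f_2$ contains only $\norm{\partial_x\Theta}_{L^2}^2$, whose unit-interval integral is controlled by Lemma~\ref{attractor1}. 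You instead correctly diagnose that transplanting the Lemma~\ref{lemma-grad-1-4} bound verbatim leaves a forcing term $\norm{v}_{L^2}^2\norm{\partial_x\Theta}_{L^2}^2(\norm{v}_{L^2}+\norm{\partial_x v}_{L^2})^2$ that is not integrable over unit intervals from the available separate $L^2_t$ bounds, and you repair it constructively: integrating the convective and nonlocal terms by parts into $\int_D\partial_x v\,(\partial_z\Theta)^2$ and $-2\int_D\partial_z v\,\partial_x\Theta\,\partial_z\Theta$ (the same identity the paper itself uses inside Lemma~\ref{lemma-grad-1-4}), moving $\norm{\partial_x v}_{L^2}^2$ into the Gronwall coefficient $\Phi$ — where uniform Gronwall only needs $\int_t^{t+1}\Phi\,d\tau\leq a_1$, not a pointwise bound — and taming the cross term by Ladyzhenskaya plus Young, so that its unit-interval integral is dominated via H\"older in time with exponents $(3/2,3)$ by the bounds $\int_t^{t+1}\norm{\nabla\Theta}_{L^2}^2\,d\tau\leq C_\epsilon$ and $\int_t^{t+1}\norm{\partial_z\nabla v}_{L^2}^2\,d\tau\leq C_\epsilon$ from Lemmas~\ref{attractor1} and~\ref{attractor2}. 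I verified these exponent counts and the vanishing of the boundary terms (including for the source, since $\int_0^1\partial_x v\,d\xi=0$), and they are sound. What your route buys is a fully transparent derivation of a Gronwall-admissible inequality, exploiting the parabolic smoothing of $\partial_z v$; what the paper's route buys is brevity, at the cost of stating a coefficient that does not visibly follow from the computation it cites. Both reach the same conclusion with $C_\epsilon$ independent of $M$.
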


\begin{proof}
Following the method used in the proof of Lemma~\ref{lemma-grad-1-4}, there exists $\lambda > 0$ such that
\begin{align*}
\begin{aligned}
\frac{d}{dt} \norm{\partial_z \Theta}_{L^2}^2
+ \lambda \norm{\partial_z \Theta}_{L^2}^2
\leq &\; C \norm{v}_{L^2}^2
+ C \norm{v}_{L^2}^2 \norm{\partial_x \Theta}_{L^2}^2 \norm{\partial_z v}_{L^2}^2 \\
& + C \norm{v}_{L^2}^2 \norm{\partial_z v}_{L^2}^2 \norm{\partial_z \Theta}_{L^2}^2.
\end{aligned}
\end{align*}
By Lemmas~\ref{attractor1} and~\ref{attractor2}, for any $\epsilon > 0$, there exists $t_0(M, \epsilon) > 0$ such that for all $t \geq t_0(M, \epsilon)$,
\begin{align*}
\begin{aligned}
\norm{v(t)}_{L^2}^2 \leq C + \epsilon, \quad
\norm{\partial_z v(t)}_{L^2}^2 \leq C + \epsilon.
\end{aligned}
\end{align*}
Hence, for $t \geq t_0(M, \epsilon)$, we obtain
\begin{align*}
\begin{aligned}
\frac{d}{dt} \norm{\partial_z \Theta}_{L^2}^2
+ \lambda \norm{\partial_z \Theta}_{L^2}^2
\leq &\; C(C + \epsilon)
+ C(C + \epsilon)^2 \norm{\partial_x \Theta}_{L^2}^2 \\
& + C(C + \epsilon)^2 \norm{\partial_z \Theta}_{L^2}^2.
\end{aligned}
\end{align*}

Let us define
\[
f_1(t) = C(C + \epsilon)^2, \quad 
f_2(t) = C(C + \epsilon) + C(C + \epsilon)^2 \norm{\partial_x \Theta}_{L^2}^2, \quad 
h(t) = \norm{\partial_z \Theta}_{L^2}^2.
\]
Then the inequality becomes
\begin{align*}
\frac{dh}{dt} \leq f_1(t) h + f_2(t).
\end{align*}

For $t \leq s \leq t + 1$, we deduce that
\begin{align}\label{ke-inequality}
\begin{aligned}
h(t+1) &\leq h(s) e^{C(C + \epsilon)} + e^{C(C + \epsilon)} \int_s^{t+1} f_2(\tau) \, d\tau \\
&\leq h(s) e^{C(C + \epsilon)} + e^{C(C + \epsilon)} \left( C(C + \epsilon) + C(C + \epsilon)^3 \right),
\end{aligned}
\end{align}
where we have used the estimate
\[
\int_t^{t+1} f_2(\tau) \, d\tau \leq C(C + \epsilon) + C(C + \epsilon)^3, \quad t \geq t_0(M, \epsilon).
\]

Integrating inequality \eqref{ke-inequality} with respect to $s$ from $t$ to $t+1$ yields
\begin{align*}
\begin{aligned}
h(t+1) &\leq e^{C(C + \epsilon)} \int_t^{t+1} h(s) \, ds 
+ e^{C(C + \epsilon)} \left( C(C + \epsilon) + C(C + \epsilon)^3 \right) \\
&\leq e^{C(C + \epsilon)} C(C + \epsilon) 
+ e^{C(C + \epsilon)} \left( C(C + \epsilon) + C(C + \epsilon)^3 \right),
\end{aligned}
\end{align*}
where we have used the bound
\[
\int_t^{t+1} h(\tau) \, d\tau \leq C + \epsilon, \quad t \geq t_0(M, \epsilon).
\]

Finally, from the energy estimate
\begin{align*}
\begin{aligned}
\frac{d}{dt} \norm{\partial_z \Theta}_{L^2}^2
+ \kappa_a \int_D |\partial_{zz} \Theta|^2 
+ \int_D |\partial_{xz} \Theta|^2 \leq 
C(C + \epsilon) + C(C + \epsilon)^2 \norm{\nabla \Theta}_{L^2}^2,
\end{aligned}
\end{align*}
we obtain
\begin{align*}
\begin{aligned}
&\kappa_a \int_t^{t+1} \norm{\partial_{zz} \Theta(\tau)}_{L^2}^2 \, d\tau 
+ \int_t^{t+1} \norm{\partial_{xz} \Theta(\tau)}_{L^2}^2 \, d\tau \\
&\leq C_{\epsilon} + C(C + \epsilon) + C(C + \epsilon) \int_t^{t+1} \norm{\nabla \Theta(\tau)}_{L^2}^2 \, d\tau \\
&\leq C_{\epsilon} + C(C + \epsilon) + C(C + \epsilon)^2 = C_{\epsilon}.
\end{aligned}
\end{align*}
\end{proof}

\begin{lemma}\label{attractor4}
Let $M > 0$ and $\epsilon > 0$ be given. For any smooth solution $(v, \Theta)$ of the system \eqref{ondimensional-equations-2} with initial data $(v_0, \Theta_0) \in H^1(D) \times L^2(D)$ satisfying
\[
\norm{v_0}_{H^1}^2 + \norm{\Theta_0}_{L^2}^2 \leq M,
\]
there exists $t_0(M, \epsilon) > 0$ such that for all $t > t_0(M, \epsilon)$,
\begin{align}\label{l2bouned-24}
\norm{\partial_x \Theta(t)}_{L^2}^2 + \norm{\partial_x v(t)}_{L^2}^2 \leq C_{\epsilon},
\end{align}
and
\begin{align}\label{l2bouned-23}
\int_t^{t+1} \norm{\partial_x (\nabla v(\tau), \nabla \Theta(\tau))}_{L^2}^2 \, d\tau \leq C_{\epsilon},
\end{align}
where $C_{\epsilon}$ is a constant depending on $\epsilon$ but independent of $M$.
\end{lemma}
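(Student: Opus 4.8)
The plan is to follow the energy estimate for the horizontal derivatives developed in the proof of Lemma~\ref{lemma-grad-1-4-cases3}, but to replace the decay-type Gronwall argument there by the uniform (absorbing) Gronwall scheme already used in Lemma~\ref{attractor3}. Concretely, I would set $h(t) = \norm{\partial_x v(t)}_{L^2}^2 + \norm{\partial_x \Theta(t)}_{L^2}^2$, multiply the $v$-equation in \eqref{ondimensional-equations-2} by $-\partial_{xx} v$ and the $\Theta$-equation by $-\partial_{xx}\Theta$, integrate by parts over $D$, and reproduce the identity \eqref{decayusing}. The Rayleigh coupling contributions combine into $J_0$, while the four nonlinear terms $H_1, H_2, S_1, S_2$ are controlled exactly as in the derivation of \eqref{vthetagradient} via the mixed-norm bounds of Lemma~\ref{mixed-norm}--Lemma~\ref{mixed-norm-1} together with the cross-term cancellation. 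After absorbing the top-order contributions into the dissipation with a small $\epsilon$, this yields a differential inequality of the form
\begin{align*}
\frac{dh}{dt} + \lambda_1\left(\norm{\partial_{xx} v}_{L^2}^2 + \norm{\partial_{xx}\Theta}_{L^2}^2\right) \leq f_1(t)\, h(t) + f_2(t),
\end{align*}
where $f_1$ and $f_2$ are built only from the lower-order quantities $\norm{v}_{L^2}$, $\norm{\partial_z v}_{L^2}$, $\norm{\partial_z\Theta}_{L^2}$ and their first-order gradients.

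Next I would invoke the absorbing estimates of the preceding lemmas. By Lemmas~\ref{attractor1}--\ref{attractor3}, for every $\epsilon>0$ there is a time $t_0(M,\epsilon)$ after which the pointwise quantities $\norm{v}_{L^2}^2$, $\norm{\partial_z v}_{L^2}^2$, $\norm{\partial_z\Theta}_{L^2}^2$ are each bounded by $C+\epsilon$, while the corresponding gradient quantities satisfy $\int_t^{t+1}(\norm{\partial_z\nabla v}_{L^2}^2 + \norm{\partial_z\nabla\Theta}_{L^2}^2)\,d\tau \leq C_\epsilon$. Substituting these into $f_1, f_2$ shows that for $t \geq t_0(M,\epsilon)$ both coefficient functions are integrable over unit-length time windows, with $\int_t^{t+1} f_1\,d\tau \leq C_\epsilon$ and $\int_t^{t+1} f_2\,d\tau \leq C_\epsilon$, the constant $C_\epsilon$ being independent of $M$. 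Crucially, the time integral of $h$ itself is already controlled: the $L^2$ energy balance \eqref{two-proof--1-cases3-attractor} from Lemma~\ref{attractor1} gives $\int_t^{t+1} h(\tau)\,d\tau \leq \int_t^{t+1}(\norm{\nabla v}_{L^2}^2 + \norm{\nabla\Theta}_{L^2}^2)\,d\tau \leq C_\epsilon$.

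With these three unit-window bounds in hand, I would run the uniform Gronwall argument exactly as in \eqref{ke-inequality}: integrating the differential inequality first over $[s, t+1]$ and then over $s \in [t, t+1]$ converts the time-integral control of $h$ into the pointwise bound \eqref{l2bouned-24} for all $t > t_0(M,\epsilon)$, with constant independent of $M$. Finally, reinserting \eqref{l2bouned-24} into the same differential inequality and integrating over $[t, t+1]$ produces the time-integrated gradient bound \eqref{l2bouned-23}. The main obstacle is the linear coupling term $J_0 = -4\mathrm{R}\int_D \partial_x\Theta\big(\int_0^z \partial_{xx} v\,d\xi\big)\,dx\,dz$: at the critical threshold $T_0 - T_1 = T_c$ the principal eigenvalue vanishes, so—unlike in the subcritical Lemma~\ref{lemma-grad-1-4-cases3}—there is no spectral gap rendering this term strictly dissipative. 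Instead it must be split as $\epsilon\norm{\partial_{xx} v}_{L^2}^2 + C_\epsilon\norm{\partial_x\Theta}_{L^2}^2$, with the first piece absorbed by the dissipation and the second folded into the $f_1 h$ term; the whole scheme then hinges on the observation that absorbing boundedness (rather than decay) of $h$ is all the uniform Gronwall argument requires, which is precisely what the eventual time-integrability of $h$ from Lemma~\ref{attractor1} supplies, with all constants kept independent of the initial-data bound $M$.
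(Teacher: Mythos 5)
Your proposal is correct and follows essentially the same route as the paper's proof: the same $-\partial_{xx}v$, $-\partial_{xx}\Theta$ energy estimate with mixed-norm control of the nonlinearities, unit-window bounds on $h$, $f_1$ (and your $f_2$) supplied by Lemmas~\ref{attractor1}--\ref{attractor3}, the uniform Gronwall argument as in \eqref{ke-inequality} for the pointwise bound \eqref{l2bouned-24}, and a final integration over $[t,t+1]$ for \eqref{l2bouned-23}. The only cosmetic difference is that the paper folds the $J_0$-type contribution $C\norm{\partial_x\Theta}_{L^2}^2$ into $f_1(t)\,h(t)$ via the constant term in $f_1$ (so that $dh/dt \leq f_1 h$ with no separate $f_2$), whereas you keep an $f_2$; both are handled identically by the uniform Gronwall lemma, and your remark that only absorbing boundedness—not decay—is needed at the critical threshold is precisely the point of the paper's argument.
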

\begin{proof}
Taking $\epsilon > 0$ sufficiently small, we obtain the energy inequality:
\begin{align*}
\begin{aligned}
&\frac{d}{dt} \left( \norm{\partial_x v}_{L^2}^2 + \norm{\partial_x \Theta}_{L^2}^2 \right)
+ \mathrm{Pr}_x \int_D |\partial_{xx} v|^2 \, dx\,dz
+ \mathrm{Pr}_z \int_D |\partial_{xz} v|^2 \, dx\,dz \\
&+ \int_D |\partial_{xx} \Theta|^2 \, dx\,dz
+ \kappa_a \int_D |\partial_{xz} \Theta|^2 \, dx\,dz \leq r(t),
\end{aligned}
\end{align*}
where the remainder term $r(t)$ is given by
\begin{align*}
\begin{aligned}
r(t) := &\ C \left( \norm{v}_{L^2}^2 \norm{\partial_z v}_{L^2}^2 + 1 \right)
\left( \norm{\partial_x v}_{L^2}^2 + \norm{\partial_x \Theta}_{L^2}^2 \right) \\
&+ C \left( \norm{\partial_z v}_{L^2}^2 \norm{\partial_x v}_{L^2}^2 
+ \norm{v}_{L^2}^2 \norm{\partial_z v}_{L^2}^2 \right)
\left( \norm{\partial_x v}_{L^2}^2 + \norm{\partial_x \Theta}_{L^2}^2 \right) \\
&+ C \norm{\partial_z \Theta}_{L^2}^2 \norm{\partial_{zz} \Theta}_{L^2}^2
\left( \norm{\partial_x v}_{L^2}^2 + \norm{\partial_x \Theta}_{L^2}^2 \right).
\end{aligned}
\end{align*}

Let us define
\[
h(t) = \norm{\partial_x v}_{L^2}^2 + \norm{\partial_x \Theta}_{L^2}^2,
\]
and
\begin{align*}
\begin{aligned}
f_1(t) = &C \left( \norm{v}_{L^2}^2 \norm{\partial_z v}_{L^2}^2 + 1 \right)
+ C \norm{\partial_z \Theta}_{L^2}^2 \norm{\partial_{zz} \Theta}_{L^2}^2.
\\&+ C \left( \norm{\partial_z v}_{L^2}^2 \norm{\partial_x v}_{L^2}^2 
+ \norm{v}_{L^2}^2 \norm{\partial_z v}_{L^2}^2 \right).
\end{aligned}
\end{align*}
Then inequality \eqref{vthetagradient-2} implies
\begin{align}\label{vthetagradient-2}
\frac{dh}{dt} \leq f_1(t) h.
\end{align}

By Lemma~\ref{attractor1}--Lemma \ref{attractor3}, there exists $t_0(M, \epsilon) > 0$ such that for all $t \geq t_0(M, \epsilon)$,
\begin{align}
\begin{aligned}
\int_t^{t+1} h(\tau) \, d\tau &\leq C_{\epsilon}, \\
\int_t^{t+1} f_1(\tau) \, d\tau &\leq C_{\epsilon}.
\end{aligned}
\end{align}
Applying the uniform Gronwall inequality, we conclude that
\begin{align*}
\norm{\partial_x \Theta(t)}_{L^2}^2 + \norm{\partial_x v(t)}_{L^2}^2 \leq C_{\epsilon}, \quad \text{for all } t \geq t_0(M, \epsilon).
\end{align*}
Finally, integrating  \eqref{vthetagradient-2} from $t$ to $t+1$ and using  Lemma~\ref{attractor1}--Lemma \ref{attractor3}, we obtain
\begin{align*}
&\int_t^{t+1} \norm{\partial_x (\nabla v(\tau), \nabla \Theta(\tau))}_{L^2}^2 \, d\tau \\
&\leq C \int_t^{t+1} r(\tau) \, d\tau + h(t) \leq C_{\epsilon}, \quad \text{for all } t \geq t_0(M, \epsilon).
\end{align*}
\end{proof}

\begin{lemma}\label{attractor5}
Let $M > 0$ and $\epsilon > 0$ be given. For any smooth solution $(v, \Theta)$ of the system \eqref{ondimensional-equations-2} with initial data $(v_0, \Theta_0) \in H^2(D)$ satisfying
\[
\norm{v_0}_{H^2}^2 + \norm{\Theta_0}_{H^2}^2 \leq M,
\]
there exists $t_0(M, \epsilon) > 0$ such that for all $t > t_0(M, \epsilon)$,
\begin{align}\label{l2bouned-24}
\norm{\partial_z \nabla \Theta(t)}_{L^2}^2 + \norm{\partial_z \nabla v(t)}_{L^2}^2 \leq C_{\epsilon},
\end{align}
where $C_{\epsilon}$ is a constant depending on $\epsilon$ but independent of $M$.
\end{lemma}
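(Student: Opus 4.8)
The plan is to upgrade the uniform-in-time (absorbing) estimates of Lemmas~\ref{attractor1}--\ref{attractor4} to the level of the second-order vertical gradients, following the same energy structure used in the decay analysis of Lemma~\ref{lemma-grad-1-6}, but closing the argument with the \emph{uniform} Gronwall inequality rather than with an exponential-decay version. Writing $\sigma := \partial_z v$ and $\eta := \partial_z \Theta$, these quantities satisfy the equations obtained by differentiating \eqref{ondimensional-equations-2} in $z$ (the analogues of \eqref{three-proof--8} and \eqref{thetaz-eta}, with the sign of the $\text{R}$-coupling fixed by $T_0 > T_1$). First I would test the $\sigma$-equation against $\partial_{xx}\sigma$ and $\partial_{zz}\sigma$ and the $\eta$-equation against $\partial_{xx}\eta$ and $\partial_{zz}\eta$, then sum. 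Exactly as in \eqref{h2dtestimate-all-1}, the two Rayleigh cross-terms $\pm 2\text{R}(\partial_x\Theta,\partial_{xx}\sigma+\partial_{zz}\sigma)$ and $\mp 2\text{R}(\partial_x v,\partial_{xx}\eta+\partial_{zz}\eta)$ cancel identically, which is precisely what makes the combined quantity $h(t):=\norm{\partial_z\nabla v}_{L^2}^2+\norm{\partial_z\nabla\Theta}_{L^2}^2$ tractable.

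The nonlinear and nonlocal terms are treated as in Lemma~\ref{lemma-grad-1-6}: the mixed-norm bounds of Lemma~\ref{mixed-norm}--Lemma~\ref{mixed-norm-1} convert each product into factors carrying a single top-order derivative (for instance $\norm{(\int_0^z\partial_x v\,d\xi)\partial_z\sigma}_{L^2}^2 \leq C\norm{\partial_x v}_{L^2}\norm{\partial_{xx}v}_{L^2}\norm{\partial_z\sigma}_{L^2}\norm{\partial_{zz}\sigma}_{L^2}$), after which Young's inequality absorbs the top-order factors into the dissipation $\text{Pr}_z\norm{\partial_{zz}\sigma}_{L^2}^2+\kappa_a\norm{\partial_{zz}\eta}_{L^2}^2+\cdots$. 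This yields a differential inequality of the form
\[
\frac{dh}{dt} + \lambda_1\big(\norm{\partial_{xx}\sigma}_{L^2}^2 + \norm{\partial_{zz}\sigma}_{L^2}^2 + \norm{\partial_{xx}\eta}_{L^2}^2 + \norm{\partial_{zz}\eta}_{L^2}^2\big) \leq f_1(t)\,h + f_2(t),
\]
where $f_1$ and $f_2$ are assembled only from the lower-order quantities $\norm{v}_{L^2}$, $\norm{\partial_z v}_{L^2}$, $\norm{\partial_x v}_{L^2}$, $\norm{\partial_{xx}v}_{L^2}$, $\norm{\partial_x\Theta}_{L^2}$, $\norm{\partial_z\Theta}_{L^2}$ and $\norm{\partial_{zz}\Theta}_{L^2}$.

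To conclude I would apply the uniform Gronwall inequality, exactly as in the proofs of Lemma~\ref{attractor3} and Lemma~\ref{attractor4}. This requires three unit-interval bounds valid for all $t\geq t_0(M,\epsilon)$ with constants independent of $M$: $\int_t^{t+1}h(\tau)\,d\tau\leq C_\epsilon$, which follows from the integral estimates of Lemma~\ref{attractor2} and Lemma~\ref{attractor3}; $\int_t^{t+1}f_1(\tau)\,d\tau\leq C_\epsilon$; and $\int_t^{t+1}f_2(\tau)\,d\tau\leq C_\epsilon$. The last two reduce to the uniform pointwise bounds on $\norm{v}_{L^2}$, $\norm{\partial_z v}_{L^2}$, $\norm{\partial_x v}_{L^2}$, $\norm{\partial_x\Theta}_{L^2}$, $\norm{\partial_z\Theta}_{L^2}$ from Lemmas~\ref{attractor1}--\ref{attractor4}, combined with the unit-interval integrability of $\norm{\partial_{xx}v}_{L^2}^2$ and $\norm{\partial_{zz}\Theta}_{L^2}^2$ furnished by the same lemmas. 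After enlarging $t_0(M,\epsilon)$ by one time unit, the uniform Gronwall lemma then gives $h(t)\leq C_\epsilon$, which is the claim.

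The main obstacle is the bookkeeping that guarantees $M$-independence: the constant $C_\epsilon$ must not inherit the size of the initial data through $f_1$ or $f_2$, so every coefficient appearing there must be controlled, beyond the entry time $t_0(M,\epsilon)$, solely by the absorbing constants $C+\epsilon$ of Lemmas~\ref{attractor1}--\ref{attractor4} and by unit-interval integrals that are themselves $M$-free. The delicate point is the cubic and quartic nonlocal terms, whose Young splitting must leave the surviving low-order coefficient in a form integrable over unit intervals uniformly in $M$; this is precisely where the integral bound $\int_t^{t+1}\norm{\partial_x\nabla v}_{L^2}^2\,d\tau\leq C_\epsilon$ of Lemma~\ref{attractor4} (controlling $\norm{\partial_{xx}v}_{L^2}^2$) and the bound $\int_t^{t+1}\norm{\partial_z\nabla\Theta}_{L^2}^2\,d\tau\leq C_\epsilon$ of Lemma~\ref{attractor3} (controlling $\norm{\partial_{zz}\Theta}_{L^2}^2$) are indispensable.
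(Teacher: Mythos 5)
Your proposal follows essentially the same route as the paper's own proof: there, too, one sets $(\sigma,\eta)=\partial_z(v,\Theta)$ and $h(t)=\norm{\nabla(\sigma,\eta)}_{L^2}^2$, derives via the mixed-norm estimates of Lemmas~\ref{mixed-norm}--\ref{mixed-norm-1} a differential inequality of exactly your form, $\frac{dh}{dt}+\lambda_1(\cdots)\leq f_1(t)\,h+C\norm{\partial_x\Theta}_{L^2}^2$ (this is \eqref{h2dtestimate-all-attractor}), and closes with the $M$-independent unit-interval bounds of Lemmas~\ref{attractor1}--\ref{attractor4} together with the uniform Gronwall inequality, just as you propose. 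One small correction: in the regime relevant here ($T_0-T_1=T_c>0$) both coupling terms in \eqref{ondimensional-equations-2} carry the factor $-\mathrm{R}$, so the two Rayleigh cross-terms reinforce rather than cancel identically (the cancellation you invoke holds in the case $T_1>T_0$ of \eqref{h2dtestimate-all-1}, where the signs are opposite); they must instead be absorbed by Young's inequality into the dissipation, which is precisely the origin of the inhomogeneity $C\norm{\partial_x\Theta}_{L^2}^2$ in \eqref{h2dtestimate-all-attractor} --- your $f_2(t)$, integrable over unit intervals by Lemma~\ref{attractor1} --- so your argument closes unchanged.
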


\begin{proof}
Let $(\sigma, \eta) = \partial_z (v, \Theta)$ and define
\[
h(t) = \norm{\nabla(\sigma, \eta)}_{L^2}^2.
\]

Following the approach used to derive \eqref{h2dtestimate-all-1}, we obtain the energy estimate:
\begin{align}\label{h2dtestimate-all-attractor}
\begin{aligned}
&\frac{dh}{dt} + \lambda_1 \left( \norm{\partial_{xx} \sigma}_{L^2}^2
+ \norm{\partial_{zx} \sigma}_{L^2}^2
+ \norm{\partial_{xx} \eta}_{L^2}^2
+ \norm{\partial_{zx} \eta}_{L^2}^2 \right) \\
&\leq f_1(t) h + C \norm{\partial_x \Theta}_{L^2}^2,
\end{aligned}
\end{align}
where 
\[
\begin{aligned}
f_1(t) = &\ C \left( \norm{\partial_z v}_{L^2}^2 \norm{v}_{L^2}^2
+ \norm{\partial_x v}_{L^2}^2 \norm{\partial_{xx} v}_{L^2}^2 \right) \\
&+ C \norm{\partial_z v}_{L^2}^2 \norm{v}_{L^2}^2 
+ C \norm{\partial_x v}_{L^2} \norm{\partial_{zx} v}_{L^2} \\
&+ C \norm{\partial_x v}_{L^2}^2 \norm{\partial_{zx} v}_{L^2}^2 
+ C \norm{\sigma}_{L^2} \norm{\partial_x \Theta}_{L^2} \norm{\partial_{xx} \Theta}_{L^2} \\
&+ C \norm{\partial_x v}_{L^2} \norm{\partial_{xx} v}_{L^2} \norm{\eta}_{L^2}.
\end{aligned}
\]

By Lemma~\ref{attractor1}--Lemma \ref{attractor4}, there exists $t_0(M, \epsilon) > 0$ such that for all $t \geq t_0(M, \epsilon)$,
\[
\int_t^{t+1} \norm{\partial_x \Theta}_{L^2}^2 \, d\tau \leq C_{\epsilon},
\quad \text{and} \quad 
\int_t^{t+1} f_1(\tau) \, d\tau \leq C_{\epsilon}.
\]

Using a similar argument as in \eqref{ke-inequality} and applying the uniform Gronwall inequality to \eqref{h2dtestimate-all-attractor}, we conclude that there exists $t_0(M, \epsilon) > 0$ such that
\[
\norm{\partial_z \nabla \Theta(t)}_{L^2}^2 + \norm{\partial_z \nabla v(t)}_{L^2}^2 \leq C_{\epsilon}, \quad \text{for all } t > t_0(M, \epsilon).
\]
\end{proof}

\begin{lemma}\label{attractor6}
Let $M > 0$ and $\epsilon > 0$ be given. For any smooth solution $(v, \Theta)$ of the system \eqref{ondimensional-equations-2} with initial data $(v_0, \Theta_0) \in H^2(D)$ satisfying
\[
\norm{v_0}_{H^2}^2 + \norm{\Theta_0}_{H^2}^2 \leq M,
\]
there exists $t_0(M, \epsilon) > 0$ such that for all $t > t_0(M, \epsilon)$,
\begin{align}\label{l2bouned-24}
\norm{\partial_x \nabla \Theta(t)}_{L^2}^2 + \norm{\partial_x \nabla v(t)}_{L^2}^2 \leq C_{\epsilon},
\end{align}
where $C_{\epsilon}$ is a constant depending on $\epsilon$ but independent of $M$.
\end{lemma}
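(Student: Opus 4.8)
The plan is to mirror the proof of Lemma~\ref{lemma-grad-1-7}, but to replace its global Gronwall argument with the uniform Gronwall inequality so that the resulting bound holds for $t>t_0(M,\epsilon)$ with a constant $C_\epsilon$ independent of $M$. Since Lemma~\ref{attractor5} already controls $\partial_z\nabla v$ and $\partial_z\nabla\Theta$ (hence $\partial_{xz}v$, $\partial_{zz}v$, $\partial_{xz}\Theta$, $\partial_{zz}\Theta$), it suffices to bound the purely horizontal second derivatives $\partial_{xx}v$ and $\partial_{xx}\Theta$, after which the claimed estimate \eqref{l2bouned-24} follows by combining the two.

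First I would set $(\omega,\theta):=(\partial_x v,\partial_x\Theta)$, which satisfies the system \eqref{three-proof--8-1-1}, and test its two equations against $\partial_{xx}\omega$ and $\partial_{xx}\theta$ respectively. Integrating by parts, using the identical cancellation of the Rayleigh cross terms $2\text{R}(\int_0^z\partial_x\theta\,d\xi,\partial_{xx}\omega)-2\text{R}(\int_0^z\partial_x\omega\,d\xi,\partial_{xx}\theta)=0$ as in Lemma~\ref{lemma-grad-1-7}, and estimating the nonlinear and nonlocal terms $K_1$--$K_4$, $J_1$--$J_4$ exactly through the bounds \eqref{kkkk-1}--\eqref{kkkk-4} and \eqref{jjjj-1}--\eqref{jjjj-4}, I obtain an energy inequality structurally identical to \eqref{h2dtestimate-all-2}. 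Setting $h(t):=\norm{\partial_{xx}v}_{L^2}^2+\norm{\partial_{xx}\Theta}_{L^2}^2$ and regrouping, this can be recast as
\[
\frac{dh}{dt}+\lambda_1\big(\norm{\partial_{xx}\omega}_{L^2}^2+\norm{\partial_{zx}\omega}_{L^2}^2+\norm{\partial_{xx}\theta}_{L^2}^2+\norm{\partial_{zx}\theta}_{L^2}^2\big)\le f_1(t)\,h+g(t),
\]
where the multiplier $f_1(t)$ collects coefficients such as $\norm{\partial_z v}_{L^2}^2\norm{v}_{L^2}^2$, $\norm{\partial_x v}_{L^2}^2\norm{\partial_{xx}v}_{L^2}^2$ and $\norm{\partial_z\omega}_{L^2}^2\norm{\partial_{xz}\omega}_{L^2}^2$, and $g(t)$ collects the remaining forcing such as $\norm{\partial_x\omega}_{L^2}^2\norm{\partial_z\Theta}_{L^2}^2\norm{\partial_{zz}\Theta}_{L^2}^2$.

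Next I would invoke Lemmas~\ref{attractor1}--\ref{attractor5} to show that, for every $\epsilon>0$, there is $t_0(M,\epsilon)>0$ so that for all $t\ge t_0$ the unit-interval integrals $\int_t^{t+1}h(\tau)\,d\tau$, $\int_t^{t+1}f_1(\tau)\,d\tau$ and $\int_t^{t+1}g(\tau)\,d\tau$ are each bounded by a constant $C_\epsilon$ independent of $M$. The crucial observation is that $\partial_{xz}\omega=\partial_{xx}\sigma$ with $\sigma=\partial_z v$, so the third-order factors appearing in $f_1$ and $g$ have unit-interval time integrals already controlled by the dissipation harvested in Lemma~\ref{attractor5} (cf.\ \eqref{h2dtestimate-all-attractor}), while the lower-order factors are uniformly bounded in time for $t\ge t_0$ by Lemmas~\ref{attractor1}--\ref{attractor4}; moreover $\int_t^{t+1}h\,d\tau\le C_\epsilon$ is precisely the second estimate of Lemma~\ref{attractor4}. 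An application of the uniform Gronwall inequality, exactly as in \eqref{ke-inequality}, then yields $h(t)\le C_\epsilon$ for all $t>t_0(M,\epsilon)$, and combining this with the bounds on $\partial_{zx}v$ and $\partial_{zx}\Theta$ from Lemma~\ref{attractor5} gives \eqref{l2bouned-24}.

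The main obstacle I anticipate is the bookkeeping that makes $C_\epsilon$ genuinely independent of $M$: every term on the right--hand side, and especially the nonlocal contributions $\big(\int_0^z\partial_{xx}v\,d\xi\big)\partial_z v$ and $\big(\int_0^z\partial_x\omega\,d\xi\big)\partial_z\Theta$ whose estimates \eqref{kkkk-4} and \eqref{jjjj-4} generate triple products containing one third--derivative factor, must be split (via Young's inequality with small $\epsilon$) so that the third--derivative factor is absorbed into the dissipation while its square sits inside a multiplier whose unit--interval integral is controlled. This is the delicate point that separates the uniform-Gronwall argument here from the global-Gronwall argument of Lemma~\ref{lemma-grad-1-7}.
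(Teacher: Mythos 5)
Your proposal is correct and follows essentially the same route as the paper, whose proof of Lemma~\ref{attractor6} is precisely the combination you describe: the energy inequality \eqref{h2dtestimate-all-2} for $(\omega,\theta)=(\partial_x v,\partial_x\Theta)$ from Lemma~\ref{lemma-grad-1-7}, closed by the uniform Gronwall argument of Lemma~\ref{attractor5} using the unit-interval integral bounds from Lemmas~\ref{attractor1}--\ref{attractor5} (in particular $\int_t^{t+1}\norm{\partial_x(\nabla v,\nabla\Theta)}_{L^2}^2\,d\tau\leq C_{\epsilon}$ from Lemma~\ref{attractor4} and the dissipation $\partial_{xz}\omega=\partial_{xx}\sigma$ harvested in \eqref{h2dtestimate-all-attractor}). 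Your identification of where the $M$-independence enters and how the third-derivative factors are absorbed matches the paper's intended argument.
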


\begin{proof}
The proof follows the proofs of Lemma~\ref{lemma-grad-1-7} and Lemma~\ref{attractor5}.
\end{proof}

\subsection{Global Asymptotic Stability}

\begin{lemma}\label{lemma-grad-1-cases3}
Let $0 < T_0 - T_1 = T_c$ and $\mathrm{R} = \mathrm{R}_c$. For any smooth solution $(v, \Theta)$ of the system \eqref{ondimensional-equations-2} with initial data $(v_0, \Theta_0) \in L^2(D) \times L^2(D)$, we have
\begin{align}
\lim_{t \to \infty} \left( \norm{v(t)}_{L^2}^2 + \norm{\Theta(t)}_{L^2}^2 \right) = 0.
\end{align}
\end{lemma}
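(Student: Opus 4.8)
The plan is to exploit that, precisely at the critical threshold, the $L^2$-energy becomes a Lyapunov functional whose dissipation rate is the degenerate quadratic form $E_1-2\mathrm{R}_c E_2$, and then to show by an explicit computation that the resulting $\omega$-limit set collapses to the origin. First I would record the exact energy identity. Since $\mathrm{R}=\mathrm{R}_c$ forces $\beta_1=\beta_{m_c,1}^{+}=0$ (Lemma~\ref{pes-lemma}), the computation behind \eqref{two-proof--1-cases3} gives $\frac{d}{dt}\big(\norm{v}_{L^2}^2+\norm{\Theta}_{L^2}^2\big)=-2\big(E_1(v,\Theta)-2\mathrm{R}_c E_2(v,\Theta)\big)$, and the variational characterization $\mathrm{R}_c=\inf E_1/(2E_2)$ in \eqref{ve12} makes the right-hand side nonpositive. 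Hence $t\mapsto\norm{v(t)}_{L^2}^2+\norm{\Theta(t)}_{L^2}^2$ is non-increasing, converges to some $L\ge 0$, and $\int_0^\infty\big(E_1-2\mathrm{R}_c E_2\big)(v(\tau),\Theta(\tau))\,d\tau<\infty$.

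Next I would invoke the dissipativity estimates of Lemmas~\ref{attractor1}--\ref{attractor6}, which render the orbit eventually bounded in $H^2(D)$ and thus precompact in $H^1(D)$. The $\omega$-limit set is then nonempty, compact and invariant, and through each of its points passes a complete, bounded trajectory $(\bar v,\bar\Theta)$ enjoying $H^2$-regularity. On this set the Lyapunov functional is constant, equal to $L$, so its dissipation vanishes identically in time: $E_1(\bar v,\bar\Theta)-2\mathrm{R}_c E_2(\bar v,\bar\Theta)\equiv 0$. Because $\mathrm{R}_c$ is attained and the minimizers of the Rayleigh quotient form exactly the two-dimensional kernel $K=\mathrm{span}\{\psi_{m_c,1}^{+,1},\psi_{m_c,1}^{+,2}\}$ (Lemma~\ref{cri-eigenvalue} and \eqref{first-eigenvector}), the vanishing of this nonnegative form is equivalent to membership in $K$; consequently every $\omega$-limit trajectory satisfies $(\bar v(t),\bar\Theta(t))\in K$ for all $t$.

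The crux is that $K$ supports no nontrivial bounded invariant trajectory. Writing $(\bar v,\bar\Theta)=x_1(t)\,\psi_{m_c,1}^{+,1}+x_2(t)\,\psi_{m_c,1}^{+,2}$ with $k=2\pi m_c/\alpha$, so that $\bar v=v_{m_c,1}^{+,1}\,g(x)\cos(\pi z)$ and $\bar\Theta=v_{m_c,1}^{+,1}A_{m_c}\,h(x)\sin(\pi z)$ where $g=x_1\cos(kx)+x_2\sin(kx)$ and $h=x_1\sin(kx)-x_2\cos(kx)$, one verifies the relations $g'=-kh$, $h'=kg$, and the key identity $g^2+h^2=x_1^2+x_2^2=\norm{\mathbf{x}}^2$. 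A direct calculation then collapses the temperature nonlinearity to $\bar v\partial_x\bar\Theta-\big(\int_0^z\partial_x\bar v\,d\xi\big)\partial_z\bar\Theta=\tfrac12\,(v_{m_c,1}^{+,1})^2 A_{m_c}k\,\norm{\mathbf{x}(t)}^2\sin(2\pi z)$, which is independent of $x$, whereas the companion velocity nonlinearity is independent of $z$ and hence annihilated by $\mathcal{P}$ (absorbed into $\partial_x q$). Since $\bar\Theta$ is a pure horizontal mode $m_c$, both $\partial_t\bar\Theta$ and the linear part of the $\Theta$-equation have zero horizontal average; averaging the $\Theta$-equation over $x\in[0,\alpha]$ therefore leaves only the convective term and forces $\tfrac12\,(v_{m_c,1}^{+,1})^2 A_{m_c}k\,\norm{\mathbf{x}(t)}^2\sin(2\pi z)\equiv 0$, i.e. $\norm{\mathbf{x}(t)}\equiv 0$. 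Thus the $\omega$-limit set is $\{(0,0)\}$, whence $L=0$ and $\lim_{t\to\infty}\big(\norm{v(t)}_{L^2}^2+\norm{\Theta(t)}_{L^2}^2\big)=0$.

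The principal obstacle is exactly this last step: at criticality the linearization gives no decay along $K$, so stability must be extracted from the nonlinearity, and the argument hinges on the fortunate cancellation that reduces the temperature nonlinearity to a purely $x$-independent $\sin(2\pi z)$ contribution—the very $\Theta_3$-mode appearing in Theorem~\ref{bifurcation}—which cannot be sustained inside $K$. A secondary technical matter is justifying the genuine two-sided invariance and the regularity needed to perform the horizontal averaging; both are furnished by the compactness supplied in Lemmas~\ref{attractor1}--\ref{attractor6}.
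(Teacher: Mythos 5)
Your proposal is correct, and its first two stages coincide with the paper's own strategy: you both show the $L^2$ energy is non-increasing exactly at $\mathrm{R}=\mathrm{R}_c$ (the paper via the spectral expansion with $\beta_1=\beta_2=0$, $\beta_k<0$ for $k\geq 3$; you via the equivalent variational identity $\frac{d}{dt}\big(\norm{v}_{L^2}^2+\norm{\Theta}_{L^2}^2\big)=-2\big(E_1-2\mathrm{R}_cE_2\big)\leq 0$), and you both use the compactness supplied by Lemmas~\ref{attractor1}--\ref{attractor6} together with a LaSalle-type invariance argument to confine the $\omega$-limit set to the two-dimensional kernel $E_0=K$. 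Where you genuinely diverge is the decisive final step showing that $K$ carries no nontrivial invariant dynamics. The paper notes that on $E_0$ the linear operator vanishes, so $\omega$-limit trajectories solve the purely quadratic transport system \eqref{perturbation-3-special}, which is invariant under $(v,\Theta)\mapsto(\gamma v(\gamma t,\cdot),\gamma\Theta(\gamma t,\cdot))$; scaling up a nontrivial trajectory (which remains a solution of the full system, since scalar multiples stay in $\ker\mathcal{L}_{\mathrm{R}_c}$) then contradicts the data-independent absorbing bounds of Lemma~\ref{attractor4}. You instead evaluate the quadratic nonlinearity explicitly on $K$: your identities $g'=-kh$, $h'=kg$, $g^2+h^2=\norm{\mathbf{x}}^2$, the reduction of the temperature nonlinearity to $\tfrac12\big(v_{m_c,1}^{+,1}\big)^2A_{m_c}k\norm{\mathbf{x}}^2\sin(2\pi z)$, and the annihilation of the $z$-independent velocity nonlinearity by $\mathcal{P}$ all agree exactly with the paper's own center-manifold computations in Section~\ref{proof-theorem24} (this is precisely the $\Theta_3$ mode of Theorem~\ref{bifurcation}), and the horizontal average then forces $\norm{\mathbf{x}(t)}\equiv 0$ by orthogonality of the $m=0$ mode against $K$. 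Your route buys explicitness and robustness: it avoids the paper's rather terse scaling argument, whose contradiction with Lemma~\ref{attractor4} requires unpacking, and it reduces everything to a checkable mode computation. Two points you should make explicit for completeness: the identification of the zero set of the nonnegative form $E_1-2\mathrm{R}_cE_2$ with $K$ presumes the minimizing wavenumber $m_c$ in \eqref{Rc-minimization} is unique (the paper's tacit standing assumption; if two wavenumbers tie, the kernel enlarges, though your averaging argument still closes since the self-interaction contributions to the $m=0$, $\sin(2\pi z)$ mode all carry the same sign); and forward invariance of the $\omega$-limit set already suffices for your contradiction, so the two-sided extension of trajectories you mention, while available from compactness, is not actually needed.
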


\begin{proof}
By Lemma \ref{eigen-all} and Remark \ref{remark51}, the eigenvalue problem \eqref{perturbation-31} possesses a countable infinity of eigenvalues given by \eqref{all-eigenvalue}. We relabel all eigenvalues and eigenvectors as $\{(\beta_k, (v_k, \Theta_k))\}_{k \in \mathbb{N}}$, where the eigenvalues (counting multiplicity) are ordered in decreasing real parts:
\begin{align*}
-\infty \leftarrow \cdots \leq \beta_3 \leq \beta_2 \leq \beta_1.
\end{align*}

The first eigenvalue has multiplicity two, and $\{(v_k, \Theta_k)\}_{k \in \mathbb{N}}$ forms an orthogonal basis of $\mathcal{X}_0(D)$. Moreover, the eigenvalues satisfy:
\begin{align*}
\begin{aligned}
&\beta_1 = \beta_2 = 0, \quad \text{as}~T_0-T_1=T_c , \\
&\beta_k < 0, \quad \text{as}~T_0-T_1=T_c . 
\end{aligned}
\end{align*}
Define the subspaces:
\begin{align}
E_0 &= \mathrm{Span}\{(v_1, \Theta_1), (v_2, \Theta_2)\}, \label{e01} \\
E_0^{\perp} &= \left\{ \psi \in \mathcal{X}_0(D) \,\middle|\, (\psi, \phi)_{L^2} = 0, ~ \forall \phi \in E_0 \right\}. \label{e02}
\end{align}
For a strong solution $(v, \Theta)$ of \eqref{ondimensional-equations-2}, consider the expansion:
\begin{align}
\begin{pmatrix} v \\ \Theta \end{pmatrix} = \sum_{k=1}^{\infty} \alpha_k \begin{pmatrix} v_k \\ \Theta_k \end{pmatrix}.
\end{align}

Testing \eqref{ondimensional-equations-2} with $(v, \Theta)$ and integrating by parts over $D$ yields:
\begin{align*}
\frac{1}{2} \frac{d}{dt} \left( \norm{v}_{L^2}^2 + \norm{\Theta}_{L^2}^2 \right)
= \sum_{m=3}^{\infty} \beta_m \alpha_m^2 \left( \norm{v_m}_{L^2}^2 + \norm{\Theta_m}_{L^2}^2 \right) \leq 0.
\end{align*}
Integrating from $0$ to $t$ gives:
\begin{align}\label{deady-1}
\begin{aligned}
\norm{v(t)}_{L^2}^2 + \norm{\Theta(t)}_{L^2}^2
&\leq \norm{v_0}_{L^2}^2 + \norm{\Theta_0}_{L^2}^2 \\
&\quad - 2|\lambda_3| \sum_{m=3}^{\infty} \alpha_m^2 \int_0^t \left( \norm{v_m(\tau)}_{L^2}^2 + \norm{\Theta_m(\tau)}_{L^2}^2 \right) d\tau \\
&= \norm{v_0}_{L^2}^2 + \norm{\Theta_0}_{L^2}^2 \\
&\quad - 2|\beta_3| \int_0^t \left( \norm{\tilde{v}(\tau)}_{L^2}^2 + \norm{\tilde{\Theta}(\tau)}_{L^2}^2 \right) d\tau,
\end{aligned}
\end{align}
where we decompose $(v, \Theta) = (u, \eta) + (\tilde{v}, \tilde{\Theta}) \in \mathcal{X}_0(D) = E_0 \oplus E_0^{\perp}$ with
\begin{align*}
(u, \eta)^T &= \alpha_1 (v_1, \Theta_1) + \alpha_2 (v_2, \Theta_2), \\
(\tilde{v}, \tilde{\Theta}) &= \sum_{m=3}^{\infty} \alpha_m (v_m, \Theta_m).
\end{align*}

For any $(v_0, \Theta_0) \in \mathcal{X}_2(D)$, the solution $(v(t; v_0, \Theta_0), \Theta(t; v_0, \Theta_0))$ of \eqref{ondimensional-equations-2} is non-increasing in time:$
\norm{v(t_2)}_{L^2}^2 + \norm{\Theta(t_2)}_{L^2}^2 \leq \norm{v(t_1)}_{L^2}^2 + \norm{\Theta(t_1)}_{L^2}^2, \quad \forall t_1 < t_2$.
Hence, the limit 
\[
\lim_{t \to \infty} \left( \norm{v(t)}_{L^2}^2 + \norm{\Theta(t)}_{L^2}^2 \right) = \delta \leq \norm{v_0}_{L^2}^2 + \norm{\Theta_0}_{L^2}^2
\]
exists. The $\omega$-limit set $\omega(v_0, \Theta_0)$, being invariant, satisfies
\[
\omega(v_0, \Theta_0) \subset S_\delta = \left\{ \phi \in \mathcal{X}_0(D) \,\middle|\, \norm{\phi}_{L^2} = \delta \right\}.
\]
For any $\phi \in \omega(v_0, \Theta_0)$, we have
\begin{align}\label{decay-2}
(v(t; \phi), \Theta(t; \phi)) \subset \omega(v_0, \Theta_0) \subset S_\delta, \quad \forall t \geq 0.
\end{align}
If $\phi = \phi_0 + \phi_1$ with $\phi_0 \in E_0$, $\phi_1 \in E_0^\perp$, and $\phi_1 \neq 0$, then \eqref{deady-1} implies
\[
\norm{v(t; \phi)}_{L^2}^2 + \norm{\Theta(t; \phi)}_{L^2}^2 < \norm{\phi}_{L^2}^2 = \delta, \quad \forall t > 0,
\]
contradicting \eqref{decay-2}. Therefore,
\[
\omega(v_0, \Theta_0) \subset E_0, \quad \forall (v_0, \Theta_0) \in \mathcal{X}_2(D).
\]

We now show that $\omega(v_0, \Theta_0) = \{(0,0)\}$ for all $(v_0, \Theta_0)$. Suppose there exists $\phi_s \neq (0,0)$ in $\omega(v_0, \Theta_0)\subset E_0$. Then the solution $(v, \Theta)$ with initial data $\phi_s$ satisfies the nonlinear system:
\begin{align}\label{perturbation-3-special}
\begin{cases}
\begin{aligned}
\partial_t v = -v \partial_x v + \left( \int_0^z \partial_x v(t,x,\xi) \, d\xi \right) \partial_z v,
\end{aligned} \\
\begin{aligned}
\partial_t \Theta = -v \partial_x \Theta + \left( \int_0^z \partial_x v(t,x,\xi) \, d\xi \right) \partial_z \Theta,
\end{aligned} \\
\partial_z v|_{z=0} = \partial_z v|_{z=1} = 0, \quad \Theta|_{z=0} = 0, \quad \Theta|_{z=1} = 0, \\
v, \Theta \text{ are periodic in $x$ with period $2\pi L$}, \\
\int_0^1 v(t,x,\xi) \, d\xi = 0.
\end{cases}
\end{align}
Note that for any $\gamma > 0$, the rescaled functions 
\[
(\gamma v(\gamma t, x, z), \gamma \Theta(\gamma t, x, z))
\]
 also satisfy \eqref{perturbation-3-special}. This scaling property contradicts the uniform bounds established in Lemma \ref{attractor4}. Hence, $\omega(v_0, \Theta_0) = \{(0,0)\}$ for all $(v_0, \Theta_0)$.
 This infers $\lim_{t \to \infty} \left( \norm{v(t)}_{L^2}^2 + \norm{\Theta(t)}_{L^2}^2 \right) = 0$.
\end{proof}

\begin{corollary}\label{colarry}
Let $0 < T_0 - T_1 = T_c$ and $\mathrm{R} = \mathrm{R}_c$. For any smooth solution $(v, \Theta)$ of system \eqref{ondimensional-equations-2} with initial data $(v_0, \Theta_0) \in H^1(D)$, we have
\begin{align}
\lim_{t \to \infty} \left( \norm{v(t)}_{H^1}^2 + \norm{\Theta(t)}_{H^1}^2 \right) = 0.
\end{align}
\end{corollary}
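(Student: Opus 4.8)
The plan is to promote the $L^2$-convergence established in Lemma~\ref{lemma-grad-1-cases3} to convergence in $H^1$ by interpolating it against the uniform-in-time $H^2$-bounds furnished by the absorbing-set estimates of Lemmas~\ref{attractor2}--\ref{attractor6}. The starting point is that, under the critical hypothesis $T_0-T_1=T_c$ (equivalently $\mathrm{R}=\mathrm{R}_c$), Lemma~\ref{lemma-grad-1-cases3} already yields
\[
\lim_{t\to\infty}\left(\norm{v(t)}_{L^2}^2+\norm{\Theta(t)}_{L^2}^2\right)=0,
\]
so it remains only to control the gradient norm by the $L^2$-norm and a uniformly bounded second-order quantity.

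First, I would secure a uniform $H^2$-bound for the trajectory. Since the data lies only in $H^1(D)$, I would exploit the parabolic smoothing of the fully dissipative system \eqref{ondimensional-equations-2} to place the smooth solution in $H^2(D)$ for every $t>0$; fixing any $t_1>0$, the solution at time $t_1$ serves as $H^2$-data of some finite size $M$. Applying Lemmas~\ref{attractor2}--\ref{attractor6} to the trajectory started at $t_1$ then provides a time $t_\ast\ge t_1$ and a constant $K>0$, independent of the initial data, such that
\[
\norm{\Delta v(t)}_{L^2}^2+\norm{\Delta\Theta(t)}_{L^2}^2\le K,\qquad t\ge t_\ast,
\]
which is precisely the assertion that the dynamics possesses a bounded absorbing set in $H^2(D)$.

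Second, I would use the interpolation produced by integration by parts. Invoking the boundary conditions $\partial_z v|_{z=0,1}=0$ and $\Theta|_{z=0,1}=0$ together with $x$-periodicity, all boundary contributions vanish and one obtains
\[
\norm{\nabla v}_{L^2}^2=-\int_D v\,\Delta v\,dx\,dz\le \norm{v}_{L^2}\norm{\Delta v}_{L^2},
\]
and the analogous inequality for $\Theta$. Combining these with the bound $K$ gives, for $t\ge t_\ast$,
\[
\norm{\nabla v(t)}_{L^2}^2+\norm{\nabla\Theta(t)}_{L^2}^2\le \sqrt{K}\left(\norm{v(t)}_{L^2}+\norm{\Theta(t)}_{L^2}\right),
\]
whose right-hand side tends to zero by the first step. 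Adding the $L^2$-decay then yields $\lim_{t\to\infty}\left(\norm{v(t)}_{H^1}^2+\norm{\Theta(t)}_{H^1}^2\right)=0$.

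The main obstacle lies in the first step: Lemmas~\ref{attractor2}--\ref{attractor6} are stated for $H^2$-data, whereas the corollary assumes only $H^1$-data, so one must argue that the smoothing instantaneously lifts the solution into $H^2$ and, crucially, that the absorbing constant $K$ is genuinely independent of the (possibly large) intermediate $H^2$-norm $M$. This independence is already encoded in those lemmas through the $M$-free constant $C_\epsilon$, and it is exactly what renders the interpolation decay uniform and the conclusion valid regardless of the initial amplitude.
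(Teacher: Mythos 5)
Your proposal is correct, and it reaches the conclusion by a genuinely more elementary route than the paper. The paper's proof is a one-line appeal to dynamical-systems machinery: Lemmas~\ref{attractor5} and~\ref{attractor6} furnish a bounded absorbing set in $H^2(D)$, whence a global attractor exists in $H^1(D)$; this attractor coincides with the $L^2$ attractor, which the proof of Lemma~\ref{lemma-grad-1-cases3} has already identified as $\{(0,0)\}$, and the $H^1$-convergence follows. You use exactly the same two ingredients --- the $L^2$-decay of Lemma~\ref{lemma-grad-1-cases3} and the eventual uniform $H^2$-bound from Lemmas~\ref{attractor2}--\ref{attractor6} --- but you bypass attractor theory entirely via the interpolation $\norm{\nabla f}_{L^2}^2 = -\int_D f\,\Delta f \le \norm{f}_{L^2}\norm{\Delta f}_{L^2}$, which is legitimate here since the boundary terms vanish under $\partial_z v|_{z=0,1}=0$, $\Theta|_{z=0,1}=0$, and $x$-periodicity. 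What your route buys is a quantitative link between the two topologies (any $L^2$-decay rate transfers to $H^1$ with a square root), and it avoids having to verify the compactness and attractor-coincidence assertions that the paper leaves implicit; what the paper's route buys is that the conclusion drops out of the attractor framework already assembled in Section~4 with no extra computation. Two remarks on your first step: the lifting from $H^1$-data to $H^2$ at some $t_1>0$, which you correctly flag as the delicate point, is equally implicit in the paper's proof (its attractor lemmas are also stated only for $H^2$-data), and under the corollary's standing hypothesis of a \emph{smooth} solution it is immediate that $\norm{(v(t_1),\Theta(t_1))}_{H^2}$ is finite; also, the $M$-independence of $C_\epsilon$ is not actually needed for your argument --- since the trajectory is fixed and you let $t\to\infty$, any finite eventual bound $K$ (even one depending on the trajectory) suffices for the interpolation step, so your emphasis on the $M$-free constant is a harmless over-claim rather than a load-bearing one.
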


\begin{proof}
By Lemma \ref{attractor5} and Lemma \ref{attractor6}, the system \eqref{ondimensional-equations-2} possesses a global attractor in $H^1(D)$. This global attractor coincides with the one in $L^2(D)$. Since we have shown that the global attractor in $L^2(D)$ for $0 < T_0 - T_1 = T_c$ is $\{(0,0)\}$, it follows that
\[
\lim_{t \to \infty} \left( \norm{v(t)}_{H^1}^2 + \norm{\Theta(t)}_{H^1}^2 \right) = 0.
\]
\end{proof}

\section{Nonlinear instability}
\subsection{Nonlinear estimates}
\begin{lemma}\label{estimes-nonlinear-instability}
Let $T > 0$ be a given time and $(v_0, \Theta_0) \in H^2(D)$ be initial data. Then there exists a unique strong solution $(v, \Theta) \in C([0,T]; H^2(D))$ to the problem \eqref{ondimensional-equations-2}. Moreover, there exists $\delta_0 \in (0,1]$ such that if
\[
\norm{v(t)}_{H^2}^2 + \norm{\Theta(t)}_{H^2}^2 \leq \delta_0 \quad \text{for all } t \in [0,T],
\]
then the following estimate holds:
\begin{align}\label{nonlinear0718}
\begin{aligned}
&\norm{v(t)}_{H^2}^2 + \norm{\Theta(t)}_{H^2}^2 
+ \norm{\partial_x q(t)}_{L^2}^2 
+ \norm{\partial_t v(t)}_{L^2}^2 
+ \norm{\partial_t \Theta(t)}_{L^2}^2 \\
&+ \int_0^t \left( \norm{\nabla v(s)}_{L^2}^2 
+ \norm{\nabla \Theta(s)}_{L^2}^2 
+ \norm{\partial_s v(s)}_{H^1}^2 
+ \norm{\partial_s \Theta(s)}_{H^1}^2 \right) ds \\
&\leq C^* \left( \norm{v_0}_{H^2}^2 + \norm{\Theta_0}_{H^2}^2 \right)
+ C^* \int_0^t \left( \norm{v(\tau)}_{L^2}^2 + \norm{\Theta(\tau)}_{L^2}^2 \right) d\tau,
\end{aligned}
\end{align}
where the constant $C^* > 0$ is independent of $T$.
\end{lemma}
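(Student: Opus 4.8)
The plan is to pair a citation-level local well-posedness statement with a single \emph{conditional} energy estimate whose only non-dissipative, non-absorbable forcing on the right-hand side is the $L^2$-norm of the solution itself, with all constants independent of $T$. For the first assertion, the existence and uniqueness of a strong solution $(v,\Theta)\in C([0,T];H^2(D))$ follows from the global theory of the two-dimensional primitive equations with full dissipation of Bresch--Kazhikhov--Lemoine and Temam--Ziane, adapted to the boundary conditions in \eqref{ondimensional-equations-2} (see the Remark following Theorem~\ref{noninstability}); I would invoke this and devote the rest of the argument to the a priori bound \eqref{nonlinear0718} under the standing smallness hypothesis $\norm{v(t)}_{H^2}^2+\norm{\Theta(t)}_{H^2}^2\le\delta_0$.

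The heart of the proof is to assemble the total energy $\mathcal{E}(t)=\norm{v}_{H^2}^2+\norm{\Theta}_{H^2}^2$ by summing, exactly as in Lemmas~\ref{v-l2-l2}--\ref{lemma-grad-1-8}, the $L^2$ identity for $(v,\Theta)$, the vertical-gradient identities for $(\sigma,\eta)=\partial_z(v,\Theta)$ coming from \eqref{three-proof--8} and \eqref{thetaz-eta}, and the horizontal-gradient identities for $(\omega,\theta)=\partial_x(v,\Theta)$ coming from \eqref{three-proof--8-1-1}. Two structural facts make the estimate close. First, the buoyancy coupling is antisymmetric at every differentiated level: the Rayleigh cross terms cancel identically, precisely the cancellations $2\mathrm{R}(\partial_x\Theta,\partial_{xx}\sigma+\partial_{zz}\sigma)-2\mathrm{R}(\partial_x v,\partial_{xx}\eta+\partial_{zz}\eta)=0$ and its $\partial_x$-analogue recorded before \eqref{h2dtestimate-all-1} and \eqref{h2dtestimate-all-2}, so these terms produce no source at orders $\ge 1$. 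Second, at order $0$ the same coupling does \emph{not} cancel (in the unstable regime $T_0>T_1$ it reinforces); after one integration by parts in $x$ and Young's inequality it is bounded by $\epsilon\norm{\nabla(v,\Theta)}_{L^2}^2+C(\norm{v}_{L^2}^2+\norm{\Theta}_{L^2}^2)$, which contributes exactly the right-hand forcing of \eqref{nonlinear0718} while its gradient part is absorbed into the dissipation.

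All remaining terms are the local and nonlocal nonlinearities. Using the anisotropic estimates
\begin{align*}
&\norm{\Big(\int_0^z\partial_x v\,d\xi\Big)\partial_z v}_{L^2}^2
\le \norm{\partial_x v}_{L_z^{2}L_x^{\infty}}^2\norm{\partial_z v}_{L_z^{\infty}L_x^{2}}^2,\\
&\norm{\Big(\int_0^z\partial_x v\,d\xi\Big)\partial_z\Theta}_{L^2}^2
\le C\,\norm{\partial_z\Theta}_{L_z^{\infty}L_x^{2}}^2\norm{\partial_x v}_{L_z^{2}L_x^{\infty}}^2,
\end{align*}
together with Lemma~\ref{mixed-norm}--Lemma~\ref{mixed-norm-1}, each nonlinear contribution reduces to a product in which a factor is small by the standing hypothesis; the genuinely top-order pieces are absorbed into the highest dissipation $\norm{(v,\Theta)}_{H^3}^2$ once $\delta_0$ is chosen small, while the residual intermediate-order pieces are handled by Gagliardo--Nirenberg interpolation and Young's inequality, which convert them into $\epsilon\norm{(v,\Theta)}_{H^3}^2$ plus a multiple of $\norm{v}_{L^2}^2+\norm{\Theta}_{L^2}^2$. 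This yields
\[
\frac{d}{dt}\mathcal{E}(t)+\frac{c}{2}\norm{(v,\Theta)}_{H^3}^2\le C\big(\norm{v}_{L^2}^2+\norm{\Theta}_{L^2}^2\big),
\]
with $c,C$ independent of $T$; integrating and using $\norm{(v,\Theta)}_{H^3}^2\ge\norm{\nabla(v,\Theta)}_{L^2}^2$ gives the $H^2$ and gradient-integral parts of \eqref{nonlinear0718}. The pressure and time-derivative contributions are then read off from the equations: testing \eqref{ondimensional-equations-2}$_1$ with $\partial_x q$ and killing the time derivative via $\int_0^z\partial_t v\,d\xi$, exactly as in Lemma~\ref{lemma-grad-1-8}, bounds $\norm{\partial_x q}_{L^2}^2$ by $C\mathcal{E}$ plus small nonlinear terms, and solving the equations for $\partial_t(v,\Theta)$ (and one further spatial derivative) controls $\norm{\partial_t(v,\Theta)}_{L^2}^2$ and $\int_0^t\norm{\partial_s(v,\Theta)}_{H^1}^2$ by the retained dissipation $\int_0^t\norm{(v,\Theta)}_{H^3}^2$.

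The hard part will be twofold. The first difficulty is controlling the nonlocal nonlinearities $\big(\int_0^z\partial_x v\,d\xi\big)\partial_z v$ and $\big(\int_0^z\partial_x v\,d\xi\big)\partial_z\Theta$ together with their $\partial_x$ and $\partial_z$ derivatives at the $H^2$ level, since Ladyzhenskaya's inequality is unavailable and only the mixed-norm estimates above render them absorbable. The second, and more delicate, point is securing the $T$-\emph{uniformity} of $C^*$: one must verify that the buoyancy cancellation is \emph{exact} at every order $\ge 1$ and that every surviving nonlinear residual is reduced, via interpolation, to the single forcing $\norm{v}_{L^2}^2+\norm{\Theta}_{L^2}^2$. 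Any leftover first-order forcing of the form $\int_0^t\norm{\nabla(v,\Theta)}_{L^2}^2$ would, through Gronwall, reintroduce a $T$-dependent factor and destroy the uniformity that the bootstrap in Theorem~\ref{noninstability} relies upon.
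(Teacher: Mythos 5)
Your proposal has a genuine gap at its central structural claim. The cancellations you cite from before \eqref{h2dtestimate-all-1} and \eqref{h2dtestimate-all-2} are proved in Section~3.1 for the \emph{stable} regime $T_1 > T_0$, where the buoyancy coupling in \eqref{ondimensional-equations-2} is antisymmetric (the $v$-equation carries $-\mathrm{R}\,\partial_x\!\int_0^z\Theta$ while the $\Theta$-equation carries $+\mathrm{R}\int_0^z\partial_x v$; cf.\ \eqref{new-eq-vz-1}). Lemma~\ref{estimes-nonlinear-instability} is invoked in the proof of Theorem~\ref{noninstability}, i.e.\ in the regime $T_0 - T_1 > T_c$, where $\mathrm{Sgn}(T_0-T_1)=1$ and \emph{both} equations carry a $-\mathrm{R}$ coupling: the quadratic form is symmetric, and at every differentiated level the cross terms \emph{double} rather than cancel. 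So your ``first structural fact'' fails exactly where the lemma is used. Worse, your fallback assessment is also wrong: you assert that any surviving first-order forcing $\int_0^t \norm{\nabla(v,\Theta)}_{L^2}^2$ would, via Gronwall, reintroduce a $T$-dependent constant. It does not — such forcing can be absorbed by the $L^2$-level dissipation through a weighted sum of the energy levels, with no Gronwall factor at all. This is precisely the mechanism of the paper's proof: the estimate \eqref{nonlinear-instability-2} deliberately retains the non-cancelling forcing $C(\norm{\nabla v}_{L^2}^2 + \norm{\nabla\Theta}_{L^2}^2)$ and then absorbs it by choosing $A_3 = \tfrac{7}{2}A_2 C$ and $A_1$ large in the combination $A_1E_1 + A_2E_2 + A_3E_3$. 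So your plan is repairable — bound the symmetric cross term at order $k$ by $\epsilon$ times the order-$k$ dissipation plus $C$ times order-$(k-1)$ quantities and absorb the latter into the dissipation of the level below — but as written, the argument rests on a cancellation that is false in the relevant regime, and on the incorrect belief that without it $T$-uniformity is lost.

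Beyond this, your route differs substantively from the paper's, and one of the differences hides a second gap. The paper never performs second-order spatial energy estimates here: it tests with $\partial_t(v,\Theta)$ and differentiates the system in time, using the smallness hypothesis $\norm{(v,\Theta)}_{H^2}^2 \le \delta_0$ to absorb the cubic/quartic nonlinear terms into $E_2$ and $E_4$; the pressure is annihilated structurally at every step (testing against $\partial_t v$ with $\int_0^1 v\,dz = 0$), and $\int_0^t \norm{\partial_s(v,\Theta)}_{H^1}^2$ comes directly from the dissipation $E_4$ of the time-differentiated system. Your alternative — reading $\partial_t(v,\Theta)$ ``and one further spatial derivative'' off the equations — requires a bound on $\norm{\partial_x q}_{H^1}$, i.e.\ on $\partial_{xx}q$, which you never establish (Lemma~\ref{lemma-grad-1-8} only controls $\norm{\partial_x q}_{L^2}$). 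This can be fixed by vertically averaging the $v$-equation (the constraint kills $\partial_t \overline{v}$, expressing $\partial_x q$ through averaged second derivatives and nonlinearities, then differentiating once more in $x$ against your claimed $H^3$ dissipation), but as stated it is a missing step. In short: the paper's time-derivative scheme buys freedom from both the higher-order buoyancy cross terms and the pressure regularity problem, which is exactly why it, and not a full spatial $H^2$ hierarchy, is the proof given.
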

\begin{proof}
From the equations \eqref{ondimensional-equations-2}, we derive the following energy estimates:
\begin{align}\label{nonlinear-instability}
&\begin{aligned}
&\frac{1}{2}\frac{d}{dt}\left( \norm{v}_{L^2}^2 + \norm{\Theta}_{L^2}^2 \right)
+ \frac{1}{2}\left( \mathrm{Pr}_x \norm{\partial_x v}_{L^2}^2 + \mathrm{Pr}_z \norm{\partial_z v}_{L^2}^2 \right. \\
&\left. + \norm{\partial_x \Theta}_{L^2}^2 + \kappa_a \norm{\partial_z \Theta}_{L^2}^2 \right)
\leq C\left( \norm{v}_{L^2}^2 + \norm{\Theta}_{L^2}^2 \right),
\end{aligned} \\
\label{nonlinear-instability-2}
&\begin{aligned}
&\frac{3}{4}\left( \norm{\partial_t v}_{L^2}^2 + \norm{\partial_t \Theta}_{L^2}^2 \right)
+ \frac{1}{2}\frac{d}{dt} \left( \mathrm{Pr}_x \norm{\partial_x v}_{L^2}^2 + \mathrm{Pr}_z \norm{\partial_z v}_{L^2}^2 \right. \\
&\left. + \norm{\partial_x \Theta}_{L^2}^2 + \kappa_a \norm{\partial_z \Theta}_{L^2}^2 \right)
\leq C\left( \norm{\nabla v}_{L^2}^2 + \norm{\nabla \Theta}_{L^2}^2 \right) \\
&+ \int_D \left( -v \partial_x v + \left( \int_0^z \partial_x v(t,x,\xi) \, d\xi \right) \partial_z v \right) \partial_t v \, dx\,dz \\
&+ \int_D \left( -v \partial_x \Theta + \left( \int_0^z \partial_x v(t,x,\xi) \, d\xi \right) \partial_z \Theta \right) \partial_t \Theta \, dx\,dz \\
&= C\left( \norm{\nabla v}_{L^2}^2 + \norm{\nabla \Theta}_{L^2}^2 \right) + T_1 + T_2,
\end{aligned}
\end{align}
where
\begin{align*}
\begin{aligned}
T_1 &:= \int_D \left( -v \partial_x v + \left( \int_0^z \partial_x v(t,x,\xi) \, d\xi \right) \partial_z v \right) \partial_t v \, dx\,dz = T_{11} + T_{12}, \\
T_2 &:= \int_D \left( -v \partial_x \Theta + \left( \int_0^z \partial_x v(t,x,\xi) \, d\xi \right) \partial_z \Theta \right) \partial_t \Theta \, dx\,dz = T_{21} + T_{22}.
\end{aligned}
\end{align*}
We estimate the terms $T_{ij}$ ($i,j=1,2$) as follows:
\begin{align*}
T_{11} &\leq \norm{v \partial_x v}_{L^2} \norm{\partial_t v}_{L^2}
\leq \norm{v}_{L^4} \norm{\partial_x v}_{L^4} \norm{\partial_t v}_{L^2} \\
&\leq \epsilon \norm{\partial_t v}_{L^2}^2 + C_{\epsilon} \norm{v}_{L^2}^2 \norm{v}_{H^2}^2, \\
T_{12} &\leq \norm{ \left( \int_0^z \partial_x v(t,x,\xi) \, d\xi \right) \partial_z v }_{L^2} \norm{\partial_t v}_{L^2} \\
&\leq \epsilon \norm{\partial_t v}_{L^2}^2 + C_{\epsilon} \norm{\partial_z v}_{L^2} \norm{\partial_{xz} v}_{L^2} \norm{\partial_x v}_{L^2}^2 \\
&\leq \epsilon \norm{\partial_t v}_{L^2}^2 + C_{\epsilon} \norm{v}_{H^1}^2 \norm{v}_{H^2}^2, \\
T_{21} &\leq \norm{v \partial_x \Theta}_{L^2} \norm{\partial_t \Theta}_{L^2}
\leq \norm{v}_{L^4} \norm{\partial_x \Theta}_{L^4} \norm{\partial_t \Theta}_{L^2} \\
&\leq \epsilon \norm{\partial_t \Theta}_{L^2}^2 + C_{\epsilon} \norm{v}_{H^1}^2 \norm{\Theta}_{H^2}^2, \\
T_{22} &\leq \norm{ \left( \int_0^z \partial_x v(t,x,\xi) \, d\xi \right) \partial_z \Theta }_{L^2} \norm{\partial_t \Theta}_{L^2} \\
&\leq \epsilon \norm{\partial_t \Theta}_{L^2}^2 + C_{\epsilon} \norm{\partial_x v}_{L^2} \norm{\partial_{xx} v}_{L^2} \norm{\partial_z \Theta}_{L^2} \norm{\partial_{zz} \Theta}_{L^2} \\
&\leq \epsilon \norm{\partial_t \Theta}_{L^2}^2 + C_{\epsilon} \left( \norm{v}_{H^1}^2 \norm{v}_{H^2}^2 + \norm{\Theta}_{H^1}^2 \norm{\Theta}_{H^2}^2 \right).
\end{align*}
Combining these estimates yields:
\begin{align}\label{nonlinear-instability-21}
\begin{aligned}
&\frac{1}{2}\left( \norm{\partial_t v}_{L^2}^2 + \norm{\partial_t \Theta}_{L^2}^2 \right)
+ \frac{1}{2}\frac{d}{dt} \left( \mathrm{Pr}_x \norm{\partial_x v}_{L^2}^2 + \mathrm{Pr}_z \norm{\partial_z v}_{L^2}^2 \right. \\
&\left. + \norm{\partial_x \Theta}_{L^2}^2 + \kappa_a \norm{\partial_z \Theta}_{L^2}^2 \right)
\leq C\left( \norm{\nabla v}_{L^2}^2 + \norm{\nabla \Theta}_{L^2}^2 \right) \\
&+ C\left( \norm{v}_{L^2}^2 \norm{v}_{H^2}^2 + \norm{v}_{H^1}^2 \norm{\Theta}_{H^2}^2 \right)
+ C\left( \norm{v}_{H^1}^2 \norm{v}_{H^2}^2 + \norm{\Theta}_{H^1}^2 \norm{\Theta}_{H^2}^2 \right).
\end{aligned}
\end{align}

Differentiating \eqref{ondimensional-equations-2} with respect to $t$, we obtain:
\begin{align}\label{nonlinear-instability-tt}
\begin{aligned}
&\frac{1}{2}\frac{d}{dt}\left( \norm{\partial_t v}_{L^2}^2 + \norm{\partial_t \Theta}_{L^2}^2 \right)
+ \left( \frac{3\mathrm{Pr}_x}{4} \norm{\partial_x \partial_t v}_{L^2}^2 + \mathrm{Pr}_z \norm{\partial_z \partial_t v}_{L^2}^2 \right. \\
&\left. + \frac{3}{4} \norm{\partial_x \partial_t \Theta}_{L^2}^2 + \kappa_a \norm{\partial_z \partial_t \Theta}_{L^2}^2 \right)
\leq C\left( \norm{\partial_t v}_{L^2}^2 + \norm{\partial_t \Theta}_{L^2}^2 \right) \\
&+ \int_D \left( -\partial_t v \partial_x v + \left( \int_0^z \partial_x \partial_t v(t,x,\xi) \, d\xi \right) \partial_z v \right) \partial_t v \, dx\,dz \\
&+ \int_D \left( -\partial_t v \partial_x \Theta + \left( \int_0^z \partial_x \partial_t v(t,x,\xi) \, d\xi \right) \partial_z \Theta \right) \partial_t \Theta \, dx\,dz \\
&= C\left( \norm{\partial_t v}_{L^2}^2 + \norm{\partial_t \Theta}_{L^2}^2 \right) + U_1 + U_2,
\end{aligned}
\end{align}
where
\begin{align*}
U_1 &= \int_D \left( -\partial_t v \partial_x v + \left( \int_0^z \partial_x \partial_t v(t,x,\xi) \, d\xi \right) \partial_z v \right) \partial_t v \, dx\,dz = U_{11} + U_{12}, \\
U_2 &= \int_D \left( -\partial_t v \partial_x \Theta + \left( \int_0^z \partial_x \partial_t v(t,x,\xi) \, d\xi \right) \partial_z \Theta \right) \partial_t \Theta \, dx\,dz = U_{21} + U_{22}.
\end{align*}
We estimate $U_{ij}$ ($i,j=1,2$) as follows:
\begin{align*}
U_{11} &\leq \norm{\partial_t v \partial_x v}_{L^2} \norm{\partial_t v}_{L^2}
\leq \norm{\partial_x v}_{L^2} \norm{\partial_t v}_{L^4}^2 \\
&\leq \epsilon \norm{\partial_t \nabla v}_{L^2}^2 + C_{\epsilon} \norm{\nabla v}_{L^2}^2 \norm{\partial_t v}_{L^2}^2, \\
U_{12} &\leq \norm{ \left( \int_0^z \partial_x \partial_t v(t,x,\xi) \, d\xi \right) \partial_z v }_{L^2} \norm{\partial_t v}_{L^2} \\
&\leq \norm{\partial_z v}_{L^2}^{1/2} \norm{\partial_{zz} v}_{L^2}^{1/2} \norm{\partial_x \partial_t v}_{L^2} \norm{\partial_t v}_{L^2} \\
&\leq \epsilon \norm{\partial_t \nabla v}_{L^2}^2 + C_{\epsilon} \norm{\partial_z v}_{L^2} \norm{\partial_{zz} v}_{L^2} \norm{\partial_t v}_{L^2}^2 \\
&\leq \epsilon \norm{\partial_t \nabla v}_{L^2}^2 + C_{\epsilon} \norm{v}_{H^2}^2 \norm{\partial_t v}_{L^2}^2, \\
U_{21} &\leq \norm{\partial_t v \partial_x \Theta}_{L^2} \norm{\partial_t \Theta}_{L^2}
\leq \norm{\partial_x \Theta}_{L^2} \norm{\partial_t v}_{L^4} \norm{\partial_t \Theta}_{L^4} \\
&\leq \epsilon \left( \norm{\partial_t \nabla v}_{L^2}^2 + \norm{\partial_t \nabla \Theta}_{L^2}^2 \right)
+ C_{\epsilon} \norm{\nabla \Theta}_{L^2}^2 \left( \norm{\partial_t v}_{L^2}^2 + \norm{\partial_t \Theta}_{L^2}^2 \right), \\
U_{22} &\leq \norm{ \left( \int_0^z \partial_x \partial_t v(t,x,\xi) \, d\xi \right) \partial_z \Theta }_{L^2} \norm{\partial_t \Theta}_{L^2} \\
&\leq C \norm{\partial_x \partial_t v}_{L^2} \norm{\partial_z \Theta}_{L^4} \norm{\partial_t \Theta}_{L^4} \\
&\leq \epsilon \left( \norm{\partial_t \nabla v}_{L^2}^2 + \norm{\partial_t \nabla \Theta}_{L^2}^2 \right)
+ C_{\epsilon} \norm{\partial_z \Theta}_{L^4}^2 \norm{\partial_t \Theta}_{L^4}^2 \\
&\leq \epsilon \left( \norm{\partial_t \nabla v}_{L^2}^2 + \norm{\partial_t \nabla \Theta}_{L^2}^2 \right)
+ C_{\epsilon} \norm{\partial_t \Theta}_{L^2}^2 \left( \norm{\Theta}_{H^2}^2 + \norm{\Theta}_{H^2}^4 \right).
\end{align*}

Define the energy functionals:
\begin{align*}
E_1(t) &= \norm{v}_{L^2}^2 + \norm{\Theta}_{L^2}^2, \\
E_2(t) &= \norm{\partial_t v}_{L^2}^2 + \norm{\partial_t \Theta}_{L^2}^2, \\
E_3(t) &= \mathrm{Pr}_x \norm{\partial_x v}_{L^2}^2 + \mathrm{Pr}_z \norm{\partial_z v}_{L^2}^2 + \norm{\partial_x \Theta}_{L^2}^2 + \kappa_a \norm{\partial_z \Theta}_{L^2}^2, \\
E_4(t) &= \mathrm{Pr}_x \norm{\partial_x \partial_t v}_{L^2}^2 + \mathrm{Pr}_z \norm{\partial_z \partial_t v}_{L^2}^2 + \norm{\partial_x \partial_t \Theta}_{L^2}^2 + \kappa_a \norm{\partial_z \partial_t \Theta}_{L^2}^2.
\end{align*}

Combining the estimates and choosing appropriate constants $A_1, A_2, A_3 > 0$ with $A_3 = \frac{7}{2} A_2 C$ and $A_1$ sufficiently large, we obtain:
\begin{align*}
&\frac{1}{2}\frac{d}{dt}\left( A_1 E_1 + A_2 E_2 + A_3 E_3 \right)
+ \frac{1}{2}\left( A_1 E_3 + A_2 E_4 + A_3 E_2 \right) \\
&\leq A_1 C E_1 + A_2 C E_2 \left( 1 + \norm{v}_{H^2}^2 + \norm{\Theta}_{H^2}^2 + \norm{v}_{H^2}^4 + \norm{\Theta}_{H^2}^4 \right) \\
&\quad + A_3 C \left( \norm{\nabla v}_{L^2}^2 + \norm{\nabla \Theta}_{L^2}^2 \right)
+ A_3 C \left( \norm{v}_{L^2}^2 \norm{v}_{H^2}^2 + \norm{v}_{L^2}^2 \norm{\Theta}_{H^2}^2 \right) \\
&\quad + A_3 C \left( \norm{\nabla v}_{L^2}^2 + \norm{\nabla \Theta}_{L^2}^2 \right) \left( \norm{v}_{H^2}^2 + \norm{\Theta}_{H^2}^2 \right).
\end{align*}

With the choice of constants, there exists $C^* > 0$ such that:
\begin{align*}
&\frac{1}{2}\frac{d}{dt}\left( A_1 E_1 + A_2 E_2 + A_3 E_3 \right)
+ \frac{1}{2}\left( A_3 C E_3 + A_2 E_4 + A_2 C E_2 \right) \leq A_1 C E_1.
\end{align*}
Integrating in time , we finally obtain:
\begin{align*}
&\norm{v(t)}_{H^1}^2 + \norm{\Theta(t)}_{H^1}^2 + \norm{\partial_t v(t)}_{L^2}^2 + \norm{\partial_t \Theta(t)}_{L^2}^2 \\
&+ \int_0^t \left( \norm{\nabla v(s)}_{L^2}^2 + \norm{\nabla \Theta(s)}_{L^2}^2 + \norm{\partial_s v(s)}_{H^1}^2 + \norm{\partial_s \Theta(s)}_{H^1}^2 \right) ds \\
&\leq C^* \left( \norm{v_0}_{H^2}^2 + \norm{\Theta_0}_{H^2}^2 \right)
+ C^* \int_0^t \left( \norm{v(\tau)}_{L^2}^2 + \norm{\Theta(\tau)}_{L^2}^2 \right) d\tau.
\end{align*}
wher we have used the bound $\norm{\partial_t v}_{L^2}^2 + \norm{\partial_t \Theta}_{L^2}^2 \leq C\left( \norm{v_0}_{H^2}^2 + \norm{\Theta_0}_{H^2}^2 \right)$.
\end{proof}
\subsection{Proof of Theorem \ref{noninstability}}\label{proof-theorem23}
\begin{proof}
We now use the preceding lemma to prove Theorem \ref{noninstability}.
For $T_0-T_1>\Delta T_c$, it follows from Subsection \ref{pescondition} that the linear problem 
\eqref{perturbation-31} has a solution
as follows $(v,\Theta)=(u_1,\Theta_1)e^{\beta_1 t}$,
where $(u_1,\Theta_1)\in C^{\infty}(D)\cap \mathcal{X}_2(D) $ is the first eigefunction given in \ref{first-eigenvector}, $\beta_{1}>0$, and it satisfies $\norm{(u_1,\Theta_1)}_{H^2}=1$, 
$\norm{(u_1,\Theta_1)}_{L^2}=C_1$ and $\norm{(u_1,\Theta_1)}_{L^1}=C_2$.
Choosing $(v_0,\Theta_0):=(v,\Theta)_{t=0}=\delta (u_1,\Theta_1)$,
for $\delta\in (0,\delta_0)$. The problem \eqref{ondimensional-equations-2} has a global solution
$(v^{\delta},\Theta^{\delta})\in  C([0,T];H^2(D)\times H^2(D))$,
its initial data $(v_0,\Theta_0)=\delta (u_1,\Theta_1)$ satisfies
$\norm{(v^{\delta}(0),\Theta^{\delta}(0))}_{H^2}=\delta$.

For any $\delta \in (0,\delta_0)$ satisfying $\delta<\epsilon_0$, let us define 
\begin{align}
T_{\delta}=\lambda_1^{-1}\ln \frac{\epsilon_0}{\delta},i.e., \epsilon_0=\delta e^{\lambda_1 T_{\delta}}
\end{align}
where $\epsilon_0$ independent of $\delta$ is small positive constant to be determined,
and define 
\begin{align*}
&T_{*}=\sup\{t\in (0,\infty)|\norm{(v^{\delta},\Theta^{\delta})}_{H^2}\leq \delta_0\},\\
&T_{**}=\sup\{t\in (0,\infty)|\norm{(v^{\delta},\Theta^{\delta})}_{L^2}\leq 2C_1\delta e^{\beta_1t}\}.
\end{align*}
By the definitions of $T_{*}$ and $T_{**}$, we have $T_{*}>0$ and $T_{**}>0$, and 
\begin{align*}
&\norm{
(v^{\delta}(T_{*}),\Theta^{\delta}(T_{*}))
}_{H^2}=\delta_0,\quad \text{if}\quad T_{*}<\infty,\\
&\norm{(v^{\delta}(T_{**}),\Theta^{\delta}(T_{**}))}_{L^2}=2C_1\delta e^{\beta_1T_{**}},\quad \text{if}\quad T_{**}<\infty.
\end{align*}
Note that for $t\leq \min\{T_{\delta},T_{*},T_{**}\}$, by the estimate \ref{nonlinear0718}, we have
 \begin{align}\label{nonlinear-delta}
\begin{aligned}
&\norm{v^{\delta}}_{H^2}^2
+\norm{\Theta^{\delta}}_{H^2}^2
+\norm{\partial_tv^{\delta}}_{L^2}^2
+\norm{\partial_t\Theta^{\delta}}_{L^2}^2
\\&+\int_0^t
\left(
\norm{\nabla v^{\delta}}_{L^2}^2
+\norm{\nabla \Theta^{\delta}}_{L^2}^2
+\norm{\partial_sv^{\delta}}_{H^1}^2
+\norm{\partial_s\Theta^{\delta}}_{H^1}^2
\right)\,ds
\\&\leq C^*\left(\delta^2
+4C_1^2\delta^2\int_0^t e^{2\beta_1t}\,d\tau\right)
\leq C_3\delta^2e^{2\beta_1t},
\end{aligned}
\end{align}
where $C_3$ is independent of $\delta$.

Let $(u^{\delta},\theta^{\delta})$ be the solution of linear problem 
associated with the system \eqref{ondimensional-equations-2}
 with initial data $\delta (u_1,\Theta_1)$. Consider $(v_e^{\delta},\Theta_e^{\delta})=(v^{\delta},\Theta^{\delta})-(u^{\delta},\theta^{\delta})$, it satisfies the following equations
     \begin{align}\label{main=p-ln}
\begin{cases}
\begin{aligned}
\partial_tv_e^{\delta}=&\text{Pr}_x\partial_{xx}v_e^{\delta}+
\text{Pr}_z\partial_{zz}v_e^{\delta}-\text{R}\partial_x \left(
\int_0^z\Theta_e^{\delta}(t,x,\xi)\,d\xi\right)-\partial_xq(t,x)
\\&-v^{\delta} \partial_x v^{\delta}+\left(\int_0^z\partial_xv^{\delta}(t,x,\xi)\,d\xi \right)\partial_z v^{\delta},
\end{aligned}\\
\begin{aligned}
\partial_t\Theta_e^{\delta}= &\partial_{xx} \Theta_e^{\delta}+
\kappa_{a}\partial_{zz} \Theta_e^{\delta}
-\text{R}\left(\int_0^z\partial_xv_e^{\delta}(t,x,\xi)\,d\xi \right)
\\&-v^{\delta}\partial_x \Theta^{\delta}+\left(\int_0^z\partial_xv^{\delta}(t,x,\xi)
\,d\xi \right)\partial_z \Theta^{\delta},
\end{aligned}\\
\partial_zv_e^{\delta}|_{z=0}= \partial_zv_e^{\delta}|_{z=1}=0, \quad
\Theta_e^{\delta}|_{z=0}=0,\quad \Theta_e^{\delta}|_{z=1}=0,\\
%v|_{x=0}= v|_{x=2\pi \alpha}=0, \quad
%\partial_x\Theta|_{x=0}=0,\quad \partial_x\Theta|_{x=2\pi\alpha}=0,\\
v_e^{\delta}, \Theta_e^{\delta}~\text{ are periodic in $x$ with period $2\pi L$}.\\
\int_0^1v_e^{\delta}(t,x,\xi)\,d\xi=0.
\end{cases}
\end{align}
Taking the $L^2$-inner product of \eqref{main=p-ln} with $(v_e^{\delta},\Theta_e^{\delta})$, we have
  \begin{align}\label{ut-proof-1-ln}
\begin{aligned}
&\frac{1}{2}\frac{d}{dt}
\left(\norm{v_{e}^{\delta}}_{L^2}^2
+\norm{\Theta_{e}^{\delta}}_{L^2}^2
\right)- \beta_1
\left(\norm{v_{e}^{\delta}}_{L^2}^2
+\norm{\Theta_{e}^{\delta}}_{L^2}^2
\right)
 \\&\leq\int_{D}\left(
 -v^{\delta}\partial_x v^{\delta}+\left(\int_0^z\partial_xv^{\delta}(t,x,\xi)
\,d\xi \right)\partial_z v^{\delta}\right)v_{e}^{\delta}\,dx\,dz\\
 &\quad+\int_{D}\left(
 -v^{\delta}\partial_x \Theta^{\delta}+\left(\int_0^z\partial_xv^{\delta}(t,x,\xi)
\,d\xi \right)\partial_z \Theta^{\delta}\right)\Theta_{e}^{\delta}\,dx\,dz
\\&:=L_1+L_2+L_3+L_4.
\end{aligned}
\end{align}
Note that 
  \begin{align}\label{ut-proof-1-ln-1}
  \begin{aligned}
&\begin{aligned}
L_1&=\int_{D}\left(
 -v^{\delta}\partial_x v^{\delta}\right)v_{e}^{\delta}\,dx\,dz
 \leq \norm{v_{e}^{\delta}}_{L^2}
 \norm{v^{\delta}}_{L^4}
 \norm{\partial_xv^{\delta}}_{L^4}
 \\&\leq C \sqrt{\norm{v_{e}^{\delta}}_{L^2}^2
 +\norm{\Theta_{e}^{\delta}}_{L^2}^2}
 \left( \norm{v^{\delta}}_{H^2}^2
 + \norm{\Theta^{\delta}}_{H^2}^2\right),
\end{aligned}\\
&\begin{aligned}
L_2&=\int_{D}\left(\left(\int_0^z\partial_xv^{\delta}(t,x,\xi)
\,d\xi \right)\partial_z v^{\delta}\right)v_{e}^{\delta}\,dx\,dz
\\&\leq \norm{v_{e}^{\delta}}_{L^2}
\norm{\left(\int_0^z\partial_xv^{\delta}(t,x,\xi)
\,d\xi \right)\partial_z v^{\delta}}_{L^2}
\\& \leq C \sqrt{\norm{v_{e}^{\delta}}_{L^2}^2
 +\norm{\Theta_{e}^{\delta}}_{L^2}^2}
 \left( \norm{v^{\delta}}_{H^2}^2
 + \norm{\Theta^{\delta}}_{H^2}^2\right),
\end{aligned}
\end{aligned}
\end{align}
and
  \begin{align}\label{ut-proof-1-ln-2}
\begin{aligned}
&\begin{aligned}
L_3&=\int_{D}\left(
 -v^{\delta}\partial_x \Theta^{\delta}\right)\Theta_{e}^{\delta}\,dx\,dz
\leq \norm{\Theta_{e}^{\delta}}_{L^2}
 \norm{v^{\delta}}_{L^4}
 \norm{\partial_x\Theta^{\delta}}_{L^4}
 \\&\leq C \sqrt{\norm{v_{e}^{\delta}}_{L^2}^2
 +\norm{\Theta_{e}^{\delta}}_{L^2}^2}
 \left( \norm{v^{\delta}}_{H^2}^2
 + \norm{\Theta^{\delta}}_{H^2}^2\right),
\end{aligned}\\
&\begin{aligned}
L_4&=\int_{D}\left(\left(\int_0^z\partial_xv^{\delta}(t,x,\xi)
\,d\xi \right)\partial_z \Theta^{\delta}\right)\Theta_{e}^{\delta}\,dx\,dz
\\&\leq \norm{\Theta_{e}^{\delta}}_{L^2}
\norm{\left(\int_0^z\partial_xv^{\delta}(t,x,\xi)
\,d\xi \right)\partial_z \Theta^{\delta}}_{L^2}
\\& \leq C \sqrt{\norm{\Theta_{e}^{\delta}}_{L^2}^2
 +\norm{\Theta_{e}^{\delta}}_{L^2}^2}
 \left( \norm{v^{\delta}}_{H^2}^2
 + \norm{\Theta^{\delta}}_{H^2}^2\right),
\end{aligned}
\end{aligned}
\end{align}
which and \eqref{ut-proof-1-ln} yields that 
  \begin{align}\label{ut-proof-1-ln-2}
\begin{aligned}
\frac{d}{dt}
\sqrt{\norm{v_{e}^{\delta}}_{L^2}^2
 +\norm{\Theta_{e}^{\delta}}_{L^2}^2}- \beta_1
\sqrt{\norm{v_{e}^{\delta}}_{L^2}^2
 +\norm{\Theta_{e}^{\delta}}_{L^2}^2} \leq 
 C  \left( \norm{v^{\delta}}_{H^2}^2
 + \norm{\Theta^{\delta}}_{H^2}^2\right).
\end{aligned}
\end{align}
Gronwall inequality and \eqref{nonlinear-delta} give that
  \begin{align}\label{ut-proof-1-ln-3}
\begin{aligned}
\sqrt{\norm{v_{e}^{\delta}}_{L^2}^2
 +\norm{\Theta_{e}^{\delta}}_{L^2}^2}
& \leq C e^{ \beta_1t}
 \int_0^t
 \left( \norm{v^{\delta}}_{H^2}^2
 + \norm{\Theta^{\delta}}_{H^2}^2\right)\,ds
 \\&\leq  CC_3\delta^2e^{ \beta_1t}
  \int_0^te^{ 2\beta_1s}\,ds
  \leq C_4\delta^2e^{ 2\beta_1t}.
\end{aligned}
\end{align}

Let us choose $\epsilon_0$ such that $\epsilon_0<
\min\{\frac{\delta_0}{\sqrt{C_3}},\frac{C_1}{C_4},\frac{C_2}{2\abs{\Omega}^{\frac{1}{2}}C_4}
\}$. Then we aim to show $T_{\delta}=\min\{T_{\delta},T_{*},T_{**}\}$.
If $T_{*}=\min\{T_{\delta},T_{*},T_{**}\}$, $T_{\delta}>T_{*}$ and $T_{*}<\infty$, we have
\[
\sqrt{\norm{v^{\delta}(T_{*})}_{H^2}^2
 + \norm{\Theta^{\delta}(T_{*})}_{H^2}^2}
 \leq 
\sqrt{C_3}\delta e^{\lambda_1T_{*}}<
\sqrt{C_3}\epsilon_0\leq \delta_0
\]
which is contradictory to the definition of $T_{*}$.

If $T_{**}=\min\{T_{\delta},T_{*},T_{**}\}$, $T_{\delta}>T_{**}$ and $T_{**}<\infty$, we have
\begin{align*}
&\sqrt{\norm{v^{\delta}(T_{**})}_{L^2}^2
 + \norm{\Theta^{\delta}(T_{**})}_{L^2}^2}
\\&\leq 
\sqrt{\norm{u^{\delta}(T_{**})}_{L^2}^2
 + \norm{\theta^{\delta}(T_{**})}_{L^2}^2}
 +\sqrt{\norm{v_e^{\delta}(T_{**})}_{L^2}^2
 + \norm{\Theta_e^{\delta}(T_{**})}_{L^2}^2}
\\&\leq C_1\delta 
e^{\lambda_1T_{**}}+C_4\delta^2e^{2\lambda_1T_{**}}\\
&\leq 
C_1\delta 
e^{\lambda_1T_{**}}\left(
1+
\frac{C_4}{C_1}\delta e^{\lambda_1T_{**}}
\right)<
C_1\delta 
e^{\lambda_1T_{**}}\left(
1+
\frac{C_4}{C_1}\epsilon_0
\right)\leq 2
C_1\delta 
e^{\lambda_1T_{**}}.
\end{align*}
which is contradictory to the definition of $T_{**}$. By 
$T_{\delta}=\min\{T_{\delta},T_{*},T_{**}\}$, we finally get
\begin{align*}
&
\sqrt{\norm{v^{\delta}(T_{\delta})}_{L^1}
 + \norm{\Theta^{\delta}(T_{\delta})}_{L^1}}
\\&\geq 
\sqrt{\norm{u^{\delta}(T_{\delta})}_{L^1}
 + \norm{\theta^{\delta}(T_{\delta})}_{L^1}}
-\sqrt{\norm{v_e^{\delta}(T_{\delta})}_{L^1}
 + \norm{\Theta_e^{\delta}(T_{\delta})}_{L^1}}
\\&\geq C_2\delta e^{\lambda_1 T_{\delta}}-\abs{D}^{\frac{1}{2}}
\sqrt{\norm{v_e^{\delta}(T_{\delta})}_{L^2}^2
 + \norm{\Theta_e^{\delta}(T_{\delta})}_{L^2}^2}
\\&\geq
C_2\epsilon_0-\abs{D}^{\frac{1}{2}}
C_4\epsilon_0^2>\frac{C_2\epsilon_0}{2}.
\end{align*}
\end{proof}

\section{Dynamic bifurcation}

For $T_0 - T_1 > T_c$, the trivial steady state $(0,0)$ of system \eqref{ondimensional-equations-2} is both linearly and nonlinearly unstable. As $T_0 - T_1$ decreases through the critical value $T_c$, the system is expected to undergo a bifurcation, giving rise to additional stable, nontrivial steady states. This indicates a transition from the trivial state to more complex configurations. In this section, we characterize this dynamic bifurcation and the corresponding transition of \eqref{ondimensional-equations-2} as $T_0 - T_1$ crosses $T_c$.

We begin by reformulating the system \eqref{ondimensional-equations-2} in abstract form for the case $T_0 > T_1$. Define the bilinear operator $\mathcal{N}: \mathcal{X}_2(D)  \to \mathcal{X}_0(D) $ by
\begin{align}
\mathcal{N}(\psi, \tilde{\psi}) = 
\begin{pmatrix}
\mathcal{P} \left( -v \partial_x \tilde{v} + \left( \int_0^z \partial_x v(t,x,\xi) \, d\xi \right) \partial_z \tilde{v} \right) \\
-v \partial_x \tilde{\Theta} + \left( \int_0^z \partial_x v(t,x,\xi) \, d\xi \right) \partial_z \tilde{\Theta}
\end{pmatrix},
\end{align}
where $\psi = (v, \Theta)$ and $\tilde{\psi} = (\tilde{v}, \tilde{\Theta})$. System \eqref{ondimensional-equations-2} can then be expressed as
\begin{align}\label{operator-form}
\begin{cases}
\dfrac{d\psi}{dt} = \mathcal{L}_{\mathrm{R}} \psi + \mathcal{N}(\psi, \psi), \\
\psi|_{t=0} = \psi_0 \in X_1(D),
\end{cases}
\end{align}
with $\mathcal{L}_{\mathrm{R}} = \mathcal{A} + \mathrm{R} \mathcal{B}$ as defined in \eqref{linear-operator}.

According to Lemma \ref{pes-lemma}, the principle of exchange of stabilities, given by \eqref{pes-condition}, applies to this abstract formulation.
Define the center-unstable space:
\begin{align}
\mathcal{X}_c(D) = \left\{ x_1 \psi_{m_c,1}^{+,1} + x_2 \psi_{m_c,1}^{+,2} \,\middle|\, x_1, x_2 \in \mathbb{R} \right\},
\end{align}
and the stable space:
\begin{align}
\mathcal{X}_s(D) = \left\{ \psi \in \mathcal{X}_0(D) \,\middle|\, (\psi, \psi_{m_c,1}^{+,j})_{L^2} = 0, \ j = 1, 2 \right\}.
\end{align}
It follows that
\begin{align}
\mathcal{X}_0(D) = \mathcal{X}_c(D) \oplus \mathcal{X}_s(D).
\end{align}
Similarly, define
\begin{align}
\mathcal{X}_1(D) = \mathcal{X}_c(D) \oplus \tilde{\mathcal{X}}_s(D),
\end{align}
where
\[
\tilde{\mathcal{X}}_s(D) = \left\{ \psi \in \mathcal{X}_1(D) \,\middle|\, (\psi, \psi_{m_c,1}^{+,j})_{L^2} = 0, \ j = 1, 2 \right\}.
\]
We now leverage the abstract formulation \eqref{operator-form} of the system \eqref{ondimensional-equations-2} to prove Theorem~\ref{reduction}.

\subsection{Proof of Theorem \ref{reduction}}\label{proof-theorem24}
\begin{proof}

According to the exchange of stabilities condition \eqref{pes-condition}, we have $\beta_{m_c,1}^{+} (\mathrm{R}_c) = 0$ and $\frac{d\beta_{m_c,1}^{+}}{d\mathrm{R}}\big|_{\mathrm{R} = \mathrm{R}_c} >0$. Consequently, $(\mathrm{R}_c, \psi) = (\mathrm{R}_c, 0)$ represents a candidate bifurcation point for system \eqref{operator-form} \cite{Ma2019}. The authors of \cite{Han2021} established a general framework for analyzing stability and bifurcation in fluid systems governed by equations with a generic fourth-second order structure. Accordingly, we apply this framework to examine the dynamic bifurcation phenomena in the primitive equations \eqref{ondimensional-equations-2} at the critical parameter value $\mathrm{R} = \mathrm{R}_c$.

The center manifold function $\Psi$ is, by definition, a local mapping from $\mathcal{X}_c(D)$ into the stable subspace $\tilde{\mathcal{X}}_{s}(D) \subset \mathcal{X}_s(D) := (\mathcal{X}_c(D))^\perp$, whose graph is tangent to $\mathcal{X}_c(D)$ at the origin.
We decompose the solution $\psi$ into its center-unstable and stable components:
\begin{align}\label{decomp}
\psi = \psi_c + \psi_s, \quad \psi_c \in \mathcal{X}_c(D), \quad \psi_s \in \mathcal{X}_s(D),
\end{align}
where $\psi_c = P_c(\psi)$ and $\psi_s = P_s(\psi)$, with $P_c$ and $P_s$ representing the projections of the phase space $\mathcal{X}_0(D)$ onto the center-unstable subspace $\mathcal{X}_c(D)$ and the stable subspace $\mathcal{X}_s(D)$, respectively.

We express the center-unstable component of the solution as
\begin{align}\label{stable-space}
  \psi_c(t) = x_1(t) \psi_{m_c,1}^{+,1} + x_2(t) \psi_{m_c,1}^{+,2}.
\end{align}
Projecting system \eqref{operator-form} onto the center-unstable and stable subspaces yields
\begin{align}\label{equeee1}
\begin{aligned}
\frac{d\psi_c}{dt} &= \mathcal{L}_{\mathrm{R}} \psi_c + P_c \mathcal{N}(\psi_c + \psi_s, \psi_c + \psi_s), \\
\frac{d\psi_s}{dt} &= \mathcal{L}_{\mathrm{R}} \psi_s + P_s \mathcal{N}(\psi_c + \psi_s, \psi_c + \psi_s).
\end{aligned}
\end{align}

Taking the inner product of the first equation with the eigenvectors $\psi_{m_c,1}^{+,j}$ for $j = 1, 2$, we obtain the amplitude equations:
\begin{align} \label{abstract-equ2}
\begin{aligned}
\frac{dx_1}{dt} &= \beta_{m_c,1}^{+} x_1 + \left( \mathcal{N}(\psi_c + \psi_s, \psi_c + \psi_s), \psi_{m_c,1}^{+,1} \right), \\
\frac{dx_2}{dt} &= \beta_{m_c,1}^{+} x_2 + \left( \mathcal{N}(\psi_c + \psi_s, \psi_c + \psi_s), \psi_{m_c,1}^{+,2} \right), \\
\frac{d\psi_s}{dt} &= \mathcal{L}_{\mathrm{R}} \psi_s + P_s \mathcal{N}(\psi_c + \psi_s, \psi_c + \psi_s).
\end{aligned}
\end{align}

According to center manifold theory, the dynamics of the full system near criticality are governed by the amplitudes $x_1$ and $x_2$ of the center modes, as described by the first two equations above. Our objective is to derive a closed-form ordinary differential equation for $x_1$ and $x_2$ in \eqref{abstract-equ2} by approximating the center manifold function $\Psi$ and substituting
\begin{align}\label{decomp-2}
\psi_s = \Psi(\psi_c)
\end{align}
into the equations. This approach allows us to study the reduced dynamics on the center manifold.

Note that the time derivative of the center manifold function satisfies
\begin{align} \label{manifold}
\frac{d \Psi(\psi_c)}{dt} = \frac{d\Psi(\psi_c)}{d\psi_c} \frac{d \psi_c}{dt}
= \frac{d\Psi(\psi_c)}{d\psi_c} \left(
\frac{dx_1}{dt} \psi_{m_c,1}^{+,1} + \frac{dx_2}{dt} \psi_{m_c,1}^{+,2}
\right).
\end{align}
Using the expressions for $\dfrac{d\psi_c}{dt}$ and $\dfrac{d\psi_s}{dt}$ from \eqref{equeee1}, we derive the center manifold equation:
\begin{align} \label{comp1}
\begin{aligned}
&\mathcal{L}_{\mathrm{R}} \Psi(\psi_c) + P_s \mathcal{N}(\Psi(\psi_c) + \psi_c, \Psi(\psi_c) + \psi_c) \\
&= \frac{d\Psi(\psi_c)}{d\psi_c} \times \left( \mathcal{L}_{\mathrm{R}} \psi_c + P_c \mathcal{N}(\Psi(\psi_c) + \psi_c, \Psi(\psi_c) + \psi_c) \right).
\end{aligned}
\end{align}

By the tangency of the center manifold function at the origin, we obtain the second-order approximation
\begin{align} \label{tangency}
\begin{aligned} 
  \Psi(\psi_c) &= \Psi_2(\psi_c) + o(|\psi_c|^2)
  = \sigma (\psi_c)^2 + o(|\psi_c|^2) \\
  &= g_{11}x_1^2 + g_{12}x_1x_2 + g_{22}x_2^2 + o(x_1^2 + x_2^2),
\end{aligned}
\end{align}
where $\Psi_2$ denotes the quadratic component of the center manifold expansion.

Due to this tangency condition, the leading-order approximation of equation \eqref{comp1} becomes
\begin{align}
\begin{aligned}
\mathcal{L}_{\mathrm{R}} \Psi_2(\psi_c) + P_s \mathcal{N}(\psi_c, \psi_c)
= 2\sigma \beta_{m_c,1}^{+} (\psi_c)^2.
\end{aligned}
\end{align}
This yields the system of equations governing $g_{11}, g_{12}$ and $g_{22}$:
\begin{align}\label{hig-order1}
\begin{cases}
\mathcal{L}_{\mathrm{R}} g_{11} - 2g_{11} \beta_{m_c,1}^{+}
= -P_s \mathcal{N}\left( \psi_{m_c,1}^{+,1}, \psi_{m_c,1}^{+,1} \right), \\
\mathcal{L}_{\mathrm{R}} g_{12} - 2g_{12} \beta_{m_c,1}^{+}
= -P_s \left( \mathcal{N}\left( \psi_{m_c,1}^{+,1}, \psi_{m_c,1}^{+,2} \right)
+ \mathcal{N}\left( \psi_{m_c,1}^{+,2}, \psi_{m_c,1}^{+,1} \right) \right), \\
\mathcal{L}_{\mathrm{R}} g_{22} - 2g_{22} \beta_{m_c,1}^{+}
= -P_s \mathcal{N}\left( \psi_{m_c,1}^{+,2}, \psi_{m_c,1}^{+,2} \right).
\end{cases}
\end{align}
where the bilinear operator $\mathcal{N}(\psi, \tilde{\psi})$ is defined by
\[
\mathcal{N}(\psi, \tilde{\psi})
= \begin{pmatrix}
\mathcal{P} \left( -v \partial_x \tilde{v} + \left( \int_0^z \partial_x v(t,x,\xi) \, d\xi \right) \partial_z \tilde{v} \right) \\
-v \partial_x \tilde{\Theta} + \left( \int_0^z \partial_x v(t,x,\xi) \, d\xi \right) \partial_z \tilde{\Theta}
\end{pmatrix},
\]
with $\psi = (v, \Theta)$ and $\tilde{\psi} = (\tilde{v}, \tilde{\Theta})$.

Using the eigenvector expression \eqref{first-eigenvector} and performing direct computations, we obtain
\[
\begin{aligned}
&\begin{aligned}
\mathcal{N}\left(\psi_{m_c,1}^{+,1},\psi_{m_c,1}^{+,1}\right)_1&=\mathcal{P}\left(-v\partial_x v
+\left(\int_0^z\partial_xv(t,x,\xi)
\,d\xi \right)\partial_zv\right)\\&=\mathcal{P}\left(
\left(v_{m_c,1}^{+,1}\right)^2
\frac{\pi m_c}{\alpha}
\sin\frac{4\pi m_cx}{\alpha}\cos^2\pi z\right)
\\&\quad+\mathcal{P}\left(
\left(v_{m_c,1}^{+,1}\right)^2
\frac{\pi m_c}{\alpha}
\sin\frac{4\pi m_cx}{\alpha}\sin^2\pi z\right)
\\
&=\mathcal{P}\left(\left(v_{m_c,1}^{+,1}\right)^2
\frac{\pi m_c}{\alpha}\sin\frac{4\pi m_cx}{\alpha}\right)=0,
\end{aligned},\\
&\begin{aligned}
\mathcal{N}\left(\psi_{m_c,1}^{+,1},\psi_{m_c,1}^{+,1}\right)_2&=-v\partial_x \Theta+\left(\int_0^z\partial_xv(t,x,\xi)
\,d\xi \right)\partial_z \Theta\\&=-
\left(v_{m_c,1}^{+,1}\right)^2
\frac{A_{m_c}\pi m_c}{\alpha}
\cos^2\frac{2\pi m_cx}{\alpha}\sin 2\pi z
\\&\quad+\left(-
\left(v_{m_c,1}^{+,1}\right)^2
\frac{A_{m_c}\pi m_c}{\alpha}
\sin^2\frac{2\pi m_cx}{\alpha}\sin 2\pi z\right)
\\
&=-
\left(v_{m_c,1}^{+,1}\right)^2
\frac{A_{m_c}\pi m_c}{\alpha}\sin 2\pi z,
\end{aligned}
\end{aligned}
\]
\[
\begin{aligned}
&\begin{aligned}
\mathcal{N}\left(\psi_{m_c,1}^{+,2},\psi_{m_c,1}^{+,2}\right)_1&=\mathcal{P}\left(-v\partial_x v
+\left(\int_0^z\partial_xv(t,x,\xi)
\,d\xi \right)\partial_zv\right)\\&=\mathcal{P}\left(-
\left(v_{m_c,1}^{+,1}\right)^2
\frac{\pi m_c}{\alpha}
\sin\frac{4\pi m_cx}{\alpha}\cos^2\pi z\right)
\\&\quad-\mathcal{P}\left(
\left(v_{m_c,1}^{+,1}\right)^2
\frac{\pi m_c}{\alpha}
\sin\frac{4\pi m_cx}{\alpha}\sin^2\pi z\right)
\\
&=\mathcal{P}\left(-\left(v_{m_c,1}^{+,1}\right)^2
\frac{\pi m_c}{\alpha}\sin\frac{4\pi m_cx}{\alpha}\right)=0,
\end{aligned}\\
&\begin{aligned}
\mathcal{N}\left(\psi_{m_c,1}^{+,2},\psi_{m_c,1}^{+,2}\right)_2&=-v\partial_x \Theta+\left(\int_0^z\partial_xv(t,x,\xi)
\,d\xi \right)\partial_z \Theta\\&=-
\left(v_{m_c,1}^{+,1}\right)^2
\frac{A_{m_c}\pi m_c}{\alpha}
\sin^2\frac{2\pi m_cx}{\alpha}\sin 2\pi z
\\&\quad+\left(-
\left(v_{m_c,1}^{+,1}\right)^2
\frac{A_{m_c}\pi m_c}{\alpha}
\cos^2\frac{2\pi m_cx}{\alpha}\sin 2\pi z\right)
\\
&=-
\left(v_{m_c,1}^{+,1}\right)^2
\frac{A_{m_c}\pi m_c}{\alpha}\sin 2\pi z,
\end{aligned}
\end{aligned}
\]
\[
\begin{aligned}
&\begin{aligned}
\mathcal{N}\left(\psi_{m_c,1}^{+,1},\psi_{m_c,1}^{+,2}\right)_1&=\mathcal{P}\left(-v\partial_x \tilde{v}
+\left(\int_0^z\partial_xv(t,x,\xi)
\,d\xi \right)\partial_z\tilde{v}\right)\\&=\mathcal{P}\left(-
\left(v_{m_c,1}^{+,1}\right)^2
\frac{2\pi m_c}{\alpha}
\cos^2\frac{2\pi m_cx}{\alpha}\cos^2\pi z\right)
\\&\quad+\mathcal{P}\left(
\left(v_{m_c,1}^{+,1}\right)^2
\frac{2\pi m_c}{\alpha}
\sin^2\frac{2\pi m_cx}{\alpha}\sin^2\pi z\right),
\end{aligned}\\
&\begin{aligned}
\mathcal{N}\left(\psi_{m_c,1}^{+,2},\psi_{m_c,1}^{+,1}\right)_1&=\mathcal{P}\left(-v\partial_x \tilde{v}
+\left(\int_0^z\partial_xv(t,x,\xi)
\,d\xi \right)\partial_z\tilde{v}\right)\\&=\mathcal{P}\left(
\left(v_{m_c,1}^{+,1}\right)^2
\frac{2\pi m_c}{\alpha}
\sin^2\frac{2\pi m_cx}{\alpha}\cos^2\pi z\right)
\\&\quad-\mathcal{P}\left(
\left(v_{m_c,1}^{+,1}\right)^2
\frac{2\pi m_c}{\alpha}
\cos^2\frac{2\pi m_cx}{\alpha}\sin^2\pi z\right),
\end{aligned}
\end{aligned}
\]
and 
\[
\begin{aligned}
&\begin{aligned}
\mathcal{N}\left(\psi_{m_c,1}^{+,1},\psi_{m_c,1}^{+,2}\right)_2&=-v\partial_x \tilde{\Theta}
+\left(\int_0^z\partial_xv(t,x,\xi)
\,d\xi \right)\partial_z\tilde{\Theta}\\&=-
\left(v_{m_c,1}^{+,1}\right)^2
\frac{A_{m_c}\pi m_c}{2\alpha}
\sin\frac{4\pi m_cx}{\alpha}\sin2\pi z
\\&\quad+
\left(v_{m_c,1}^{+,1}\right)^2
\frac{A_{m_c}\pi m_c}{2\alpha}
\sin\frac{4\pi m_cx}{\alpha}\sin2\pi z=0,
\end{aligned}\\
&\begin{aligned}
\mathcal{N}\left(\psi_{m_c,1}^{+,2},\psi_{m_c,1}^{+,1}\right)_2&=-v\partial_x \tilde{v}
+\left(\int_0^z\partial_xv(t,x,\xi)
\,d\xi \right)\partial_z\tilde{v}\\&=-
\left(v_{m_c,1}^{+,1}\right)^2
\frac{A_{m_c}\pi m_c}{2\alpha}
\sin\frac{4\pi m_cx}{\alpha}\sin2\pi z
\\&\quad+
\left(v_{m_c,1}^{+,1}\right)^2
\frac{A_{m_c}\pi m_c}{2\alpha}
\sin\frac{4\pi m_cx}{\alpha}\sin2\pi z=0.
\end{aligned}
\end{aligned}
\]
Collecting the preceding computing results, we have
\[
\begin{aligned}
&\mathcal{N}\left(\psi_{m_c,1}^{+,1},\psi_{m_c,1}^{+,1}\right)
=-\left(v_{m_c,1}^{+,1}\right)^2\begin{pmatrix}
0\\
\frac{A_{m_c}m_c}{4\alpha}\sin 2\pi z
\end{pmatrix},\\
&\mathcal{N}\left(\psi_{m_c,1}^{+,2},\psi_{m_c,1}^{+,2}\right)
=-\left(v_{m_c,1}^{+,1}\right)^2\begin{pmatrix}
0\\
\frac{A_{m_c}m_c}{4\alpha}\sin 2\pi z
\end{pmatrix},\\
&\mathcal{N}\left(\psi_{m_c,1}^{+,1},\psi_{m_c,1}^{+,2}\right)+
\mathcal{N}\left(\psi_{m_c,2}^{+,2},\psi_{m_c,1}^{+,1}\right)=0.
\end{aligned}
\]
Thus, the system \eqref{hig-order1} become
\begin{align}\label{hig-order1-1}
\begin{cases}
\mathcal{L}_{\text{R}}g_{11}-2g_{11}\beta_{m_c,1}^{+}=\left(v_{m_c,1}^{+,1}\right)^2\begin{pmatrix}
0\\
\frac{A_{m_c} \pi m_c}{2\alpha}\sin 2\pi z
\end{pmatrix},\\
\mathcal{L}_{\text{R}}g_{12}-2g_{12}\beta_{m_c,1}^{+}=0,\\
\mathcal{L}_{\text{R}}g_{22}-2g_{22}\beta_{m_c,1}^{+}=\left(v_{m_c,1}^{+,1}\right)^2\begin{pmatrix}
0\\
\frac{A_{m_c}\pi m_c}{2\alpha}\sin 2\pi z
\end{pmatrix}.
\end{cases}
\end{align}
Solving the system \eqref{hig-order1-1}, we get
\[
g_{11}=g_{22}=g_{12}=\begin{pmatrix}
0\\
-\frac{A_{m_c}\pi m_c\left(\psi_{m_c,1}^{+,1}\right)^2}{2\alpha\left(
4\kappa_{a} \pi^2+2\beta_{m_c,1}^{+}
\right)}\sin 2\pi z
\end{pmatrix},\quad
g_{12}=\begin{pmatrix}
0\\
0\end{pmatrix}.
\]
On the center manifold, the reduced system reads
\begin{align}\label{reduced-new}
\begin{aligned}
&\frac{dx_1}{dt}=\beta_{m_c,1}^{+}
x_1+ \left( \mathcal{N}(\psi_c + \psi_s, \psi_c + \psi_s), \psi_{m_c,1}^{+,1} \right),\\
&\frac{dx_2}{dt}=\beta_{m_c,1}^{+}
x_2+ \left( \mathcal{N}(\psi_c + \psi_s, \psi_c + \psi_s), \psi_{m_c,1}^{+,2} \right).
\end{aligned}
\end{align}

Taking $\psi_s=g_{11}x_1^2  + g_{22}x_2^2$ and 
substituting the preceding explicit expressions
for $g_{11}$ and $g_{22}$
 into the nonlinear terms \eqref{reduced-new}, we get
\begin{align}
\begin{aligned}
&\begin{aligned}
& \left( \mathcal{N}(\psi_c + \psi_s, \psi_c + \psi_s), \psi_{m_c,1}^{+,1} \right)=x_1\left(
x_1^2+x_2^2\right)
\left(\mathcal{N}\left(
\psi_{m_c,1}^{+,1},g_{11}\right),\psi_{m_c,1}^{+,1}\right)\\
&=\frac{\left(16-3 \pi ^2\right)A_{m_c}^2m_c^2\left(v_{m_c,1}^{+,1}\right)^2}{12\alpha 
\left(
4\kappa_{a} \pi^2+2\beta_{m_c,1}^{+}
\right)}
x_1\left(
x_1^2+x_2^2\right)+o\left(\abs{\mathbf{x}}^3\right),
\end{aligned}\\
&\begin{aligned}
& \left( \mathcal{N}(\psi_c + \psi_s, \psi_c + \psi_s), \psi_{m_c,1}^{+,2} \right)=x_2\left(
x_1^2+x_2^2\right)
\left(\mathcal{N}\left(
\psi_{m_c,1}^{+,2},g_{22}\right),\psi_{m_c,1}^{+,2}\right)\\
&=
\frac{\left(16-3 \pi ^2\right)A_{m_c}^2m_c^2\left(v_{m_c,1}^{+,1}\right)^2}{12\alpha 
\left(
4\kappa_{a} \pi^2+2\beta_{m_c,1}^{+}
\right)}
x_2\left(
x_1^2+x_2^2\right)+o\left(\abs{\mathbf{x}}^3\right),
\end{aligned}
\end{aligned}
\end{align}
where we have used
\[
\begin{aligned}
&\left(\mathcal{N}\left(
\psi_{m_c,1}^{+,1},\psi_{m_c,1}^{+,1}\right),\psi_{m_c,1}^{+,1}\right)=0,
\quad \left(\mathcal{N}\left(
\psi_{m_c,1}^{+,2},\psi_{m_c,1}^{+,2}\right),\psi_{m_c,1}^{+,2}\right)=0,
\\
&\left(\mathcal{N}\left(
\psi_{m_c,1}^{+,1},\psi_{m_c,1}^{+,2}\right),\psi_{m_c,1}^{+,1}\right)+
\left(\mathcal{N}\left(
\psi_{m_c,1}^{+,2},\psi_{m_c,1}^{+,1}\right),\psi_{m_c,1}^{+,1}\right)=0,
\\
&\left(\mathcal{N}\left(
\psi_{m_c,1}^{+,2},\psi_{m_c,1}^{+,1}\right),\psi_{m_c,1}^{+,2}\right)+
\left(\mathcal{N}\left(
\psi_{m_c,1}^{+,1},\psi_{m_c,1}^{+,2}\right),\psi_{m_c,1}^{+,2}\right)=0,
\\
&\left(\mathcal{N}\left(
\psi_{m_c,1}^{+,2},\psi_{m_c,1}^{+,2}\right),\psi_{m_c,1}^{+,1}\right)=0,
\quad
\left(\mathcal{N}\left(
\psi_{m_c,1}^{+,1},\psi_{m_c,1}^{+,1}\right),\psi_{m_c,1}^{+,2}\right)=0,
\\
&\left(\mathcal{N}\left(
\psi_{m_c,1}^{+,2},g_{11}\right),\psi_{m_c,1}^{+,1}\right)=0,\quad
\left(\mathcal{N}\left(g_{11},
\psi_{m_c,1}^{+,2}\right),\psi_{m_c,1}^{+,1}\right)=0,\\
&\left(\mathcal{N}\left(
\psi_{m_c,1}^{+,1},g_{11}\right),\psi_{m_c,1}^{+,2}\right)=0,\quad
\left(\mathcal{N}\left(g_{11},
\psi_{m_c,1}^{+,1}\right),\psi_{m_c,1}^{+,2}\right)=0,\\
&\left(\mathcal{N}\left(
\psi_{m_c,1}^{+,2},g_{22}\right),\psi_{m_c,1}^{+,1}\right)=0,\quad
\left(\mathcal{N}\left(g_{22},
\psi_{m_c,1}^{+,2}\right),\psi_{m_c,1}^{+,1}\right)=0,\\
&\left(\mathcal{N}\left(
\psi_{m_c,1}^{+,1},g_{22}\right),\psi_{m_c,1}^{+,2}\right)=0,\quad
\left(\mathcal{N}\left(g_{22},
\psi_{m_c,1}^{+,1}\right),\psi_{m_c,1}^{+,2}\right)=0,\\
&\left(\mathcal{N}\left(
g_{11},\psi_{m_c,1}^{+,1}\right),\psi_{m_c,1}^{+,1}\right)=0,\quad
\left(\mathcal{N}\left(
g_{22},\psi_{m_c,1}^{+,2}\right),\psi_{m_c,1}^{+,2}\right)=0.
\end{aligned}
\]

Taking the leading orders of system \eqref{reduced-new}, we get the final reduced equations as follows
\begin{align}\label{req-m1}
\begin{cases}
\frac{dx_1}{dt}=\beta_{m_c,1}^{+}
x_1+l
x_1\left(
x_1^2+x_2^2\right),\\
\frac{dx_2}{dt}=\beta_{m_c,1}^{+}
x_2+lx_2\left(
x_1^2+x_2^2\right),
\end{cases}
\end{align}
where
\begin{align}
l=\frac{\left(16-3 \pi ^2\right)A_{m_c}^2m_c^2\left(v_{m_c,1}^{+,1}\right)^2}{12\alpha 
\left(
4\kappa_{a} \pi^2+2\beta_{m_c,1}^{+}
\right)}<0,
\end{align}
in which $A_{m_c}$ and $\left(v_{m_c,1}^{+,1}\right)^{2}$ are given in \eqref{A-V}.

For the system \eqref{req-m1}, we note that $l < 0$, indicating a supercritical bifurcation at $\mathrm{R} = \mathrm{R}_c$. In this regime, the system admits infinitely many stable steady-state solutions $(x_1, x_2) = (s_1, s_2)$ for $\mathrm{R} > \mathrm{R}_c$, which are determined by the following equation
\[
s_1^2 + s_2^2 = -\beta_{m_c,1}^{+}/l.
\]
From the evolution equation
\[
\frac{d}{dt}\left( x_1^2 + x_2^2 \right) =
2\left( x_1^2 + x_2^2 \right) \left( \beta_{m_c,1}^{+} + l \left( x_1^2 + x_2^2 \right) \right),
\]
we conclude that these solutions constitute a local ring attractor for system \eqref{reduced-new}.
\end{proof}
\subsection{Proof of Theorem \ref{bifurcation}}\label{proof-theorem25}
\begin{proof}
The dynamics of the system \eqref{operator-form} with $\mathrm{R}$ near $\mathrm{R}_c$ are completely reduced to those of the ODE system \eqref{req-m1}. Consequently, the bifurcation type of the system \eqref{operator-form} at $\mathrm{R} = \mathrm{R}_c$ is determined by the bifurcation type of \eqref{req-m1} at $\mathrm{R} = \mathrm{R}_c$, which is supercritical. Moreover, the bifurcating solutions can be obtained by analyzing the reduced system \eqref{req-m1}.

Let $(s_1, s_2)$ be an arbitrary point on the blue circle $s_1^2 + s_2^2 = -\beta_{m_c,1}^{+}/l$, as illustrated in Fig.~\ref{attractor-ring}.. Then, based on \eqref{stable-space} and \eqref{decomp-2}, the corresponding bifurcation solution $\psi_b$ of the system \eqref{operator-form}, approximated to second order, takes the form:
\begin{align}\label{flow-p}
\begin{aligned}
\psi_b=\psi_c+\psi_s&=
s_1\psi_{m_c,1}^{+,1}+s_2\psi_{m_c,1}^{+,2}\\&\quad-
\abs{\mathbf{s}}^2
\begin{pmatrix}
0\\\frac{A_{m_c}\pi m_c\left(\psi_{m_c,1}^{+,1}\right)^2}{2\alpha\left(
4\kappa_{a} \pi^2+2\beta_{m_c,1}^{+}
\right)}\sin 2\pi z
\end{pmatrix}
+o(\abs{\mathbf{s}}^2),\quad 
\abs{\mathbf{s}}^2=\frac{\beta_{m_c,1}^{+}}{l}.
\end{aligned}
\end{align}

Recalling that 
\[
\begin{aligned}
\psi_{m_c,1}^{+,1} &=
\left(
\frac{\alpha }{4}+\frac{\alpha  }{4} A_{m_c}^2\right)^{-1}
\begin{pmatrix}
\cos \left( \dfrac{2\pi m_c x}{\alpha} \right) \cos (\pi z) \\[1.5ex]
A_{m_c} \sin \left( \dfrac{2\pi m_c x}{\alpha} \right) \sin (\pi z)
\end{pmatrix}, \\[3ex]
\psi_{m_c,1}^{+,2} &=
\left(
\frac{\alpha }{4}+\frac{\alpha  }{4} A_{m_c}^2\right)^{-1}
\begin{pmatrix}
\sin \left( \dfrac{2\pi m_c x}{\alpha} \right) \cos (\pi z) \\[1.5ex]
-A_{m_c} \cos \left( \dfrac{2\pi m_c x}{\alpha} \right) \sin (\pi z)
\end{pmatrix},
\end{aligned}
\]
substitutting these expressions into \eqref{flow-p}, we have
\[
\begin{aligned}
\psi_b=&s_1\left(
\frac{\alpha }{4}+\frac{\alpha  }{4} A_{m_c}^2\right)^{-1}
\begin{pmatrix}
\cos \left( \dfrac{2\pi m_c x}{\alpha} \right) \cos (\pi z) \\[1.5ex]
A_{m_c} \sin \left( \dfrac{2\pi m_c x}{\alpha} \right) \sin (\pi z)
\end{pmatrix}
\\&+s_2
\left(
\frac{\alpha }{4}+\frac{\alpha  }{4} A_{m_c}^2\right)^{-1}
\begin{pmatrix}
\sin \left( \dfrac{2\pi m_c x}{\alpha} \right) \cos (\pi z) \\[1.5ex]
-A_{m_c} \cos \left( \dfrac{2\pi m_c x}{\alpha} \right) \sin (\pi z)
\end{pmatrix}\\
&-\abs{\mathbf{s}}^2
\frac{A_{m_c}\pi m_c\left(
\frac{\alpha }{4}+\frac{\alpha  }{4} A_{m_c}^2\right)^{-2}.}{2\alpha\left(
4\kappa_{a} \pi^2+2\beta_{m_c,1}^{+}
\right)}
\begin{pmatrix}
0\\ \sin 2\pi z
\end{pmatrix}
+o(\abs{\mathbf{s}}^2).
\end{aligned}
\]
Reverting to the dimensional formulation, the solution in the original variables $(v^s_b, T_b^s)$ admits the leading-order asymptotic behavior characterized by
\begin{align*}
\begin{pmatrix}
v_b^s \\
T_b^s
\end{pmatrix}
= s_1
\begin{pmatrix}
V_1 \\
\Theta_1
\end{pmatrix}
+ s_2
\begin{pmatrix}
V_2 \\
\Theta_2
\end{pmatrix}
+ \abs{\mathbf{s}}^2
\begin{pmatrix}
V_3 \\
\Theta_3
\end{pmatrix}
+ o(\abs{\mathbf{s}}^2),
\end{align*}
where $\abs{\mathbf{s}}^2 = \beta_{m_c,1}^{+} / l$, and the vector profiles $(V_j, \Theta_j)^\top$ for $j = 1, 2, 3$ are defined as follows:
\begin{align*}
\begin{aligned}
\begin{pmatrix}
V_1 \\
\Theta_1
\end{pmatrix} &=
\left( \frac{L}{4H} + \frac{L}{4H} A_{m_c}^2 \right)^{-1}
\begin{pmatrix}
\frac{\kappa_x}{H}\cos \left( \frac{2\pi m_c x}{L} \right) \cos \left( \frac{\pi z}{H} \right) \\
\abs{T}_0A_{m_c} \sin \left( \frac{2\pi m_c x}{L} \right) \sin \left( \frac{\pi z}{H} \right)
\end{pmatrix}, \\
\begin{pmatrix}
V_2 \\
\Theta_2
\end{pmatrix} &=
\left( \frac{L}{4H} + \frac{L}{4H} A_{m_c}^2 \right)^{-1}
\begin{pmatrix}
\frac{\kappa_x}{H}\sin \left( \frac{2\pi m_c x}{L} \right) \cos \left( \frac{\pi z}{H} \right) \\
-\abs{T}_0A_{m_c} \cos \left( \frac{2\pi m_c x}{L} \right) \sin \left( \frac{\pi z}{H} \right)
\end{pmatrix}, \\
\begin{pmatrix}
V_3 \\
\Theta_3
\end{pmatrix} &=
- \frac{A_{m_c} \kappa_x \pi m_c}{2\alpha \left( 4 \kappa_z \pi^2 + 2 \kappa_x \beta_{m_c,1}^{+} \right)}
\left( \frac{L}{4H} + \frac{L}{4H} A_{m_c}^2 \right)^{-2}
\begin{pmatrix}
0 \\
\abs{T}_0\sin \left( \frac{2\pi z}{H} \right)
\end{pmatrix}.
\end{aligned}
\end{align*}

\end{proof}

\subsection{Examples}

For the system \eqref{primitive-reformulation-p}, we have show that 
as the temperature $T_0-T_1>T_c$, it bifurcates into an infinite number 
of nontrivial steady-state solutions, forming a local ring attractor,
as illustrated in Fig.~\ref{attractor-ring}. In this section, we give a specific example to
show that each nontrivial steady-state solution contained in the ring attractor
could describe the large-scale thermal convection phenomena.

Recall that the velocity component $v_b^s$ of bifurcation solutions
reads
\[
v_b^s=\frac{\kappa_x}{H}\left(
\frac{L }{4H}+\frac{L}{4H} A_{m_c}^2\right)^{-1}
\left(
s_1\cos \frac{2\pi m_cx}{L}\cos \frac{\pi z}{H}
+s_2\sin \frac{2\pi m_cx}{L}\cos \frac{\pi z}{H}
\right)++o(\abs{\mathbf{s}}^2).
\]
The corresponding stream function $\psi_b^s$ 
satisfying $v_b^s=-\partial_z\psi_b^s$
reads
\[
\psi_b^s=-\frac{\kappa_x}{\pi}\left(
\frac{L}{4H}+\frac{L}{4H} A_{m_c}^2\right)^{-1}
\left(
s_1\cos \frac{2\pi m_cx}{L}\sin \frac{\pi z}{H}
+s_2\sin \frac{2\pi m_cx}{L}\sin \frac{\pi z}{H}
\right)+o(\abs{\mathbf{s}}^2).
\]
We then plot the stream function $\psi_b^s$ 
to show that the flow behind has convection cells. To this end,
we select red points $\mathbf{s}_j$ $(j=1,\cdots,4)$ in Fig.~\ref{attractor-ring}
to analyze the structure of the flow.
\[
\begin{aligned}
&\psi_b^1=-\frac{\kappa_x}{\pi}
\sqrt{ -\frac{\beta_{m_c,1}^{+}}{l}}
\left(
\frac{L}{4H}+\frac{L}{4H} A_{m_c}^2\right)^{-1}
\cos \frac{2\pi m_cx}{L}\sin \frac{\pi z}{H}
+o(\abs{\mathbf{s}}^2),\\
&\psi_b^2=-\frac{\kappa_x}{\pi}
\sqrt{ -\frac{\beta_{m_c,1}^{+}}{l}}
\left(
\frac{L}{4H}+\frac{L}{4H} A_{m_c}^2\right)^{-1}
\sin \frac{2\pi m_cx}{L}\sin \frac{\pi z}{H}
+o(\abs{\mathbf{s}}^2),\\
&\psi_b^3=\frac{\kappa_x}{\pi}
\sqrt{ -\frac{\beta_{m_c,1}^{+}}{l}}
\left(
\frac{L}{4H}+\frac{L}{4H} A_{m_c}^2\right)^{-1}
\cos \frac{2\pi m_cx}{L}\sin \frac{\pi z}{H}
+o(\abs{\mathbf{s}}^2),\\
&\psi_b^4=\frac{\kappa_x}{\pi}
\sqrt{ -\frac{\beta_{m_c,1}^{+}}{l}}
\left(
\frac{L}{4H}+\frac{L}{4H} A_{m_c}^2\right)^{-1}
\sin \frac{2\pi m_cx}{L}\sin \frac{\pi z}{H}
+o(\abs{\mathbf{s}}^2).
\end{aligned}
\]

\begin{example}
For all parameters in the equations \eqref{primitive-reformulation-p}, we take their standard units. Taking $T_0=20, H=1000, L=50000, \nu_{z}=10, \kappa_z=1, \nu_{x}=10000, \kappa_x=100, \rho_0=1.2,\beta=0.0001, g=9.8$, we have $m_c=2, T_c=14.6021, T_1=5.3979, R_c=  \frac{H \sqrt{H \rho_0 g \beta T_c}}{\kappa_x}= 41.4392$.
In this case, the structure of the flow given by $\psi_b^1-\psi_b^4$
are shown in Fig.~\ref{Critical_p11}. From Fig.~\ref{Critical_p11}, we see that $2m_c=4$ gives the number of cells contained in the convection solutions $\psi_b^1-\psi_b^4$. In addition, although $\psi_b^1-\psi_b^4$ are different solutions,
the flow structures behind which are essentially the same.
\end{example}

\begin{figure}[h]
  \centering
  \includegraphics[width=.8\textwidth,height=.5\textwidth]{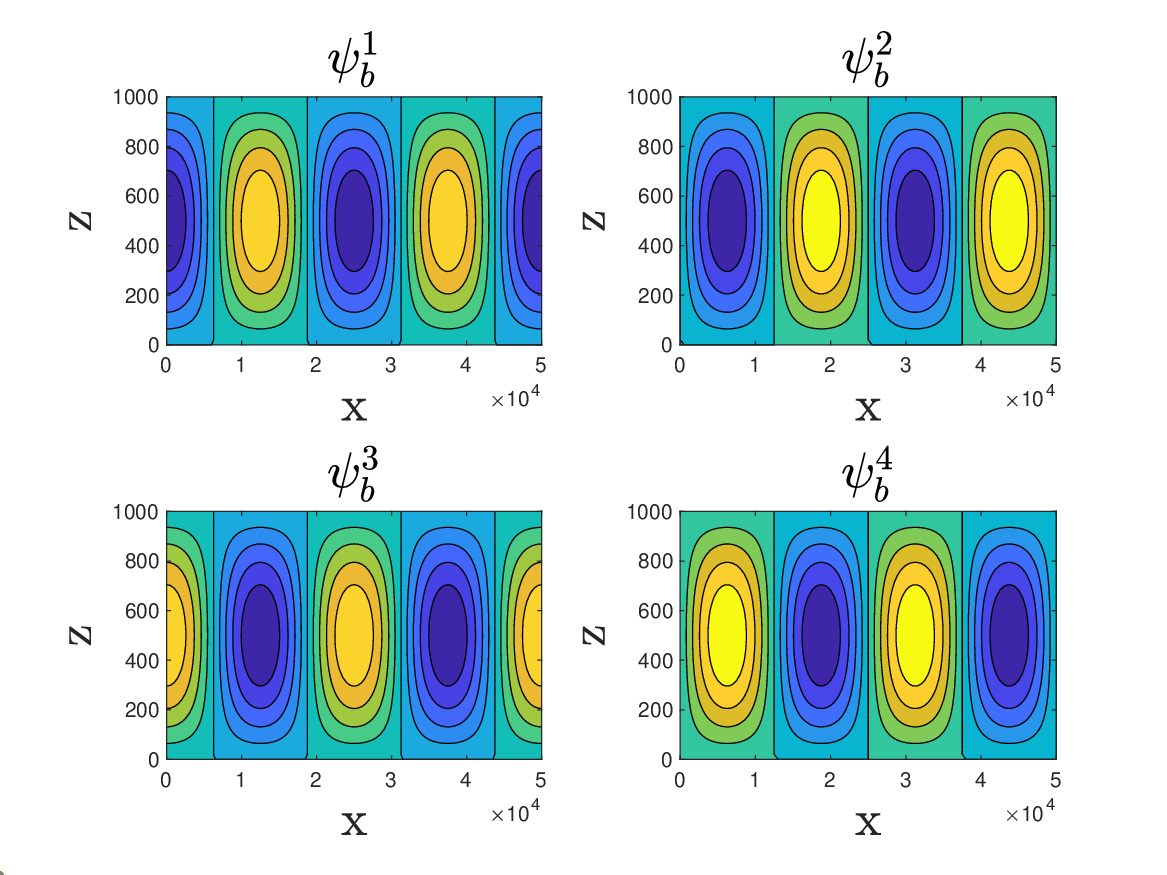}\\
  \caption{Bifurcation solutions.
  }\label{Critical_p11}
\end{figure}

\section{Appendix}
\begin{appendix}
\begin{lemma}\label{general-inequality1}
Let $f$ and $g$ be two nonnegative function defined on $[0,+\infty)$. Suppose that there are two positive numbers $\lambda_1$,
$\lambda_2$ such that 
\begin{align}
\frac{df}{dt}+\lambda_1f\leq g,\quad 
g\leq D_1(t)e^{-\lambda_2 t}+D_2(t).
\end{align}
where $D_j(t) (j=1,2)$ are two functions satisfying
\[
D_j(t) (j=1,2)\in L^1([0,+\infty)).
\]
Then, there exists a positive number $\lambda$ such that
\begin{align}
f\leq e^{-\lambda t}
\left(
f(0)+\int_0^{\infty}D_1(\tau)\,d\tau
\right)
+\int_0^{\infty}D_2(\tau)\,d\tau.
\end{align}
Particular, if $D_2(t)\equiv 0$, we have 
\begin{align}
f\leq e^{-\lambda t} \left(
f(0)+\int_0^{\infty}D_1(\tau)\,d\tau
\right).
\end{align}
\end{lemma}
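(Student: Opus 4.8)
The plan is to treat the hypothesis as a linear first-order differential inequality, integrate it explicitly against the exponential integrating factor $e^{\lambda_1 t}$, and then estimate the two resulting integrals separately using the $L^1$-integrability of $D_1$ and $D_2$. First I would multiply $\frac{df}{dt}+\lambda_1 f \le g$ by $e^{\lambda_1 t}>0$, recognize the left side as $\frac{d}{dt}\bigl(e^{\lambda_1 t} f\bigr)$, and integrate over $[0,t]$ to obtain the representation
\[
f(t) \le e^{-\lambda_1 t} f(0) + e^{-\lambda_1 t}\int_0^t e^{\lambda_1 s} g(s)\,ds .
\]
Substituting the pointwise bound $g(s)\le D_1(s)e^{-\lambda_2 s}+D_2(s)$ then splits the convolution integral into a $D_1$-part and a $D_2$-part, each of which I handle on its own.

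For the $D_2$-part the estimate is immediate: since $s\le t$ forces $e^{-\lambda_1(t-s)}\le 1$, one has $e^{-\lambda_1 t}\int_0^t e^{\lambda_1 s}D_2(s)\,ds \le \int_0^t D_2(s)\,ds \le \int_0^{\infty}D_2(\tau)\,d\tau$, which produces exactly the non-decaying term in the conclusion. For the $D_1$-part I am left to control
\[
e^{-\lambda_1 t}\int_0^t e^{(\lambda_1-\lambda_2)s}D_1(s)\,ds ,
\]
and here I would split into the cases $\lambda_1\le\lambda_2$ and $\lambda_1>\lambda_2$ according to the sign of the exponent $\lambda_1-\lambda_2$. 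When $\lambda_1\le\lambda_2$ we have $e^{(\lambda_1-\lambda_2)s}\le 1$, giving the bound $e^{-\lambda_1 t}\int_0^{\infty}D_1$; when $\lambda_1>\lambda_2$ the factor $e^{(\lambda_1-\lambda_2)s}\le e^{(\lambda_1-\lambda_2)t}$ can be pulled out, yielding $e^{-\lambda_2 t}\int_0^{\infty}D_1$. Setting $\lambda:=\min\{\lambda_1,\lambda_2\}$ unifies both cases into the single bound $e^{-\lambda t}\int_0^{\infty}D_1(\tau)\,d\tau$, and since $\lambda\le\lambda_1$ the initial-data term likewise satisfies $e^{-\lambda_1 t}f(0)\le e^{-\lambda t}f(0)$.

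Collecting the three estimates gives precisely $f(t)\le e^{-\lambda t}\bigl(f(0)+\int_0^{\infty}D_1\bigr)+\int_0^{\infty}D_2$, and the particular case follows at once by setting $D_2\equiv 0$ so the final integral drops out. The only point requiring any care — and the place I would slow down — is the case split for the $D_1$-integral, since it is the sign of $\lambda_1-\lambda_2$ that decides whether the exponential weight in $s$ grows or decays and hence which exponential rate survives; beyond this the argument is a direct computation needing nothing more than absolute continuity of $f$ (so the fundamental theorem of calculus applies) together with $D_1,D_2\in L^1([0,\infty))$. No compactness, regularity, or structural information about $f$ is used.
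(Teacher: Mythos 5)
Your proof is correct, but it resolves the exponential competition differently from the paper. The paper never uses the full integrating factor $e^{\lambda_1 t}$: it first weakens the damping term, replacing $\lambda_1 f$ by $\epsilon\lambda_1 f$ (legitimate precisely because $f\geq 0$ and $\epsilon<1$), with $\epsilon$ chosen small enough that $\lambda_2-\epsilon\lambda_1>0$; it then multiplies by $e^{\epsilon\lambda_1 t}$ and integrates, so the exponent $-(\lambda_2-\epsilon\lambda_1)\tau$ in the $D_1$-integral is negative by construction and no case distinction is needed. This buys a one-case argument at the price of an unspecified decay rate $\lambda=\epsilon\lambda_1$, which is strictly smaller than $\min\{\lambda_1,\lambda_2\}$. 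You instead keep the full factor $e^{\lambda_1 t}$ and split on the sign of $\lambda_1-\lambda_2$, bounding $e^{(\lambda_1-\lambda_2)s}$ by $1$ or by $e^{(\lambda_1-\lambda_2)t}$ accordingly; this yields the sharper explicit rate $\lambda=\min\{\lambda_1,\lambda_2\}$ (including the boundary case $\lambda_1=\lambda_2$) and, as you note, nowhere uses the nonnegativity of $f$, whereas the paper's reduction step does. Since the lemma only asserts the existence of some $\lambda>0$, both arguments fully suffice; yours is marginally stronger, the paper's marginally shorter.
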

\begin{proof}
Let $\epsilon$ be a small positive number such that $(\lambda_2-\epsilon \lambda_1)>0$, we have
\[
\begin{aligned}
\frac{df}{dt}e^{\epsilon \lambda_1t}+\epsilon \lambda_1e^{\epsilon \lambda_1t}f\leq D_1e^{-(\lambda_2-\epsilon \lambda_1) t}+D_2
e^{\epsilon \lambda_1t}
\end{aligned}
\]
Integrating the preceding inequality, we have
\[
\begin{aligned}
f&\leq e^{-\epsilon \lambda_1t}f(0)+
e^{-\epsilon \lambda_1t}\int_0^tD_1(\tau)e^{-(\lambda_2-\epsilon \lambda_1) \tau}\,d\tau+
e^{-\epsilon \lambda_1t}
\int_0^tD_2(\tau)e^{\epsilon \lambda_1\tau}\,d\tau\\
&\leq
e^{-\epsilon \lambda_1t}f(0)+
e^{-\epsilon \lambda_1t}\int_0^{\infty}D_1(\tau)\,d\tau
+
\int_0^tD_2(\tau)e^{-\epsilon \lambda_1(t-\tau)}\,d\tau\\
&\leq e^{-\epsilon \lambda_1t}
\left(
f(0)+\int_0^{\infty}D_1(\tau)\,d\tau
\right)
+\int_0^{\infty}D_2(\tau)\,d\tau.
\end{aligned}
\]
\end{proof}
\end{appendix}

\begin{lemma}\label{mixed-norm}
Let $f\in H^1(D)$ be a function
 periodic in $x$ with period $\alpha$, then there exists a constant $C>0$ such that
 \begin{align}\label{appeed20}
  \begin{aligned}
 &\norm{f}_{L_x^{\infty}L^2_z}\leq 
C\norm{f}_{L^2}\left(\norm{f}_{L^2}+\norm{\partial_xf}_{L^2}\right),\\
&\norm{f}_{L_z^{\infty}L^2_x}\leq 
C\norm{f}_{L^2}\left(\norm{f}_{L^2}+\norm{\partial_zf}_{L^2}\right).
\end{aligned}
\end{align}
Particularly, we have 
 \begin{align}\label{appeed2}
  \begin{aligned}
 &\norm{f}_{L_x^{\infty}L^2_z}\leq 
C\norm{f}_{L^2}\norm{\partial_xf}_{L^2},\quad \text{if}\quad
\int_0^{\alpha}f(x,z)\,dx=0,
\\
&\norm{f}_{L_z^{\infty}L^2_x}\leq 
C\norm{f}_{L^2}\norm{\partial_zf}_{L^2},
\quad \text{if}\quad
\int_0^{z}f(x,z)\,dz=0.
\end{aligned}
\end{align}
\end{lemma}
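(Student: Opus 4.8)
The plan is to reduce each two-dimensional mixed-norm estimate to a one-dimensional calculus inequality in the ``outer'' variable, and to record everything at the level of the squared mixed norm, since that is the quantity that actually enters the nonlinear estimates of the preceding lemmas. For the first inequality, I would set $g(x) := \int_0^1 |f(x,z)|^2\,dz$, so that $\norm{f}_{L_x^\infty L_z^2}^2 = \norm{g}_{L_x^\infty}$, and observe that $g$ is periodic in $x$ (inherited from $f$) with average $\frac{1}{\alpha}\norm{f}_{L^2}^2$ over $D=(0,\alpha)\times(0,1)$. Since a continuous periodic function attains a value no larger than its average at some point $x_0$, the fundamental theorem of calculus gives $\norm{g}_{L_x^\infty} \le \frac{1}{\alpha}\norm{f}_{L^2}^2 + \int_0^\alpha |g'(x)|\,dx$.

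Next I would differentiate under the integral sign, $g'(x) = 2\int_0^1 f\,\partial_x f\,dz$, and apply the Cauchy--Schwarz inequality (first in $z$, then in $x$) to get $\int_0^\alpha |g'(x)|\,dx \le 2\,\norm{f}_{L^2}\norm{\partial_x f}_{L^2}$. Combining the two bounds yields $\norm{f}_{L_x^\infty L_z^2}^2 \le C\,\norm{f}_{L^2}\bigl(\norm{f}_{L^2}+\norm{\partial_x f}_{L^2}\bigr)$, which is exactly the form invoked throughout Section~3. The second inequality is identical after exchanging the roles of $x$ and $z$: set $h(z) := \int_0^\alpha |f(x,z)|^2\,dx$, so $\norm{f}_{L_z^\infty L_x^2}^2=\norm{h}_{L_z^\infty}$. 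Here $f$ need not be periodic in $z$, so instead of periodicity I would select the minimizing point $z_0\in[0,1]$, at which necessarily $h(z_0)\le \int_0^1 h\,dz=\norm{f}_{L^2}^2$, and then integrate $h'(z)=2\int_0^\alpha f\,\partial_z f\,dx$ from $z_0$, again closing with Cauchy--Schwarz.

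For the refined estimates in \eqref{appeed2}, the additional hypothesis is a vanishing mean in the relevant direction. Under $\int_0^\alpha f(x,z)\,dx=0$ the Poincar\'e inequality in $x$ gives $\norm{f}_{L^2}\le C\norm{\partial_x f}_{L^2}$, so the average term $\frac{1}{\alpha}\norm{f}_{L^2}^2$ can be absorbed into the product $\norm{f}_{L^2}\norm{\partial_x f}_{L^2}$, leaving $\norm{f}_{L_x^\infty L_z^2}^2 \le C\norm{f}_{L^2}\norm{\partial_x f}_{L^2}$; the vertical case follows the same way, using the Poincar\'e--Wirtinger inequality in $z$ to control $h(z_0)$ once the vertical mean vanishes.

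This lemma is essentially routine, so I do not expect a serious obstacle. The only points requiring genuine care are (i) keeping the estimate at the level of the \emph{squared} mixed norm, since that is the form consumed by the nonlinear estimates; and (ii) the vertical direction, where $f$ is not periodic and one must invoke a minimum-point argument (or, equivalently, the boundary conditions) rather than periodicity to bound the base value $h(z_0)$ by $\norm{f}_{L^2}^2$.
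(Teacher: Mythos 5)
Your proof of \eqref{appeed20} is correct and essentially the paper's own argument: choose a point $x_0$ at which the slice integral $g(x)=\int_0^1 f(x,z)^2\,dz$ is at most its average (the paper takes the minimum point of $g$, which serves the same purpose as your below-average point), connect $x_0$ to an arbitrary $x$ by the fundamental theorem of calculus, and close with Cauchy--Schwarz; the vertical case is handled identically, with the minimum point of $h(z)=\int_0^\alpha f^2\,dx$ replacing any appeal to periodicity, exactly as the paper's ``proved similarly'' indicates. You are also right that the operative conclusion is the bound on the \emph{squared} mixed norm: that is what the paper's computation actually produces and what the nonlinear estimates of Section~3 consume, the missing exponent in the lemma's statement being a typographical slip.

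For \eqref{appeed2}, however, your route genuinely differs from the paper's. You retain the decomposition from the first part and absorb the average term $\frac{1}{\alpha}\norm{f}_{L^2}^2$ via the slice-wise Poincar\'e--Wirtinger inequality, $\norm{f}_{L^2}\leq C\norm{\partial_x f}_{L^2}$. The paper instead exploits the mean-zero hypothesis pointwise: for each fixed $z$ it produces a point $x_0(z)$ with $f(x_0(z),z)=0$, writes $f(x,z)^2=2\int_{x_0(z)}^x f\,\partial_y f\,dy$, and integrates, which eliminates the base term outright and yields the inequality with the explicit constant $2$ and no Poincar\'e constant. Both arguments are valid; the paper's is marginally more direct, while yours is more modular, reusing \eqref{appeed20} verbatim and making transparent why the mean-zero hypothesis upgrades the estimate. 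Two small remarks: the hypothesis $\int_0^{z}f\,dz=0$ in $\eqref{appeed2}_2$ is evidently a misprint for the vertical mean-zero condition $\int_0^1 f(x,z)\,dz=0$, which is what both your argument and the companion Lemma~\ref{mixed-norm-1} actually use; and, as the paper does explicitly, you should record the reduction to $f\in C^1(\overline{D})$ by density to justify differentiating $g$ and $h$ under the integral sign.
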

\begin{proof}
Becasue $C^1(\overline{D})$ is dense in $H^1(D)$, we only need prove
the conclusion for case of $f\in C^1(\overline{D})$. Let $x_0$ be the minimum point of $\int_{0}^1\left(f(x,z)\right)^2\,dz$, we have
\[
\begin{aligned}
\int_{0}^1\left(f(x,z)\right)^2\,dz&=
\int_{0}^1\left(f(x_0,z)\right)^2\,dz
+2\int_{x_0}^x\left(\int_{0}^1f(\sigma,z) \partial_{\sigma}f(\sigma,z) \,dz\right)\,d\sigma\\
&\leq \frac{1}{\alpha}
\int_{D}f^2\,dx\,dz
+2\int_{x_0}^x\left(\int_{0}^1f(\sigma,z) \partial_{\sigma}f(\sigma,z) \,dz\right)\,d\sigma\\
&\leq C\norm{f}_{L^2}\left(\norm{f}_{L^2}+\norm{\partial_xf}_{L^2}\right),
\end{aligned}
\]
which yields $\eqref{appeed20}_1$. $\eqref{appeed20}_2$. 
can be proved similarly.
If $\int_0^{\alpha}f(x,z)\,dx=0$, then there exists $x_0$ such that
$f(x_0,z)=0$. This enables us to get
\[
\int_0^1\left(f(x,z)\right)^2\,dz
=2\int_0^1\left(\int_{x_0}^xf(y,z)\partial_{y}f(y,z)\,dy\right)
\,dz\leq 2\norm{f}_{L^2}\norm{\partial_xf}_{L^2}.
\]
This yields $\eqref{appeed2}_1$. $\eqref{appeed2}_2$
can be proved similarly.
\end{proof}

\begin{lemma}\label{mixed-norm-1}
Let $f\in H^1(D)$ be a function
 periodic in $x$ with period $\alpha$ such that
 \[
 \int_0^1f(x,z)\,dz=0,
 \]
  then there exists a constant $C>0$ such that
 \begin{align}\label{appeed22}
  \begin{aligned}
 &\norm{f}_{L_x^{2}L^{\infty}_z}\leq 
C\norm{f}_{L^2}\norm{\partial_zf}_{L^2}.
\end{aligned}
\end{align}
\end{lemma}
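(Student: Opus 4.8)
The plan is to exploit the vertical mean-zero constraint $\int_0^1 f(x,z)\,dz=0$ in exactly the spirit of the proof of Lemma~\ref{mixed-norm}. By density of $C^1(\overline{D})$ in $H^1(D)$ it suffices to prove the bound for $f\in C^1(\overline{D})$; the general case then follows by approximation, since both sides are continuous with respect to the $H^1$-norm. First I would fix $x\in[0,\alpha]$ and observe that, because the slice $z\mapsto f(x,z)$ has zero average on $(0,1)$, the intermediate value theorem furnishes a level $z_0=z_0(x)\in[0,1]$ at which $f(x,z_0)=0$. This vanishing point is the whole mechanism behind the estimate: it converts the vertical supremum into a controllable integral.

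With such a point in hand, the next step is to write, for an arbitrary $z\in[0,1]$,
\[
f(x,z)^2=f(x,z_0)^2+2\int_{z_0}^{z}f(x,\zeta)\,\partial_\zeta f(x,\zeta)\,d\zeta
=2\int_{z_0}^{z}f\,\partial_\zeta f\,d\zeta,
\]
so that taking the supremum over $z$ yields the pointwise-in-$x$ bound
\[
\|f(x,\cdot)\|_{L^\infty_z}^2\le 2\int_0^1|f(x,\zeta)|\,|\partial_\zeta f(x,\zeta)|\,d\zeta .
\]

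Finally I would integrate this inequality in $x$ over one period and apply the Cauchy--Schwarz inequality on $D$:
\[
\|f\|_{L^2_xL^\infty_z}^2=\int_0^\alpha\|f(x,\cdot)\|_{L^\infty_z}^2\,dx
\le 2\int_D|f|\,|\partial_z f|\,dx\,dz
\le 2\,\norm{f}_{L^2}\,\norm{\partial_z f}_{L^2},
\]
which is precisely \eqref{appeed22} with $C=\sqrt 2$. There is no genuine obstacle here; the only point requiring care is the justification of the zero-average-to-zero-value step together with the accompanying density reduction, and this is entirely analogous to the argument already carried out for $\eqref{appeed2}_2$ in Lemma~\ref{mixed-norm}.
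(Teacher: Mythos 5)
Your proposal is correct and follows essentially the same route as the paper's own proof: density of $C^1(\overline{D})$ in $H^1(D)$, a vanishing point $z_0(x)$ obtained from the zero vertical mean, the fundamental theorem of calculus to bound $\sup_z |f(x,z)|^2$ by $2\int_0^1 |f\,\partial_\zeta f|\,d\zeta$, and integration in $x$ followed by Cauchy--Schwarz. Your reading of \eqref{appeed22} as a bound on $\norm{f}_{L^2_x L^\infty_z}^2$ (so that $C=\sqrt{2}$ works) is also the correct interpretation, consistent with how the lemma is invoked throughout the paper.
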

\begin{proof}
Becasue $C^1(\overline{D})$ is dense in $H^1(D)$, we only need prove
the conclusion for case of $f\in C^1(\overline{D})$. 
$\int_0^{\alpha}f(x,z)\,dz=0$ means that there exists $z_0$ such that
$f(x,z_0)=0$. This enables us to get
\[
\abs{f(x,z)}^2\leq 2\int_{z_0}^z\abs{f(x,\xi)\partial_{\xi}f(x,\xi)}\,d\xi
\leq 2\int_{0}^1\abs{f(x,\xi)\partial_{\xi}f(x,\xi)}\,d\xi.
\]
This enables us to get 
\[
  \begin{aligned}
\norm{f}_{L_x^{2}L^{\infty}_z}\leq 
C\norm{f}_{L^2}\norm{\partial_zf}_{L^2}.
\end{aligned}
\]
\end{proof}

 \section*{Acknowledgement}

Q. Wang was supported by the Natural Science Foundation
of Sichuan Province (No.2025ZNSFSC0072).

    \section*{Conflict Of Interest Statement}
We declare that we have no financial and personal relationships with
other people or organizations that can inappropriately influence our
work, there is no professional or other personal interest of any nature
or kind in any product, service and/or company that could be
construed as influencing the position presented in, or the review of,
the manuscript.

    \section*{Data Availability Statement}
Data sets generated during the current study are available
from the corresponding author on reasonable request.

\itemsep=0pt
 %\bibliography{xin-cankaowenxian}
%\bibliographystyle{plain}
%\bibliographystyle{unsrt} 

\end{document}